\numberwithin{equation}{section}
\newtheorem{theorem}{Theorem}[section]
\newtheorem{lemma}[theorem]{Lemma}
\newtheorem{proposition}[theorem]{Proposition}
\newtheorem{corollary}[theorem]{Corollary}
\theoremstyle{definition}
\newtheorem{definition}[theorem]{Definition}
\newtheorem{example}[theorem]{Example}
\theoremstyle{remark}
\newtheorem{remark}[theorem]{Remark}
\newcommand{\ra}{\to}
\newcommand{\lra}{\longrightarrow}
\newcommand{\sra}{\twoheadrightarrow}
\newcommand{\hra}{\hookrightarrow}
\newcommand{\IFF}{\Leftrightarrow}
\newcommand{\shC}{{\mathcal{C}}}
\newcommand{\shF}{{\mathcal{F}}}
\newcommand{\shHom}{{\mathcal{H}om}}
\newcommand{\shI}{{\mathcal{I}}}
\newcommand{\shJ}{{\mathcal{J}}}
\newcommand{\shK}{{\mathcal{K}}}
\newcommand{\shL}{{\mathcal{L}}}
\newcommand{\shO}{{\mathcal{O}}}
\newcommand{\shQ}{{\mathcal{Q}}}
\newcommand{\shS}{{\mathcal{S}}}
\newcommand{\shT}{{\mathcal{T}}}
\newcommand{\shW}{{\mathcal{W}}}
\newcommand{\SC}{{\mathscr C}}
\newcommand{\op}[1]{{\operatorname{#1}}}
\newcommand{\CC}{{\mathbb{C}}}
\newcommand{\HH}{\mathbb{H}}
\newcommand{\NN}{\mathbb{N}}
\newcommand{\PP}{\mathbb{P}}
\newcommand{\RR}{\mathbb{R}}
\newcommand{\ZZ}{\mathbb{Z}}
\newcommand  {\aff}{{\op{aff}}}
\newcommand  {\Aut}{\op{Aut}\,}
\renewcommand{\atop}[2]{{\genfrac{}{}{0pt}{}{#1}{#2}}}
\newcommand  {\bct}{{\op{bct}}}
\newcommand  {\bLambda}{{\bf\Lambda}}
\newcommand  {\codim}{\op{codim}\,}
\newcommand  {\coker}{\op{coker}\,}
\newcommand  {\core}[1]{{\backslash #1 /}}
\newcommand  {\dual}{\vee}
\newcommand  {\eps}{\epsilon}
\newcommand  {\Hom}{\op{Hom}\,}
\newcommand  {\id}{\op{id}\,}
\newcommand  {\im}{\op{im}\,}
\renewcommand{\k}{{\Bbbk}}
\newcommand  {\kk}{\Bbbk}
\renewcommand{\ker}{\op{kern}\,}
\newcommand  {\loc}{{\tiny \op{loc}}}
\renewcommand{\log}{\dagger}
\newcommand  {\llog}{{\op{log}}}
\newcommand  {\Newton}{\op{Newton}\,}
\newcommand  {\ocore}[1]{{\backslash \overset{\circ}{#1} /}}
\renewcommand{\P}{\mathscr{P}}
\newcommand  {\Quot}{\op{Quot}\,}
\newcommand  {\rank}{\op{rank}\,}
\newcommand  {\relint}{\op{relint}\,}
\newcommand  {\red}{\op{red}\,}
\newcommand  {\res}{\op{res}\,}
\newcommand  {\rk}{\op{rk}\,}
\renewcommand{\span}[1]{\op{span}_{#1}\,}
\newcommand  {\Sing}{\op{Sing}\,}
\newcommand  {\Spec}{\op{Spec}\,}
\newcommand  {\st}{{\op{st}}}
\renewcommand{\top}{{\op{top}}}
\newcommand  {\Tors}{{\shT\!ors}}
\newcommand  {\Tot}{{\op{Tot}}}
\newcommand  {\Ver}{\op{Vert}\,}
\newcommand  {\Int}{\op{Int}\,}
\newcommand  {\X}{\mathcal{X}}
\newcommand  {\Z}{\mathcal{Z}}
\begin{document}

\title[Log Hodge groups on a toric Calabi-Yau degeneration]
{Log Hodge groups on a toric\\ Calabi-Yau degeneration}
\author{Helge Ruddat} 
\address{Mathematisches Institut,
Universit\"at Freiburg, Eckerstr.1, 79104 Freiburg, Germany}
\email{helge.ruddat@math.uni-freiburg.de}

\subjclass{14J32, 32S35, 58A14, 14M25, 52B70}
\date{June 14, 2009}
 		
\keywords{Algebraic geometry, logarithmic geometry, Hodge theory, degeneration, Calabi-Yau}

\begin{abstract}
We give a spectral sequence to compute the logarithmic Hodge groups 
on a hypersurface type toric log Calabi-Yau space $X$, compute its $E_1$ term 
explicitly in terms of tropical degeneration data and Jacobian rings 
and prove its degeneration at $E_2$ under mild assumptions. 
We prove the basechange of the affine Hodge groups and deduce it
for the logarithmic Hodge groups in low dimensions.
As an application, we prove a mirror symmetry duality in dimension two and 
four involving the ordinary Hodge numbers, the stringy Hodge numbers and the affine Hodge numbers.
\end{abstract}

\maketitle
\tableofcontents
\bigskip


\section*{Introduction} \label{introduction}

Hodge theory implies that Hodge numbers stay constant in smooth,
proper families \cite{deligne0}. By using logarithmic differential forms
Steenbrink extended this result to normal crossing degenerations 
\cite{steenbrink1}. Later it was observed 
\cite{kawamatanamikawa}, \cite{illusie} that the notion of
log smoothness in abstract log geometry \cite{fkato},\cite{kkato} provides the right
framework for this kind of result.

In \cite{grosie1} and \cite{grosie3} Gross and Siebert provide a framework for a
comprehensive understanding of mirror symmetry via maximal
degenerations $\X\to S$, using the technique of log geometry. The
central fibre of their maximal degenerations are unions of complete
toric varieties, and they allow an essentially combinatorial
(``tropical'') description via an integral affine manifold $B$ with
certain singularities along with a compatible decomposition into
lattice polyhedra. While a maximal degeneration does not literally
define a log smooth morphism, it is shown in \cite{grosie2} that in many
cases there is enough log smoothness to compute the Hodge numbers of
the general fibers from the log Hodge numbers of the central fiber. 
The latter can in turn be computed on $B$:
\[
h^{p,q}(X_t)= h^{p,q}_\aff (B):= h^q(B, \bigwedge^p i_*\check\Lambda\otimes_\ZZ\RR).
\]
Here $X_t$ is a general fibre of $\X\to S$, $\Lambda$ is the sheaf of integral 
tangent vectors on the complement $B\setminus \Delta$ of the singular locus $\Delta$
of the affine structure and $i: B\setminus \Delta\to B$ is the inclusion.

Starting from dimension four this result can not always hold for one
expects stringy Hodge numbers to replace ordinary Hodge numbers
\cite{batyrev0}, \cite{batyrev3}. 
In fact, the authors of \cite{grosie2} impose the subtle condition that certain
lattice polytopes encoding the local affine monodromy are
standard simplices rather than elementary simplices. By definition a
lattice simplex is elementary if it does not contain any interior
integral points. 

In general, the stringy Hodge numbers $h_\st^{p,q}$ 
are greater than or equal to the ordinary Hodge numbers. For a not necessarily
maximal degeneration, we supplement this to
$$h^{p,q}_\aff (B)\ \le\ h^{p,q}(X_t)\ \le\ h_\st^{p,q}(X_t).$$

Moreover, we observe that mirror symmetry interchanges
differences: Let $\X\to S$ be a maximal degeneration with $n=\dim X_t=4$.
We can recover the difference of the stringy to the ordinary 
Hodge numbers on the mirror dual degeneration $\check \X\to S$ as the
difference of the ordinary to the affine Hodge numbers, i.e.,

$$h^{p,q}(X_t) - h^{p,q}_\aff (B)\ =\ h^{n-p,q}_\st (\check X_t) - h^{n-p,q}(\check X_t).$$

Note that a zero difference on the left hand side under the standard simplex 
condition of \cite{grosie2} is reflected by a smoothness condition on the right hand side 
because each lattice polytope locally describes
a toric singularity of $\check X_t$ and smoothness corresponds to a standard simplex.
Note that mirror symmetry of stringy Hodge numbers for complete intersections in
toric varieties was shown in \cite{batybor}.

More generally, we investigate the Hodge groups for non-maximal degenerations by defining
the new degeneration space classes \emph{hypersurface type (h.t.)} and 
\emph{complete intersection type (c.i.t.)}.
For instance, an anticanonically embedded general hypersurface in a Fano
toric variety yields a h.t. degeneration. To refine this to a maximal degeneration
one would have to form its MPCP resolution and possibly even blow this further up.

We relate the $h^{p,q}(X_t)$ to the logarithmic Hodge numbers of the 
central fibre $h^{p,q}_\llog(\X_0)$ and
derive a recipe to compute $h^{p,q}_\llog(\X_0)$ in terms of
$h^{p,q}_\aff (B)$ and additional contributions which we call \emph{log twisted sectors}.
The latter depend on the affine data of $B$ (the monodromy polytopes) as well as a 
continuous parameter $Z$ which is the locus of the logarithmic singularities of $\X_0$. 
Our result was inspired by \cite{bormav}, where Borisov and Mavlyutov give 
a conjectural definition of string cohomology for a hypersurface Calabi-Yau in 
a toric variety. They use toric Jacobian rings which come up in our setting as well.
More recently, Helm and Katz \cite{helmkatz} have related the cohomology of a subvariety
of a torus to the topology of the tropical variety obtained from a 
normal crossing degeneration thereof.

If $f$ is a local equation for an irreducible component $Z_\omega$ of $Z$, 
$\check\Delta_\omega$ the corresponding monodromy polytope 
and $C(\check\Delta_\omega)$ the cone over the 
polytope then the graded dimensions of the toric 
Jacobian rings $R_0(C(\check\Delta_\omega),f)$ and 
$R_1(C(\check\Delta_\omega),f)$ in the notation of \cite{bormav}
play a central role in the computation of the log twisted sectors. 
For the moment, let $X=X_\omega$ denote 
the smallest stratum of $\X_0$ containing $Z_\omega$.
For the canonical linear system $V\subseteq \Gamma(X,\shO_X(Z_\omega))$
given by $f$ and its logarithmic derivatives, we set
$R(Z_\omega)_n:=\coker\big(V\otimes \Gamma(X,\shO_X((n-1)Z_\omega))\ra \Gamma(X,\shO_X(nZ_\omega)) \big).$
If $\check\Delta_\omega$ is a simplex, we have
$V=\Gamma(X,\shO_X(Z_\omega))$, $R(Z_\omega)\cong R_0(C(\check\Delta_\omega),f)$ and
\[\dim R(Z_\omega)_n = \#\left\{ \atop{\hbox{lattice points}\hbox{ of }n\cdot \check\Delta_\omega
\hbox{ which are {\it not} a sum of a}}{\hbox{lattice point of }(n-1)\cdot\check\Delta_\omega
\hbox{ and a vertex of }\check\Delta_\omega} \right\}\]

We prove the following result for the 
logarithmic Hodge groups $H^q(X_0,\Omega_\llog^p)$ 
of the central fibre $X_0$ of a toric degeneration.

\begin{theorem} \label{intro1}
Let $X_0$ be a hypersurface type (h.t.) toric log Calabi-Yau space. 
\begin{enumerate}
\item[a)]
  For each $p$, there is a spectral sequence which computes the logarithmic Hodge groups 
  $H^{q}(\X_0,\Omega_\llog^p)$ whose
  $E_1$ term can be given explicitly in terms of
  $i_*\bigwedge^r\check\Lambda\otimes_\ZZ\k$, 
  $\bigwedge^r i_*\check\Lambda\otimes_\ZZ\k$ and 
  $R(Z_\omega\cap X_\tau)$ for various $\omega, \tau$.
\item[b)] If every $\check\Delta_\omega$ is a simplex, 
the spectral sequence is degenerate at $E_2$ and 
$$\begin{array}{rll}\dim E_2^{q,0}(\Omega_\llog^p)&=& h^{p,q}_\aff (B)\\
\sum_{k>0}\dim E_2^{q-k,k}(\Omega_\llog^p)&=& \hbox{log twisted sectors}
\end{array}$$
\end{enumerate}
\end{theorem}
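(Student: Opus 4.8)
The plan is to construct the spectral sequence from a natural filtration on the log de Rham complex of $\X_0$, indexed by the stratification of the central fibre, and then to identify each graded piece with a combinatorially tractable object. First I would recall that $\X_0$ is log smooth over the standard log point away from the log singular locus $Z$, so on the open part $j\colon \X_0\setminus Z \hra \X_0$ the sheaf $\Omega^p_\llog$ behaves like the sheaf of relative log differentials of a toric degeneration; by the Gross--Siebert machinery this computes, at the level of $E_1^{q,0}$, the affine cohomology $h^{p,q}_\aff(B)$ via the wedge powers $\bigwedge^p i_*\check\Lambda$. The remaining information is concentrated along $Z$, and the filtration by ``order of contact with $Z$'' (equivalently, the pole filtration of $\Omega^p_\llog$ along the components $Z_\omega$) is what produces the spectral sequence in part a). The $E_1$ term splits into a ``toric/affine'' column built from the constructible sheaves $i_*\bigwedge^r\check\Lambda\otimes\k$ and $\bigwedge^r i_*\check\Lambda\otimes\k$, plus correction terms supported on the strata $Z_\omega\cap X_\tau$ whose stalks are exactly the cokernels defining $R(Z_\omega\cap X_\tau)$ — this is where the linear system $V$ of logarithmic derivatives of the local equation $f$ enters, matching the setup in the excerpt and the Jacobian-ring picture of Borisov--Mavlyutov.

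For part b), under the hypothesis that each monodromy polytope $\check\Delta_\omega$ is a simplex, two simplifications occur. First, as noted in the excerpt, $V = \Gamma(X,\shO_X(Z_\omega))$ is the full linear system, so $R(Z_\omega)\cong R_0(C(\check\Delta_\omega),f)$ and its graded dimensions acquire the explicit lattice-point formula stated; more importantly, the simplex condition forces the local models for $Z$ inside each stratum to be sufficiently ``nondegenerate'' that the relevant Koszul-type complexes computing the correction terms have cohomology only in the expected degree. This is the input that makes the differential $d_1$ have the predicted rank. I would then argue degeneration at $E_2$ by a weight argument: the log de Rham complex carries a weight filtration (as in Steenbrink's theory, transported to the log-smooth setting à la Kawamata--Namikawa and Illusie), the weights on $E_2^{q,0}$ versus $E_2^{q-k,k}$ for $k>0$ are forced apart, and since all higher differentials $d_r$ for $r\ge 2$ must respect weights, they vanish. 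The identification $\dim E_2^{q,0} = h^{p,q}_\aff(B)$ is then immediate from the open-part computation, and $\sum_{k>0}\dim E_2^{q-k,k}$ is by definition the collection of log twisted sectors.

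The structural steps, in order, are: (1) define the filtration on $\Omega^\bullet_\llog$ by pole order along $Z$ and show it is exhaustive and finite, so the associated spectral sequence converges; (2) compute $\mathrm{gr}^0$ as the log de Rham complex of the log-smooth part and invoke the toric/affine identification to get the $i_*\bigwedge^r\check\Lambda$ and $\bigwedge^r i_*\check\Lambda$ contributions; (3) compute $\mathrm{gr}^k$ for $k\ge 1$ as complexes supported on the $Z_\omega$ with stalks given by the residue/Poincaré-residue maps, and identify their hypercohomology with $H^\bullet$ of the strata $X_\omega$ twisted by $\shO(nZ_\omega)$, yielding the $R(Z_\omega\cap X_\tau)$ terms; (4) under the simplex hypothesis, use the resulting Koszul resolution to show $d_1$ behaves as predicted and pin down $E_2$; (5) run the weight argument for $E_2$-degeneration.

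The main obstacle I anticipate is step (3)–(4): correctly identifying the graded pieces of the pole filtration along the \emph{singular} locus $Z$ — which is not a normal crossing divisor in the ambient degeneration but sits inside the singular central fibre — and showing that the local contributions assemble globally into the cokernel sheaves $R(Z_\omega\cap X_\tau)$ without extra error terms. This requires a careful local analysis of $\Omega^p_\llog$ near points of $Z$ using the explicit toric/tropical local models, and the simplex condition is precisely what I expect to be needed to kill the obstruction cohomology in this local computation; handling the non-simplex case (left for part a) only, without $E_2$-degeneration) is correspondingly more delicate and I would not attempt it beyond the abstract existence of the spectral sequence.
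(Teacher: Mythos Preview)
Your proposal diverges from the paper in both the construction of the spectral sequence and the $E_2$-degeneration argument, and in each case the proposed route has a genuine gap.

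\textbf{The spectral sequence.} The paper does \emph{not} use a pole-order filtration along $Z$. The sheaf $\Omega^p_\llog = j_*\Omega^p_{(X\setminus Z)^\log/\k^\log}$ is a pushforward from the complement of $Z$; it carries no evident pole-order filtration, and your step~(1) does not define one. Instead, the paper resolves $\Omega^p$ by the \emph{barycentric complex} $\SC^\bullet(\Omega^p)$ indexed by chains $\tau_0\to\cdots\to\tau_k$ of strata of $\X_0$ (this is the Gross--Siebert resolution, shown to be exact for h.t.\ spaces via the local models). The relevant spectral sequence is then the first hypercohomology spectral sequence $E_1^{k,q}=H^q(X,\SC^k(\Omega^p))\Rightarrow H^{k+q}(X,\Omega^p)$. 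The $E_1$ term is computed stratum by stratum: for each $e:\tau_0\to\tau_k$ one constructs an acyclic Koszul-type resolution of $F_s(e)^*\Omega^p_{\tau_0}/\mathrm{Tors}$, and the cohomology $H^q$ of this sheaf is identified (Thm.~4.4) as $\Gamma(W_e,i_*\bigwedge^p\check\Lambda\otimes\k)$ for $q=0$ and as $R(Z_e)_q\otimes(\cdots)$ for $q>0$. So the affine contribution sits in the $q=0$ \emph{row}, not in a $\mathrm{gr}^0$ column of a pole filtration, and the twisted sectors arise as \emph{higher sheaf cohomology on strata}, not as residues along $Z$. Your steps (2)--(3) misidentify where these pieces live.

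\textbf{$E_2$-degeneration.} Your weight argument is not available here. The paper explicitly remarks that even for the (different) log Hodge--to--de Rham spectral sequence the cohomological mixed Hodge complex structure is left to future work; for the barycentric spectral sequence no weight filtration is constructed at all. The actual proof is purely algebraic: under the simplex hypothesis, the Jacobian ring $R_0(f,\check\Delta_e)$ admits an explicit monomial basis indexed by lattice points of $\check\Delta_e^{\core{l}}$ (Prop.~3.12). One builds an acyclic double complex $\shQ^{\bullet,\bullet}(\Omega^r)$ resolving $\SC^\bullet(\Omega^r)$ and a subcomplex $Q_\top^{\bullet,\core{\bullet}}$ in which every $\delta$-closed class has a representative. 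Since $d_\bct$ preserves this subcomplex and the inclusion $Q_\top^{k,\core{l}}\hookrightarrow H^l_\delta\Gamma(\shQ^{k,\bullet})$ is injective, the zig-zag criterion of Lemma~4.12 forces all higher differentials to vanish. The simplex hypothesis is used precisely to get this monomial basis and the functoriality of $\check\Delta_e^{\core{l}}$ under face inclusions (Lemma~3.9), not to ``kill obstruction cohomology'' in a local model as you suggest.
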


To relate this to the ordinary Hodge groups of the general fibre, 
the authors of \cite{grosie2} have shown for maximal degenerations that 
\begin{equation}
H^{q}(\X_0,\Omega_\llog^p)\cong H^{q}(X_t,\Omega_\llog^p)
=H^{q}(X_t,\Omega^p_{X_t}) \label{eq_genspec}
\end{equation}
by means of a base change result for $\HH^{k}(\X,\Omega_\llog^\bullet)$.
The base change easily generalizes to c.i.t. degenerations, so that 
generalizing (\ref{eq_genspec}) is equivalent to showing that
the first hypercohomology spectral sequence computing $\HH^{k}(X_0,\Omega_\llog^\bullet)$
degenerates at $E_1$. The classical way would be to show 
that $\Omega_\llog^\bullet$ carries the structure of a cohomological 
mixed Hodge complex (\cite{deligne3},~8.1.9).
This requires the topological result that $\HH^{k}(X_0,\Omega_\llog^\bullet)$ computes the cohomology
of the Kato-Nakayama space corresponding to $X_0$ which we leave for future work. 
Instead, we show the degeneration directly under some conditions up to $\dim X=4$.
\\[2mm]
The structure of this paper is as follows.
All central results can be found in Section 1 where we quickly recall Gross and Siebert's
constructions, give the main
definitions h.t. and c.i.t. (1.1), state the result about the spectral sequence computing
the log Hodge groups (1.2) and give the base change result for the affine Hodge numbers
and its consequences for the base change of the ordinary Hodge numbers in low dimensions and
a mirror result on the stringy Hodge numbers (1.3).
In Section~2, we first derive some further consequences of the c.i.t. definition (2.1), in
particular a set of inner monodromy polytopes which we then use to generalize Gross-Siebert's
construction of local models (2.2), the exactness of $\SC^\bullet(\Omega^p)$ and
some further technical properties.
In Section~3, we treat Koszul cohomology for a semi-ample divisor $Z$ in a toric variety and
compute its cohomlogy in terms of $R(Z)$. In 3.3, we compare $R(Z)$ with Jacobian
rings in the case where the corresponding polytope is a simplex. In particular, we
give a monomial basis for the Jacobian ring of a non-degenerate $Z$.
In 3.4, we identify the intermediate cokernels of the Koszul complex with differential
forms having poles on the toric boundary and zeros along $Z$.
These coincide with the summands of $\SC^\bullet(\Omega^p)$, such that the
Koszul complex leads to a resolution of these as we show in 4.1.
In 4.2, we treat the dependency of the choice of a vertex for the Newton polytope
in the resolution. The central result about the computation of the 
log Hodge groups is then proved in 4.3-4.5.
The basechange and mirror symmetry for the twisted sectors 
is the contents of Chapter~5.

I would like to thank my PhD advisor Bernd Siebert for many useful discussions
and his support during my thesis. I also thank Mark Gross for supporting
my coming to the UCSD and helpful input and corrections.
I am grateful to Klaus Altmann, Renzo Cavalieri and Klaus Hulek for
invitations to give a talk on my work.
I had inspiring meetings with Stefan M\"uller-Stach and Stefan Waldmann.
I wish to thank the DFG and Studienstiftung des deutschen Volkes for
financial support and the Mathematische Fakult\"at der
Albert-Ludwigs-Universit\"at for a working environment.

\section{Definitions and central results} \label{results}

\subsection{Toric log Calabi-Yau spaces of hypersurface and complete intersection type}
We fix an algebraically closed field $\k$.
Recall from (\cite{grosie1}, Def. 4.1) that a \emph{toric degeneration}
is flat family $\X\ra \shS=\Spec A$ for some discrete 
valuation ring $A$ with residue field $\k$ such that
\begin{itemize}
\item[a)] the generic fibre $\X_\eta$ is a normal algebraic space,
\item[b)] the special fibre $\X_0$ is a union of toric varieties 
glued along toric boundary strata and
\item[c)] there is a closed subset $\Z\subset \X$ of relative codimension at least two, 
such that every
point in $\X\backslash \Z$ has a neighbourhood which is \'etale locally
equivalent to an affine toric variety where $\X_0$ is identified with the toric
boundary divisor and the deformation parameter is given by a monomial.
\end{itemize}

The Cartier divisor $\X_0$ in $\X$ induces a divisorial log structure on $\X$ which
one may pull back to $\X_0$ to turn it into a log space. For log structures, 
see \cite{kkato}, \cite{grosie1}. The definition of a 
\emph{toric log Calabi Yau space} [\cite{grosie1}, Def. 4.3] (short: toric log CY space) 
is precisely made such that $\X_0$ with its log structure is the key example. 
The authors of \cite{grosie1} demonstrate how to derive 
the \emph{dual intersection complex} $(B,\P)$ from $\X_0$ which is a
real affine manifold $B$ with singularities in codimension two and a polyhedral
decomposition $\P$ with some further properties.
Given \emph{lifted open glueing data} $s$ for $(B,\P)$, one can reconstruct $\X_0$
from the triple $(B,\P,s)$. One might even start directly with such a triple 
to construct a toric log CY space $X_0(B,\P,s)$ 
(if one also adds a suitable log structure).
Recall that a toric log CY space is \emph{positive} if the section of the
log smooth structure moduli bundle on $\X_0\backslash Z$ extends to $\X_0$ by 
attaining zeros rather than poles [\cite{grosie1}, Def. 4.19]. 
An analogous notion of \emph{positivity} for $(B,\P)$ is a 
condition on the local monodromy around the singular locus [\cite{grosie1}, Def.~1.54].
Recall that the set of polytopes $\P$ can be 
considered as a category consisting of lattice polytopes as objects 
and inclusions of faces as morphisms. We recall additional notation:
\begin{itemize}
\item $\P^{[l]}$ for the subset of cells of dimension $l$,
\item $\Delta$ for the \emph{discriminant locus} of $B$,
\item $\Lambda,\check\Lambda$ for the local system of integral tangent and 
cotangent vectors on $B\backslash \Delta$,
\item $i:B\backslash \Delta\ra B$ for the natural inclusion,
\item $\Lambda_\tau$ for the subset of 
tangent vectors parallel to $\tau\in\P$ at a relative interior point of $\tau$ and
\item $\check\Lambda_\tau$ for $\Lambda^\perp_\tau$.
\end{itemize}

Recall that a pair $(\omega,\rho)\in \P^{[1]}\times \P^{[\dim B-1]}$ determines a loop
around the singular locus by going from one vertex of $\omega$ through the interior of one
neighbouring maximal cell of $\rho$ to the other vertex of $\omega$ and returning to the first vertex by passing
through the interior of the other maximal neighbouring cell of $\rho$. 
The order of vertices and maximal cells and thus the orientation of the 
loop can be chosen by fixing integral primitive vectors $d_\omega\in\Lambda_\omega$
and $d_\rho\in\check\Lambda_\rho$. It was shown in loc.cit. that the monodromy in a nearby stalk of $\Lambda$ 
along the so determined homotopy
class of loop has a special shape and can be given by $n\mapsto n+\kappa_{\omega\rho}\langle n, \check d_\rho\rangle d_\omega$
where $\kappa_{\omega\rho}$ is an integer independent of the choices of $d_\omega$ and $\check d_\rho$
(\cite{grosie3}, before Def.~1.4).

Recall that a special fibre $\X_0$ of a toric degeneration is always positive.
We will assume from now on that $X$ is a positive toric log CY space.
The dual intersection complex $(B,\P)$ is then also positive, i.e., 
each $\kappa_{\omega\rho}\ge 0$.

Recall that the \emph{inner monodromy polytope} for $\rho\in\P^{[\dim B-1]}$ is constructed by
fixing a vertex $v\in \rho$ and by taking the convex hull of all $m^\rho_{v,v'}$ where $v'$ is a vertex of $\rho$
and $n\mapsto n+\langle n, \check d_\rho\rangle m^\rho_{v,v'}$ is the monodromy transformation 
of a stalk of $\Lambda$ near $v$ for a loop going from $v$ to $v'$ through the interior of the maximal cell 
on which $\check d_\rho$ is negative and returning through the other one. It is denoted by
$$\widetilde{\Delta}_\rho\subset \Lambda_\rho\otimes_\ZZ \RR.$$
By restricting to vertices in a face $\tau$ of $\rho$, one gets for each $e:\tau\ra\rho$ a polytope
$\widetilde{\Delta}_{\rho,e} \subset \Lambda_\tau\otimes_\ZZ \RR$
which is a face of the previous one. It is clear that the $m^\rho_{v,v'}$ are sums of appropriate 
$(\kappa_{\omega\rho}d_\omega)$'s. Up to integral translation, the monodromy polytopes  are independent of $v$.
Dually, we have the \emph{outer monodromy polytopes}
$$\widetilde{\check\Delta}_\omega\subset \check\Lambda_\omega\otimes_\ZZ \RR\hbox{ and }
\widetilde{\check\Delta}_{\omega,e}\subset \check\Lambda_\tau\otimes_\ZZ \RR$$
given $\omega\in\P^{[1]}$, resp. $e:\omega\ra\tau$. These are constructed from the monodromy of a stalk of
$\check\Lambda$ in some maximal cell $\sigma$ containing $\omega$ along loops passing through the vertices of $\omega$
into other maximal cells $\sigma'$. The transformations have the shape
$m\mapsto m+\langle d_\omega, m\rangle n^{\sigma,\sigma'}_\omega$.
We have decorated the polytopes by \ $\widetilde{ }$\ \ in contrast to \cite{grosie1}
to distinguish them from similar polytopes coming up later on.

Recall that there is a contravariant correspondence of closed strata $X_\tau$ of $X$
and cells $\tau\in\P$. The irreducible components of $X$
are $X_v$ for $v\in\P^{[0]}$.
Because each stratum is a toric variety, 
we also get a decomposition of $X$ in a disjoint
union of locally closed strata
$$X=\coprod_{\tau\in\P} \Int(X_\tau)$$
where $\Int(X_\tau)$ is supposed to be the open torus in $X_\tau$.
For each $\omega\in\P^{[1]}$ there is a possibly empty 
or non-reduced Cartier divisor $\widetilde{Z}_\omega$ in $X_\omega$ such that 
$$Z=\bigcup_{\omega\in\P^{[1]}} \widetilde{Z}_\omega$$ 
is the log singular locus of $X$. 
We have $\widetilde{Z}_\omega=\emptyset$ if and only if $\omega$ doesn't meet $\Delta$.  
For a semi-ample Cartier divisor 
(i.e. one whose invertible sheaf is generated by global sections)
$E$ on a toric variety we denote its Newton polytope 
defined via a linearly equivalent toric divisor by $\Newton(E)$.
For subvarieties $E$ of codimension greater than one, we set
$\Newton(E)=\{0\}$.
Recall from \cite{grosie1} that we have
$$\Newton(\widetilde{Z}_\omega) = \widetilde{\check\Delta}_\omega.$$
We write $\widetilde{Z}_\omega^\red$ for the reduction of 
the effective Cartier Divisor $\widetilde{Z}_\omega$ and 
set $Z_\omega:=\widetilde{Z}_\omega^\red$.
We follow \cite{batyrev1}, \cite{danikhov} and call a
semi-ample divisor $E$ on a toric variety
\emph{non-degenerate} if $\Newton(E)$ up to translation coincides with the convex hull of
all monomials with nontrivial coefficients given an equation of $E$ in a toric chart 
and $E$ has a regular or empty intersection with every torus orbit.

\begin{definition} \label{def_ht} A positive toric log CY space is of \emph{hypersurface type} (short: h.t.) iff
 \begin{enumerate}
 \item The divisor $Z_{\omega}$ is non-degenerate for each $\omega\in\P^{[1]}$
 and for some $a_\omega\in\NN_{\ge1}$
 $$\widetilde{Z}_\omega=a_\omega\cdot Z_{\omega}.$$   
 \item For each $\tau\in\P$, the set 
  $\{Z_{\omega}\cap X_\tau\,|\,\omega\in\P^{[1]}, Z_{\omega}\cap\Int(X_\tau)\neq\emptyset\}$ 
  is either empty or contains only one element which we then denote by $Z_\tau$.
 \end{enumerate}
\end{definition}

The nomenclature is deduced from Batyrev's mirror construction \cite{batyrev2}.
A toric degeneration which is fibrewise embedded as an anticanonical 
hypersurfaces in a Fano toric variety in generic position yields an example of 
a h.t. space. Generally, having an embedding is not necessary of course. 
We will mostly concentrate on the h.t. property in this paper. For
the more general parts, we use the analogue of the 
Batyrev-Borisov mirror construction \cite{borisov} as in the upcoming definition.
We call a set of lattice polytopes
$\Delta_1,...,\Delta_r$ in an $\RR$-vector space $W$ \emph{transverse} if their tangent
spaces form an interior direct sum in $W$. 

\vbox{
\begin{definition} \label{defcit}
 A positive toric log CY space is of \emph{complete intersection type} (short: c.i.t.) iff
 \begin{enumerate}
 \item The divisor $Z_{\omega}$ is non-degenerate for each $\omega\in\P^{[1]}$
 and for some $a_\omega\in\NN_{\ge1}$
 $$\widetilde{Z}_\omega=a_\omega\cdot Z_{\omega}$$ 
 \item For each $\tau$ and $\omega_1,\omega_2\in\P^{[1]}$, we have
 $$\{0\}\neq\Newton(Z_{\omega_1}\cap X_\tau)
 =\Newton(Z_{\omega_2}\cap X_\tau)\ 
 \Rightarrow\ Z_{\omega_1}\cap X_\tau=Z_{\omega_2}\cap X_\tau$$
 \item For each $\tau\in\P$, the set 
 $\{\Newton(Z_{\omega}\cap X_\tau)\,|\,\omega\in\P^{[1]}, Z_{\omega}\cap\Int(X_\tau)\neq\emptyset\}$ 
 is either empty or contains at most $\min(\dim\tau,\codim\tau)$ many elements 
 $\check\Delta_{\tau,1}$,...,$\check\Delta_{\tau,q}$ which are transverse. The corresponding divisors are denoted
 by $Z_{\tau,1},...,Z_{\tau,q}$.
 \end{enumerate}
\end{definition}
}

\subsection{A spectral sequence to compute the log Hodge groups}
In this and in the next section, we are going to summarize the main results of the paper.
We recall some notions of \cite{grosie2} in the following, 
in particular the barycentric resolution of the log Hodge sheaves.
Let $X$ a toric log CY space and $j:X\backslash Z\ra X$ denote the canonical 
inclusion of the log smooth locus. 
\begin{definition} 
The \emph{log Hodge sheaf} $\Omega^r$ of degree $r$ is the 
pushforward of the sheaf of log differential forms, i.e.,
$$\Omega^r:=j_*\Omega^r_{(X\backslash Z)^\log/k^\log}.$$
The \emph{log Hodge group} of index $p,q$ is the cohomology group 
$$H^{p,q}_\llog(X):=H^q(X,\Omega^p).$$
The \emph{log Hodge number} of index $p,q$ is $h^{p,q}_\llog(X) := \dim H^{p,q}_\llog(X)$.
\end{definition}

Where useful, we will write $\Omega^p_X$ for $\Omega^p$. Recall from \cite{grosie2} that $F_s(\tau_0\ra\tau_k): X_{\tau_k}\ra X_{\tau_0}$ is the inclusion
of one stratum of $X$ in another indexed by $\tau_0,\tau_k\in\P$. It is written this way to account for the possibly
non-trivial glueing data $s$. We drop the base scheme $S$ in 
the usual notation $F_{S,s}$ because we always assume $S=\Spec \k$.
Recall that $F_s(e)= F(e) \circ s_e$ where $F(e)$ is the standard toric inclusion 
and $s_e\in \Aut(X_{\tau_2})$ is given by the action of a torus element.
We set
$$\Omega^r_\tau := (\kappa_{\tau})_*\kappa_{\tau}^*(q^*_{\tau}\Omega^r/\Tors)$$
where $q_\tau: X_\tau\ra X$ and 
$\kappa_\tau: X_\tau\backslash (D_\tau\cap q_\tau^{-1}(Z))\ra X_\tau$
are natural inclusions with $D_\tau=X_\tau\backslash \Int(X_\tau)$. 

\begin{definition}\label{defCOmega} 
We recall the barycentric complex given by
$$\SC^k(\Omega^r)=\bigoplus_{\tau_0\ra\dots\ra\tau_k}
(q_{\tau_k})_*((F_s(\tau_0\ra\tau_k)^*\Omega_{\tau_0}^r)/\Tors)$$
where $\Tors$ is the torsion submodule. The differential is
$$\begin{array}{rl}
(d_\bct(\alpha))_{\tau_0\ra\dots\ra\tau_{k+1}}=
&\alpha_{\tau_1\ra\dots\ra\tau_{k+1}}
+\sum_{i=1}^k(-1)^i\alpha_{\tau_0\ra\dots\breve{\tau}_i\ra\dots\ra\tau_{k+1}}\\
&+(-1)^{k+1} F_s(\tau_k\ra\tau_{k+1})^*\alpha_{\tau_0\ra\dots\ra\tau_{k}}.
\end{array}$$
\end{definition}

The proof of the following lemma will be given in Section~\ref{sec_barycentric}.

\begin{lemma} \label{COmegaexact} 
If $X$ is c.i.t., there is an exact sequence
$$0\ra\Omega^r \ra\SC^0(\Omega^r)\stackrel{d_\bct}{\lra} \SC^1(\Omega^r)\ra ...\ .$$
\end{lemma}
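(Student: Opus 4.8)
The plan is to check exactness of the complex $\SC^\bullet(\Omega^r)$ stalk-by-stalk. Since the statement is local on $X$, it suffices to fix a geometric point $x\in X$ and show that the complex of $\shO_{X,x}$-modules obtained by taking stalks at $x$ is a resolution of $\Omega^r_x$. The crucial observation is that the combinatorial structure of $\SC^\bullet$ is governed by the poset of strata through $x$: if $\tau_{\min}\in\P$ is the smallest cell with $x\in\Int(X_{\tau_{\min}})$, then the only chains $\tau_0\to\cdots\to\tau_k$ contributing nontrivially to the stalk at $x$ are those with $\tau_0$ a face of $\tau_{\min}$ and $x\in X_{\tau_k}$; the relevant index set is the order complex of the interval $[\,\cdot\,,\tau_{\min}]$ in $\P$, which is a barycentrically subdivided simplex and hence contractible. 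This is exactly the mechanism that makes analogous \v{C}ech-type "barycentric" complexes exact, and it is the pattern used in \cite{grosie2} for the case they treat.

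Concretely, I would first reduce to the case $x\in\Int(X_{\tau_{\min}})$ being the generic point of a stratum (or work \'etale-locally so that $X$ near $x$ looks like an affine toric variety with its toric boundary as $X_0$, per condition (c) of the toric degeneration, possibly pulled back), using the c.i.t. hypothesis to control the divisors $Z_\omega$ passing through $x$. Then I would split the analysis according to whether $x\in Z$ or not. Away from $Z$, the sheaves $\Omega^r_\tau$ are the honest log differentials and the argument is essentially the one in \cite{grosie2}: the stalk complex is, up to a free module tensor factor, the simplicial cochain complex of the contractible order complex described above, with the augmentation $\Omega^r\to\SC^0$ identifying $\Omega^r_x$ with the subspace of "globally consistent" sections. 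The novelty, and where the c.i.t. definition does real work, is at points $x\in Z$: here $\Omega^r_\tau$ is defined via $(\kappa_\tau)_*\kappa_\tau^*$, i.e. by allowing poles along $D_\tau\cap q_\tau^{-1}(Z)$ and then pushing forward, so one must check that restriction along $F_s(\tau_0\to\tau_k)^*$ interacts correctly with these pole/zero conditions. Transversality of the monodromy polytopes $\check\Delta_{\tau,i}$ (Definition~\ref{defcit}(3)) guarantees that the several divisors $Z_{\tau,i}$ meeting a stratum do so in "general position", so that the local defining equations can be taken as part of a system of toric/monomial coordinates, and the cokernels/torsion quotients behave like those of the smooth locus tensored with an explicit combinatorial factor coming from the $R(Z_\tau)$-type data.

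The key steps, in order: (1) localize at $x$ and identify the index poset of surviving chains with the order complex of $[\tau_0, \tau_{\min}]$, noting it is a cone (contractible) once we also remember the "last vertex" $\tau_k$ constraint, so its reduced cohomology vanishes; (2) show the stalk of $\SC^\bullet(\Omega^r)$ is the total complex of this order-complex cochain complex with coefficients in a system of modules that are all identified, via the $F_s(e)^*$ maps, with a single module $M := \Omega^r_{\tau_{\min}, x}$ (here one uses that $s_e$ is a torus translation, hence an isomorphism on the relevant sheaves, and that $q^*\Omega^r/\Tors$ restricted to a smaller stratum recovers $\Omega^r$ of that stratum — a compatibility already in \cite{grosie2}); (3) conclude exactness in positive degrees from contractibility and identify $H^0$ with $\Omega^r_x$ by the gluing/descent characterization of $\Omega^r$ as the kernel of $\SC^0\to\SC^1$ (which is how $\Omega^r$ relates to its restrictions in the first place); (4) handle the $Z$-locus case by invoking non-degeneracy (Definition~\ref{defcit}(1)) and transversality (Definition~\ref{defcit}(3)) to reduce the pole-bearing sheaves to the shape treated in Section~\ref{sec_barycentric} and apply the same simplicial argument with the modified coefficient module. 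I expect step (4) to be the main obstacle: making precise that $(\kappa_\tau)_*\kappa_\tau^*(q_\tau^*\Omega^r/\Tors)$ restricts compatibly under the stratum inclusions, and that the interaction of "poles along the toric boundary, zeros/no condition along $Z$" with the chain differential $d_\bct$ does not destroy exactness, is where the c.i.t. hypotheses (2) and (3) are genuinely needed and where the bookkeeping with monodromy polytopes has to be done carefully; everything else is a contractible-order-complex argument of the type already present in the literature.
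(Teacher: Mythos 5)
Your step (2) contains the central error. The coefficient system attached to the barycentric index poset is \emph{not} constant: the module associated to a chain $\tau_0\to\cdots\to\tau_k$ is
$\big(F_s(\tau_0\to\tau_k)^*\Omega^r_{\tau_0}\big)/\Tors$, and this depends genuinely on both ends of the chain. The parenthetical claim that ``$q^*\Omega^r/\Tors$ restricted to a smaller stratum recovers $\Omega^r$ of that stratum'' is false: Prop.~\ref{delta0kernel} identifies $(F_s(e)^*\Omega^r_{\tau_1})/\Tors$ as a \emph{proper} subsheaf of $F_s(e\circ g)^*\Omega^r_v$ cut out by monodromy-polytope data, and Prop.~\ref{Omegatau12split} shows this subsheaf has a nontrivial dependence on $\tau_1$ via the factor $\bigwedge^\bullet\Delta_e^\perp$. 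Likewise, the transition along the $\tau_k$-direction is restriction to a strictly smaller toric stratum, hence not an isomorphism. So at a stalk you do not get a contractible order complex with a single coefficient module $M$; you get a cochain complex with a genuinely varying coefficient functor. Contractibility of the barycentrically subdivided cell $\tau_{\min}$ is therefore not enough: with arbitrary coefficients the simplicial cochain complex of a cone need not be exact. What is required is a lifting property for compatible collections, which is exactly criterion $\op{(L)}$ in Appendix~\ref{A_bct} (Lemma~\ref{baryacyclic}), and that criterion is an algebraic statement about lifting functions from boundary strata, not a consequence of contractibility.

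Your step (3) is also circular: $\Omega^r$ is \emph{defined} as $j_*\Omega^r_{(X\backslash Z)^\log/\k^\log}$, not as $\ker(\SC^0\to\SC^1)$. Identifying these two sheaves is part of the content of the lemma (the ``first term'' issue), and that identification requires a local computation. Finally, the ingredient your proposal is silently assuming in step (1) --- that near an arbitrary $x\in Z$ one has a good toric chart in which everything can be computed --- is precisely what fails for the raw toric degeneration (condition (c) of the definition only governs $\X\backslash\Z$) and is the main new content the paper supplies: Prop.~\ref{localmodels} constructs the c.i.t.\ local models, and with those in hand the paper simply invokes the proof of \cite{grosie2}, Thm.~3.5 (which verifies the lifting criterion). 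A second, independent proof appears at the end of Section~\ref{secstratadiff} using the Koszul double complex $\shQ^{\bullet,\bullet}$ and Lemma~\ref{baryacyclic} directly. Your step (4), where you concede that the pole/zero bookkeeping is the hard part, is in fact where the entire argument lives; the contractibility scaffolding you build around it does not carry the weight you assign it.
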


Each morphism $e:\tau_1\ra \tau_2$ in $\P$ can be identified with an edge in $B$
and we define the open set $W_e\subset B$ 
as the union of all relative interiors of simplices
in the barycentric subdivision of $\P$ which contain $e$.
We set $Z_e=Z_{\tau_1}\cap X_{\tau_2}$ and then have
$R(Z_e)$ as before defined with respect to the toric variety $X_{\tau_2}$.
We prove part a) of the following result in Section~\ref{Coho_singlestrat} and part b) in Section~\ref{Sec_Proofmainb}.

\begin{theorem} \label{maintheorem} Let $X$ be a h.t. toric log CY space. We fix $r$. 
\begin{itemize}
\item[a)]
  The $E_1$ term of the hypercohomology spectral sequence of $\SC^\bullet(\Omega^r)$ is
  $$E_1^{p,q}:H^q(X,\SC^p(\Omega^r)) \Rightarrow
  \HH^{p+q}(X,\SC^\bullet(\Omega^r))=H^{p+q}(X,\Omega^r),$$
  where
  $$H^q(X,\SC^p(\Omega^r))=
    \bigoplus_{e:\tau_0\ra\dots\ra\tau_p}
    \left\{ 
   \begin{array}{rl}
    \Gamma(W_e,i_*\bigwedge^r\check\Lambda\otimes_\ZZ\k)&\hbox{ for }q=0,\\[8pt]  
    \displaystyle R(Z_e)_q\otimes 
    \frac{\Gamma(W_e,i_*\bigwedge^{r+q}\check\Lambda\otimes\k)}
    {\Gamma(W_e,\bigwedge^{r+q}i_*\check\Lambda\otimes\k)}
    &\hbox{ for }q>0
   \end{array}
  \right\}.$$        
  Note that Lemma~\ref{thmstratafunct} gives the differential $d_1$.
\item[b)] If every $\check\Delta_\omega$ is a simplex, 
the spectral sequence in a) is degenerate at $E_2$ level and
$$E^{p,0}_2=H^p(B,i_*\bigwedge^{r}\check\Lambda\otimes\k).$$
\end{itemize}
\end{theorem}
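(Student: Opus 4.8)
The plan is to analyze the hypercohomology spectral sequence of $\SC^\bullet(\Omega^r)$ whose $E_1$ page has been completely identified in part a). The key structural observation is that the $E_1$ page splits into a ``row $q=0$'' part, involving the sheaves $i_*\bigwedge^r\check\Lambda\otimes\k$, and ``higher rows'' $q>0$ involving the Jacobian-type pieces $R(Z_e)_q$. I would first treat the $q=0$ row. By Lemma~\ref{COmegaexact} the complex $\SC^\bullet(\Omega^r)$ resolves $\Omega^r$; restricting attention to the subsheaves and the combinatorial structure of the $W_e$, the row $E_1^{\bullet,0}$ is precisely the complex of \v{C}ech-type cochains on the barycentric open cover computing $H^\bullet(B, i_*\bigwedge^r\check\Lambda\otimes\k)$ (the open sets $W_e$ form a good cover for this constructible sheaf because each $W_e$ is contractible and retracts onto the interior of the corresponding barycentric simplex, where $i_*\bigwedge^r\check\Lambda$ is constant up to the monodromy bookkeeping encoded in $F_s$). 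Hence $E_2^{p,0}=H^p(B, i_*\bigwedge^r\check\Lambda\otimes\k)$, which is the claimed formula.

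The substantive part is the degeneration at $E_2$, i.e.\ showing $d_r=0$ on $E_r$ for all $r\ge 2$. The differential $d_2\colon E_2^{p,q}\to E_2^{p+2,q-1}$ drops the row index by one, so in particular $d_2\colon E_2^{p,1}\to E_2^{p+2,0}$ is the only differential that could interact with the row we computed, and for $q\ge 2$ we need $d_2$ (and higher $d_r$) to vanish on the higher rows as well. The strategy is to exhibit, for each fixed $q>0$, a direct-sum decomposition of the row complex $E_1^{\bullet,q}$ that is compatible with $d_1$ and realizes $E_2^{\bullet,q}$ as cohomology of complexes built from the sheaves $\bigwedge^{r+q} i_*\check\Lambda\otimes\k$ sitting inside $i_*\bigwedge^{r+q}\check\Lambda\otimes\k$, twisted by the locally constant data $R(Z_e)_q$. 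Because $\check\Delta_\omega$ is a simplex, $R(Z_e)$ is a genuine graded Jacobian ring $R_0(C(\check\Delta_{e}),f)$, and in particular the pieces $R(Z_e)_q$ depend only on the combinatorial type of the stratum, not on deformation moduli; this rigidity is what forces the higher differentials to vanish. Concretely, I would argue that the support conditions — each $Z_e$ lies in a single stratum $X_{\tau_k}$, and the quotient $\Gamma(W_e, i_*\bigwedge^{r+q}\check\Lambda\otimes\k)/\Gamma(W_e,\bigwedge^{r+q} i_*\check\Lambda\otimes\k)$ is supported precisely on the part of $W_e$ meeting $\Delta$ — make the higher-row complexes into (shifted, twisted) copies of local cohomology complexes concentrated near $\Delta$, which cannot receive a differential from the globally-supported $q=0$ row, and among themselves the rows are ``internally exact enough'' that $d_2$ vanishes. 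A clean way to package this is a weight/filtration argument: put a second filtration on $\SC^\bullet(\Omega^r)$ by order of vanishing along $Z$ (equivalently by the grading variable $n$ of $R(Z_e)_n$), check that the associated graded spectral sequence degenerates at $E_2$ for trivial degree reasons, and then transfer degeneration back.

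The main obstacle I anticipate is precisely making rigorous the claim that the higher rows $E_1^{\bullet,q}$, $q>0$, have cohomology that neither maps to nor receives maps from the $q=0$ row, and more generally that all $d_2$ between distinct rows vanish. This is where the simplex hypothesis on $\check\Delta_\omega$ must be used in an essential way — presumably via Batyrev--Borisov style vanishing for the Jacobian ring $R_0(C(\check\Delta_\omega),f)$ combined with the identification in Section~3.4 of the intermediate Koszul cokernels with differential forms with poles on the toric boundary and prescribed zeros along $Z$. I would spend most of the effort establishing that the \v{C}ech differential $d_1$ on each higher row is, after the monomial-basis identification of $R(Z_e)$, built entirely out of restriction maps of the locally constant sheaves $i_*\bigwedge^{r+q}\check\Lambda$ modulo $\bigwedge^{r+q} i_* \check\Lambda$, with the $R(Z_e)_q$-factor transported functorially; then a bidegree bookkeeping shows $d_2$ lands in a group that is forced to be zero. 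A secondary technical point is handling the possibly non-trivial glueing data $s$ inside the $F_s(\tau_k\to\tau_{k+1})^*$ appearing in $d_\bct$: since $s_e$ acts by a torus element, it acts trivially on the constructible sheaves $i_*\bigwedge^\bullet\check\Lambda$ and only permutes monomials in the Jacobian rings, so it does not affect the cohomology computation, but this needs to be stated carefully. I would conclude by assembling the row-by-row computations into the stated $E_2$ formula and invoking the vanishing of higher differentials to get degeneration.
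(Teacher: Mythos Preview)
Your treatment of the $q=0$ row is fine and matches the paper: $E_1^{\bullet,0}$ is the barycentric \v{C}ech complex for $i_*\bigwedge^r\check\Lambda\otimes\k$ with respect to the cover $\{W_\tau\}$, so $E_2^{p,0}=H^p(B,i_*\bigwedge^r\check\Lambda\otimes\k)$.

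The degeneration argument, however, has a genuine gap. First, the claim that ``$R(Z_e)_q$ depend only on the combinatorial type of the stratum'' is false: even when $\check\Delta_e$ is a simplex, only the graded \emph{dimensions} of $R(Z_e)$ are combinatorial (Prop.~\ref{Fermatiso}); the ring itself and the restriction maps $R(Z_e)_q\to R(Z_{\hat e})_q$ genuinely depend on the equation $f$. So ``rigidity'' cannot be the mechanism. Second, ``bidegree bookkeeping shows $d_2$ lands in a group forced to be zero'' is not true: for $q\ge 2$ both source and target of $d_2$ can be nonzero, and your support and weight heuristics do not supply a vanishing argument. A filtration by order of vanishing along $Z$ would have to be shown compatible with $d_\bct$ in a way that survives to the spectral sequence level, and you do not indicate how.

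The paper's proof is structurally different. Rather than working at the $E_1$ page, it builds an explicit acyclic double complex $\shQ^{\bullet,\bullet}(\Omega^r)$ resolving $\SC^\bullet(\Omega^r)$ out of Koszul complexes (Section~\ref{secstratadiff}). The simplex hypothesis enters through Prop.~\ref{Fermatiso}: for a simplex $\check\Delta_e$ the Jacobian ring $R(Z_e)_l$ has a canonical monomial basis indexed by the half-open slice $\check\Delta_e^{\core{l}}$. One then singles out a subspace $Q_\top^{\bullet,\core{\bullet}}(\Omega^r)\subseteq \Gamma(X,\shQ^{\bullet,\bullet}(\Omega^r))$ of ``core'' representatives satisfying two properties: it injects into $\delta$-cohomology (Lemma~\ref{topinjection}), and it is \emph{closed under} $d_\bct$, because restriction to a face preserves the core condition (Lemma~\ref{corefunc}). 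Given a $\delta$-cocycle $x$ with $d_\bct x=\delta y$, one first modifies $x$ by a $\delta$-coboundary so that it lies in $Q_\top^{\core{}}$; then $d_\bct x$ lies there too, is a $\delta$-coboundary, hence is zero by the injection. This verifies the zigzag criterion of Lemma~\ref{criteriondeg} and yields $E_2$ degeneration. The essential idea you are missing is a \emph{section} of the map from cocycles to cohomology that is compatible with $d_\bct$; this section comes from the explicit Koszul model together with the combinatorics of $\check\Delta^{\core{l}}$, not from an abstract filtration or support condition.
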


\subsection{Base change of the logarithmic Hodge groups}
In Section~\ref{subsec_localmod} we give for each point $x$ of a c.i.t. space $X$ 
a \emph{local model} for the log structure, i.e.,
an affine toric variety $Y_\loc$ with a toric Cartier divisor $X_\loc$, s.t. at $x$,
$X$ is \'etale locally equivalent to an open subset of $X_\loc$, and
the log structure on $X$ agrees with the pullback to $X_\loc$ of the
divisorial log structure on $Y_\loc$ given by the divisor $X_\loc$.
This is important for points in $Z$, the others fulfil this by definition.

Analogous to (\cite{grosie2}, Def.~2.7), we say that a toric deformation 
$\X\ra\shS$ where $X=\X_0$ is a c.i.t. space 
is a \emph{divisorial deformation} of $X$ if it is \'etale locally isomorphic to 
the c.i.t. local models $Y_\loc$. We are then going to prove:

\begin{theorem} \label{hyperbasechange}
 Let $\pi:\X\ra\Spec A$ be a divisorial deformation of a c.i.t. toric log CY space, 
 $j:\X\backslash \mathcal{Z}\ra\X$ the inclusion of the log smooth locus
 and write $\Omega_{\X}^\bullet := j_*\Omega^\bullet_{\X^\log/A^\log}$. 
 Then for each $p$, $\HH^p(\X,\Omega^\bullet_\X)$ is a free $A$-module, 
 and it commutes with base change.
\end{theorem}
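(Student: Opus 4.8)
The plan is to reduce the base-change statement to the classical criterion that cohomology commutes with base change if and only if it is locally free, which by Grothendieck's theorem follows once we know the ranks of $\HH^p(\X,\Omega^\bullet_\X)$ are constant over $\Spec A$. Since $A$ is a discrete valuation ring with residue field $\k$ and fraction field, say, $K$, there are only two fibers to compare: the special fiber $X_0=X$, and the generic fiber $\X_\eta$. So it suffices to prove
\[
\dim_\k \HH^p(X_0,\Omega^\bullet_{\X}\!\otimes_A\k)\ =\ \dim_K \HH^p(\X_\eta,\Omega^\bullet_{\X_\eta/K}).
\]
The first step is to establish the semicontinuity and the ``upper-bound'' half: by the standard cohomology-and-base-change machinery applied to the complex $\Omega^\bullet_\X$ (which is a complex of coherent, flat-over-$A$ sheaves once we restrict attention away from $\mathcal Z$, whose relative codimension is $\ge 2$ so it does not affect reflexive pushforwards), we get that the function $p\mapsto\dim\HH^p$ of the fiber is upper semicontinuous, i.e. $\dim_\k\HH^p(X_0,\Omega^\bullet_\X\otimes\k)\ge \dim_K\HH^p(\X_\eta,\Omega^\bullet_{\X_\eta})$, and equality everywhere is equivalent to freeness of every $\HH^p$.

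The second and central step is to prove the reverse inequality. Here I would use the \'etale-local structure of a divisorial deformation: by construction $\X$ is \'etale locally isomorphic to the c.i.t.\ local models $Y_\loc\supset X_\loc$, so $\Omega^\bullet_\X$ is \'etale locally identified with the pushforward to $X_\loc$ of log de Rham complex of a toric pair, where the deformation direction is an explicit monomial. On such a toric model the log de Rham complex is, up to the torsion quotient, a Koszul-type complex of free modules, and its hypercohomology can be computed combinatorially; in particular the log de Rham complex of $X_\loc$ over $\k$ and of the total space over $A$ have the \emph{same} underlying graded pieces after $\otimes\k$. Gluing these local computations along the barycentric complex $\SC^\bullet(\Omega^r)$ of Definition~\ref{defCOmega} --- which by Lemma~\ref{COmegaexact} resolves $\Omega^r$ --- one obtains a double complex whose $E_1$ page over $A$ is a complex of free $A$-modules and which reduces mod $\mathfrak{m}_A$ to the corresponding $E_1$ page over $\k$ computed in Theorem~\ref{maintheorem}. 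Since each term $\SC^k(\Omega^r)$ is a direct sum of pushforwards from toric strata, its cohomology and base change can be checked stratum-by-stratum, where it is a statement about Koszul cohomology of semiample toric divisors treated in Section~3; the freeness there is visible because the relevant modules $R(Z_e)_q$ and the lattice-point spaces $\Gamma(W_e,\cdot)$ are manifestly independent of whether one works over $\k$ or over $A$.

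Concretely I would proceed as follows. (i) Fix $r$ and build the double complex $K^{p,q}$ whose rows are the \v Cech/barycentric differential of $\SC^\bullet(\Omega^r_\X)$ and whose columns resolve each $\SC^k(\Omega^r_\X)$ by flat-over-$A$ acyclic sheaves on the toric strata. (ii) Show, using the explicit toric local models, that every column of $K^{\bullet,\bullet}$ consists of $A$-flat modules with $A$-flat cohomology, and that forming cohomology of a column commutes with $\otimes_A\k$ --- this is where the Koszul description of $R(Z_\omega)$ and of the log forms with prescribed zeros/poles from Sections~3.3--3.4 is used verbatim, now over the base ring $A$ rather than over $\k$. (iii) Conclude that the first spectral sequence of $K^{\bullet,\bullet}$ has $E_1$-page a complex of free $A$-modules whose mod-$\mathfrak m_A$ reduction is the $E_1$-page of Theorem~\ref{maintheorem}(a); hence each $\HH^p(\X,\Omega^\bullet_\X)$ is computed by a bounded complex of free $A$-modules. (iv) A bounded complex of finitely generated free modules over a PID has free cohomology provided there is no torsion; torsion is excluded precisely by the semicontinuity inequality of the first step combined with the matching of ranks that Theorem~\ref{maintheorem} provides on the special fiber and the already-known base change for the affine part. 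Then freeness implies base-change commutation by Grothendieck's criterion.

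\textbf{The main obstacle.} The delicate point is step (ii): controlling the torsion. The sheaves $\Omega^r_\X$ are defined as pushforwards $j_*$ from the log smooth locus, and the quotients ``$/\Tors$'' appearing in $\SC^\bullet$ interact subtly with base change --- a priori $(\Omega^r_\X/\Tors)\otimes_A\k$ need not equal $\Omega^r_{X_0}/\Tors$. I would handle this by working with the explicit toric local models, where both sides are honest monomial submodules of a free module and the comparison becomes a finite lattice computation: one checks that the monomials surviving in the torsion-free quotient over $A$ are exactly those surviving over $\k$, because the defining inequalities (membership of lattice points in dilated monodromy polytopes $n\cdot\check\Delta_\omega$, as in the displayed formula for $\dim R(Z_\omega)_n$ in the introduction) do not involve the base ring at all. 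Once the local statement is pinned down, the global gluing is formal. A secondary subtlety is that $\mathcal Z$ has relative codimension $\ge 2$ but its fiberwise codimension could in principle jump; here one uses that $X$ is c.i.t., so $Z$ is a union of non-degenerate (hence equidimensional, well-behaved) divisors in the strata, keeping the codimension bound fiberwise and allowing the reflexive-pushforward formalism to commute with the base change $A\to\k$.
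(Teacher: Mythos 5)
Your overall direction --- reduce to the local models and observe that the relevant computations are purely combinatorial, hence insensitive to the base ring --- does capture the spirit of the argument. But there are two structural gaps that would have to be repaired before this is a proof. First, you never perform the reduction to an Artinian base. The paper's proof opens by citing Kawamata--Namikawa (Lemma~4.1) to replace the DVR $A$ by a local Artinian $\k[t]$-algebra; this is not cosmetic. Your step~(i) invokes Grothendieck's cohomology-and-base-change/semicontinuity machinery for the two fibers of $\Spec A$, but that machinery applies to $R^q\pi_*$ of a single coherent sheaf flat over the base (or to hypercohomology of a bounded complex of $A$-flat sheaves). The sheaf $\Omega^p_\X = j_*\Omega^p_{\X^\log/A^\log}$ is a pushforward from the complement of $\mathcal Z$, and its $A$-flatness is precisely what needs to be established --- which is done in Gross--Siebert's Theorem~4.1 via the local models, after the Artinian reduction, not before. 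Your argument assumes at the outset what the Artinian step is there to make possible.

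Second, your double complex $K^{p,q}$ built from $\SC^\bullet(\Omega^r_\X)$ does not exist as stated: the barycentric complex $\SC^\bullet$ of Definition~\ref{defCOmega} is a resolution of $\Omega^r$ on the \emph{central fiber} $X=\X_0$, exploiting that $X$ is a union of toric strata indexed by $\P$; the total space $\X$ is merely \'etale-locally toric and admits no such stratification, so $\SC^k(\Omega^r_\X)$ is undefined. Consequently the plan in steps~(ii)--(iii) of ``gluing along the barycentric complex over $A$ and reducing mod $\mathfrak m_A$ to the $E_1$-page of Theorem~\ref{maintheorem}'' has no actual object to run on. And step~(iv) is circular: you propose to exclude torsion using ``the already-known base change for the affine part'' and a rank comparison with the generic fiber, but Corollary~\ref{basechange} and the identification of generic-fiber Hodge numbers are both downstream of the theorem you are proving; Theorem~\ref{maintheorem} computes an $E_1$-page on $X_0$ only and says nothing about $\X_\eta$. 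The paper's route --- Artinian reduction, then the local model computation of Gross--Siebert Theorem~4.1 verbatim --- sidesteps all three issues and cannot be replaced by the fiber-comparison strategy without substantial extra work.
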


\begin{corollary} \label{basechange}
Let $\pi:\X\ra\Spec A$ be a divisorial deformation of a c.i.t. toric log CY space $X$. 
If the log Hodge to log de Rham spectral sequence on $X$ (i.e., the hypercohomology spectral
sequence of $\Omega^\bullet_X$) degenerates at $E_1$ then
$H^q(\X,\Omega^p_\X)$ is a free $A$-module, and it commutes with base change.
\end{corollary}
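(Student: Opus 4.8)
The plan is to deduce Corollary~\ref{basechange} from Theorem~\ref{hyperbasechange} by a standard semicontinuity-and-rank-counting argument, using the degeneration of the log Hodge to log de Rham spectral sequence to control the fibrewise dimensions. Let me sketch the steps.

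First I would reduce to the case where $A$ is a discrete valuation ring with algebraically closed residue field $\k$ and fraction field $K$ (this is the setting of a toric degeneration anyway); the general case follows by base change from this one since all constructions are compatible with $\Spec A' \to \Spec A$. Write $X = \X_0$ for the closed fibre and $\X_\eta$ for the generic fibre. The key external input is Theorem~\ref{hyperbasechange}: each $\HH^m(\X, \Omega^\bullet_\X)$ is a free $A$-module of finite rank commuting with base change, so in particular
$$\dim_\k \HH^m(X, \Omega^\bullet_X) = \rk_A \HH^m(\X,\Omega^\bullet_\X) = \dim_K \HH^m(\X_\eta, \Omega^\bullet_{\X_\eta}).$$

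Second, I would run the obvious comparison of Euler characteristics along the filtration bête $\sigma_{\ge p}\Omega^\bullet_\X$. For the relative complex $\Omega^\bullet_\X$ over $A$ one has, for each $p$, a two-step comparison: by flatness of $\X \to \Spec A$ (which gives that $\Omega^p_\X$ is $A$-flat away from the issue of torsion — here one uses that $j_*$ of the locally free relative log forms on the log smooth locus is $A$-flat, which is built into the definition of the local models) the coherent cohomology sheaves $R^q\pi_* \Omega^p_\X$ have well-behaved specialisation. By upper semicontinuity of fibre dimension (applied to $\pi$, which we may assume proper — $X_0$ is a union of complete toric varieties), $\dim_\k H^q(X, \Omega^p_X) \ge \dim_K H^q(\X_\eta, \Omega^p_{\X_\eta})$ for every $p,q$. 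On the other hand, summing the inequalities $\dim_\k E_1^{p,q}(X) \ge \dim_\k E_\infty^{p,q}(X)$ over all $p+q = m$ and using that the generic fibre is log smooth (so its log Hodge to log de Rham spectral sequence degenerates at $E_1$ by the classical result of Illusie--Kato--Nakayama, or simply because it is the ordinary Hodge--de Rham spectral sequence of a smooth proper variety over $K$ after the comparison), one gets
$$\sum_{p+q=m} \dim_\k H^q(X,\Omega^p_X)\ \ge\ \dim_\k \HH^m(X,\Omega^\bullet_X)\ =\ \dim_K \HH^m(\X_\eta,\Omega^\bullet_{\X_\eta})\ =\ \sum_{p+q=m} \dim_K H^q(\X_\eta,\Omega^p_{\X_\eta}).$$
Combined with the semicontinuity inequalities term by term, this forces equality $\dim_\k H^q(X,\Omega^p_X) = \dim_K H^q(\X_\eta,\Omega^p_{\X_\eta})$ for all $p,q$ — but only once we know the left-hand inequality is an equality, i.e. once we invoke the \emph{hypothesis} that the spectral sequence on $X$ degenerates at $E_1$. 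That is exactly where the hypothesis enters: it turns the first displayed $\ge$ into $=$, and then the term-by-term semicontinuity inequalities, all of which sum to an equality, must individually be equalities.

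Third, with $\dim_\k H^q(X,\Omega^p_X) = \dim_{\kappa(s)} H^q(\X_s,\Omega^p_{\X_s})$ now known to be constant in the fibre $s \in \Spec A$ (the generic point by the above, the closed point tautologically, and that is all of $\Spec A$ for a DVR), Grauert's theorem — or rather its algebraic avatar, the cohomology-and-base-change criterion (\cite{deligne0}-style, or Hartshorne III.12.11): if $R^q\pi_*\shF$ has locally constant fibre dimension for an $A$-flat coherent $\shF$, then it is locally free and commutes with base change — applies to $\shF = \Omega^p_\X$ and yields that $R^q\pi_*\Omega^p_\X = H^q(\X,\Omega^p_\X)$ is a free $A$-module commuting with base change. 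The $A$-flatness of $\Omega^p_\X$ needed here is again part of the divisorial-deformation package: on each local model $Y_\loc$ the relative log forms are locally free, $j_*$ of them is reflexive, and flatness over $A$ is checked on the local models. This completes the proof.

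The main obstacle I expect is the flatness of $\Omega^p_\X$ over $A$ — equivalently, controlling the torsion and the behaviour of $j_*$ under specialisation, since $\Omega^p_\X$ is defined as a pushforward $j_*$ from the open log smooth locus and such pushforwards do not in general commute with restriction to a fibre. One has to use the explicit c.i.t. local models of Section~\ref{subsec_localmod} to see that, étale locally, $\Omega^p_\X$ is a direct summand (or at least a well-understood submodule) of a free module over the toric coordinate ring, with the deformation parameter a non-zero-divisor on it; this is presumably where the bulk of the real work sits, and it is also what makes Theorem~\ref{hyperbasechange} available in the first place, so the corollary is genuinely a short deduction once that theorem and the local model analysis are in hand. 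The rest — semicontinuity, Euler characteristic bookkeeping, the base-change criterion — is formal.
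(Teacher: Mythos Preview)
Your approach is correct and reaches the goal, but it takes a different path from the paper's. The paper argues via Grothendieck's surjectivity criterion: it is enough to show that the restriction maps $H^q(\X,\Omega^p_\X)\to H^q(X,\Omega^p_X)$ are surjective; identifying these with the $E_1$-terms and using that both spectral sequences degenerate at $E_1$ (the one on $X$ by hypothesis, the one on $\X$ because degeneration propagates), this reduces to surjectivity of $\op{Gr}_F\HH^k(\X,\Omega^\bullet_\X)/\op{Tors}\to\op{Gr}_F\HH^k(X,\Omega^\bullet_X)$, which follows immediately from Theorem~\ref{hyperbasechange}. Your route instead compares the closed and the generic fibre via semicontinuity and then invokes Grauert's constancy criterion. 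Both are standard and both hinge on Theorem~\ref{hyperbasechange} and on $A$-flatness of $\Omega^p_\X$; the paper's version is marginally shorter because it never has to look at the generic fibre at all.

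One correction: your appeal to $E_1$-degeneration on the generic fibre (``since it is the ordinary Hodge--de~Rham spectral sequence of a smooth proper variety over $K$'') is both unnecessary and, as stated, wrong --- the generic fibre of a toric degeneration is only required to be a normal algebraic space, and in the c.i.t.\ setting it can be a genuine orbifold (cf.\ Remark~\ref{remorbifold}). Fortunately your chain of inequalities does not need that equality: replacing your second ``$=$'' by the universal spectral-sequence inequality $\dim_K\HH^m(\X_\eta,\Omega^\bullet_{\X_\eta})\le\sum_{p+q=m}\dim_K H^q(\X_\eta,\Omega^p_{\X_\eta})$ and then sandwiching with the summed semicontinuity inequalities still forces all terms to be equal. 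So the degeneration on $\X_\eta$ is a \emph{consequence} of your argument rather than an input. With that adjustment the proof is clean.
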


\begin{proof} 
By Grothendieck's cohomology and base change theorem, it 
suffices to prove surjectivity for the restrictions
$H^q(\X,\Omega^p_\X)\ra H^q(X,\Omega^p_X)$. This means surjectivity for
$E_1(\Omega^\bullet_\X)\ra E_1(\Omega^\bullet_X)$. 
Since degeneration is an open property, both spectral sequences 
are degenerate at $E_1$ and we are done if 
we show surjectivity of
$(\op{Gr}_F\HH^{k}(\X,\Omega^\bullet_\X))/\op{Tors}\ra \op{Gr}_F\HH^{k}(X,\Omega^\bullet_X)$
where $F$ is the canonical filtration.
This follows from Thm.~\ref{hyperbasechange} by the surjectivity of 
$\HH^{k}(\X,\Omega^\bullet_\X)\ra \HH^{k}(X,\Omega^\bullet_X)$.
\end{proof}

\begin{remark} \label{remorbifold}
If all inner monodromy polytopes are simplices then the generic fibre $X_\eta$ is an orbifold.
The restriction of $\Omega^r_\X\otimes_{\shO_{\Spec A}} \shO_{\Spec \eta}$ coincides with
the pushforward of $\Omega^r_{(X_\eta\backslash\Sing X_\eta)/\k}$ to $X_\eta$.
By \cite{steenbrink2}, these sheaves give the natural mixed Hodge structure on $X_\eta$ (\cite{deligne3})
which is pure in each cohomology degree.
\end{remark}

\begin{definition} The \emph{affine Hodge group} of degree $(p,q)$ 
of a toric log CY space $X$, resp. its dual intersection complex $(B,\P)$,
is defined as
$$H^{p,q}_\aff(X)=H^{p,q}_\aff(B)=H^q(B,i_*\bigwedge^{p}\check\Lambda\otimes\k).$$
We denote its dimension by $h^{p,q}_\aff(X)$ and call it \emph{affine Hodge number}.
\end{definition}

We are going to prove the following result in Section~\ref{sec_affloghodge}.

\begin{theorem} \label{Thm_affinlog}
Let X be a c.i.t. toric log CY space.
\begin{itemize}
\item[a)] For each $p,q$ there is a natural injection 
$$H^{p,q}_\aff(X) \hra H^{p,q}_\llog(X).$$
\item[b)] For each $k$ there is a natural injection 
$$\bigoplus_{p+q=k} H^{p,q}_\aff(X) \hra \HH^{k}(X,\Omega^\bullet)$$
which is compatible with the canonical filtration induced 
on $\HH^{k}(X,\Omega^\bullet)$.
\end{itemize}
\end{theorem}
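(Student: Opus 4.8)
The plan is to deduce Theorem~\ref{Thm_affinlog} from Theorem~\ref{maintheorem} together with the exactness of the barycentric complex (Lemma~\ref{COmegaexact}), by identifying the affine Hodge groups with the $q=0$ row of the $E_1$-page and tracking this row through the two spectral sequences involved. For part a), recall from Lemma~\ref{COmegaexact} that $\SC^\bullet(\Omega^p)$ is a resolution of $\Omega^p$, so $H^{p,q}_\llog(X)=H^q(X,\Omega^p)$ is the abutment of the hypercohomology spectral sequence of Theorem~\ref{maintheorem}a). Inside that $E_1$-page, the subcomplex formed by the groups $\Gamma(W_e,i_*\bigwedge^p\check\Lambda\otimes\k)$ — i.e. exactly the $q=0$ part — is the \v{C}ech-type complex of the cover $\{W_e\}$ of $B$ with coefficients in the sheaf $i_*\bigwedge^p\check\Lambda\otimes\k$. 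First I would verify that the $W_e$ form an open cover of $B$ adapted to the barycentric subdivision and that this cover is acyclic for $i_*\bigwedge^p\check\Lambda\otimes\k$ (each $W_e$ is a neighbourhood of a barycentric star, hence contractible onto a point of $B\setminus\Delta$ or, more carefully, onto a cell where the constructible sheaf is well-behaved), so that the $E_2$-term of this $q=0$ row computes precisely $H^p(B,i_*\bigwedge^p\check\Lambda\otimes\k)=H^{p,q}_\aff(X)$ — this is essentially the content of the identification $E^{p,0}_2=H^p(B,i_*\bigwedge^p\check\Lambda\otimes\k)$ already recorded in Theorem~\ref{maintheorem}b), and I expect the general (non-simplex) case needs only the $q=0$ statement, which does not require the simplex hypothesis.

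The injection then comes from the edge map of the spectral sequence: the bottom row $E_\bullet^{\bullet,0}$ is a quotient complex of $\SC^\bullet(\Omega^p)$'s global sections at $q=0$, and since $d_1$ cannot map \emph{into} the $q=0$ row from anywhere (there is no $q=-1$ row) while it maps the $q=0$ row to the $q=0$ row, $E_2^{p,0}$ receives no further differentials that could kill classes coming from $H^p(B,\dots)$; more precisely $E_\infty^{p,0}$ is a subquotient of $H^{p+0}(X,\Omega^p)$, but because the $q=0$ row sits at the edge, the relevant differentials $d_r\colon E_r^{p,0}\to E_r^{p+r,1-r}$ all vanish for $r\ge 2$, so $E_2^{p,0}=E_\infty^{p,0}$, giving a \emph{subobject} $H^{p,q}_\aff(X)=E_\infty^{p,0}\hookrightarrow \HH^p(X,\SC^\bullet(\Omega^p))$. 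Wait — I must be careful about the two different degrees: here $q$ in $H^{p,q}_\aff$ is a cohomological degree on $B$, whereas $p$ in $\SC^p$ is the \v{C}ech index. Re-reading Theorem~\ref{maintheorem}, the \v{C}ech index and the $B$-cohomology degree are the same variable, so $H^{p,q}_\aff(X)=H^q(B,i_*\bigwedge^p\check\Lambda\otimes\k)=E_2^{q,0}$ (the row $q=0$ of the spectral sequence, in \v{C}ech-degree $q$), which is $E_\infty^{q,0}$ by the edge argument, hence injects into $\HH^q(X,\Omega^p)=H^q(X,\Omega^p)=H^{p,q}_\llog(X)$. That proves a). For part b), I would run the \emph{other} spectral sequence, the first hypercohomology spectral sequence of the complex $\Omega^\bullet$ (the log Hodge-to-log de Rham one), whose $E_1$-term is $\bigoplus_{p+q=k}H^q(X,\Omega^p)$ abutting to $\HH^k(X,\Omega^\bullet)$; its $F$-filtration is the canonical (stupid/Hodge) filtration. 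Inside each $H^q(X,\Omega^p)$ sits $H^{p,q}_\aff(X)$ by part a), and I would check that these affine subgroups assemble into a subcomplex on which the de Rham differential acts trivially — this should follow because the affine classes are represented by locally constant (monodromy-invariant) differential forms coming from $i_*\bigwedge^\bullet\check\Lambda$, on which the exterior derivative of log forms vanishes (these are the "harmonic" pieces, flat sections of the local system). Then the same edge/degeneration bookkeeping as above, applied degree-by-degree in the Hodge filtration, yields the filtered injection $\bigoplus_{p+q=k}H^{p,q}_\aff(X)\hookrightarrow\HH^k(X,\Omega^\bullet)$.

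The main obstacle I anticipate is \textbf{not} the formal spectral-sequence bookkeeping but establishing naturality and the well-definedness of the comparison map $i_*\bigwedge^p\check\Lambda\otimes\k\to\Omega^p$ at the sheaf level over all of $B$, i.e. producing the inclusion \emph{before} passing to cohomology, in a way compatible with the barycentric resolution and with restriction to subfamilies $W_e$. Concretely one needs: (i) over $B\setminus\Delta$ the identification of $i_*\bigwedge^p\check\Lambda\otimes\k$ with the locally constant sheaf of flat log $p$-forms on the log smooth locus, which is classical (Gross–Siebert), and (ii) that this identification extends across $\Delta$ to land inside $\Omega^p=j_*\Omega^p_{(X\setminus Z)^\log/\k^\log}$ rather than acquiring poles — this uses that near a point of $Z$ the local model of \S\ref{subsec_localmod} is toric and the monodromy-invariant forms extend, which is precisely where positivity and the c.i.t. hypothesis enter. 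A secondary technical point is verifying that the edge differentials $d_r$ ($r\ge 2$) out of the $q=0$ (resp. lowest Hodge-weight) row genuinely vanish — for part a) this is automatic since there is no row below $q=0$, but for part b) one must confront the possibility that the log Hodge-to-log de Rham spectral sequence does \emph{not} degenerate at $E_1$ in general (the paper itself only proves this up to dimension $4$); the resolution is that the affine subgroups, being represented by $d$-closed flat forms with no coboundary relations among themselves, survive to $E_\infty$ regardless of degeneration — this is the crux of the "compatible with the canonical filtration" clause and should be argued via the explicit cocycle representatives rather than abstractly.
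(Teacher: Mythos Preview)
Your overall plan for part~a) --- identify $H^{p,q}_\aff(X)$ with the bottom-row $E_2$-term of the hypercohomology spectral sequence of $\SC^\bullet(\Omega^p)$ and inject via the edge map --- is the route the paper takes. But your edge-map bookkeeping contains a genuine error. You argue that since ``$d_r\colon E_r^{p,0}\to E_r^{p+r,1-r}$ all vanish for $r\ge 2$'', one has $E_2^{p,0}=E_\infty^{p,0}$. Vanishing of \emph{outgoing} differentials from the bottom row only yields $E_{r+1}^{p,0}=E_r^{p,0}/\im(d_r\colon E_r^{p-r,r-1}\to E_r^{p,0})$, so $E_\infty^{p,0}$ is a \emph{quotient} of $E_2^{p,0}$; the \emph{incoming} differentials $d_r\colon E_r^{p-r,r-1}\to E_r^{p,0}$ land in the first quadrant for $r\ge 2$ and there is no formal reason for them to vanish. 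Thus the composite $E_2^{p,0}\twoheadrightarrow E_\infty^{p,0}\hookrightarrow\HH^p$ is not injective by general nonsense, and the injection you claim is not established. The paper's displayed quotient $\ker(D|_{\Gamma(\shJ^{q,0})})/(\im D\cap\Gamma(\shJ^{q,0}))$ is precisely $E_\infty^{q,0}$; the identification with $H^q_{d_\bct}\Gamma(X,\SC^\bullet(\Omega^p))$ is the substantive step, and your justification for it is the wrong one.

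There is a second gap: you base everything on Theorem~\ref{maintheorem}, which is stated only for h.t.\ spaces, whereas Theorem~\ref{Thm_affinlog} is for c.i.t.\ spaces. The paper does not reduce to Theorem~\ref{maintheorem}; it gives a direct c.i.t.\ argument for the key identity $H^0(X_{\tau_2},F_s(e)^*\Omega^p_{\tau_1}/\Tors)=\Gamma(W_e,i_*\bigwedge^p\check\Lambda\otimes\k)$ by writing $(F_s(e)^*\Omega^p_{\tau_1})/\Tors=\bigcap_i\Omega^p_i$ via Proposition~\ref{delta0kernel} and identifying $\Gamma(X_{\tau_2},\Omega^p_i)$ with the $G_i$-invariants in $\bigwedge^p\check\Lambda_v\otimes\k$ for the transvection group $G_i$ attached to the $i$-th monodromy polytope. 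For part~b) your intuition that the de~Rham differential vanishes on affine classes is correct and is exactly what the paper exploits, but it makes this rigorous by constructing a specific resolution (Lemma~\ref{specialbottomterm}) with $\shI^{k,r,0}=\SC^k(\bLambda^r)$, whose global sections consist only of constant forms on which $d$ is manifestly zero; the filtered injection then follows by the same bottom-row mechanism, and you would need an analogue of this construction rather than the vaguer appeal to ``explicit cocycle representatives''.
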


\begin{corollary} Let $X_t$ be a general fibre of a toric degeneration with at most
orbifold singularities. 
Assume that the central fibre $X$ is a c.i.t. space. For all $p,q$, we have
$$h^{p,q}_\aff(X) \le h^{p,q}(X_t).$$
\end{corollary}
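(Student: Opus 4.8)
The plan is to combine Theorem~\ref{Thm_affinlog} with the base change results and a Hodge-theoretic specialization argument. The statement is essentially a limit/semicontinuity statement: the affine Hodge numbers bound the log Hodge numbers of the central fibre, which in turn bound the ordinary Hodge numbers of the general fibre via base change.

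\medskip

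First I would observe that since $X_t$ has at most orbifold singularities, its ordinary cohomology carries a pure Hodge structure in each degree (as in Remark~\ref{remorbifold}, via \cite{steenbrink2}), so $h^{p,q}(X_t)=\dim H^q(X_t,\Omega^p_{X_t})$ makes sense and the Hodge-to-de Rham spectral sequence for $X_t$ degenerates at $E_1$. Second, I would note that a toric degeneration with orbifold general fibre is in particular a divisorial deformation of its central fibre $X$ in the sense preceding Theorem~\ref{hyperbasechange} (the orbifold condition on $X_t$ is equivalent to all inner monodromy polytopes being simplices, which is precisely the hypothesis under which the local models are the c.i.t.\ local models). Third, degeneration at $E_1$ for $X_t$ is an open condition on $\Spec A$, hence—together with the freeness/base change of $\HH^\bullet(\X,\Omega^\bullet_\X)$ from Theorem~\ref{hyperbasechange}—it forces degeneration at $E_1$ for the log Hodge-to-log de Rham spectral sequence on the central fibre $X=\X_0$ as well (one compares ranks: $\sum_{p+q=k}\dim E_1^{p,q}(\Omega^\bullet_{X_t})=\dim\HH^k(X_t,\Omega^\bullet_{X_t})=\dim\HH^k(X,\Omega^\bullet_X)\le\sum_{p+q=k}\dim E_1^{p,q}(\Omega^\bullet_X)$, with the middle equality by Theorem~\ref{hyperbasechange}; since the outer inequalities are always $\ge$, all are equalities). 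Then Corollary~\ref{basechange} applies, giving $h^{p,q}(X_t)=h^q(\X,\Omega^p_\X)\otimes\k(\eta)=\dim H^q(X,\Omega^p_X)=h^{p,q}_\llog(X)$ for the generic fibre, and by upper semicontinuity of fibre cohomology combined with the base-change freeness one gets the equality $h^{p,q}(X_t)=h^{p,q}_\llog(X)$ on the whole base.

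\medskip

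With that identification in hand, the result is immediate: Theorem~\ref{Thm_affinlog}a) provides the natural injection $H^{p,q}_\aff(X)\hookrightarrow H^{p,q}_\llog(X)$, so
$$h^{p,q}_\aff(X)\ \le\ h^{p,q}_\llog(X)\ =\ h^{p,q}(X_t).$$

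\medskip

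The main obstacle is the step identifying $h^{p,q}_\llog(X)$ with $h^{p,q}(X_t)$ rather than just with some associated-graded dimension. The inequality $h^{p,q}_\aff(X)\le h^{p,q}_\llog(X)$ is free from Theorem~\ref{Thm_affinlog}, and base change freeness of the \emph{hyper}cohomology is Theorem~\ref{hyperbasechange}; the delicate point is propagating $E_1$-degeneration from the generic fibre (where it is classical, via the orbifold/pure Hodge structure of \cite{steenbrink2}) to the special fibre, which relies on the openness of degeneration together with the constancy of $\dim\HH^k$. One must also check that the orbifold hypothesis on $X_t$ genuinely yields that $\X\to\Spec A$ is a divisorial deformation in the precise technical sense required by Corollary~\ref{basechange}; I expect this to follow from the construction of local models in Section~\ref{subsec_localmod}, matching the ``all inner monodromy polytopes are simplices'' condition of Remark~\ref{remorbifold}. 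If one wishes to avoid the full $E_1$-degeneration machinery, an alternative is to run the argument purely at the level of associated gradeds: Theorem~\ref{Thm_affinlog}b) gives $\bigoplus_{p+q=k}H^{p,q}_\aff(X)\hookrightarrow\HH^k(X,\Omega^\bullet)$ compatibly with the canonical filtration, Theorem~\ref{hyperbasechange} gives $\dim\HH^k(X,\Omega^\bullet)=\dim\HH^k(X_t,\Omega^\bullet_{X_t})=\sum_{p+q=k}h^{p,q}(X_t)$, and summing the desired inequality over $p+q=k$ then follows—though recovering the inequality for each individual bidegree $(p,q)$ still seems to require the $E_1$-degeneration on the central fibre, so I would present the degeneration argument as the core of the proof.
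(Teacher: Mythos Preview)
Your primary argument contains a genuine gap: the chain you write down does \emph{not} establish $E_1$-degeneration on the central fibre $X$. From
\[
\sum_{p+q=k}h^{p,q}(X_t)=\dim\HH^k(X_t,\Omega^\bullet)=\dim\HH^k(X,\Omega^\bullet)\le\sum_{p+q=k}h^{p,q}_\llog(X)
\]
you only obtain $\sum h^{p,q}(X_t)\le\sum h^{p,q}_\llog(X)$; degeneration on $X$ would require the \emph{reverse} inequality. Upper semicontinuity (even granting flatness of $\Omega^p_\X$) gives $h^{p,q}_\llog(X)\ge h^{p,q}(X_t)$, again the wrong direction, and ``openness'' of the degeneration locus only says it contains a neighbourhood of the generic point, not the closed one. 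Indeed, $E_1$-degeneration of the log Hodge--de~Rham spectral sequence on $X$ is exactly the content of Theorem~\ref{Thm_logdeRham}, which is proved only in low dimensions under extra hypotheses; the remark following it explicitly flags the general case as open. So you cannot invoke it here as a lemma.

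The fix is your own ``alternative'', which you dismiss too quickly---it is precisely the paper's argument. Compatibility of the injection in Theorem~\ref{Thm_affinlog}\,b) with the canonical filtration $F$ means that each summand $H^{p,q}_\aff(X)$ injects into the graded piece $\op{Gr}_F^p\HH^{p+q}(X,\Omega^\bullet_X)$, giving $h^{p,q}_\aff(X)\le\dim\op{Gr}_F^p\HH^{p+q}(X,\Omega^\bullet_X)$ for each individual bidegree with no degeneration hypothesis on $X$ whatsoever. One then bounds $\dim\op{Gr}_F^p\HH^{p+q}(X,\Omega^\bullet_X)\le\rk\bigl(\op{Gr}_F^p\HH^{p+q}(\X,\Omega^\bullet_\X)/\op{Tors}\bigr)$ via the base change of Theorem~\ref{hyperbasechange}, and identifies the right-hand side with $h^{p,q}(X_t)$ by Remark~\ref{remorbifold}, using $E_1$-degeneration only on the \emph{generic} fibre.
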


\begin{proof} By Thm.~\ref{hyperbasechange}, Thm.~\ref{Thm_affinlog} and
Remark~\ref{remorbifold}, we have
$h^{p,q}_\aff(X) \le \dim \op{Gr}_F^p\HH^{p+q}(X,\Omega_X^\bullet) 
\le \rk \op{Gr}_F^p\HH^{p+q}(\X,\Omega_\X^\bullet)/\op{Tors} = h^{p,q}(X_t)$
where $F$ is the canonical filtration on $\Omega_\X^\bullet$. 
\end{proof} 

In Section~\ref{Sec_twistsec}, we give a proof of the following result.

\begin{theorem} \label{Thm_logdeRham}
Let $X$ be a h.t. toric log CY space.
Assume we have one of the following conditions
\begin{enumerate}
\item[a)] $\dim X\le 2$
\item[b)] $\dim X=3$, each $\check\Delta_\tau$ is a simplex and
every component of $\Delta \backslash \Delta^0$ is contractible
where $\Delta^0$ denotes the set of
points in $\Delta$ where the corresponding monodromy polytope
$\check\Delta_\tau$ has dimension two
\item[c)] $\dim X \le 4$ and each $\check\Delta_\tau$ is an elementary simplex
\end{enumerate}
Then the log Hodge to log de Rham spectral sequence
 degenerates at 
   $$E^{p,q}_1:H^p(X,\Omega^q)\Rightarrow \HH^{p+q}(X,\Omega^\bullet).$$
\end{theorem}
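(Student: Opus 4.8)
The statement to prove is the $E_1$-degeneration of the log Hodge to log de Rham spectral sequence under the dimension and simplex hypotheses (a)--(c). The plan is to run the numerical criterion for degeneration: a first-quadrant spectral sequence with abutment $\HH^k$ degenerates at $E_1$ if and only if $\dim\HH^k(X,\Omega^\bullet)=\sum_{p+q=k}\dim H^p(X,\Omega^q)$ for all $k$. By Theorem~\ref{maintheorem} we already have an explicit handle on $H^q(X,\Omega^p)=H^{p,q}_\llog(X)$ through the barycentric spectral sequence, which (part b) degenerates at $E_2$ once every $\check\Delta_\omega$ is a simplex, giving $\dim H^{p,q}_\llog(X)=h^{p,q}_\aff(X)+(\text{log twisted sectors in degree }(p,q))$. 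So the left side of the numerical identity is controlled; the real content is to compute $\dim\HH^k(X,\Omega^\bullet)$ and match it. First I would set up the two-step hypercohomology computation: filter $\Omega^\bullet$ using the barycentric resolution $\SC^\bullet(\Omega^r)$ of Lemma~\ref{COmegaexact} to get a spectral sequence whose input is the data of Theorem~\ref{maintheorem}, and whose abutment is $\HH^\bullet(X,\Omega^\bullet)$; this produces an explicit complex built from $\Gamma(W_e,i_*\bigwedge^r\check\Lambda\otimes\k)$ in the $q=0$ row and the twisted-sector pieces $R(Z_e)_q\otimes(\dots)$ in higher rows.

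The key steps, in order: (i) observe that the $q=0$ row of the combined double complex computes exactly the affine cohomology $\HH^\bullet(B,\bigwedge^\bullet i_*\check\Lambda\otimes\k)$, and that the affine Hodge to affine de Rham spectral sequence on $(B,\P)$ degenerates — this is where I would invoke the results of Section~\ref{sec_affloghodge} behind Theorem~\ref{Thm_affinlog}, together with the known purity/formality of the affine de Rham complex on a positive $(B,\P)$; (ii) show the twisted-sector contributions assemble into a complementary summand that itself contributes the "expected" dimension to $\HH^k$, i.e. that the spectral sequence does not mix the $q=0$ part with the $q>0$ part. For the low-dimensional cases this should follow from a weight/degree count: in $\dim X\le 2$ there are essentially no twisted sectors of the problematic kind, and the whole complex is forced to degenerate; in $\dim X=3$ under the contractibility hypothesis on the components of $\Delta\setminus\Delta^0$, the twisted-sector sheaves are cohomologically concentrated so that the only possible higher differentials vanish for dimension reasons; in $\dim X\le 4$ with elementary simplices, the $R(Z_\omega)$ are of very restricted graded type (for an elementary simplex the interior is empty, so the relevant $R(Z_\omega)_n$ vanish except in a single degree), again killing the differentials. (iii) Conclude the numerical identity $\dim\HH^k=\sum_{p+q=k}h^{p,q}_\llog(X)$ and hence $E_1$-degeneration.

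\textbf{Main obstacle.} The hard part will be step (ii): ruling out the higher differentials in the combined spectral sequence between the affine ($q=0$) part and the twisted-sector ($q>0$) part, and also among the twisted-sector rows themselves. A priori the barycentric differential $d_\bct$ could connect an affine class on $W_e$ to a twisted-sector class on a smaller $W_{e'}$, and there is no formal reason (like strictness of a weight filtration, which we are explicitly avoiding by not building a cohomological mixed Hodge complex) for this to vanish. I expect the argument to go through a careful local analysis near the discriminant $\Delta$: over each component of $\Delta\setminus\Delta^0$ (or its higher-dimensional analogue) one shows the local contribution is that of a product situation where degeneration is classical, and the global statement follows by a Mayer--Vietoris / Čech patching over the cover $\{W_e\}$, using contractibility (case b) or the elementary-simplex bound on $R(Z_\omega)$ (case c) to control the patching spectral sequence. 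The dimension restrictions $\dim X\le 4$ are exactly what make this local-to-global step manageable: there are too few strata of high enough codimension for a nonzero higher differential to have both a nonzero source and a nonzero target. If I could not push this through directly, the fallback would be to compare with the Kato--Nakayama space and invoke the topological $E_1$-degeneration, but that is precisely what the paper defers, so I would instead try to strengthen the vanishing in Theorem~\ref{maintheorem}(b) to pin down the $E_2=E_\infty$ page of the barycentric spectral sequence degree by degree and read off that no room is left.
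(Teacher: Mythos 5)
Your outline correctly identifies several of the right ingredients—concentration of the twisted sectors, injectivity of the affine part, and the restricted graded behaviour of $R(Z_\omega)$ for elementary simplices—but the way you assemble them has a genuine gap. You propose to verify the numerical identity $\dim\HH^k(X,\Omega^\bullet)=\sum_{p+q=k}\dim H^p(X,\Omega^q)$, which requires an \emph{independent} computation of $\dim\HH^k$. The triple-complex sketch you offer never delivers that computation, and you acknowledge as much when you flag step~(ii) as the main obstacle; the suggested Mayer--Vietoris/\v{C}ech patching over $\{W_e\}$ is not carried through and would face the further issue (noted explicitly in Section~4.4) that the de~Rham differential does not extend to the $\Gamma$-acyclic resolution $\shQ^{\bullet,\bullet}$.

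The missing idea is that the paper never computes $\HH^k$ at all. It combines two facts. First, by Theorem~\ref{Thm_affinlog}(b) the direct sum $\bigoplus_{p+q=k}H^{p,q}_\aff(X)$ injects into $\HH^k(X,\Omega^\bullet)$ compatibly with the filtration, so the affine classes survive to $E_\infty$: they are neither killed by any $d_r$ nor in the image of any $d_r$. Second, in each of the cases a), b), c) the log twisted sectors $T^{p,q}_\llog=H^{p,q}_\llog/H^{p,q}_\aff$ are nonzero only in a single total degree $p+q=\dim X$ (bidegree $(1,1)$ in a), $\{(1,2),(2,1)\}$ in b), $(2,2)$ in c)). Since every $d_r$ raises total degree by one, a differential out of a twisted bidegree lands in a purely affine bidegree and must vanish (affine is not hit), while a differential into a twisted bidegree emanates from a purely affine bidegree and also vanishes (affine survives). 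Hence $d_r=0$ for all $r$, with no dimension count needed. Two further points of comparison. Your worry about a differential ``between the affine $(q=0)$ part and the twisted-sector $(q>0)$ part'' is aimed at the wrong spectral sequence: $d_\bct$ does not appear in the log Hodge to log de~Rham spectral sequence. The barycentric spectral sequence of Theorem~\ref{maintheorem} enters only as an auxiliary device, and only in case~b), to \emph{prove} the concentration of twisted sectors: there the contractibility of the components $K$ of $\Delta\setminus\Delta^0$ is used to identify a piece of $d_1$ with a \v{C}ech differential of a locally constant sheaf on $K$ and hence to kill $T^{1,1}_\llog$ and $T^{2,2}_\llog$. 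For a) and c) the concentration follows directly from Theorem~\ref{maintheorem} together with Lemma~\ref{standardequiv} and (for c)) Lemma~\ref{elemthreesimplex}—so your intuition about the graded type of $R(Z_\omega)$ for elementary simplices is on target, but what it buys is the concentration of twisted sectors, not a dimension identity.
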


\begin{remark}
To prove the degeneration of the log Hodge to log de Rham spectral sequence
in greater generality, a common way would be show that $\Omega_X^\bullet$
carries the structure of a cohomological mixed Hodge complex (\cite{deligne3},~8.1.9).
In particular, this requires a $\ZZ$-structure which one would obtain as the
pushforward from the semi-analytic Kato-Nakayama space $\tilde X\ra X$. One then needs
to show that $\Omega_X^\bullet$ is quasi-isomorphic to a pushforward
of a modified de Rham complex on $\tilde X$ which is in turn a 
resolution of $\ZZ\otimes_\ZZ\CC$ on $\tilde X$. We leave the topological properties 
of the local models to future work.
\end{remark}

\begin{theorem} \label{mirror_affinestringy}
Assume that we are given a h.t. space $X$ and that $X_t$ is
a general fibre of a degeneration into $X$. 
Assume we are in one of the cases of
Thm.~\ref{Thm_logdeRham} and that $X_t$ is an orbifold, 
i.e., each $\Delta_\tau$ is a simplex. We have for each $p,q$,
\begin{enumerate}
\item[a)] $h^{p,q}_\llog(X)=h^{p,q}(X_t)$
\item[b)] If we are in case a) or c), we have
$$h^{p,q}(X_t) - h^{p,q}_\aff (X)\ =\ h^{n-p,q}_\st (\check X_t) - h^{n-p,q}(\check X_t).$$
\end{enumerate}
\end{theorem}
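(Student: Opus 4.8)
The plan is to assemble part a) from the degeneration results already in place and then to deduce part b) by a combinatorial bookkeeping argument on Hodge numbers. For part a), I would argue as follows. Under the hypotheses we are in one of the cases of Theorem~\ref{Thm_logdeRham}, so the log Hodge to log de Rham spectral sequence of $\Omega^\bullet_X$ degenerates at $E_1$; since $X$ is h.t.\ (hence c.i.t.), Corollary~\ref{basechange} applies and $H^q(\X,\Omega^p_\X)$ is free over $A$ and commutes with base change. Restricting to the generic point and invoking Remark~\ref{remorbifold} (valid because each $\Delta_\tau$ is a simplex, so $X_t$ is an orbifold), the generic fibre of $\Omega^p_\X$ is the pushforward of $\Omega^p_{(X_t\setminus \Sing X_t)/\k}$, whose cohomology computes the pure Hodge pieces $H^q(X_t,\Omega^p_{X_t})$ in Steenbrink's sense. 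Freeness plus base change then forces $h^{p,q}_\llog(X)=\dim H^q(X,\Omega^p_X)=\dim H^q(X_t,\Omega^p_{X_t})=h^{p,q}(X_t)$; this is exactly the c.i.t.\ analogue of (\ref{eq_genspec}) from \cite{grosie2}, and no new ideas beyond bookkeeping are needed here.

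For part b), the strategy is to compute $h^{p,q}(X_t)-h^{p,q}_\aff(X)$ from the spectral sequence of Theorem~\ref{maintheorem}. By part a) together with Theorem~\ref{intro1}b), the left-hand side equals the total contribution of the log twisted sectors, i.e.\ $\sum_{k>0}\dim E_2^{q-k,k}(\Omega^p_\llog)$, which by the $E_1$-description in Theorem~\ref{maintheorem}a) is expressed through the graded pieces $R(Z_e)_k$ of the toric Jacobian rings $R_0(C(\check\Delta_\omega),f)$ and the local affine cohomology groups on the open sets $W_e$. The key identity to be proved is that this sum, organized stratum by stratum over the cells $\tau\in\P$ meeting $\Delta$, matches $h^{n-p,q}_\st(\check X_t)-h^{n-p,q}(\check X_t)$ on the mirror side. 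Here I would use the fact that under Batyrev's construction each monodromy polytope $\check\Delta_\omega$ describes a toric singularity of the mirror $\check X_t$, and that the stringy$E$-function correction terms of a toric (orbifold) singularity are governed precisely by the interior lattice points of dilates of the corresponding simplex — which is the combinatorial content of $\dim R(Z_\omega)_n$ as spelled out in the introduction. So the identity reduces to a purely lattice-polytope statement: the Batyrev--Borisov-type string-cohomology numbers attached to $C(\check\Delta_\omega)$ equal the stringy correction of the dual toric singularity, with the $(p,q)$ versus $(n-p,q)$ flip coming from the exchange of $\Lambda$ and $\check\Lambda$ (equivalently of $\P$ and the dual decomposition $\check\P$) under mirror symmetry.

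I would carry this out in the following order. First, record part a) as above. Second, rewrite $h^{p,q}(X_t)-h^{p,q}_\aff(X)$ as $\sum_{k>0}\dim E_2^{q-k,k}$ using Theorems~\ref{intro1} and \ref{maintheorem}, and similarly set up the Danilov--Khovanskii / Batyrev formula for $h^{n-p,q}_\st(\check X_t)$ and $h^{n-p,q}(\check X_t)$ on the mirror degeneration, isolating their difference as a sum over the mirror's singular strata. Third, match the index sets: the cells $\omega\in\P^{[1]}$ meeting $\Delta$ (with their $\check\Delta_\omega$ and the parameter $Z$) correspond under mirror duality to the toric-singularity strata of $\check X_t$ with the same polytopes. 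Fourth, prove the local numerical identity relating $\bigoplus_k R(Z_e)_k\otimes\big(\Gamma(W_e,i_*\bigwedge\check\Lambda)/\Gamma(W_e,\bigwedge i_*\check\Lambda)\big)$ to the stringy correction term of the dual cone, tracking the degree shift $p\mapsto n-p$. This last step is where I expect the main obstacle: one must show that the Jacobian-ring gradings appearing in our $E_1$-term reproduce exactly Batyrev's $\check\psi$-type correction polynomials for toric orbifold singularities, and that the local affine-cohomology factor on $W_e$ supplies precisely the Künneth factor that appears in the Batyrev--Borisov formula for the mirror (this is essentially the content linking our setup to \cite{bormav} and \cite{batybor}). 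Establishing that $\Gamma(W_e,\bigwedge^{r+q}i_*\check\Lambda\otimes\k)\subseteq\Gamma(W_e,i_*\bigwedge^{r+q}\check\Lambda\otimes\k)$ has the right corank locally, and that summing these local contributions over $\P$ with the differential $d_1$ of Lemma~\ref{thmstratafunct} does not collapse them further, is the delicate point; restricting to cases a) and c) (dimension $\le 4$, elementary simplices) is what makes it tractable, since there the twisted sectors live in a single total degree and the combinatorics of elementary versus standard simplices pins down the comparison with smoothness of the mirror stratum.
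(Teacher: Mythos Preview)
Your argument for part a) is correct and matches the paper exactly: combine Theorem~\ref{Thm_logdeRham} with Corollary~\ref{basechange} and Remark~\ref{remorbifold}.

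For part b), your overall strategy --- identify the left side with the log twisted sectors, identify the right side with the stringy correction at the singularities of $\check X_t$, and match them via the inner/outer polytope swap under Legendre duality --- is the paper's as well. But you are overcomplicating the bookkeeping and missing the one genuinely nontrivial ingredient. First, the ``delicate point'' you worry about (whether $d_1$ collapses the local contributions) does not arise: in cases a) and c) the proof of Theorem~\ref{Thm_logdeRham} already yields the twisted sectors explicitly (Corollary~\ref{twistedcomputed} in the paper) as a bare direct sum, namely $\bigoplus_{\omega\in\P^{[1]}}R(Z_\omega)_1$ in dimension~$2$ and $\bigoplus_{\tau\in\P^{[2]}}R(Z_\tau)_2$ in dimension~$4$, concentrated in a single bidegree. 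Second, the paper does not set up a general Danilov--Khovanskii comparison on the mirror but simply quotes \cite{bormav}, Def.~8.1: the singularities of $\check X_t$ are isolated, governed by the inner monodromy polytopes $\Delta_{\check\tau}$ of $\check B$, and the stringy-minus-ordinary correction at each is $\dim R_1(\omega_{\check\tau},C(\Delta_{\check\tau}))$. Since $\Delta_{\check\tau}=\check\Delta_\tau$ under Legendre duality, the index sets and polytopes match on the nose and $\dim R_1$ is independent of the choice of nondegenerate function.

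The step your outline actually lacks is the passage from $R_0$ to $R_1$. The log twisted sectors produce $R(Z_\tau)\cong R_0(f_\tau,C(\check\Delta_\tau))$ via Lemma~\ref{compareRs}, whereas Borisov--Mavlyutov's stringy correction is expressed through $R_1$. In case c) this gap is closed by Lemma~\ref{elemthreesimplex}: for an elementary $3$-simplex every proper face is standard, so $R_0(f,C(\check\Delta_\tau))_k=R_1(f,C(\check\Delta_\tau))_k$ for $k>0$, and both vanish except at $k=2$. In case a) the polytope is an interval and the identification is immediate. Without this $R_0=R_1$ statement your ``local numerical identity'' does not close, and nothing in your sketch indicates you had it in hand.
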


\begin{example} 
Note that Theorem~\ref{mirror_affinestringy}, a)
holds for all Calabi-Yau threefolds
obtained from simplicial subdivisions of reflexive 4-polytopes where
the subdivision doesn't introduce new vertices. In particular, we
obtain for the quintic threefold $X$ in $\PP^3$ as well as for its mirror 
dual orbifold the affine Hodge diamond
$$
\begin{array}{cccccccccccc}
&&&1\\
&&0&&0\\
&0&&1&&0\\
1&&1&&1&&\ 1.\\
\end{array}
$$
The log twisted sectors of $X$ contribute to $h^{2,1}(X)=h^{1,2}(X)=101$ 
by adding $100$ to the affine Hodge numbers and since $X$ is smooth, $h^{p,q}(X)=h_\st^{p,q}(X)$. 
All log twisted sectors of $\check X$ are trivial. 
On the other hand, we obtain non-trivial orbifold twisted sectors in degree $(1,1)$
and $(2,2)$. 
We have $h_\st^{1,1}(\check X)=h^{1,1}(\check X)+100=h_\aff^{1,1}(\check X)+100=101$ 
and the analogous for $h_\st^{2,2}(\check X)$.
\end{example}


\section{Local models for c.i.t. spaces}
\subsection{Reduced inner monodromy polytopes}
The c.i.t. property is a generalization of h.t. 
becoming distinct only if $\dim X\ge 4$. 
It also generalizes simplicity (\cite{grosie1}, Def.~1.60, Rem.~1.61) 
which we referred to in the introduction as a \emph{maximal degeneration}. 
There is a natural bijection 
$$\P \leftrightarrow \{\hbox{ vertices of the barycentric subdivision of }\P\}$$
by identifying a cell with its barycenter. 
Moreover, there is a natural bijection between the set of $d$-dimensional
simplices in the barycentric subdivision and the set of chains of proper inclusions
$\tau_0\ra...\ra\tau_d$ of cells in $\P$.
It follows from (\cite{grosie1}, Def.~1.58) that
the discriminant locus $\Delta$ is the union of all codimension 
two simplices in the barycentric subdivision of $\P$ corresponding 
to chains of the shape
$\omega\ra...\ra\rho$ with $\omega\in\P^{[1]}$, $\rho\in\P^{[\dim B-1]}$ 
and $\kappa_{\omega\rho}\neq 0$.

\begin{lemma} \label{lem_ZDelta}
 Let $X$ be a c.i.t. toric log CY space. Fix $\omega\in\P^{[1]}$.  
 The barycentric edge corresponding to some
 $e:\omega\ra\tau$ is contained in $\Delta$ if and only if
 $$Z_\omega\cap \Int(X_\tau)\neq\emptyset.$$
\end{lemma}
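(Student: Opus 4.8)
The statement is an ``if and only if'' about when the barycentric edge attached to $e\colon\omega\to\tau$ lies in $\Delta$. By the description of $\Delta$ recalled just above the lemma, this edge lies in $\Delta$ exactly when it is a face of some codimension-two barycentric simplex corresponding to a chain $\omega\to\cdots\to\rho$ with $\rho\in\P^{[\dim B-1]}$ and $\kappa_{\omega\rho}\neq 0$; equivalently, there is a maximal cell $\rho\supseteq\tau$ (if $\tau\in\P^{[\dim B-1]}$ take $\rho=\tau$) with $\omega\subseteq\rho$ and $\kappa_{\omega\rho}\neq 0$. So the first step is to rephrase both sides of the claimed equivalence purely in terms of the combinatorics of $\omega\le\tau$ and the invariants $\kappa_{\omega\rho}$.

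For the reverse implication, the key input is the identity $\Newton(\widetilde Z_\omega)=\widetilde{\check\Delta}_\omega$ recalled from \cite{grosie1}, together with $\widetilde Z_\omega=a_\omega Z_\omega$ from the c.i.t. definition and the remark that $Z_\omega\cap X_\tau$ has Newton polytope the face $\widetilde{\check\Delta}_{\omega,e}$ (or its reduction). First I would observe that $Z_\omega\cap\Int(X_\tau)\neq\emptyset$ is equivalent to $\Newton(Z_\omega\cap X_\tau)$ not being a point, i.e.\ $\widetilde{\check\Delta}_{\omega,e}$ being positive-dimensional, because a semi-ample non-degenerate divisor on the toric variety $X_\tau$ meets the open torus $\Int(X_\tau)$ iff its Newton polytope is not a single point. (When the Newton polytope is a point the divisor, being non-degenerate and semi-ample, is either empty or all of $X_\tau$, neither of which meets $\Int(X_\tau)$ in a nonempty proper subset — and one should note $Z_\omega\cap X_\tau$ is never all of $X_\tau$ for dimension reasons when $\dim\tau\ge 1$, while the $\dim\tau=0$ case must be handled separately.) Then $\widetilde{\check\Delta}_{\omega,e}$ has positive dimension iff some $n^{\sigma,\sigma'}_\omega$ appearing in its construction, for $\sigma,\sigma'\supseteq\tau$, is nonzero; dually — using that the $m^\rho_{v,v'}$ are sums of $\kappa_{\omega\rho}d_\omega$'s and the corresponding statement for the outer polytopes — this forces some $\kappa_{\omega\rho}\neq 0$ with $\omega\subseteq\rho$, $\tau\subseteq\rho$, which is precisely the condition for the barycentric edge of $e$ to lie in $\Delta$.

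For the forward implication I would run essentially the same chain of equivalences in the other direction: if the edge of $e$ lies in $\Delta$, pick the witnessing $\rho\in\P^{[\dim B-1]}$ with $\tau\subseteq\rho$, $\omega\subseteq\rho$, $\kappa_{\omega\rho}\neq0$; then the monodromy around that loop is nontrivial, so the corresponding generator $d_\omega$ (resp.\ the dual vector) contributes a nonzero vertex to $\widetilde{\check\Delta}_{\omega}$ lying in the face indexed by $e$, hence $\widetilde{\check\Delta}_{\omega,e}\subsetneq\{0\}$ is positive-dimensional, hence $Z_\omega$ restricted to $X_\tau$ is a genuine hypersurface meeting the torus, i.e.\ $Z_\omega\cap\Int(X_\tau)\neq\emptyset$. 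Throughout, the c.i.t.\ hypothesis is used only to guarantee non-degeneracy of $Z_\omega$ and the multiplicity relation $\widetilde Z_\omega = a_\omega Z_\omega$, so that passing to the reduction does not change which torus orbits are met.

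\emph{Main obstacle.} The crux is the clean dictionary ``$\widetilde{\check\Delta}_{\omega,e}$ positive-dimensional $\iff$ $\exists\,\rho\supseteq\tau$ with $\omega\subseteq\rho$ and $\kappa_{\omega\rho}\neq0$''. Establishing this requires carefully tracking which loops (which pairs $(\sigma,\sigma')$ of maximal cells through vertices of $\omega$) contribute to the face $\widetilde{\check\Delta}_{\omega,e}$ versus the whole polytope $\widetilde{\check\Delta}_\omega$, and matching the vanishing of the associated $n^{\sigma,\sigma'}_\omega$ with vanishing of the $\kappa_{\omega\rho}$ — i.e.\ unwinding the Gross–Siebert monodromy combinatorics relative to a face $\tau$. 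The degenerate edge-cases — $\tau$ a vertex (so $X_\tau$ a point and the Newton polytope is automatically $\{0\}$, forcing both sides false), and disentangling ``empty'' from ``all of $X_\tau$'' for the non-degenerate divisor $Z_\omega\cap X_\tau$ — also need explicit, if routine, attention.
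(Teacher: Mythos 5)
Your proof follows essentially the same route as the paper's: both reduce the statement to the observation that $Z_\omega\cap\Int(X_\tau)\neq\emptyset$ precisely when the Newton polytope $\widetilde{\check\Delta}_{\omega,e}$ (a face of $\widetilde{\check\Delta}_\omega$) is positive-dimensional, then use the correspondence between edges of $\widetilde{\check\Delta}_\omega$ and pairs $(\omega,\rho)$ with $\kappa_{\omega\rho}\neq 0$ (Lemma~\ref{inoutkappa}) to match this with the barycentric description of $\Delta$. Aside from minor slips — a semi-ample divisor with point Newton polytope is a boundary divisor, not ``all of $X_\tau$,'' the degenerate case $\dim\tau=0$ cannot occur since $\tau\supseteq\omega\in\P^{[1]}$, and the typo $\widetilde{\check\Delta}_{\omega,e}\subsetneq\{0\}$ — the argument is the one the paper sketches.
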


\begin{proof} 
 We just sketch the proof to keep the notation concise.
 The Newton polytope of the closure of $Z_\omega\cap \Int(X_\tau)$ is 
 a face of $\check\Delta_\omega$ contained in a translate 
 of $\tau^\perp$. The intersection is non-trivial if and only if 
 this Newton polytope has positive dimension. This happens if and only if
 it contains an edge of $\check\Delta_\omega$ which in turn corresponds
 to some $\tau\ra\rho$ such that this edge is parallel to $\rho^\perp$.
 This means $\kappa_{\omega\rho}\neq 0$. This happens for some $e:\tau\ra\rho$
 if and only if $e$ is contained in $\Delta$.
\end{proof}

\begin{lemma} \label{kappas} 
Let $X$ be a c.i.t. toric log CY space.
\begin{itemize}
\item[a)] We have
$$\frac{\kappa_{\omega_1\rho}}{a_{\omega_1}}=\frac{\kappa_{\omega_2\rho}}{a_{\omega_2}},$$
whenever the barycentric edges $\omega_1\ra\rho$, $\omega_2\ra\rho$
are contained in $\Delta$.
\item[b)] We define $\check{a}_\rho$ as the integral length of $\Newton((Z\cap X_\rho)^\red)$ and have for each 
$(\omega,\rho)\in \P^{[1]}\times \P^{[\dim B-1]}$
$$\kappa_{\omega\rho}=a_\omega \check{a}_\rho.$$
\end{itemize}
\end{lemma}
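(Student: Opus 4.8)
The plan is to reduce both statements to the combinatorics of the outer monodromy polytope $\widetilde{\check\Delta}_\omega = \Newton(\widetilde Z_\omega)$ together with the c.i.t. axioms, and then to track integral lengths of edges. First I would recall from Definition~\ref{defcit}(1) that $\widetilde Z_\omega = a_\omega\cdot Z_\omega$, so that $\widetilde{\check\Delta}_\omega = \Newton(\widetilde Z_\omega) = a_\omega\cdot\Newton(Z_\omega)$; thus every edge of $\widetilde{\check\Delta}_\omega$ has integral length divisible by $a_\omega$, and $a_\omega$ is precisely the gcd of the integral edge lengths (using non-degeneracy so that $\Newton(Z_\omega)$ is a genuine lattice polytope with primitive structure governed by the equation of $Z_\omega$). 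Next I would use the description, recalled before Definition~\ref{def_ht}, of how the vertices $m^\rho_{v,v'}$ generating the inner monodromy polytope are sums of the $(\kappa_{\omega\rho}d_\omega)$'s, dualized to the outer side: for a fixed $\rho$ with $\omega\ra\rho$ in $\Delta$, the integer $\kappa_{\omega\rho}$ is exactly the integral length of the edge of $\widetilde{\check\Delta}_\omega$ in the direction $\rho^\perp$ corresponding to $\omega\ra\rho$ — this is the content of the identification $\Newton(\widetilde Z_\omega) = \widetilde{\check\Delta}_\omega$ combined with the shape $n\mapsto n+\kappa_{\omega\rho}\langle n,\check d_\rho\rangle d_\omega$ of the monodromy.

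For part (a): fix $\rho$ and suppose $\omega_1\ra\rho$ and $\omega_2\ra\rho$ both lie in $\Delta$. By Lemma~\ref{lem_ZDelta} this means $Z_{\omega_1}\cap\Int(X_\rho)\neq\emptyset$ and $Z_{\omega_2}\cap\Int(X_\rho)\neq\emptyset$, so $\Newton(Z_{\omega_1}\cap X_\rho)$ and $\Newton(Z_{\omega_2}\cap X_\rho)$ are both one-dimensional (edges), and in fact both are edges in the direction $\rho^\perp$ of $\Lambda_\rho\otimes\RR$ which is one-dimensional since $\codim\rho=1$. Now $\Newton(Z_{\omega_i}\cap X_\rho)$ is, up to translation, a face of $\Newton(Z_{\omega_i}) = \frac{1}{a_{\omega_i}}\widetilde{\check\Delta}_{\omega_i}$, so its integral length is $\kappa_{\omega_i\rho}/a_{\omega_i}$. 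The point is that these two edges must be equal: this is exactly where Definition~\ref{defcit}(2) enters — since the ambient one-dimensional lattice forces $\Newton(Z_{\omega_1}\cap X_\rho) = \Newton(Z_{\omega_2}\cap X_\rho)$ as soon as they have the same integral length, and conversely axiom (2) says equal Newton polytopes force $Z_{\omega_1}\cap X_\rho = Z_{\omega_2}\cap X_\rho$ — so I would instead argue directly that axiom~(3) of c.i.t. for the cell $\rho$ (which has $\min(\dim\rho,\codim\rho)=1$) allows at most one element in the set of Newton polytopes $\{\Newton(Z_\omega\cap X_\rho)\}$, hence $\Newton(Z_{\omega_1}\cap X_\rho) = \Newton(Z_{\omega_2}\cap X_\rho)$, giving $\kappa_{\omega_1\rho}/a_{\omega_1} = \kappa_{\omega_2\rho}/a_{\omega_2}$.

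For part (b): first handle the case $\kappa_{\omega\rho}\neq 0$, equivalently $\omega\ra\rho\subset\Delta$. Here $(Z\cap X_\rho)^{\red}$ has a one-dimensional Newton polytope whose direction is $\rho^\perp$; by part (a) the common ratio $\kappa_{\omega\rho}/a_\omega$ is independent of $\omega$, and I claim it equals $\check a_\rho$, the integral length of $\Newton((Z\cap X_\rho)^{\red})$. Indeed $Z\cap X_\rho = \bigcup_\omega Z_\omega\cap X_\rho$, and axiom~(3) with the bound $1$ shows all the nonempty $Z_\omega\cap X_\rho$ coincide as a single divisor $Z_\rho$; its reduction has Newton polytope $\Newton(Z_\rho)$, and $\Newton(Z_\omega\cap X_\rho)$ — being a face of $\frac{1}{a_\omega}\widetilde{\check\Delta}_\omega$ with integral length $\kappa_{\omega\rho}/a_\omega$ — must be an integer multiple of the primitive/reduced one; non-degeneracy and the fact that $Z_\omega\cap X_\rho$ is already $Z_\rho$ (not a nontrivial multiple, since the $a_\omega$ has been divided out) shows the multiple is $1$, so $\check a_\rho = \kappa_{\omega\rho}/a_\omega$. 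Rearranging gives $\kappa_{\omega\rho} = a_\omega\check a_\rho$. When $\kappa_{\omega\rho}=0$ the edge $\omega\ra\rho$ is not in $\Delta$, so by Lemma~\ref{lem_ZDelta} $Z_\omega\cap\Int(X_\rho)=\emptyset$; one then checks $\Newton((Z\cap X_\rho)^{\red})$ receives no contribution in the $\rho^\perp$ direction from this $\omega$, and the relevant integral length vanishes, so both sides are $0$ — the only subtlety is to make sure $\check a_\rho$ is well-defined (independent of which $\omega$ realizes the edge) which is guaranteed by part (a). The main obstacle I anticipate is the bookkeeping in this last paragraph: precisely pinning down that $\Newton(Z_\rho)$ (the reduced object) has integral length exactly $\kappa_{\omega\rho}/a_\omega$ rather than a proper divisor thereof, i.e. ruling out a hidden non-reducedness after dividing by $a_\omega$ — this needs the non-degeneracy hypothesis of Definition~\ref{defcit}(1) used carefully, together with the fact that intersecting with the torus orbit $\Int(X_\rho)$ is transverse.
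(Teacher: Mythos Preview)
Your proposal is correct and follows essentially the same route as the paper: use Lemma~\ref{lem_ZDelta} to translate ``barycentric edge in $\Delta$'' into $Z_{\omega_j}\cap\Int(X_\rho)\neq\emptyset$, then invoke c.i.t.\ axiom~(3) at $\tau=\rho$ (where $\min(\dim\rho,\codim\rho)=1$) to force a single Newton polytope $\check\Delta_{\rho,1}$, and read off $\kappa_{\omega_j\rho}/a_{\omega_j}$ as its integral length. The paper's proof is terser---it dispatches part~(b) in one line as ``just rephrasing this''---while you spell out the $\kappa_{\omega\rho}=0$ case and worry about hidden non-reducedness after dividing by $a_\omega$; that caution is reasonable but not needed, since non-degeneracy of $Z_\omega$ already guarantees $\Newton(Z_\omega\cap X_\rho)$ is the honest Newton polytope of the reduced divisor $Z_\rho$.
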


\begin{proof}
We prove a). Because $\codim\rho=1$ there is at most one $\check\Delta_{\rho,i}$ by
Def.~\ref{defcit}. Assume we have $\P^{[1]}\ni\omega_j\stackrel{e_j}{\lra}\rho$ for $j=1,2$ 
 such that $Z_{\omega_j}\cap X_\rho\neq\emptyset$. By 
 Lemma~\ref{lem_ZDelta}, this is equivalent to $e_1,e_2\in\Delta$.
 We get $\Newton(Z_{\omega_j}\cap X_\rho)=\check\Delta_{\rho}$.
 Because $\frac{\kappa_{\omega_j,\rho}}{a_{\omega_j}}$ 
 is the integral length of $\check\Delta_{\rho,1}$, we get the assertion.
 Part b) is just rephrasing this. 
\end{proof}

\begin{remark} \label{rem_ht}
 \begin{enumerate}
 \item A positive toric log CY space in dimension 2 where $Z$ is reduced is h.t..
 Not included in the h.t. definition are situations where 
 some $Z_\omega$ is the union of a 
 reduced point and a double point, for instance. 
 Two double points, however, would fulfill h.t. by having $a_\omega=2$.
 \item If $X$ is simple then $X$ is h.t. iff for each $\tau\in\P$ the 
 number of outer (or inner) monodromy polytopes 
 at $\tau$ given in the simplicity definition is less or equal than one.
 The inverse direction follows from the multiplicative condition for the log structure 
 $\prod_\omega d_\omega \otimes f_\omega^{\eps_\tau(\omega)}|_{V_\tau}=1$
 (\cite{grosie1}, Thm 3.22)
 which implies that all $Z_\omega|_{X_\tau}$ for varying $\omega$ are either empty or agree because
 $f_\omega$ is a local equation of $Z_\omega$.
 In particular, if $X$ is simple of dimension $3$ then $X$ is h.t..
 \item Why do we allow $a_\omega>1$? 
 This is best seen by the just mentioned
 multiplicative condition for the log structure. If some inner simplex
 polytope has non-primitive edges of different integral lengths,
 we have to require some $a_\omega>1$ for a log structure to exist 
 on such a space.
 \item Recall that a discrete Legendre transform interchanges inner and outer
 monodromy polytopes. It also interchanges $a_\omega$ and $\check a_\rho$ and we will
 see that there is a collection of reduced inner monodromy polytopes analogous
 to the collection of reduced outer monodromy polytopes in the definition of c.i.t.
 \end{enumerate}
\end{remark}

Here is a lemma which relates the inner and 
outer monodromy polytopes to the $\kappa_{\omega\rho}$.
It is directly deduced from the construction of $\widetilde{\Delta}_\rho$ and 
$\widetilde{\check\Delta}_\omega$.

\begin{lemma} \label{inoutkappa}
 \begin{enumerate}
 \item Given $\rho\in\P^{[\dim B-1]}$, there is a natural surjection
 $$\{\omega\ra\rho\, |\, \omega\in\P^{[1]}, \kappa_{\omega\rho}\neq 0 \}
 \ \rightarrow\ \{\hbox{edges of }\widetilde{\Delta}_{\rho}\}.$$
 Moreover, $\omega$ is collinear to the edge it maps to
 and $\kappa_{\omega\rho}$ is its integral length.
 \item Given $\omega\in\P^{[1]}$, there is a natural surjection
 $$\{\omega\ra\rho\, |\, \rho\in\P^{[\dim B-1]}, \kappa_{\omega\rho}\neq 0 \}
 \ \rightarrow\ \{\hbox{edges of }\widetilde{\check\Delta}_{\omega}\}.$$
 Moreover, a translate of $\rho^\perp$ contains the edge 
 it maps to and $\kappa_{\omega\rho}$ is its integral length.
 \end{enumerate}
\end{lemma}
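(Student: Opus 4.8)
The plan is to deduce Lemma~\ref{inoutkappa} directly from the construction of $\widetilde{\Delta}_\rho$ and $\widetilde{\check\Delta}_\omega$ recalled in Section~1.1, without any of the c.i.t.\ machinery --- indeed this lemma holds for any positive toric log CY space. I would treat part (1) in detail and then obtain part (2) by the symmetry between inner and outer polytopes (discrete Legendre duality), or equally well by the verbatim dual argument.

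For part (1): fix $\rho\in\P^{[\dim B-1]}$ and a vertex $v\in\rho$, so that $\widetilde{\Delta}_\rho=\conv\{m^\rho_{v,v'}\mid v'\text{ a vertex of }\rho\}$. First I would recall the key structural input from \cite{grosie3}: the monodromy around the barycentric locus of an edge $\omega\ra\rho$ acts on a nearby stalk of $\Lambda$ by $n\mapsto n+\kappa_{\omega\rho}\langle n,\check d_\rho\rangle d_\omega$, with $d_\omega\in\Lambda_\omega$ primitive. A loop from $v$ to another vertex $v'$ of $\rho$ through the prescribed maximal cells factors, up to homotopy in $B\setminus\Delta$, as a concatenation of such elementary loops, one for each edge $\omega$ of $\rho$ lying on the path from $v$ to $v'$ in the $1$-skeleton of $\rho$ (here one uses that $\rho$, being a lattice polytope of dimension $\dim B-1$, has connected $1$-skeleton and that these elementary monodromies all share the common eigen-covector $\check d_\rho$, so they commute and compose additively); hence $m^\rho_{v,v'}=\sum_\omega \pm\kappa_{\omega\rho}d_\omega$, the sum over edges $\omega$ on that path, with signs fixed by the chosen orientations. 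This is precisely the statement ``the $m^\rho_{v,v'}$ are sums of appropriate $(\kappa_{\omega\rho}d_\omega)$'s'' made in the text. Consequently the vectors $m^\rho_{v,v'}$ are the lattice-path sums along a graph whose edges are the vectors $\kappa_{\omega\rho}d_\omega$, and the convex hull $\widetilde{\Delta}_\rho$ of the ``vertices'' $\{m^\rho_{v,v'}\}$ of this configuration has as its edge vectors exactly the $\kappa_{\omega\rho}d_\omega$ with $\kappa_{\omega\rho}\neq 0$ that survive as edges of the hull. I would then define the map $\{\omega\ra\rho\mid\kappa_{\omega\rho}\neq 0\}\to\{\text{edges of }\widetilde{\Delta}_\rho\}$ by sending $\omega$ to the (unique) edge of $\widetilde{\Delta}_\rho$ parallel to $d_\omega$; surjectivity is immediate from the previous sentence, and collinearity of $\omega$ with the edge it maps to holds by construction. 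Finally, since $d_\omega$ is primitive, the edge parallel to it has integral length equal to the corresponding coefficient $\kappa_{\omega\rho}$ --- one must check that the edge of the hull in direction $d_\omega$ has length exactly $\kappa_{\omega\rho}$ and not a larger multiple, which follows because distinct edges of $\rho$ through a fixed vertex point in distinct (indeed linearly independent, after quotienting by $\check d_\rho^\perp$ appropriately) primitive directions, so no cancellation or accumulation in the direction $d_\omega$ can occur along a simple path.

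For part (2): the outer polytope $\widetilde{\check\Delta}_\omega=\conv\{n^{\sigma,\sigma'}_\omega\}$ is built from monodromy transformations $m\mapsto m+\langle d_\omega,m\rangle n^{\sigma,\sigma'}_\omega$ of a stalk of $\check\Lambda$ in a maximal cell $\sigma\supseteq\omega$, as $\sigma'$ ranges over maximal cells reached by crossing codimension-one cells $\rho$ with $\omega\subset\rho$. By the same reasoning as above --- now the elementary loops are indexed by the cells $\rho\in\P^{[\dim B-1]}$ with $\omega\ra\rho$, the relevant elementary monodromy contributes $\kappa_{\omega\rho}\check d_\rho$ (a vector in a translate of $\rho^\perp\subset\check\Lambda_\omega\otimes\RR$), and these commute since they share the eigenvector $d_\omega$ --- the vectors $n^{\sigma,\sigma'}_\omega$ are path-sums of the $\kappa_{\omega\rho}\check d_\rho$, and the edges of their convex hull $\widetilde{\check\Delta}_\omega$ are exactly the $\kappa_{\omega\rho}\check d_\rho$ with $\kappa_{\omega\rho}\neq 0$. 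Sending $\omega\ra\rho$ to the edge of $\widetilde{\check\Delta}_\omega$ lying in the translate of $\rho^\perp$ gives the desired surjection, with integral length $\kappa_{\omega\rho}$ because $\check d_\rho$ is primitive.

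The main obstacle is the bookkeeping in the first step: carefully justifying that a general loop of the specified homotopy type decomposes, up to homotopy in $B\setminus\Delta$, into the elementary loops about individual edges (resp.\ codimension-one cells), and that the resulting composition of affine-linear monodromies is the honest additive sum $\sum\pm\kappa_{\omega\rho}d_\omega$ with no lower-order corrections --- this is where one leans on the fact that all the elementary monodromies in question are unipotent with a common fixed covector $\check d_\rho$ (resp.\ fixed vector $d_\omega$), so that the group they generate is abelian and the transvections add. Once that is in place, ``edges of the hull $\leftrightarrow$ primitive direction vectors with their lattice lengths'' is a routine fact about lattice polytopes, and the remaining assertions (collinearity, the parallel-to-$\rho^\perp$ statement, integral length) read off directly. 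I would keep the write-up short, as the text already signals, citing \cite{grosie1} and \cite{grosie3} for the shape of the monodromy and deferring the combinatorial path-sum observation to a one-paragraph argument.
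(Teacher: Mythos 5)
The paper gives no proof for this lemma — it simply declares it ``directly deduced from the construction'' — so you are filling in a genuine gap rather than paralleling a written argument. Your observation that the elementary monodromies around the various edges $\omega$ of $\rho$ share the eigencovector $\check d_\rho$, hence commute and compose additively, so that each $m^\rho_{v,v'}$ is a path-sum $\sum\pm\kappa_{\omega\rho}d_\omega$, is correct and is exactly the fact stated in the text just above the lemma.

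Where the argument actually has a gap is in the definition of the map and the integral-length claim. You define the map by sending $\omega$ to ``the (unique) edge of $\widetilde\Delta_\rho$ parallel to $d_\omega$'', but a lattice polytope can have several mutually parallel edges (a square already does), and nothing in the construction forbids this for $\widetilde\Delta_\rho$; so as stated the map may be ill-defined. Likewise, the justification that the edge has length exactly $\kappa_{\omega\rho}$ --- ``distinct edges of $\rho$ through a fixed vertex point in distinct primitive directions, so no accumulation can occur'' --- does not address the real concern: an edge of $\widetilde\Delta_\rho=\conv\{m^\rho_{v,v'}\}$ connects two of the $m^\rho_{v,v'}$, and the vertices $v_1,v_2$ they come from need not be adjacent in $\rho$, so the corresponding difference $m^\rho_{v_1,v_2}$ is in general a sum over a path of several edges. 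The clean way to close both gaps is the normal-fan framing that the paper sets up in Lemma~\ref{polyunique} and uses in Proposition~\ref{innerpolys} (via \cite{grosie1}, Remark~1.59): $\widetilde\Delta_\rho$ determines a convex piecewise-linear function $\psi$ on the normal fan $\check\Sigma_\rho$; the wall $\check\omega$ of $\check\Sigma_\rho$ dual to $\omega$, if $\kappa_{\omega\rho}\neq 0$, has its relative interior in a unique codimension-one cone $F$ of the coarser normal fan of $\widetilde\Delta_\rho$, and $\omega\mapsto$ (edge of $\widetilde\Delta_\rho$ dual to $F$) is the canonical, well-defined surjection. The integral length then follows because $\psi$ is linear on each maximal cone of the coarser fan, so the kink of $\psi$ across $F$ (which is the edge vector of $\widetilde\Delta_\rho$) agrees with the kink across the sub-wall $\check\omega$ in the refinement, namely $\kappa_{\omega\rho}d_\omega$; since $d_\omega$ is primitive, the edge has integral length $\kappa_{\omega\rho}$. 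I'd recommend recasting the write-up in this language; the path-sum observation then enters only in identifying the kinks with the $\kappa_{\omega\rho}d_\omega$, where your unipotence/commutativity argument is exactly what is needed. Part (2) is indeed the verbatim dual once this is set up.
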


\begin{lemma} \label{polyunique}
 For $N=\ZZ^n$ and $M=\Hom(N,\ZZ)$, 
 let $\Sigma$ be a complete fan in $N_\RR=N\otimes_\ZZ\RR$ and $\psi$ a piecewise linear function on $N_\RR$
 with respect to $\Sigma$. Assume that $\psi$ comes from a lattice polytope $\Xi\subset M_\RR$, i.e.,
 $$\psi(n)=-\min\{\langle m,n\rangle\,|\, m\in\Xi\}.$$
 Given ${\check\omega}\in\Sigma^{[n-1]}$, let $\sigma_1,\sigma_2\in\Sigma^{[n]}$ 
 the two maximal cones containing $\check\omega$.
 We set \newline $\kappa_{\check\omega}=\hbox{integral length of }m_1-m_2$
 where $m_1,m_2\in M$ with $m_1=\psi|_{\sigma_1}$ and $m_2=\psi|_{\sigma_2}$. The data
 $$k:\Sigma^{[n-1]}\ra \NN,\ k({\check\omega})=\kappa_{\check\omega}$$
 determines $\Xi$ uniquely up to translation.
\end{lemma}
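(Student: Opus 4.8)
The plan is to recover $\Xi$ (up to translation) from the function $k$ by reconstructing the support function $\psi$ edge by edge along the one-skeleton of the dual fan. Concretely, for each maximal cone $\sigma\in\Sigma^{[n]}$ the function $\psi|_\sigma$ is a linear form $m_\sigma\in M$, and $\Xi=\conv\{m_\sigma\mid\sigma\in\Sigma^{[n]}\}$ by the standard description of a polytope as the convex hull of the slopes of its (negative) support function with respect to its normal fan (here $\Sigma$ need only refine the normal fan of $\Xi$). So it suffices to show that the collection $(m_\sigma)_{\sigma}$ is determined up to a common translation $m_\sigma\mapsto m_\sigma+m_0$ by the integers $k(\check\omega)$.

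First I would fix a base maximal cone $\sigma_0$ and normalise $m_{\sigma_0}=0$; this is the translation freedom. Next, observe that for any wall $\check\omega\in\Sigma^{[n-1]}$ separating adjacent maximal cones $\sigma_1,\sigma_2$, the difference $m_{\sigma_1}-m_{\sigma_2}$ vanishes on the hyperplane $\span{\RR}\check\omega$, hence is an integral multiple of the primitive conormal vector $m_{\check\omega}$ of that wall; convexity of $\psi$ pins down the sign, and $|{\cdot}|$ in the integral-length sense gives exactly $k(\check\omega)$. Thus $m_{\sigma_1}-m_{\sigma_2}=\pm k(\check\omega)\,m_{\check\omega}$ with a sign determined by which of $\sigma_1,\sigma_2$ lies on the side where $\psi$ has the larger slope — and this sign is intrinsic to $\psi$, but more to the point it is forced: since both choices of $(m_\sigma)$ reconstruct the \emph{same} convex $\psi$ we are comparing two solutions, so we may instead argue by uniqueness directly. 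Given two lattice polytopes $\Xi,\Xi'$ inducing the same $k$, the difference of their support functions $\psi-\psi'$ is linear on each maximal cone of $\Sigma$ and, across every wall, the jump in its gradient is $k(\check\omega)m_{\check\omega}-k(\check\omega)m_{\check\omega}=0$; hence $\psi-\psi'$ is globally linear on $N_\RR$, say equal to $\langle m_0,\cdot\rangle$, which means $\Xi'=\Xi+m_0$. Since both are lattice polytopes, $m_0\in M$, giving the claimed uniqueness up to integral translation.

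The one point that needs care — and which I expect to be the only real obstacle — is the claim that the \emph{sign} of the slope jump across each wall is intrinsic, i.e. that $k$ really does record, via integral length, enough to control $m_{\sigma_1}-m_{\sigma_2}$ and not merely its absolute value. In the uniqueness formulation above this is sidestepped cleanly: one never has to choose signs, because one compares two a priori given convex functions and shows their difference has no slope jumps, using only that the \emph{unsigned} jump agrees. The verification that "integral length of $m_1-m_2$" combined with convexity of each of $\psi,\psi'$ forces the two signed jumps to coincide is a short local computation on a single wall (the gradient must increase in the direction pointing out of the cone where $\psi$ is smaller, and this direction is determined by the fan, not by the polytope). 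I would write that local lemma first and then run the global telescoping/connectedness argument along a path of maximal cones in $\Sigma$, which is legitimate because the support $|\Sigma|=N_\RR$ is connected in codimension one.
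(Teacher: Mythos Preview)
Your argument is correct and rests on the same observation as the paper's: across each wall $\check\omega$ the slope difference $m_{\sigma_1}-m_{\sigma_2}$ lies on the line $\check\omega^\perp$, its integral length is $k(\check\omega)$, and convexity of $\psi$ pins down the sign. The paper packages this constructively---fixing a base maximal cone and rebuilding $\Xi$ as the convex hull of the telescoped sums $m_\gamma$ along chains of adjacent cones---whereas you phrase it as a comparison, showing $\psi-\psi'$ has vanishing slope jump across every wall and is therefore globally linear; these are two sides of the same connectedness-in-codimension-one argument, and your version has the mild advantage of never needing to verify that the chain-sum construction is well-defined (independent of the chain).
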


\begin{proof}
 Note that $m_1-m_2$ is collinear to $\check\omega^\perp$ and is thus uniquely 
 determined by $\kappa_{\check\omega}$ up to orientation. 
 The combinatorics of the fan now gives a recipe
 to assemble these edge vectors to the polytope $\Xi$. Fix some maximal cone 
 ${\check v}_0\in\Sigma^{[n]}$. To each chain $\gamma$ of the shape
 $${\check v}_0 \supset {\check\omega}_0\subset {\check v}_1 
 \supset {\check\omega}_1\subset\ ...\ \supset{\check\omega}_l\subset {\check v}_l$$
 with ${\check\omega}_i\in\Sigma^{[n-1]}, {\check v}_i\in\Sigma^{[n]}$, set
 $m_\gamma=\sum_{i=0}^l m_i$
 where $m_i$ is the unique element in $M$ which is collinear to 
 ${\check\omega}_i^\perp$, has integral length 
 $\kappa_{{\check\omega}_i}$ and evaluates positive on the interior of ${\check v}_{i}$.
 We obtain
 $$\Xi = \hbox{convex hull of }\{m_\gamma\,|\,\gamma \hbox{ is a chain}\}.$$
\end{proof}

The following proposition takes care of the inner monodromy polytopes
which are not obvious from the definition of c.i.t. unlike the outer ones.

\begin{proposition} \label{innerpolys}
 Let $X$ be c.i.t. and $\tau\in\P$. 
 Let $\check\Delta_{\tau,1},...,\check\Delta_{\tau,q}$ be the associated set of Newton polytopes.
 There exists a canonical set of lattice polytopes 
 $$\Delta_{\tau,1},...,\Delta_{\tau,q}\subset \Lambda_\tau\otimes_\ZZ \RR$$
 such that, for each $\rho\in\P^{[\dim B-1]}, e:\tau\ra\rho$ and 
 $\widetilde{\Delta}_{\rho,e}$ non-trivial,
 we find a unique $i$ such that $\widetilde{\Delta}_{\rho,e}$ 
 is an integral multiple of $\Delta_{\tau,i}$.
\end{proposition}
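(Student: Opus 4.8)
The plan is to reduce the statement to Lemma~\ref{polyunique} applied to a suitable complete fan attached to $\tau$. First I would recall that the star of $\tau$ in $\P$ gives rise to a fan: the cells $\rho \supseteq \tau$ correspond, after quotienting by $\Lambda_\tau$, to cones in $\Lambda_\tau^\perp \backslash (\text{something})$—more precisely, to the normal fan data one uses to build the toric variety $X_\tau$. Concretely, the maximal cells $\sigma \supseteq \tau$ determine the maximal cones, and the cells $\rho \in \P^{[\dim B - 1]}$ with $\tau \subseteq \rho$ determine the codimension-one cones $\check\omega$ through which Lemma~\ref{polyunique} is phrased. So the combinatorial skeleton on which I want to run the reconstruction is $\Sigma := $ the fan in $N_\RR := (\Lambda_\tau^\perp \otimes \RR)^\dual \cong$ (the cocharacter space of the torus of $X_\tau$), whose codimension-one cones $\check\omega$ are indexed by the $e : \tau \ra \rho$, $\rho \in \P^{[\dim B - 1]}$.

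Next, for each such $e : \tau \ra \rho$ I need an integer to feed into the function $k$ of Lemma~\ref{polyunique}. The natural candidate is $\check a_\rho$ (the integral length of $\Newton((Z \cap X_\rho)^\red)$ from Lemma~\ref{kappas}b), or more precisely: by Def.~\ref{defcit}(3) the Newton polytopes $\widetilde\Delta_{\rho,e}$ that are nontrivial must each be an integral multiple of exactly one of the transverse directions $\check\Delta_{\tau,i}$ (dually to the outer polytopes), so $e$ gets assigned a pair (an index $i \in \{1,\dots,q\}$ and a multiplicity $\mu_{\rho,e} \in \NN$). Grouping the codimension-one cones by the index $i$ gives, for each $i$, a function $k_i : \Sigma^{[n-1]} \ra \NN$ (set $k_i(\check\omega) = \mu_{\rho,e}$ if $e$ has index $i$, and $0$ otherwise). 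I would then verify that $k_i$ satisfies the compatibility needed to be of the form coming from a piecewise-linear function, i.e., that the edge vectors $m_i$ assembled along any closed chain of cones sum to zero; this is exactly the content of the monodromy being well-defined and the $m^\rho_{v,v'}$ of the inner-polytope construction being consistent around loops, which is built into the Gross–Siebert data (and into the construction of $\widetilde\Delta_\rho$ recalled before Lemma~\ref{inoutkappa}). Applying Lemma~\ref{polyunique} to $(\Sigma, k_i)$ produces a lattice polytope $\Delta_{\tau,i} \subset \Lambda_\tau \otimes \RR$, canonical up to translation.

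Finally I would check the asserted property: fix $\rho \in \P^{[\dim B - 1]}$, $e : \tau \ra \rho$ with $\widetilde\Delta_{\rho,e}$ nontrivial. By construction $e$ determines a codimension-one cone $\check\omega = \check\omega(e)$ in $\Sigma$ and an index $i$; I claim $\widetilde\Delta_{\rho,e}$ is the integral multiple $\mu_{\rho,e} \cdot \Delta_{\tau,i}$. This should follow because $\widetilde\Delta_{\rho,e}$, being a face of $\widetilde\Delta_\rho$ obtained by restricting to vertices in $\tau$ (as recalled in the introduction), is itself governed by the same piecewise-linear function on the localized fan—Lemma~\ref{inoutkappa}(1) identifies its edges with the $\omega \ra \rho$ and their integral lengths with the $\kappa_{\omega\rho}$, and Lemma~\ref{kappas} relates those $\kappa$'s to $a_\omega \check a_\rho$, making the comparison with the $k_i$-data exact. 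Uniqueness of $i$ is forced by transversality in Def.~\ref{defcit}(3): two distinct $\check\Delta_{\tau,i}$ have independent tangent spaces, so $\widetilde\Delta_{\rho,e}$, having a single well-defined tangent direction class, can be a multiple of only one of them.

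The main obstacle I expect is the bookkeeping in the second step: showing that the locally-defined edge-length data $k_i$ on $\Sigma^{[n-1]}$ genuinely assembles (the cocycle/closed-loop condition) rather than just matching Lemma~\ref{polyunique}'s hypotheses formally. This is where one must invoke precisely the consistency of the monodromy polytope construction in \cite{grosie1}—that the $m^\rho_{v,v'}$ are additive in $v'$ and compatible around loops—and translate it into the statement that $k_i$ comes from a single piecewise-linear $\psi_i$. Everything after that is a matching of two descriptions (the Gross–Siebert face $\widetilde\Delta_{\rho,e}$ versus the Lemma~\ref{polyunique} output) via the integral-length dictionary of Lemmas~\ref{inoutkappa} and~\ref{kappas}.
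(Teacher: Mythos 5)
Your plan goes wrong at the very first step: you pick the wrong fan for Lemma~\ref{polyunique}, and this is not a cosmetic slip but a directional error that would, if carried out, reconstruct the \emph{outer} monodromy polytopes rather than the inner ones.

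The polytope $\Delta_{\tau,i}$ you want to construct lives in $\Lambda_\tau\otimes\RR$, which has dimension $\dim\tau$. Hence Lemma~\ref{polyunique} must be applied to a complete fan in the dual space $\Hom(\Lambda_\tau,\ZZ)\otimes\RR$, of dimension $\dim\tau$. The natural candidate is $\check\Sigma_\tau$, the \emph{normal fan of the lattice polytope $\tau$ itself}, whose codimension-one cones are in inclusion-reversing bijection with the edges $\omega$ of $\tau$ (i.e.\ with $\P^{[1]}\ni\omega\subseteq\tau$). The function $k$ to feed in is $k(\check\omega)=a_\omega$ if $\omega\ra\tau\in\Upomega_{\tau,i}$ and $0$ otherwise; the edge lengths $a_\omega$ are exactly the edge lengths of $\Delta_{\tau,i}$ by Lemma~\ref{inoutkappa}(1) combined with Lemma~\ref{kappas}. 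You instead set $\Sigma$ to be the \emph{star fan} of $\tau$, i.e.\ the fan of the toric variety $X_\tau$, living in a space of dimension $\codim\tau$ with codimension-one cones indexed by $e:\tau\ra\rho$, $\rho\in\P^{[\dim B-1]}$. That fan is dual to $\check\Lambda_\tau\otimes\RR$, and running Lemma~\ref{polyunique} there produces a polytope in $\check\Lambda_\tau\otimes\RR$ — that is, something on the \emph{outer} side, which is already given by the c.i.t.\ definition and not what the proposition asserts. The $\rho$'s parametrize the possible "witnesses" $\widetilde\Delta_{\rho,e}$ from which the common polytope is extracted, not the edges of that polytope; those edges are controlled by the $\omega$'s inside $\tau$. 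Your conclusion that "$\Delta_{\tau,i}\subset\Lambda_\tau\otimes\RR$" is internally inconsistent with your setup for $\Sigma$.

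A secondary point: even with the correct fan, you would not need to verify any cocycle/closed-loop condition to \emph{build} a piecewise-linear function from the $k$-data. Lemma~\ref{polyunique} is a \emph{uniqueness} statement. The paper's argument defines $\Delta_{\tau,i}:=\frac{1}{\check a_\rho}\widetilde\Delta_{\rho,\tau\ra\rho}$ directly for some chosen $\tau\ra\rho$ with $\widetilde\Delta_{\rho,\tau\ra\rho}$ nontrivial; each such rescaled polytope already gives a PL function on $\check\Sigma_\tau$ by [\cite{grosie1}, Rem.~1.59]. The content of Lemma~\ref{polyunique} is then that different choices of $\rho$ (which give the same $k$-data by Lemma~\ref{kappas}) yield the same polytope up to translation. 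Existence is provided by the Gross–Siebert construction; only independence of $\rho$ needs Lemma~\ref{polyunique}. The well-definedness of the index map $\{\tau\ra\rho\,|\,\widetilde\Delta_{\rho,\tau\ra\rho}\neq 0\}\ra\{1,\dots,q\}$ — which you do correctly attribute to the transversality condition in Def.~\ref{defcit}(3) — is a separate, preliminary step.
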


\begin{proof}
\underline{The correspondence:}\ 
 We have fixed $\tau$. All $\omega$'s are supposed to be in $\P^{[1]}$ and
 all $\rho$'s in $\P^{[\dim B-1]}$. Consider the diagram
 $$
 \xymatrix{ 
   \{\omega\ra\tau\ra\rho\,|\, \kappa_{\omega\rho}\neq 0\} \ar[r]\ar[d]  
     & \{\tau\ra\rho\,|\,\widetilde{\Delta}_{\rho,\tau\ra\rho}\neq 0\} \ar@{.>}[d]\\  
   \{\omega\ra\tau\,|\, Z_\omega\cap\Int(X_\tau)\neq\emptyset\}\ar[r]
   & \{1,...,q\}.
 } $$
 The upper horizontal arrow is just ``forgetting $\omega$'', the left vertical one is
 ``forgetting $\rho$'' and uses Lemma~\ref{lem_ZDelta}. 
 The lower horizontal map is given by part (3) of the definition of c.i.t..
 There is only one way to define the dotted arrow to make the diagram commute and we
 need to argue why it is well-defined. Assume we have $\omega_1\ra\tau\ra\rho$ and
 $\omega_2\ra\tau\ra\rho$ with $\kappa_{\omega_1\rho}\neq 0\neq \kappa_{\omega_2\rho}$.
 By Lemma~\ref{inoutkappa} we find that $\widetilde{\check\Delta}_{\omega_1}$ 
 and $\widetilde{\check\Delta}_{\omega_2}$
 both have an edge contained in a translate of the straight line $\rho^\perp$.
 The same holds for $\widetilde{\check\Delta}_{\omega_1,{\omega_1\ra\tau}}, 
 \widetilde{\check\Delta}_{\omega_2,{\omega_2\ra\tau}}$ and also for 
 $\frac1{a_{\omega_1}}\widetilde{\check\Delta}_{\omega_1,{\omega_1\ra\tau}}, 
 \frac1{a_{\omega_2}}\widetilde{\check\Delta}_{\omega_2,{\omega_2\ra\tau}}$
 which are the Newton polytopes of 
 $Z_{\omega_1}\cap X_\tau$ and $Z_{\omega_2}\cap X_\tau$, respectively. 
 Thus, these polytopes cannot be transverse and by (3) of the c.i.t. definition 
 they have to be the same up to translation. This makes
 the dotted map well-defined. 

 We denote the preimage of $i$ under the lower horizontal map by $\Upomega_{\tau,i}$.

\underline{Defining the $\Delta_{\tau,i}$:}\ 
 We stay with the previous setup. We define 
 $$\Delta_{\tau,i}:=\frac{1}{\check a_\rho}\widetilde{\Delta}_{\rho,\tau\ra\rho}$$
 where $i$ is the image of $\widetilde{\Delta}_{\rho,\tau\ra\rho}$ under the dotted arrow. It is easy to see that
 (up to translation) this is a lattice polytope where an edge which is 
 the image of some $\omega$ via Lemma~\ref{inoutkappa} has length $a_\omega$.
 We have to show that we get the same $\Delta_{\tau,i}$ if we choose 
 another $\tau\ra\rho'$ with $\kappa_{\omega\rho'}\neq 0$ to define it. 
 We are going to apply Lemma~\ref{polyunique}. 
 By (\cite {grosie1}, Remark~1.59), both 
 $\frac{1}{\check a_\rho}\widetilde{\Delta}_{\rho,\tau\ra\rho}$ 
 and $\frac{1}{\check a_{\rho'}}\widetilde{\Delta}_{\rho',\tau\ra\rho'}$
 give piecewise linear functions on $\check\Sigma_\tau$,
 the normal fan of $\tau$ in $\Hom(\Lambda_\tau,\ZZ)\otimes_\ZZ\RR$. 
 We have an inclusion reversing bijection
 $$\hbox{cones in }\check\Sigma_\tau\ \leftrightarrow\ \hbox{faces of }\tau.$$
 Codimension one cones $\check\omega\in\check\Sigma^{[\dim\tau-1]}_\tau$ 
 correspond to edges $\omega$ of $\tau$.
 So this is consistent with the notation in the lemma. 
 Note that the data $k:\check\Sigma^{[\dim\tau-1]}_\tau\ra\NN$ is the same for both polytopes
 because for each $\check\omega\in\check\Sigma^{[\dim\tau-1]}$ we have 
 $$\kappa_{\check\omega}=\left\{\begin{array}{ll} a_\omega & \hbox{if }\omega\ra\tau\in
 \Upomega_{\tau,i}\\ 0 & \hbox{otherwise}.\end{array}\right.$$
 We deduce that $\frac{1}{\check a_{\rho'}}\widetilde{\Delta}_{\rho,\tau\ra\rho}$ and 
 $frac{1}{\check a_{\rho'}}\Delta_{\rho',\tau\ra\rho'}$
 coincide up to translation.
\end{proof}

We extract a definition from the previous proof.

\begin{definition} 
  Given a c.i.t. $X$ and $\tau\in\P$. For $1\le i\le q$, we define
  $$\Upomega_{\tau,i}=\{\omega\ra\tau\, | \,\widetilde{\check\Delta}_{\omega,\omega\ra\tau}
  =a\cdot\check\Delta_{\tau,i}\hbox{ for some }a>0\}$$
  $$R_{\tau,i}=\{\tau\ra\rho\, | \,\widetilde{\Delta}_{\rho,\tau\ra\rho}
  =a\cdot\Delta_{\tau,i}\hbox{ for some }a>0\}$$
  where $\Delta_{\tau,i}$ is the polytope given in Prop.~\ref{polyunique}.
\end{definition}

Note that for $\omega\ra\tau,\tau\ra\rho$ we have $\kappa_{\omega\rho}\neq 0$ if and only if 
there is some $i$ such that $\omega\ra\tau\in \Omega_{\tau,i}$ and $\tau\ra\rho\in R_{\tau,i}$
which can be deduced from the
diagram in the proof of Prop.~\ref{innerpolys}. 
In view of (\cite{grosie1},~Def.~1.60), we see that this property 
generalizes from \emph{simplicity} to c.i.t. spaces.


\subsection{Toric local models for the log structure} \label{subsec_localmod}
In this section, we give a direct generalization
of the local model construction developed by M.~Gross and B.~Siebert in \cite{grosie2}
to the c.i.t. case. 
The proof will remain sketchy where there is little difference to loc.cit.. 
Recall Construction~2.1 in loc.cit. where $Y$ is the product of a torus with
the affine toric variety given by the cone over the Cayley product of $\tau$ and 
the $\Delta_{\tau,i}$ and $X$ is the invariant divisor given by the rays in $\tau$.
We prefer to call the spaces $X,Y$ of loc.cit. $X_\loc, Y_\loc$ at this point.

\begin{proposition} \label{localmodels}
 Suppose we are given a c.i.t. toric log CY space $X$ and a geometric point in the
 log singular locus $\bar{x}\ra Z\subseteq X$, there exist data 
 $\tau,\check\psi_1,...,\check\psi_q$ as in (\cite{grosie2},Constr.~2.1) 
 defining a monoid $P$ and an element $\rho\in P$, 
 hence affine toric log spaces $Y_\loc^\log, X_\loc^\log\ra\Spec\k^\log$, such that there is a diagram over $\Spec \k^\log$
 $$
 \xymatrix{
 & V^\log\ar[dl]\ar^{\phi}[dr]\\
 X^\log && X_\loc^\log
 }$$
 with both maps strict \'etale and $V^\log$ an \'etale neighbourhood of $\bar{x}$.
\end{proposition}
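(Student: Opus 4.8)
The plan is to follow Gross--Siebert's proof of the analogous statement in \cite{grosie2} for simple degenerations, replacing their single monodromy polytope at a point of $Z$ by the transverse collection $\Delta_{\tau,1},\dots,\Delta_{\tau,q}$ furnished by Proposition~\ref{innerpolys}. First I would fix the geometric point $\bar x\to Z\subseteq X$ and let $\tau\in\P$ be the unique cell with $\bar x\in\Int(X_\tau)$; the contravariant strata correspondence then makes $X_\tau$ the smallest stratum through $\bar x$. By Lemma~\ref{lem_ZDelta} and Definition~\ref{defcit}(3), the set of $Z_\omega$ meeting $\Int(X_\tau)$ gives exactly the transverse Newton polytopes $\check\Delta_{\tau,1},\dots,\check\Delta_{\tau,q}$ with $q\le\min(\dim\tau,\codim\tau)$, and Proposition~\ref{innerpolys} provides the dual lattice polytopes $\Delta_{\tau,i}\subset\Lambda_\tau\otimes\RR$. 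These are the data $\tau,\check\psi_1,\dots,\check\psi_q$ of (\cite{grosie2}, Constr.~2.1): each $\check\Delta_{\tau,i}$ (after translating a chosen vertex to the origin) is $\Newton$ of a semi-ample divisor and hence defines a piecewise-linear $\check\psi_i$ on the normal fan of $\tau$. Feeding these into Construction~2.1 of loc.\ cit.\ produces the monoid $P$ (the integral points of the cone over the Cayley product $\tau*\Delta_{\tau,1}*\cdots*\Delta_{\tau,q}$, crossed with a lattice for the torus factor), the toric variety $Y_\loc$, the invariant divisor $X_\loc$ cut out by the rays through $\tau$, and the distinguished element $\rho\in P$ giving the map $Y_\loc^\log\to\Spec\k^\log$.

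The core of the argument is then to produce the strict \'etale correspondence. I would proceed in three steps. (i) \emph{Structure theory of the log stratum.} Using the description of the ghost sheaf of $X^\log$ near $\bar x$ in terms of the dual intersection data (\cite{grosie1}, \S3) — in particular the multiplicative relation $\prod_\omega d_\omega\otimes f_\omega^{\eps_\tau(\omega)}|_{V_\tau}=1$ quoted in Remark~\ref{rem_ht} — one identifies the monoid $\overline{\mathcal M}_{X,\bar x}$ with the monoid $P$ built above; the non-degeneracy in Definition~\ref{defcit}(1) guarantees the local equations $f_\omega$ of the $Z_\omega$ have Newton polytope exactly $\check\Delta_{\tau,i}$, so the torsion-free pieces of the charts match. (ii) \emph{Constructing $V$.} As in loc.\ cit., one builds an \'etale neighbourhood $V$ of $\bar x$ in $X$ together with a map $\phi:V\to X_\loc$ by choosing, near $\bar x$, coordinates on $X_\tau$ adapted to the $Z_{\tau,i}=Z_{\omega}\cap X_\tau$ and to the toric boundary $D_\tau$, and extending transversally into the normal directions of $X_\tau$ in $X$ using the deformation-parameter monomials of the toric degeneration axiom (c) in the definition of toric degeneration; the transversality of the $\Delta_{\tau,i}$ (equivalently Definition~\ref{defcit}(2)--(3)) is precisely what makes these normal directions independent, so the resulting $\phi$ is \'etale. (iii) \emph{Strictness.} One checks that $\phi^*\mathcal M_{X_\loc}\to\mathcal M_V$ is an isomorphism of log structures, which reduces to the monoid identification of step (i) together with matching of the structure maps to $\Spec\k^\log$ via the element $\rho$; here one uses positivity ($\kappa_{\omega\rho}\ge0$, equivalently $a_\omega\check a_\rho\ge0$ by Lemma~\ref{kappas}) to see that $\rho\in P$ and that the induced log-smooth-structure section attains zeros rather than poles.

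The main obstacle I anticipate is step (i)--(iii) in the case $q\ge2$, i.e.\ the genuinely new c.i.t.\ phenomenon where several non-degenerate divisors $Z_{\tau,1},\dots,Z_{\tau,q}$ pass through $\bar x$ simultaneously. In Gross--Siebert's simple case there is a single $Z_\omega$ through each point of $Z$, and the Cayley cone degenerates to the much simpler cone over $\tau*\Delta_\tau$; for $q\ge2$ one must verify that the local log structure of $X$ really is the pullback of the \emph{divisorial} log structure on $Y_\loc$ defined by the single divisor $X_\loc$ — that is, that the several local functions $f_{\omega}$ organize into one monomial $\rho$ on the Cayley cone. This is where transversality of the $\Delta_{\tau,i}$ (their tangent spaces forming an interior direct sum, Definition~\ref{defcit}(3)) does the essential work: it lets one choose the adapted coordinates in step (ii) so that the $q$ directions decouple, and it is what Proposition~\ref{innerpolys} was designed to supply on the dual side. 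Apart from this point, the remaining verifications — strictness, the \'etaleness of both legs, and independence of the choices of vertices (to be handled, as the paper notes, in \S4.2) — are routine adaptations of \cite{grosie2}, and I would keep their treatment brief as loc.\ cit.\ already covers the analogous computations.
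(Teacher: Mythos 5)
Your overall plan is the right one, and you've correctly identified the three inputs — the uniquely determined $\tau$ with $\bar x\in\Int(X_\tau)$, the transverse family furnished by Definition~\ref{defcit}(3), and the inner polytopes of Proposition~\ref{innerpolys} — as well as the fact that transversality is what allows the $q$ normal directions to decouple in the coordinate chart. This is essentially how the paper proceeds, and your anticipated "main obstacle'' (several $Z_{\tau,i}$ through $\bar x$, needing the local $f_\omega$'s to organize into the Cayley-cone monomial) is indeed where the substance lies. That said, there are a few concrete gaps between your sketch and a working proof.

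First, a slip: the piecewise-linear functions $\check\psi_i$ on the normal fan $\check\Sigma_\tau\subset\Hom(\Lambda_\tau,\ZZ)\otimes\RR$ come from polytopes in the dual space $\Lambda_\tau\otimes\RR$, i.e.\ from the \emph{inner} polytopes $\Delta_{\tau,i}$ of Proposition~\ref{innerpolys}, not from the outer Newton polytopes $\check\Delta_{\tau,i}\subset\check\Lambda_\tau$. Your Cayley product $\tau*\Delta_{\tau,1}*\cdots*\Delta_{\tau,q}$ in the next sentence is the correct object, so I read this as a mislabel rather than a strategic error, but it should be fixed. Also note that only those $i$ with $\bar x\in Z_{\tau,i}$ contribute a nontrivial $\Delta_i$; for the remaining indices (and to pad the list up to $\dim B-\dim\tau$) one must set $\Delta_i=\{0\}$ before applying Constr.~2.1.

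Second, and more seriously, the heart of the c.i.t.\ adaptation is not visible in your outline: the divisors $\widetilde Z_\omega$ on which the log structure tuple $f=(f_{\sigma,e})$ is defined are \emph{non-reduced} in general, with $\widetilde Z_\omega=a_\omega Z_\omega$, and one must pass to roots $f_\omega^{\red}$ with $(f_\omega^{\red})^{a_\omega}=f_\omega$. For these roots to glue to the functions $f_i$ indexed by $\Upomega_{\tau,i}$ — which is what ultimately feeds into $\phi^*(u_i)=f_i$ — one needs to normalize the log-structure section so that $f_\omega|_{x[\sigma]}=1$; otherwise the locally defined roots differ by units and do not patch. The glueing itself is where c.i.t.\ properties (2) and (3) enter, exactly as you flag, but the reduction and normalization are not optional bookkeeping: without them the map $\phi$ cannot be written down and the log structures cannot be compared. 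Your ``identify the ghost monoid $\overline{\mathcal M}_{X,\bar x}$ with $P$'' step skips over precisely this — matching ghost sheaves is not the same as matching log structures, and the actual argument compares sections in $\Gamma(V,\bigoplus_e\shO_{V_e})$ term by term.

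Finally, your step (ii) invokes ``deformation-parameter monomials of the toric degeneration axiom (c)'', but the proposition is a statement about a toric log CY space $X$ in isolation, not about the total space of a degeneration; no deformation parameter is available. The coordinates that fill the normal directions are the pullbacks of coordinates on $\Int(X_\tau)\cong(\k^\times)^q$ to an \'etale neighbourhood $V(\tau)$, together with the glued functions $f_1,\dots,f_r$ and, for the remaining indices, the affine coordinates $z_i-z_i(\bar v)$. The invertibility of the Jacobian $\det(\partial f_i/\partial z_{i_j})$ — your ``decoupling'' — is what c.i.t.\ (3) guarantees, and this is the place where your intuition is exactly right.
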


\begin{proof}
 As in loc.cit., we take the unique $\tau\in\P$ such that $\bar{x}\in\Int(X_\tau)$.
 By the definition of c.i.t., we then have the outer monodromy polytopes 
 $\check\Delta_{\tau,1},...,\check\Delta_{\tau,q}\subset
 \check\Lambda_\tau$.
 By Prop.~\ref{innerpolys}, we also obtain
 $\Delta_{\tau,1},...,\Delta_{\tau,q}\subset\Lambda_\tau$.
 By renumbering, we may assume that $\bar{x}\in Z_{\tau,1},...,Z_{\tau,r}$ and 
 $\bar{x}\not\in Z_{\tau,i}$ for $r<i\le q$. We set
 $$\Delta_i=\left\{\begin{array}{ll} \Delta_{\tau,i} &\hbox{for }1\le i\le r\\ 
 \{0\} &\hbox{for }r< i\le \dim B-\dim\tau
 \end{array}\right.$$
 We redefine $q=\dim B-\dim\tau$.
 The polytopes $\Delta_i$ give piecewise linear functions $\check\psi_i$ on the normal fan 
 $\check\Sigma_\tau$ in $N'_\RR=\Hom(\Lambda_\tau,\ZZ)\otimes_\ZZ\RR$. 
 By (\cite{grosie2}, Constr.~2.1), we obtain a monoid $P'\subseteq N'$ with
 $P'=C(\tau)^\vee\cap N'$, a monoid $P\subseteq N=N'\oplus\ZZ^{q+1}$, 
 $\rho\in P$ given by $\rho=e_0^*$, $Y_\loc=\Spec\k[P]$ and 
 $X_\loc=\Spec\k[P]/(z^\rho)$. 
 To obtain a log-structure on $X_\loc$, we use the pullback of 
 the divisorial log structure given by $X_\loc$ in $Y_\loc$.
 To proceed as in the proof of (\cite{grosie2},~Thm.~2.6), we choose $g:\tau\ra\sigma\in\P^{[\dim B]}$
 to have an \'etale neighbourhood $V(\sigma)$ of $\bar{x}$.
 We are going to construct a diagram with strict \'etale arrows
 $$
 \xymatrix{
 & V(\sigma)^\log \ar[dl]_{p_\sigma} & \,V(\tau)^\log \ar@{_{(}->}[l] & \ar@{_{(}->}[l] \,V^\log \ar^{\phi}[dr]\\
 X^\log &&&& X_\loc^\log.
 }$$
 Recall from (\cite{grosie1}, Thm.~3.27) that pulling back the log-structure from $X^\log$ to $V(\sigma)$ gives a tuple
 $$f=(f_{\sigma,e})_{e:\omega\ra\sigma}\in
 \Gamma\bigg(V(\sigma),\bigoplus_{\atop{e:\omega\ra\sigma}{\dim\omega=1}}\shO_{V_e}\bigg)$$
 where $V_e$ is the closure of $\Int(X_\omega)$ in $V(\sigma)$.
 We write $f_\omega$ for $f_{\sigma,e}$ with $e:\omega\ra\tau$.
 Let $x[\sigma]$ be the unique zero-dimensional torus orbit in $V(\sigma)$. We may assume that $f$
 is normalized, i.e., $f_\omega|_{x[\sigma]}=1$ for each $\omega$. This is possible because, if
 $f$ is not normalized, we may use a pullback by an automorphism of $V(\sigma)$ as explained in 
 [\cite{grosie1}, after Def.~4.23] to obtain a normalized section. 
 Let $p_\sigma:V(\sigma)\ra X$ be an \'etale map whose 
 pullback-log-structure is the normalized section. Note that 
 $$p_\sigma^{-1}\widetilde{Z}_\omega=\{f_\omega=0\}\subseteq V_e$$
 Let $f_\omega^\red$ be such that $(f_\omega^\red)^{a_\omega}=f_\omega$.
 Fixing some $1\le i\le r$, we claim that the functions $f_\omega^\red$ for
 $\omega\ra\tau\in\Upomega_{\tau,i}$ glue to
 a function $f_i$ on $\bigcup_{e\in\Upomega_{\tau,i}} V_e$. 
 This will follow if we show that the
 corresponding $Z_\omega$ glue because then their defining 
 functions $f_\omega^\red$ can at most differ by a non-trivial constant which is $1$ by
 the normalization assumption. To show this, we consider a diagram
 $$
 \xymatrix{
   \omega_1 \ar[rd] \ar@/^/[rrd]^{e_1}\\
   &\tau'\ar[r] & \tau\ar[r]^g & \sigma\\
   \omega_2 \ar[ru] \ar@/_/[rru]_{e_2}
 }$$
 which algebraic geometrically means that we have a stratum 
 $$X_{\tau'} \subset X_{\omega_1}\cap X_{\omega_2}$$
 on which we wish to show $Z_{\omega_1}\cap X_{\tau'}=Z_{\omega_2}\cap X_{\tau'}$. 
 By the c.i.t. property (2), it suffices to show 
 $$\Newton(Z_{\omega_1}\cap X_{\tau'})=\Newton(Z_{\omega_2}\cap X_{\tau'}).$$
 This follows from the c.i.t. property (3) because $\Newton(Z_{\omega_1}\cap X_{\tau'})$ 
 and $\Newton(Z_{\omega_2}\cap X_{\tau'})$ cannot be transverse due to the fact that
 $$\Newton(Z_{\omega_1}\cap X_{\tau})=\Newton(Z_{\omega_2}\cap X_{\tau})=\check\Delta_{\tau,i}$$
 is a non-trivial face of each. 
 Thus, we have functions $f_i$ as claimed before. We can naturally extend these to functions on $V(\tau)$ which 
 we are also going to denote by $f_i$. 
 
 We now choose coordinates $z_1,...,z_q$ on $\Int(X_\tau)\cong (\k^\times)^q$, and pull these back to functions on
 $V(\tau)$. By the c.i.t. property (3) we know that 
 the $Z_{\tau,i}$ meet transversely in 
 $\bar{v}:=p_\sigma^{-1}(\bar{x})$, so we can find a subset $\{i_1,...,i_r\}\subseteq \{1,...,q\}$ such that
 $F:=\det(\partial f_i/\partial z_{i_j})_{1\le i,j\le r}$ is invertible 
 in $\bar{v}$ (see \cite{eisenbud},~Cor.~16.20). 
 By reordering the indices, we can assume $\{i_1,...,i_r\}=\{1,...,r\}$. 
 For $r<i\le q$, we set $f_i:=z_i-z_i(\bar{v})$ 
 so that all $f_i$ vanish in $\bar{v}$ and give a set of local 
 coordinates at $\bar{v}$ when restricted to $\Int(X_\tau)$.
 We fix an isomorphism 
 $$V(\tau)\cong \Spec\k[\partial P']\times (\k^\times)^q.$$
 We are going to choose $V$ as a Zariski open neighbourhood of $\bar{v}$ in $V(\tau)$. 
 Before we say which one exactly, we define the map $\phi:V\ra X_\loc=\Spec \k[\partial P'\oplus\NN^q]$ by
 $$\begin{array}{rcll} 
  \phi^*(z^p)&=&z^p&\hbox{for }p\in\partial P'\\
  \phi^*(u_i)&=&f_i&\hbox{for }1\le i\le q
 \end{array}$$
 where $u_i$ is the monomial in $\k[\partial P'\oplus\NN^q]$ corresponding to 
 the $i$th standard basis vector of $\NN^q$. 
 Now $V$ is chosen in a way such that $\phi$ is \'etale. That this can be done is just a repetition of the
 argument in [\cite{grosie2},~Prf.~of~Thm.~2.6]. 
 To see that the pullbacks of the two log-structures to $V$ coincide, 
 we check that both give the same section in 
 $\Gamma(V,\bigoplus_{\atop{e:\omega\ra\sigma}{\dim\omega=1}}\shO_{V_e})$. 
 Arguing as in [\cite{grosie2},~Prf.~of~Thm.~2.6], this comes down to showing,
 for each $e:\omega\ra\tau$ and $n\in N'$,
 $$(d_\omega\otimes f_\omega)(n)=
 \prod_{i=1}^r f_i^{\langle m_i^--m_i^+,n\rangle}$$
 holds on the closed toric subset $V_e$ where
 \begin{itemize}
 \item $v^-,v^+$ are the vertices of $\omega$, 
 \item $d_\omega$ is the unique primitive vector pointing from $v^-$ to $v^+$ and
 \item $m_i^-, m_i^+\in \Hom(N',\ZZ)$ are $\check\psi_i|_{\check v^-},\check\psi_i|_{\check v^+}$,
 respectively where ${\check v^\pm}$ is the maximal cone in 
 $\check\Sigma_\tau$ corresponding to $v^\pm$.
 \end{itemize}
 We have that $\check\psi_i$ bends at $\check\omega$ 
 if and only if $e\in\Upomega_{\tau,i}$.
 By convexity $m_i^--m_i^+$ is positive on $\check v^-$ like $d_\omega$.
 Combining this with the
 fact that the edge of $\Delta_i$ corresponding to $\omega$ has 
 length $a_\omega$, this is just saying
 $$m_i^--m_i^+=\left\{\begin{array}{ll} 0& e\not\in\Upomega_{\tau,i}\\
 a_\omega d_\omega & e\in\Upomega_{\tau,i}.\end{array}\right.$$
 By construction, we have that $f_i|_{V_e}$ is invertible for $e\not\in\Upomega_{\tau,i}$.
 Since we have chosen $f$ to be normalized and
 the invertible elements of a toric monoid ring $k[P]$ are given as $k^\times\times P^\times$, we have
 $$f_i|_{V_e} =\left\{\begin{array}{ll} 1& e\not\in\Upomega_{\tau,i}\\
 f_\omega^\red|_{V_e} & e\in\Upomega_{\tau,i}.\end{array}\right.$$  
 Using $(f_\omega^\red)^{a_\omega}=f_\omega$, this finishes the proof. 
 Note that we have slightly simplified the proof as compared to loc.cit. by requiring $f$ to be normalized in the
 beginning (this implied $h_p=1$ for all $p$ in the notation of loc.cit.).
\end{proof}

The local models are the key ingredient to prove the base change for the hypercohomology
of the logarithmic de Rham complex:

\begin{proof}[Proof of Theorem~\ref{hyperbasechange}]  As argued in (\cite{kawamatanamikawa}, Lemma 4.1), one may assume that $A$
is a local Artinian $\k[t]$-algebra. Then, using the existence of local models from 
Prop.~\ref{localmodels}, 
the proof becomes literally the same as in (\cite{grosie2},~Thm.~4.1).
\end{proof}


\subsection{The barycentric resolution of the log Hodge sheaves}
\label{sec_barycentric}

The existence of local models for c.i.t. spaces enables many of the further constructions
in \cite{grosie2}. The entire section 3.1 in [\cite{grosie2}, Local calculations] 
doesn't use simplicity arguments and extends directly to the c.i.t. case.
In section 3.2,
Prop.~3.8, Theorem~3.9, Cor.~3.10, Cor.~3.11, Lemma~3.12 and Lemma~3.13 are valid in the
c.i.t. case. 
For this paper, we get three results from this.
First, we obtain a proof of Lemma~\ref{COmegaexact}, i.e., have the exact sequence
$$0\ra\Omega^r\ra \SC^0(\Omega^r)\ra \SC^1(\Omega^r)\ra ...$$
Roughly speaking, this is a resolution given by a kind of Deligne's simplicial scheme: 
We pull back the sheaf to each toric stratum and then apply the inclusion-exclusion-principle
to get a complex. 
Lemma~3.14 and Lemma~3.15 in loc.cit., however, fail to be true in the c.i.t. setting. 
At least in the proof of 3.15 simplicity is being used explicitly. 
In any event, we are not interested in these results. 
We just use a small replacement for Lemma~3.14:

\begin{lemma} \label{vertex}
Given a c.i.t. space $X$, $\tau\in\P$, then $v\in\tau^{[0]}$ induces
a canonical choice of a vertex
$$\Ver_i(v)\in\Delta_{\tau,i}\hbox{ for each }i.$$
\end{lemma}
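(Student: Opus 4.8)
The goal is to attach to each vertex $v$ of $\tau$ a distinguished vertex of each inner monodromy polytope $\Delta_{\tau,i} \subset \Lambda_\tau \otimes_\ZZ \RR$, in a way that only depends on $v$ and on the combinatorial/affine data. The natural strategy is to use the dual description: by Proposition~\ref{innerpolys} (via Remark~1.59 of \cite{grosie1}) each $\Delta_{\tau,i}$, up to translation, defines a piecewise linear function $\psi_i$ on the normal fan $\check\Sigma_\tau$ of $\tau$ in $\Hom(\Lambda_\tau,\ZZ)\otimes_\ZZ\RR$, exactly as in Lemma~\ref{polyunique}. Under the inclusion-reversing bijection between cones of $\check\Sigma_\tau$ and faces of $\tau$, the vertex $v \in \tau^{[0]}$ corresponds to a maximal cone $\check v \in \check\Sigma_\tau^{[\dim\tau]}$. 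The plan is to set
$$\Ver_i(v) := \psi_i|_{\check v} \in \Hom(\Hom(\Lambda_\tau,\ZZ),\ZZ) = \Lambda_\tau \otimes_\ZZ \RR,$$
i.e., the linear functional on $\check v$ that $\psi_i$ restricts to on that maximal cone; this is automatically a vertex of $\Delta_{\tau,i}$, since the vertices of the polytope defining a PL function are precisely its restrictions to the maximal cones of the corresponding complete fan.

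First I would make precise that $\Delta_{\tau,i}$ is only defined up to translation, so $\Ver_i(v)$ inherits the same ambiguity; this is harmless, since what we ultimately need (in the resolution of Section~\ref{Coho_singlestrat}, and in the vertex-dependency analysis of Section~4.2) is the \emph{choice} of which vertex, and differences $\Ver_i(v) - \Ver_i(v')$ for two vertices $v, v'$ of $\tau$ are then well-defined lattice vectors. Concretely, if $v^-, v^+$ are joined by an edge $\omega$ of $\tau$, then $\Ver_i(v^+) - \Ver_i(v^-)$ is the edge-vector $m^+ - m^-$ considered in the proof of Lemma~\ref{polyunique}, which by the computation at the end of the proof of Proposition~\ref{localmodels} equals $a_\omega d_\omega$ when $\omega\to\tau \in \Upomega_{\tau,i}$ and $0$ otherwise. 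I would record this explicitly, as it is exactly the compatibility the later sections use, and it shows the choice is "canonical" in the sense that it is dictated by the affine monodromy data with no further input.

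Second I would check well-definedness: the PL function $\psi_i$ associated to $\Delta_{\tau,i}$ is canonical once $\Delta_{\tau,i}$ is (Proposition~\ref{innerpolys} already proved $\Delta_{\tau,i}$ is canonical up to translation, with its edge lengths fixed by the $a_\omega$ via Lemma~\ref{inoutkappa}), and the normal fan $\check\Sigma_\tau$ and the cone-to-face bijection depend only on $\tau$; hence $\Ver_i(v)$ depends only on $v$ and $i$. I do not expect a serious obstacle here — the statement is essentially a bookkeeping corollary of Lemma~\ref{polyunique} and Proposition~\ref{innerpolys}. The one point requiring a sentence of care is the degenerate situation where $\Delta_{\tau,i} = \{0\}$ for the trimmed index range used in Proposition~\ref{localmodels} (the redefinition $q = \dim B - \dim\tau$ with $\Delta_i = \{0\}$ for $i > r$): there $\Ver_i(v) = 0$ for all $v$, consistently. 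So the "hard part" is merely phrasing the construction so it meshes verbatim with how $\Ver_i$ is invoked downstream; the mathematical content is immediate from the Legendre-transform picture of inner monodromy polytopes as PL functions on $\check\Sigma_\tau$.
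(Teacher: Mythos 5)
Your proposal is essentially the paper's own argument: both proofs pass to the maximal cone $\check v\in\check\Sigma_\tau$ dual to the vertex $v$ and use that the piecewise linear function $\check\psi_{\tau,i}$ associated to $\Delta_{\tau,i}$ is linear on $\check v$, so that this linear part singles out a unique vertex of $\Delta_{\tau,i}$. One small sign caveat: with the paper's convention $\psi(n)=-\min\{\langle m,n\rangle\}$, the paper records $\check\psi_{\tau,i}|_{\check v}=-\langle v_i,\cdot\rangle$ and defines $\Ver_i(v)=v_i$ via $v_i-\Delta_{\tau,i}\subseteq\check v^\vee$, whereas your formula $\Ver_i(v):=\psi_i|_{\check v}$ literally produces $-v_i$; you should either flip the sign or use the support-function convention $\psi(n)=\max\{\langle m,n\rangle\}$ so that the restrictions of $\psi_i$ to maximal cones really are the vertices of $\Delta_{\tau,i}$.
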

\begin{proof} Let $\check{v}$ denote the maximal cone in the normal fan 
$\check\Sigma_\tau$ corresponding to $v$. 
Choose $v_i=\Ver_i(v)\in\Delta_{\tau,i}^{[0]}$ such that 
$$v_i-\Delta_{\tau,i}\subseteq \check{v}^\dual.$$
There is at most one such $v_i$ because otherwise $\check{v}^\dual$ would have to contain a
straight line. There exists one because $\check\psi_{\tau,i}$, the piecewise linear
function associated to $\Delta_{\tau,i}$, is linear on $\check{v}$. 
In fact, we have $\check\psi_{\tau,i}|_{\check{v}}=-\langle v_i,\cdot\rangle$.
\end{proof}

The third consequence for c.i.t. spaces 
which is most importance to us is [\cite{grosie2}, Prop.~3.17] which we cite here. 
Just note that
$\op{dlog}(f^a)=a\cdot\op{dlog}f$ has the same poles as $\op{dlog}f$ which is the reason
why the sheaves of differentials only care about $Z^\red$ and not $Z$.

\begin{proposition} \label{delta0kernel} 
Let X be c.i.t.. 
Given $\P^{[0]}\ni v\stackrel{g}{\ra}\tau_1\stackrel{e}{\ra}\tau_2$, the image of the inclusion
$(F_{s}(e)^*\Omega^r_{\tau_1})/\Tors$ in $F_{s}(e\circ g)^*\Omega^r_v$ is
\[
\ker\bigg(
F_{s}(e\circ g)^*\Omega^r_v\stackrel{\delta}{\lra}
\bigoplus_{\atop{i=1,\ldots,q}{w_i\not=v_i}} \Omega^{r-1}_{(Z'_{\tau_2,i})^{\dagger}/\k^{\dagger}}\bigg)
\]

where: 
\begin{enumerate}
\item We set $v_i=\Ver_i(v)$.
The direct sum is over all $i$ and all vertices 
$w_i\in\Delta^{[0]}_{\tau_1,i}$ with $w_i\not=v_i$.
\item $Z'_{\tau_2,i}=F_{s}(e)^{-1}(Z_{\tau_1,i})$ which might be empty.
\item We define a log structure on $X_v$ as the pushforward of the pull-back via 
$X_v\backslash Z\hra X\backslash Z$ ($Z$ has codimension two in $X_v$). Then,
$\Omega^{r-1}_{(Z'_{\tau_2,i})^{\dagger}/\k^{\dagger}}$ is defined by pulling back the
log structure from $X^\log_v$ via $X_{\tau_1}$ and $Z_{\tau_1,i}$ to $Z'_{\tau_2,i}$.
\item
For $\alpha\in F_{S,s}(e\circ g)^*\Omega^r_v$, the component of
$\delta(\alpha)$ in the direct 
summand $\Omega^{r-1}_{(Z'_{\tau_2,i})^{\log}/\k^\log}$
corresponding to some $w_i$ is given by
$\iota(\partial_{w_i-v_i})\alpha|_{(Z'_{\tau_2,i})^{\dagger}}$. 
\end{enumerate}
\end{proposition}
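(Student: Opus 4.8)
The plan is to follow the local model analysis developed in the previous subsections, reducing to an explicit toric computation on the affine model $X_\loc$ constructed in Proposition~\ref{localmodels}, and then interpreting the combinatorics in terms of the polytopes $\Delta_{\tau,i}$ and the vertex choice from Lemma~\ref{vertex}. First I would fix the data $v\stackrel{g}{\ra}\tau_1\stackrel{e}{\ra}\tau_2$ and choose, using Proposition~\ref{localmodels}, an \'etale local model in which the log structure is the divisorial one coming from a toric Cartier divisor cut out by a monomial $z^\rho$ on the affine toric variety $Y_\loc=\Spec\k[P]$, with $P=P'\oplus\NN^q$ and $P'=C(\tau)^\vee\cap N'$. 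The module $F_s(e\circ g)^*\Omega^r_v$ is then, by the local calculations of section~3.1 of \cite{grosie2} which extend verbatim to the c.i.t.\ case, identified with an explicit free module over the coordinate ring of the stratum, generated by wedge products of $\op{dlog}$ of the torus coordinates on $\Int(X_v)$ together with the $\op{dlog} f_i$. The torsion-free quotient $(F_s(e)^*\Omega^r_{\tau_1})/\Tors$ is the analogous module for the coarser stratum $X_{\tau_1}$, so the inclusion in question is simply the map induced by passing from the $v$-local model to the $\tau_1$-local model.

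The heart of the argument is then to describe which forms on $X_v$ extend (after removing torsion) to $X_{\tau_1}$. The point is that the two local models differ precisely by how the piecewise linear functions $\check\psi_{\tau,i}$ are resolved: at $v$ each $\check\psi_{\tau,i}$ has been linearized on the maximal cone $\check v$, picking out the vertex $v_i=\Ver_i(v)$, while at $\tau_1$ we only know $\check\psi_{\tau,i}$ is linear on the (generally larger) cone $\check{\tau_1}$ corresponding to $\tau_1$, which sees the face $\Delta_{\tau_1,i}$ of $\Delta_{\tau,i}$. A monomial differential form on $X_v$ descends to $X_{\tau_1}$ modulo torsion iff its ``order of vanishing'' condition along the extra boundary divisors is met, and a short computation with the monoid $P$ shows that the obstruction to descent is exactly measured, for each $i$ and each vertex $w_i\neq v_i$ of $\Delta_{\tau_1,i}$, by the residue $\iota(\partial_{w_i-v_i})\alpha$ restricted to the corresponding divisor $Z'_{\tau_2,i}=F_s(e)\inv(Z_{\tau_1,i})$. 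I would verify this by choosing an adapted basis: write $\alpha$ in terms of $\op{dlog} z_1,\dots,\op{dlog} z_q$ (coordinates on $\Int(X_\tau)$) and the $\op{dlog} f_i$, observe that the poles of $\op{dlog} f_i$ along the relevant boundary divisor are governed by which vertex of $\Delta_{\tau,i}$ is selected, and check that the kernel of $\delta$ as defined in item~(4) is precisely the submodule of forms whose pullback is torsion-free on $X_{\tau_1}$.

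The main obstacle I anticipate is bookkeeping: correctly matching the toric boundary divisors of the $v$-model that get ``contracted'' in passing to the $\tau_1$-model with the faces/vertices of the $\Delta_{\tau,i}$, and keeping track of the glueing data $s$ (which enters through $F_s(e)=F(e)\circ s_e$) so that the residue maps land in the correctly twisted sheaves $\Omega^{r-1}_{(Z'_{\tau_2,i})^{\dagger}/\k^{\dagger}}$ with the log structure as specified in item~(3). Since all of this is a direct transcription of [\cite{grosie2}, Prop.~3.17], whose proof uses only the local model and the local calculations of section~3.1 — both of which we have already established for c.i.t.\ spaces in Proposition~\ref{localmodels} and the remarks opening this subsection — I would present the proof as an application of that result, indicating precisely where $\Delta_{\tau,i}$ and $\Ver_i(v)$ from Lemma~\ref{vertex} replace the simplicial data of loc.\ cit., and spelling out the residue formula of item~(4) only to the extent needed to see it is the same map. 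The finiteness and well-definedness of $\Ver_i(v)$ needed here is exactly Lemma~\ref{vertex}, so no new input is required beyond careful translation.
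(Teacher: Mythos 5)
Your approach matches the paper's: the paper offers no new proof but simply cites [GS2, Prop.~3.17], noting that it carries over to the c.i.t.\ setting once the local models of Proposition~\ref{localmodels} and the verbatim extension of [GS2, \S3.1--3.2] are in place, exactly as you argue. The only point the paper flags that you do not is the observation that $\op{dlog}(f^{a}) = a\cdot\op{dlog} f$ has the same poles as $\op{dlog} f$, which is why the residue maps in item~(4) are taken along the reduced divisors $Z'_{\tau_2,i}$ even though the $\widetilde{Z}_\omega$ may be non-reduced with multiplicity $a_\omega$.
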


In a setting more general than the simple case with standard outer monodromy simplices 
as dealt with in \cite{grosie2}, the resolution 
$\SC^\bullet(\Omega^r)$ is no longer acyclic. 
One of the main points of this paper is 
to produce an acyclic resolution for this complex. 
We start off by constructing a resolution of a summand
of $\SC^\bullet(\Omega^r)$ on a single stratum 
for which we are going to use a Koszul complex.


\section{Koszul cohomology} \label{section_koszkoho}
This section is separate from the previous ones and 
we will be reusing some of the notations.

\subsection{The general setting}
Koszul cohomology is a globalized version of the Koszul complex for 
affine rings (see \cite{eisenbud}, Exc.17.19).
It was extensively developed and exploited by Mark Green in 
\cite{green1} and \cite{green2}.

Let $X$ be a $\k$-variety, $\shL$ an invertible sheaf on $X$ 
and $V$ a finite dimensional linear subspace of $\Gamma(X,\shL)=\Hom(\shO_X,\shL)$. 
Defining $\phi(1)$ as the canonical map $(V\otimes\shO_X\ra\shL)\in V^*\otimes\shL$, we get a complex
$$0\ra \shO_X \stackrel{\phi}{\ra} \shL\otimes\bigwedge^1 V^* 
\ra \shL^{\otimes 2}\otimes\bigwedge^2 V^*
\ra \shL^{\otimes 3}\otimes\bigwedge^3 V^*\ra...$$
whose differential is given by $\alpha\mapsto \phi(1)\wedge\alpha$.
We introduce the notation
$$\shK^i(V,\shL):=\shL^{\otimes i}\otimes\bigwedge^i V^*$$
and call the according differential $d^i$. 
For $m\in\ZZ$, we define the twists 
$\shK^i(V,\shL,m):=\shL^m\otimes\shK^i(V,\shL)$
and denote the dual by $\shK_i(V,\shL,m)$.

\begin{lemma} \label{koszulexakt}
If $V$ is base point free, i.e., $V\otimes\shO_X\ra\shL,\,s\otimes g\mapsto s(g)$ 
is surjective, then for each $m\in\ZZ$ the Koszul complexes 
$\shK^\bullet(V,\shL,m) \hbox{ and }\shK_\bullet(V,\shL,m)\hbox{ are exact.}$
\end{lemma}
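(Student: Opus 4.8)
The plan is to verify exactness locally and pointwise, reducing to the exactness of an ordinary Koszul complex over a local ring. Since the Koszul complexes $\shK^\bullet(V,\shL,m)$ and $\shK_\bullet(V,\shL,m)$ are complexes of locally free sheaves, it suffices to check exactness on stalks (even on fibers, after the usual Nakayama reduction), so I may assume $X=\Spec R$ for a local ring $R$ with maximal ideal $\mathfrak m$, that $\shL$ is trivialized by a generator $\ell$, and that $m=0$ (twisting by a line bundle that is locally trivial changes nothing locally). Under a choice of trivialization, $\phi(1)\in V^*\otimes\shL$ becomes a linear functional, i.e. a tuple $(a_1,\dots,a_N)$ where $a_j\in R$ is the image of a basis $s_1,\dots,s_N$ of $V$ under the trivialization; the complex $\shK^\bullet(V,\shL)$ is then literally the Koszul complex $\bigwedge^\bullet(R^N)$ with differential wedging against $(a_1,\dots,a_N)$, and $\shK_\bullet(V,\shL)$ is its dual, the Koszul complex with the contraction differential.

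The key input is the base-point-freeness hypothesis: $V\otimes\shO_X\to\shL$ is surjective, so at the closed point of $\Spec R$ the tuple $(a_1,\dots,a_N)$ generates the unit ideal, i.e. some $a_j$ is a unit in $R$. Now I invoke the standard fact (e.g. \cite{eisenbud}, Exc.~17.19, or the homotopy-invariance of the Koszul complex): if one of the entries of the defining vector of a Koszul complex is a unit, then the complex is split exact — concretely, multiplication-by-$a_j^{-1}$-followed-by-contraction-with-$e_j$ (respectively wedging with $e_j$) gives a contracting homotopy. Hence $\shK^\bullet(V,\shL)$ and its dual are exact at every stalk, and therefore exact as complexes of sheaves on $X$.

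The only point requiring a little care — and the main (mild) obstacle — is the bookkeeping of the twist and the trivialization: one should note that $\shK^i(V,\shL,m)=\shL^{m+i}\otimes\bigwedge^i V^*$, and that multiplying the differential by a local unit (the transition function of $\shL^m$, or a rescaling $a_j\mapsto a_j^{-1}$) does not affect exactness, so the reduction to the split case with a unit entry goes through unchanged for every $m$. The dual complex needs the symmetric observation that the dual of a split-exact complex of finitely generated free modules is again split exact, which is immediate. Putting these together yields exactness of both $\shK^\bullet(V,\shL,m)$ and $\shK_\bullet(V,\shL,m)$ for all $m\in\ZZ$.
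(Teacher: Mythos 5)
Your proof is correct and follows essentially the same route as the paper: reduce to stalks, use base-point-freeness to produce a unit entry in the local Koszul complex, and invoke the standard fact that a Koszul complex on a sequence containing a unit is exact (you phrase it via the contracting homotopy, the paper via Eisenbud Prop.~17.14(a) that multiplication by $x_i$ annihilates the cohomology — two formulations of the same observation). The only cosmetic difference is the direction of the duality: the paper proves exactness for $\shK_\bullet$ and dualizes to get $\shK^\bullet$, while you do the reverse; both are equally valid since the complexes are locally free.
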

\begin{proof} Since $\shL$ is locally free, it suffices to show exactness
for one $m$. Furthermore, it is enough to show it for $\shK_\bullet(V,\shL)$, and
exactness of the dual follows by
applying $\shHom(\cdot,\shO_X)$ because the complex is locally free.
Exactness can be considered at stalks. After choosing a basis
$\{x_1,...,x_n\}$ of $V$, in the notation of \cite{eisenbud}, the Koszul complex is locally isomorphic to
$K(x_1,...,x_n)$. Due to base point freeness of
$V$, at each stalk at least one of the $x_i$ is invertible.
By [\cite{eisenbud}, Prop.17.14 a)] multiplication with $x_i$ annihilates the cohomology
of $\shK^\bullet(V,\shL)$ which is thus trivial.
\end{proof}


\subsection{Semi-ample line bundles on toric varieties} \label{sec_torickoszul}
In the following, we fix a toric variety $X$ with character lattice $M$.
A Cartier divisor $Z$ on $X$ is linearly equivalent to a 
(non-unique) torus invariant Cartier divisor $D$. By a standard procedure 
(see \cite{fulton}) we associate to $D$ its support function $\psi_D$ which
is a piecewise linear function on the fan of $X$.
By [\cite{danilov1},~Prop.~6.7], 
semi-ampleness of $Z$ is equivalent to the convexity of $\psi_D$.
Convex piecewise linear functions, in turn, correspond to 
lattice polytopes $\Delta\subset M\otimes_\ZZ \RR$ whose normal fan can be refined 
to the fan of $X$, and $\Delta$ is uniquely associated to $Z$ up 
to translation by lattice vectors.
By [\cite{fulton}, Lemma in 3.4], one has
$$\Gamma(X,\shO_X(D))=\bigoplus_{m\in\Delta\cap M} \k\cdot z^m.$$
An element of this corresponds to an effective divisor which is linearly equivalent to $D$.
The correspondence is 1:1 up to the operation of $\k^\times$ on $\Gamma(X,\shO_X(D))$.
The element of the right hand side gives an equation of the divisor on 
the big torus as a Laurent polynomial. 
In the following, when we talk about the \emph{equation of a divisor} we mean a 
corresponding element $f\in\bigoplus_{m\in\Delta\cap M} \k\cdot z^m$.
Occasionally, we will consider $f$ as an element of the field of fractions 
$\Quot(\shO_X)$ in which $\Gamma(X,\shO_X(D))$ naturally embeds as $\{q\,|\, \op{div}(q)\ge -D\}$.
In the following, we always consider a fixed translation representative of~$\Delta$.

\begin{lemma} \label{globalsec}
Let $Z$ be an effective divisor on $X$ which is linearly equivalent to $D$ and is given by an equation
$f$. There are isomorphisms
$$\bigoplus_{m\in\Delta\cap M} \k\cdot z^m \stackrel{\cdot \frac1{f}}{\lra} \Gamma(X,\shO_X(Z))
\qquad\hbox{ and }\qquad
\bigoplus_{m\in\Delta\cap M} \k\cdot ({z^m})^*\stackrel{\cdot f}{\lra} \Gamma(X,\shO_X(Z))^*.$$
\end{lemma}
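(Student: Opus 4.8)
The plan is to exhibit both isomorphisms very concretely, as multiplication by rational functions in $\Quot(\shO_X)$, and to check that these maps land in the right place and are bijective. First I would recall that by the displayed formula just before the lemma, $\Gamma(X,\shO_X(D))=\bigoplus_{m\in\Delta\cap M}\k\cdot z^m$, and that $\shO_X(Z)\cong\shO_X(D)$ since $Z\sim D$; the chosen equation $f$ of $Z$ is precisely an element of $\bigoplus_{m\in\Delta\cap M}\k\cdot z^m$ with $\op{div}(f)=Z-D$ (here I use the fixed translation representative of $\Delta$). Viewing everything inside $\Quot(\shO_X)$, a section $g\in\Gamma(X,\shO_X(D))$ satisfies $\op{div}(g)\ge -D$, hence $\op{div}(g/f)=\op{div}(g)-\op{div}(f)\ge -D-(Z-D)=-Z$, so $g/f\in\Gamma(X,\shO_X(Z))=\{q\mid\op{div}(q)\ge -Z\}$. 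This shows the first map $\cdot\tfrac1f$ is well-defined; its inverse is multiplication by $f$, by the symmetric computation $\op{div}(hf)=\op{div}(h)+\op{div}(f)\ge -Z+(Z-D)=-D$ for $h\in\Gamma(X,\shO_X(Z))$. So the first map is an isomorphism of $\k$-vector spaces.

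For the second isomorphism I would simply dualize. Applying $\Hom_\k(-,\k)$ to the isomorphism $\cdot\tfrac1f\colon\bigoplus_{m\in\Delta\cap M}\k\cdot z^m\xrightarrow{\sim}\Gamma(X,\shO_X(Z))$ yields an isomorphism $\Gamma(X,\shO_X(Z))^*\xrightarrow{\sim}\bigl(\bigoplus_{m\in\Delta\cap M}\k\cdot z^m\bigr)^*=\bigoplus_{m\in\Delta\cap M}\k\cdot(z^m)^*$, whose inverse I want to identify with ``$\cdot f$''. Concretely, the transpose of ``multiply by $\tfrac1f$'' sends a functional $\lambda$ on $\Gamma(X,\shO_X(Z))$ to the functional $z^m\mapsto\lambda(z^m/f)$; calling this the action of $f$ on the dual side is exactly the statement that $\langle f\cdot\lambda,\,z^m\rangle=\langle\lambda,\,z^m/f\rangle$, matched up under the pairing between $\bigoplus\k\cdot z^m$ and $\bigoplus\k\cdot(z^m)^*$. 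I would phrase this by saying the second arrow is the dual (equivalently, the inverse transpose) of the first, which is automatically an isomorphism; the notation $\cdot f$ records that it is induced by multiplication by $f$ in $\Quot(\shO_X)$, compatibly with the natural embedding $\Gamma(X,\shO_X(D))\hookrightarrow\Quot(\shO_X)$.

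The only genuinely nontrivial point — the ``main obstacle'', though it is minor — is bookkeeping about translation representatives and the precise meaning of $\op{div}(f)$. One must fix, once and for all, the same representative of $\Delta$ used to write $\Gamma(X,\shO_X(D))=\bigoplus_{m\in\Delta\cap M}\k z^m$, and observe that then the chosen equation $f$ of $Z$ has $\op{div}(f)+D=Z$ as honest divisors (not merely up to linear equivalence); this is what makes the divisor inequalities above come out exactly. Granting that, everything else is the formal verification that $g\mapsto g/f$ and $h\mapsto hf$ are mutually inverse $\k$-linear maps, which is immediate from $\op{div}$ being additive. So the proof is really just: identify $\Gamma(X,\shO_X(D))$ with lattice points of $\Delta$, realize $\Gamma(X,\shO_X(Z))$ inside $\Quot(\shO_X)$, multiply by $f^{\mp1}$, and dualize.
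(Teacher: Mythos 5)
Your proof is correct and follows essentially the same approach as the paper's: identify $\shO_X(D)=f\cdot\shO_X(Z)$ inside $\Quot(\shO_X)$, take global sections, and dualize. The paper's version is a two-line sketch; you have merely filled in the routine divisor bookkeeping that makes "induces" precise.
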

\begin{proof} 
We have $\shO_X(D)=f\cdot\shO_X(Z)$ in the field of fractions of $\shO_X$.
This induces the isomorphism of global sections and the one for their duals.
\end{proof}

Set $N=\Hom(M,\ZZ)$, and choose some equation $f=\sum_{m\in\Delta\cap M} f_m z^m$ 
of a divisor $Z$. 
Using Lemma~\ref{globalsec}, we define the \emph{log derivation map}
$$\partial_Z:(N\oplus\ZZ)\otimes_\ZZ \k \ra \Gamma(X,\shO_X(Z))$$
via
$(n,a)\mapsto f^{-1}\partial_{(n,a)} f 
= f^{-1}\sum_{m\in\Delta\cap M} f_m \langle (n,a),(m,1) \rangle z^m$
and denote its image by $V$. 
Note that neither $V$ nor $\partial_Z$ depends on the scalar multiple of an equation.
However, $\partial_Z$ depends on the translation representative of $\Delta$
whereas $V$ doesn't depend on it. 
In Remark~\ref{moveDelta}, we discuss what happens if we move $\Delta$.
We define the cone
$C(\Delta)=\{\sum_{v\in\Delta^{[0]}} \lambda_v (v,1)\,|\, \lambda_v\ge 0\}
\subset (M\oplus\ZZ)\otimes_\ZZ\RR.$
where $\Delta^{[k]}$ is the set of $k$-dimensional faces of $\Delta$.
Let $LC(\Delta)$ denote the linear subspace generated by $C(\Delta)$.
We define $\hat T_\Delta=(LC(\Delta)\cap (M\oplus\ZZ))\otimes_\ZZ \k$ and think of
it as the $\k$-valued tangent space of the cone.

\begin{lemma} \label{dimV}
If $Z$ is a semi-ample effective divisor on $X$ with Newton polytope $\Delta$, then
$$0\ra \hat T_\Delta^\perp \ra (N\oplus\ZZ)\otimes_\ZZ \k \stackrel{\partial_Z}{\lra} V\ra0$$
is an exact sequence and thus $\dim V=\dim\Delta+1$.
\end{lemma}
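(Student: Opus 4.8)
The plan is to identify the kernel of $\partial_Z$ explicitly and then count dimensions. First I would note that, by construction, $\partial_Z(n,a) = f^{-1}\sum_{m} f_m \langle (n,a),(m,1)\rangle z^m$, and since the isomorphism $\Gamma(X,\shO_X(Z))^* \cong \bigoplus_{m\in\Delta\cap M}\k\cdot(z^m)^*$ of Lemma~\ref{globalsec} is injective on the span of the monomials $z^m$ with $m$ a \emph{vertex} of $\Delta$ (these are linearly independent in $\Gamma(X,\shO_X(Z))$), an element $(n,a)$ lies in $\ker\partial_Z$ precisely when $\langle (n,a),(v,1)\rangle = 0$ for every $v\in\Delta^{[0]}$, hence for every $m\in\Delta\cap M$ by taking convex combinations. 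Here I would use non-degeneracy is \emph{not} needed — only that the $f_m$ are nonzero for $m$ running over vertices, which follows from the definition of the Newton polytope $\Delta$ of $Z$ (the convex hull of monomials with nonzero coefficient has exactly these as vertices). So $\ker\partial_Z = \{(n,a) : \langle(n,a),(v,1)\rangle = 0\ \forall v\in\Delta^{[0]}\} = C(\Delta)^\perp = LC(\Delta)^\perp = \hat T_\Delta^\perp$, where I pass to the $\k$-linear-span because the perp of a cone equals the perp of its linear span.

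The second step is the dimension count. Exactness of $0\to \hat T_\Delta^\perp \to (N\oplus\ZZ)\otimes_\ZZ\k \xrightarrow{\partial_Z} V\to 0$ is immediate once the kernel is identified, since $V$ is defined as the image of $\partial_Z$. Then $\dim V = \dim\big((N\oplus\ZZ)\otimes\k\big) - \dim \hat T_\Delta^\perp = (n+1) - \big((n+1) - \dim LC(\Delta)\big) = \dim LC(\Delta) = \dim C(\Delta) = \dim\Delta + 1$, the last equality because the cone over $\Delta\times\{1\}$ has dimension one more than $\Delta$ itself. I would also remark that semi-ampleness enters only to guarantee (via the earlier discussion, using \cite{danilov1},~Prop.~6.7 and \cite{fulton}) that $\Delta$ really is the Newton polytope associated to $Z$ and that $\Gamma(X,\shO_X(D))$ has the stated monomial basis, so the identification of $V$ with the image makes sense.

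The main obstacle is the first step: being careful that the vanishing of $\partial_Z(n,a)$ as an element of $\Gamma(X,\shO_X(Z))$ — not as a formal Laurent polynomial — still forces $\langle(n,a),(v,1)\rangle=0$ for all vertices. One cannot blindly read off coefficients, because distinct monomials $z^m$ need not remain linearly independent in $\Gamma(X,\shO_X(D))$ in general; however, after clearing the $f^{-1}$, one works inside $\bigoplus_{m\in\Delta\cap M}\k\cdot z^m$, which by the Lemma in~\cite{fulton},~3.4 \emph{is} a direct sum, so the coefficients $f_m\langle(n,a),(m,1)\rangle$ genuinely all vanish; since $f_m\neq 0$ whenever $m$ is a vertex, the linear condition on vertices follows, and affine-linearity of $(n,a)\mapsto\langle(n,a),(\cdot,1)\rangle$ propagates it to all of $\Delta\cap M$. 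The rest is bookkeeping with perps of cones versus their linear spans.
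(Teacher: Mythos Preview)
Your argument is correct and uses the same key ingredients as the paper's proof: linear independence of the monomials $z^m$ in $\Gamma(X,\shO_X(D))$ and the nonvanishing of $f_v$ at vertices of $\Delta$. The only difference is organizational: you compute $\ker\partial_Z$ directly by reading off coefficients, while the paper first notes the obvious containment $\hat T_\Delta^\perp\subseteq\ker\partial_Z$ and then bounds $\dim V$ from below by picking $\dim\Delta+1$ affinely independent vertices $v_1,\dots,v_n$, dual vectors $n_i$ with $\langle n_i,(v_j,1)\rangle=\delta_{ij}$, and observing that the composition $(N\oplus\ZZ)\otimes\k\to\Gamma(X,\shO_X(Z))\twoheadrightarrow\bigoplus_i\k f^{-1}z^{v_i}$ sends $n_i\mapsto f_{v_i}f^{-1}z^{v_i}$ and is therefore surjective. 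Your direct kernel computation is arguably cleaner; the paper's surjectivity argument amounts to the same projection-to-vertex-coefficients idea phrased dually.

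One small caution: your justification that $f_v\neq 0$ at vertices appeals to ``the definition of the Newton polytope of $Z$ (the convex hull of monomials with nonzero coefficient)'', but in this paper $\Newton(Z)$ is defined via a linearly equivalent \emph{toric} divisor, not via the support of $f$. The paper's own proof asserts $f_v\neq 0$ ``by the hypotheses'' without elaboration, so it is treating this as part of the standing setup (indeed all the $Z$'s arising later are non-degenerate, which includes this condition). You should align your phrasing with that convention rather than invoking a different definition of Newton polytope.
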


\begin{proof} We show $\dim V\ge\dim\Delta+1$, then the assertion follows because
$LC(\Delta)^\perp\cap((M\oplus\ZZ)\otimes\kk)$ clearly lies in the kernel.
Let $f$ be an equation of $Z$. By the 
hypotheses, $f_v\neq 0$ for $v\in\Delta^{[0]}$. 
Let $\{v_1,...,v_n\}$ be a $\dim\Delta+1$ element subset of $\Delta^{[0]}$ 
whose convex hull has dimension $\dim\Delta$.
For each $1\le i\le n$, choose $n_i\in (N\oplus\ZZ)\otimes_\ZZ \k$
such that $\langle n_i,(v_j,1)\rangle=\delta_{ij},$
the latter being the Kronecker symbol. Consider the composition
$$(N\oplus\ZZ)\otimes_\ZZ \k \ra \Gamma(X,\shO_X(Z))\ra \bigoplus_{i=1}^n \k f^{-1}z^{v_i}$$
where the last map is the natural projection
$\bigoplus_{m\in\Delta\cap M} \k f^{-1}z^{m} \sra \bigoplus_{i=1}^n \k f^{-1} z^{v_i}.$
The image of $n_i$ is $f_{v_i}f^{-1}z^{v_i}$. Therefore, the map is surjective, and we are done.
\end{proof}

We define the ``tangent space''
$T_{\Delta}=\{a\cdot(x-y)\,|\,a\in \k, x,y\in\Delta^{[0]}\}$
and obtain an exact sequence
$$0\ra T_\Delta\ra \hat T_\Delta \stackrel{h}{\lra} \k\ra 0$$
where $h$ is the height function coming from the projection to the $\ZZ$ summand.
The above lemma induces an isomorphism $(\partial_Z)^*:V^*\ra \hat T_\Delta.$
We may plug it into the Koszul complex to get an isomorphism of complexes
$$\begin{CD}
\dots @>>> \shK^{l-1}(V,\shO(Z),m) @>d^{l-1}>>  \shK^l(V,\shO(Z),m) @>d^{l}>> \dots \\
&&@VV{\id\otimes\bigwedge^{l-1}(\partial_Z)^*}V  @VV{\id\otimes\bigwedge^{l}(\partial_Z)^*}V \\
\dots @>>> \shO_X((l-1+m)Z)\otimes_\k\bigwedge^{l-1}\hat{T}_\Delta 
@>>> \shO_X((l+m)Z)\otimes_\k\bigwedge^{l}\hat{T}_\Delta @>>>\dots
\end{CD}$$
We denote the complex in the lower row by $\shK^{\bullet}(T_\Delta,Z,m).$
This complex will play a major role in this paper. 
We now give an explicit description of its differential.
\begin{lemma} \label{dZexplicit}
The differential of $\shK^{\bullet}(T_\Delta,Z,m)$ is
$$d_Z:u\otimes \alpha 
\mapsto u\cdot\sum_{m\in\Delta\cap M} f_m \frac1f z^{m}\otimes (m,1)\wedge \alpha.$$
\end{lemma}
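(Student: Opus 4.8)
The plan is to simply trace the isomorphism $\id\otimes\bigwedge^l(\partial_Z)^*$ through the Koszul differential and read off the formula. First I would fix a basis $\{x_1,\dots,x_n\}$ of $V$, so that $\phi(1)\in V^*\otimes\shL$ is written as $\sum_i x_i^*\otimes x_i$ under $\shL=\shO_X(Z)$, where we think of $x_i$ as a global section of $\shO_X(Z)$ (equivalently, via Lemma~\ref{globalsec}, as $f^{-1}z^{m}$-type expressions). The Koszul differential $d^l$ on $\shK^\bullet(V,\shO(Z),m)$ sends $s\otimes\xi$ to $\phi(1)\wedge(s\otimes\xi)=\sum_i (x_i s)\otimes(x_i^*\wedge\xi)$, i.e.\ it is wedging with the tautological element. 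This is the only input from the Koszul side.

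Next I would transport this along $(\partial_Z)^*\colon V^*\xrightarrow{\ \sim\ }\hat T_\Delta$ from Lemma~\ref{dimV}. By construction $\partial_Z$ sends $(n,a)\in(N\oplus\ZZ)\otimes\k$ to $f^{-1}\partial_{(n,a)}f=f^{-1}\sum_{m\in\Delta\cap M} f_m\langle(n,a),(m,1)\rangle z^m\in V$, so under the pairing of $V^*$ with $V$ and the identification of $V$ with a subspace of $\Gamma(X,\shO_X(Z))$ via Lemma~\ref{globalsec}, the dual map $(\partial_Z)^*$ is characterized by $\langle (\partial_Z)^*(\lambda),(n,a)\rangle=\lambda(\partial_Z(n,a))$. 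The key computation is that the tautological class $\phi(1)=\sum_i x_i^*\otimes x_i$, when the $V^*$-factor is pushed through $(\partial_Z)^*$, becomes exactly $\sum_{m\in\Delta\cap M} f_m\,\tfrac1f z^m\otimes(m,1)\in\Gamma(X,\shO_X(Z))\otimes\hat T_\Delta$; this is just the statement that $\partial_Z$ is, tautologically, contraction of $\sum_m f_m\tfrac1f z^m\otimes(m,1)$ against $(N\oplus\ZZ)\otimes\k$, which is immediate from the defining formula for $\partial_Z$ and does not depend on the choice of basis $\{x_i\}$. Wedging with this element gives precisely the asserted formula $d_Z\colon u\otimes\alpha\mapsto u\cdot\sum_{m\in\Delta\cap M} f_m\tfrac1f z^m\otimes(m,1)\wedge\alpha$, and the commutativity of the diagram defining $\shK^\bullet(T_\Delta,Z,m)$ then identifies $d_Z$ with the Koszul $d^l$.

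The argument is essentially bookkeeping, so there is no serious obstacle; the only point requiring a little care is being consistent about the several identifications in play at once — the inclusion $V\hookrightarrow\Gamma(X,\shO_X(Z))$, the twist isomorphism $\cdot\tfrac1f$ of Lemma~\ref{globalsec}, and the sign/ordering conventions in $\phi(1)\wedge(-)$ versus $(m,1)\wedge\alpha$. I would therefore spell out the tautological class $\phi(1)$ in both descriptions explicitly once, check that $(\id\otimes(\partial_Z)^*)(\phi(1))=\sum_m f_m\tfrac1f z^m\otimes(m,1)$, and then the formula for $d_Z$ on a general element follows by $\k$-linearity and the Leibniz-free fact that the Koszul differential is left multiplication by this degree-one element in the exterior algebra. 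One should also note that the formula is visibly independent of the chosen basis of $V$ and of the chosen equation $f$ up to scalar (both numerator and denominator scale), consistent with the remarks preceding the lemma.
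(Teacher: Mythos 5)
Your argument is correct and is in essence the same as the paper's: the paper also reduces the lemma to tracing the tautological element of $V\otimes V^*$ through the identification $(\partial_Z)^*\colon V^*\sira\hat T_\Delta$, only it does so by first considering the composition $\Gamma(X,\shO_X(Z))^*\to V^*\to\hat T_\Delta$ and computing the image of the basis elements $(f^{-1}z^m)^*$ one at a time (each maps to $f_m\cdot(m,1)$), whereas you work basis-independently by observing directly that $\partial_Z$ is contraction against $\sum_m f_m\tfrac1f z^m\otimes(m,1)$. Both are the same bookkeeping computation, so the proposal matches the paper's proof.
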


Note that in the lemma, we understand $\frac1f z^{m}$ as a rational function.

\begin{proof} Let us take a look at the composition
$\Gamma(X,\shO_X(Z))^*\ra V^*\stackrel{(\partial_Z)^*}{\lra}\hat{T}_\Delta.$
Its dual map sends $(n,a)$ to 
$\sum_{m\in\Delta\cap M} f_m \langle (n,a),(m,1) \rangle f^{-1}z^m$, so we have
$$f(z^m)^*=(f^{-1}z^m)^*\ \mapsto\  
((n,a)\mapsto f_m \langle (n,a),(m,1) \rangle)=f_m\cdot (m,1)$$
Now it is straightforward to see that the Koszul differential
$u\otimes \alpha 
\mapsto u\cdot\sum_{m\in\Delta\cap M} \frac1f z^{m}\otimes (\frac1f z^{m})^*\wedge \alpha$
transforms to $d_Z$ as given in the assertion.
\end{proof}

Recall the non-degeneracy definition from before Def.~\ref{def_ht}.
The set of non-degenerate divisors form a Zariski open set in 
$\Gamma(X,\shO_X(D))$ which can be deduced from Bertini's theorem, see
\cite{danilov1}.

\begin{lemma} \label{bpf} 
If $Z$ is non-degenerate, then $V$ is base-point free.
\end{lemma}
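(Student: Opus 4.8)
The plan is to show that base-point freeness of $V$ is a pointwise statement on $X$ that can be checked on the big torus and on each torus orbit separately, and then to use the non-degeneracy hypothesis orbit by orbit. Recall that $V$ is generated by the log derivatives $f^{-1}\partial_{(n,a)}f$ for $(n,a)\in (N\oplus\ZZ)\otimes\k$, and that $V\otimes\shO_X\ra\shO_X(Z)$ is base-point free iff at every point $x\in X$ some element of $V$ is nonvanishing at $x$, equivalently the common zero locus of all sections in $V$ is empty. Using Lemma~\ref{globalsec} we may identify $V$ (via multiplication by $f$) with a subspace $\widetilde V\subseteq\bigoplus_{m\in\Delta\cap M}\k z^m=\Gamma(X,\shO_X(D))$, namely the span of the Laurent polynomials $\partial_{(n,a)}f=\sum_{m} f_m\langle (n,a),(m,1)\rangle z^m$; since the two maps $V\otimes\shO_X\ra\shO_X(Z)$ and $\widetilde V\otimes\shO_X\ra\shO_X(D)$ differ by the isomorphism $\cdot\tfrac1f$, one is base-point free iff the other is. So it suffices to show that the sections in $\widetilde V$ have no common zero on $X$.

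Next I would stratify $X$ by torus orbits $O_\sigma$ and analyze the common zero locus of $\widetilde V$ orbit by orbit. Fix a cone $\sigma$ in the fan of $X$ and let $\Delta_\sigma=\{m\in\Delta: \langle m,\cdot\rangle \text{ is minimized on }\sigma\}$ be the corresponding face of $\Delta$ (using the correspondence between faces of $\Delta$ and cones of the normal fan, which is refined by the fan of $X$). The restriction of a section $\sum_m c_m z^m\in\Gamma(X,\shO_X(D))$ to the orbit $O_\sigma$ is, up to a common monomial unit, the Laurent polynomial $\sum_{m\in\Delta_\sigma} c_m z^m$ on the torus $O_\sigma$. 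Applying this to the generators $\partial_{(n,a)}f$: the restriction to $O_\sigma$ of $\partial_{(n,a)}f$ is (up to a unit) $\sum_{m\in\Delta_\sigma} f_m\langle (n,a),(m,1)\rangle z^m$. Letting $(n,a)$ range over $(N\oplus\ZZ)\otimes\k$, the $\k$-span of these restrictions is exactly the space of log derivatives of $f_\sigma:=\sum_{m\in\Delta_\sigma}f_m z^m$, which is the equation of the divisor $Z\cap O_\sigma$ cut out on $O_\sigma$ — and by the non-degeneracy hypothesis the Newton polytope of $Z$ restricted to each orbit is the corresponding face of $\Delta$ and $Z$ meets each torus orbit regularly (or is empty there).

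Now on a single torus $(\k^\times)^d$, for a Laurent polynomial $g=\sum_{m\in\Pi}g_m z^m$ with all vertex coefficients of $\Pi$ nonzero, I claim the log derivatives $\partial_{(n,a)}g$ have no common zero: indeed by Lemma~\ref{dimV} their span has dimension $\dim\Pi+1$, and if we pick vertices $v_0,\dots,v_d$ of $\Pi$ spanning an affine basis, the argument in the proof of Lemma~\ref{dimV} produces $n_0,\dots,n_d$ whose images under the log derivation map project isomorphically onto $\bigoplus_i \k\, g_{v_i}z^{v_i}$; at any point of the torus at least one $z^{v_i}$ is nonzero (monomials are units), so at least one $\partial_{n_i}g$ is nonvanishing there. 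Applying this with $g=f_\sigma$ on $O_\sigma$ (using that the vertex coefficients of $\Delta_\sigma$ are nonzero, which is the $f_v\neq 0$ part of non-degeneracy) shows $\widetilde V$ has no common zero on $O_\sigma$; since the regular-intersection part of non-degeneracy guarantees $Z$ itself is well-behaved, the point-of-$Z$ case is handled by the same computation restricted to the relevant orbit. Ranging over all $\sigma$ covers $X$, so $\widetilde V$, hence $V$, is base-point free. The main obstacle is the bookkeeping of restrictions to orbits — making precise that "restrict the equation to $O_\sigma$" sends $f$ to $f_\sigma$ and commutes with taking log derivatives, and that non-degeneracy of $Z$ gives exactly the hypotheses ($\Newton(Z\cap O_\sigma)=\Delta_\sigma$ and nonvanishing vertex coefficients) needed to rerun the Lemma~\ref{dimV} argument on each orbit.
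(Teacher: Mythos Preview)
The paper simply cites \cite{batyrev1},~Prop.~4.3, and your overall strategy --- pass to $\widetilde V\subset\Gamma(X,\shO_X(D))$, stratify $X$ by torus orbits, observe that restriction to $O_\sigma$ sends $\partial_{(n,a)}f$ to $\partial_{(n,a)}f_\sigma$, then argue orbit by orbit --- is exactly the right shape for that argument. But the key step on each orbit is wrong.

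Your claim ``for a Laurent polynomial $g=\sum_{m\in\Pi}g_m z^m$ with all vertex coefficients nonzero, the log derivatives $\partial_{(n,a)}g$ have no common zero on the torus'' is false. Take $g=(z-1)^2=1-2z+z^2$ on $\k^\times$: the Newton polytope is $[0,2]$ with both vertex coefficients equal to $1$, yet every log derivative $\partial_{(n,a)}g=a-2(n+a)z+(2n+a)z^2$ vanishes at $z=1$. The Lemma~\ref{dimV} argument you invoke only shows that the $\partial_{n_i}g$ \emph{project} onto $\bigoplus_i \k\, g_{v_i}z^{v_i}$, not that $\partial_{n_i}g=g_{v_i}z^{v_i}$; the remaining monomial terms can cancel at a point, and in the example they do.

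What is missing is exactly the regular-intersection part of non-degeneracy, which you mention but never actually use. The correct argument on each $O_\sigma$ splits into two cases: at $p\in O_\sigma\setminus Z$ one has $f_\sigma(p)\neq 0$, so $\partial_{(0,1)}f_\sigma=f_\sigma$ already does not vanish; at $p\in Z\cap O_\sigma$, regularity of $Z\cap O_\sigma$ means $df_\sigma(p)\neq 0$, so some torus-invariant derivative $\partial_n f_\sigma$ is nonzero at $p$. This replaces your step~5 and finishes the proof.
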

\begin{proof} See \cite{batyrev1}, Prop.~4.3.
\end{proof}

\begin{lemma} \label{cohotoriclb}
Let $Z$ be a semi-ample effective divisor on a toric variety $X$ whose Newton polytope is
$\Delta$. For $m\in\ZZ$, we have
$$H^i(X,\shO_X(mZ))=0\qquad\hbox{ for }0<i<\dim\Delta\hbox{ and for }i>0,m\ge 0.$$
\end{lemma}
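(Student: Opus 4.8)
The plan is to reduce the vanishing statement to known cohomology vanishing for ample (or big-and-nef) line bundles on the projective toric variety $X_\Delta$ whose fan is the normal fan of $\Delta$. First I would dispose of the easy range $i>0$, $m\ge 0$: here $mZ$ is again a semi-ample effective divisor, and by a standard argument (e.g.\ via the combinatorial computation of $H^i(X,\shO_X(D))$ in terms of the polytope, cf.\ \cite{fulton}, or by Demazure vanishing) all higher cohomology of a globally generated line bundle on a complete toric variety vanishes; so $H^i(X,\shO_X(mZ))=0$ for $i>0$, $m\ge 0$. The substance of the lemma is the range $0<i<\dim\Delta$ with $m$ possibly negative.

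For that range I would factor the semi-ample morphism. Convexity of $\psi_D$ means the linear system $|D|$ defines a toric morphism $\pi\colon X\to X_\Delta$ with $\shO_X(Z)=\pi^*\shO_{X_\Delta}(1)$, where $\shO_{X_\Delta}(1)$ is the ample divisor on the projective toric variety $X_\Delta=\Proj\big(\bigoplus_{k\ge0}\bigoplus_{m\in k\Delta\cap M}\k z^m\big)$ of dimension $\dim\Delta$. Since $\pi$ is proper toric with $R^j\pi_*\shO_X=0$ for $j>0$ (the fibres are complete toric varieties, so $\pi_*\shO_X=\shO_{X_\Delta}$ and higher direct images vanish — again Demazure-type vanishing on the fibres, or the projection formula combined with the computation of fibre cohomology), the Leray spectral sequence collapses and gives
$$H^i(X,\shO_X(mZ))=H^i(X,\pi^*\shO_{X_\Delta}(m))=H^i(X_\Delta,\shO_{X_\Delta}(m)).$$
Now $X_\Delta$ is a projective toric variety of dimension $d:=\dim\Delta$ with ample $\shO_{X_\Delta}(1)$, so Serre–Grothendieck vanishing together with the toric form of Serre duality (or directly Batyrev–Borisov–type combinatorial vanishing, cf.\ \cite{batybor}, \cite{danilov1}) gives $H^i(X_\Delta,\shO_{X_\Delta}(m))=0$ for all $m$ whenever $0<i<d$: for $m\ge0$ this is Demazure vanishing for a nef line bundle, for $m<0$ it is the Serre-dual statement $H^i(X_\Delta,\shO_{X_\Delta}(m))\cong H^{d-i}(X_\Delta,\omega_{X_\Delta}(-m))^*$ combined with the vanishing of $H^{j}(X_\Delta,\shO_{X_\Delta}(k)\otimes\omega_{X_\Delta})$ for $0<j<d$ (Batyrev–Borisov vanishing for $\omega\otimes(\text{nef})$, using that $X_\Delta$ is Cohen–Macaulay). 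One must only be mildly careful that $X_\Delta$ need not be smooth or even simplicial; but it is Cohen–Macaulay, so Serre duality in the stated form is available.

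The main obstacle I anticipate is organizing the two auxiliary vanishing statements cleanly without $X$ or $X_\Delta$ being smooth: namely $R^j\pi_*\shO_X=0$ for $j>0$ and the Batyrev–Borisov cohomology vanishing for (nef)$\otimes\omega$ on the possibly singular projective toric variety $X_\Delta$. Both are standard in the toric literature but need to be quoted at the right level of generality (complete/projective, not smooth). An alternative that avoids $\pi$ entirely — and which I would mention as a remark — is to compute $H^i(X,\shO_X(mZ))$ directly via the polytope: for a torus-invariant representative $mD$ with $m<0$ one has $H^i(X,\shO_X(mD))=\bigoplus_{u\in M}\tilde H^{i-1}(Z_{mD,u};\k)$ where $Z_{mD,u}$ is the corresponding support set, and one shows this reduced cohomology vanishes in the stated range because, after projecting along $LC(\Delta)^\perp$, the relevant set deformation-retracts onto (a translate of) $\Delta$ or its complement in $M_\RR/\langle\Delta\rangle$, whose topology kills $\tilde H^{i-1}$ for $i-1<\dim\Delta-1$. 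Either route works; the Leray route is shorter to write, so I would take it and relegate the combinatorial computation to a one-line remark.
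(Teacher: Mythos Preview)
Your proposal is correct and yields a valid proof, but it takes a genuinely different route from the paper. The paper argues purely combinatorially in the style of \cite{fulton}, \S 3.5: after invoking the Corollary there for the case $m\ge 0$, it handles $m<0$ by decomposing $H^i(X,\shO_X(mZ))$ into $M$-graded weight pieces and identifying each such piece with a local cohomology group $H^i_C(\RR^n;\k)$, where $C$ is either empty or a polyhedral cone whose maximal linear subspace has dimension $\codim\Delta$; a short topological case analysis (contractibility of the complement when $C$ is not linear, and the cohomology of $\RR^{n-d}\setminus\{0\}$ when $C$ is a linear subspace of dimension $d=\codim\Delta$) then gives the vanishing for $0<i<\dim\Delta$. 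Your approach instead factors through the contraction $\pi\colon X\to X_\Delta$, collapses the Leray spectral sequence using $R^j\pi_*\shO_X=0$, and finishes with Serre duality and Kodaira-type vanishing on the $(\dim\Delta)$-dimensional projective toric variety $X_\Delta$. This is more structural but invokes heavier results (Cohen--Macaulayness of toric varieties, dualising sheaves, toric Kodaira vanishing in the singular setting), whereas the paper's argument is elementary once Fulton's weight decomposition is available. Interestingly, the ``alternative'' combinatorial computation you sketch in your closing remark is essentially the route the paper actually takes.
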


\begin{proof} 
This is easy using the techniques of [\cite{fulton}, 3.5].
The part for $m\ge 0$ is, in fact, the Corollary in loc.cit.. 
By similar arguments, the remaining part can be reduced to showing that
$H^i(\RR^n,\RR^n\backslash C;\k)=H^i_C(\RR^n;\k)=0$ for $0<i<\dim\Delta$ 
where $C$ is either empty or a polyhedral cone whose largest
linear subspace has dimension $\codim\Delta$. The empty case is trivial, otherwise
one may use $H^i_C(\RR^n;\k)=H^{i-1}(\RR^n\backslash C;\k)\hbox{ for }i>1$ and
$H^0_C(\RR^n;\k)=H^1_C(\RR^n;\k)=0$
via the long exact sequence of relative cohomology. There are two possibilities,
either $C$ is a linear subspace or it is not. If it is not then its complement
is contractible and we are done. 
If it is a linear subspace, its dimension is $d=\codim\Delta$. Then
$H^{i-1}(\RR^n\backslash C;\k)=H^{i-1}(\RR^{n-d}\backslash \{0\};\k)$
which vanishes for $(i-1)<n-d-1\ \IFF\ i<\dim\Delta$.
\end{proof}

\begin{proposition} \label{koszulcoho}
Let $Z$ be a non-degenerate divisor on a toric variety $X$ with Newton polytope $\Delta$.
Set $n=\dim\Delta+1$ and 
$HK^i(V,Z,m):=H^i_{d^\bullet}\Gamma(\shK^\bullet(V,\shO_X(Z),m))$.
We have
$$HK^i(V,Z,m)=\left\{\begin{array}{ll}0 & \hbox{ for }i\neq n\\ 
R(Z)_{n+m}\otimes_\k\bigwedge^{n}V^*&\hbox{ for }i=n. \end{array}\right.$$
\end{proposition}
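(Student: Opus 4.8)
The plan is to compute the hypercohomology of the complex $\shK^\bullet(V,\shO_X(Z),m)$ by means of the standard second spectral sequence of hypercohomology, whose $E_1$-page is $E_1^{i,j} = H^j(X,\shK^i(V,\shO_X(Z),m)) = H^j(X,\shO_X((i+m)Z))\otimes\bigwedge^i V^*$, converging to $\HH^{i+j}$ of the complex. The key input is twofold: the exactness of the Koszul complex of sheaves $\shK^\bullet(V,\shO_X(Z),m)$ (Lemma~\ref{koszulexakt}, applicable since $V$ is base-point free by Lemma~\ref{bpf}), which says the hypercohomology of the complex vanishes entirely; and the vanishing $H^j(X,\shO_X((i+m)Z)) = 0$ for $0<j<\dim\Delta$ and for $j>0$ when $i+m\ge 0$ (Lemma~\ref{cohotoriclb}). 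Combining these, on the $E_1$-page the only possibly nonzero rows are $j=0$ and $j=\dim\Delta$, and in the row $j=\dim\Delta$ only the columns with $i+m<0$ survive.

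First I would argue that the row $j=0$ computes $HK^i(V,Z,m) = H^i_{d^\bullet}\Gamma(\shK^\bullet(V,\shO_X(Z),m))$ by definition, so the content of the proposition is precisely a statement about the $E_2^{i,0}$ terms. Since the abutment $\HH^{i+j}$ vanishes (Lemma~\ref{koszulexakt}), and the spectral sequence has at most two nonzero rows ($j=0$ and $j=\dim\Delta = n-1$), the only differentials that can occur are $d_n \colon E_n^{i,0} \to E_n^{i+n, 1-n}$, i.e. $d_{n}\colon E_{n}^{i-n,\,n-1}\to E_{n}^{i,0}$, with $n = \dim\Delta+1$. Therefore $E_2^{i,0} = E_n^{i,0}$ and the surviving cohomology on the bottom row is the cokernel of the incoming differential from the top row. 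Degenerating the top row: for $i+m\ge 0$ the term $H^{n-1}(X,\shO_X((i+m)Z)) = 0$, so $E_1^{i,n-1}=0$ there, while for $i+m<0$ we get $H^{n-1}(X,\shO_X((i+m)Z))\otimes\bigwedge^i V^*$. Hence the only bottom-row spot that can receive a nonzero differential is $i=n$ (from $i-n=0$, where $0+m < 0$ need not hold—here I must be careful and track indices precisely), forcing $HK^i(V,Z,m)=0$ for all $i\ne n$ and identifying $HK^n(V,Z,m)$ as a cokernel.

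The remaining and most delicate step is to identify this cokernel with $R(Z)_{n+m}\otimes\bigwedge^n V^*$. Here one uses the isomorphism of complexes $\shK^\bullet(V,\shO_X(Z),m)\cong \shK^\bullet(T_\Delta,Z,m)$ established before Lemma~\ref{dZexplicit}, together with the explicit description of the differential $d_Z$ in Lemma~\ref{dZexplicit}. On global sections in top degree, $\Gamma(X,\shK^n(V,\shO_X(Z),m)) = \Gamma(X,\shO_X((n+m)Z))\otimes\bigwedge^n V^*$, and one identifies $\bigwedge^n\hat T_\Delta$ with $\bigwedge^n V^*$ (a one-dimensional space, since $\dim V = n$ by Lemma~\ref{dimV}); the incoming Koszul differential from degree $n-1$ has, via Lemma~\ref{globalsec} and Lemma~\ref{dZexplicit}, exactly the image generated by $V\cdot\Gamma(X,\shO_X((n+m-1)Z))$ inside $\Gamma(X,\shO_X((n+m)Z))$, which is the defining submodule for $R(Z)_{n+m}$. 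The main obstacle I anticipate is keeping the index bookkeeping in the spectral sequence fully correct—in particular verifying that $i=n$ is genuinely the only bottom-row spot hit by a nonzero $d_n$, which requires checking that for $i-n = 0$ one has $(i-n)+m = m$ and that the corresponding top-row term $H^{n-1}(X,\shO_X(mZ))\otimes\bigwedge^0 V^*$ does contribute (it need not vanish for $m<0$), while all other potential sources vanish by Lemma~\ref{cohotoriclb}. Once that is pinned down, matching the cokernel with the Jacobian-type module $R(Z)$ via the explicit differential is routine.
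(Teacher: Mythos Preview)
Your approach is essentially the paper's: both run the hypercohomology spectral sequence $E_1^{p,q}=H^q(X,\shK^p)$ using Lemma~\ref{cohotoriclb} to leave only rows $q=0$ and $q=n-1$, and both identify $HK^n$ directly as the cokernel of the last Koszul differential via $\bigwedge^{n-1}V^*\cong V\otimes\bigwedge^n V^*$. The only organizational difference is that the paper truncates to $\shK^{\bullet>0}$ (so the abutment is $H^{\bullet-1}(X,\shO(mZ))$ and the $d_n$ from the top row lands in columns $\ge n+1$, hence zero), whereas you keep the full exact complex (abutment zero, with the unique incoming $d_n$ hitting column $n$); your index worries are unfounded, since for $i<n$ the source $E_n^{i-n,n-1}$ sits in a negative column and hence $HK^i=E_2^{i,0}=E_\infty^{i,0}\subset\HH^i=0$. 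One phrasing to fix: the ``cokernel of the incoming $d_n$'' is $E_\infty^{n,0}=0$, not $HK^n$; what you actually (correctly) compute in your last paragraph is $HK^n=\coker\Gamma(d^{n-1})$, exactly as in the paper's Step~2.
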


\begin{remark} Using elementary results from 
Section~\ref{sec_jacobianrings} to show $R(Z)_{n+m}=0$ for $m\ge 1$,
this generalizes the $d=1$ case of the vanishing theorem 
\cite{green2},~Thm.~2.2 to toric varieties.
\end{remark}

\begin{proof} 
By Lemma~\ref{dimV}, we have $n=\dim V^*$. 
The case $Z=0$, i.e., $R(Z)_{\bullet>0}=0$, is trivial. So let us assume $n>1$.\\
\underline{Step 1:}\ We first show the vanishing for $i\neq n$.
The vanishing for $i>n$ is clear.
By Lemma~\ref{bpf} and Lemma~\ref{koszulexakt}, the complex
$\shK^{\bullet}(V,Z,m)$ is exact, and we may interpret it as
a resolution of the first non-trivial term. Hence
$H^i(X,\shO(mZ))=\HH^{i+1}(X,\shK^{\bullet>0}(V,Z,m)).$
The vanishing for $i=0,1$ follows from the left-exactness of the
functor $\Gamma$.
By Lemma~\ref{cohotoriclb}, if $m\ge 0$, we are done because
the Koszul complex is an acyclic resolution of the first term, 
so its hypercohomology coincides with its cohomology after taking $\Gamma$.
In general, we may consider the $E_1$-term of the first hypercohomology 
spectral sequence of $\shK^{\bullet>0}(V,Z,m)$.
By Lemma~\ref{cohotoriclb}, it looks like
$$
 \begin{array}{ccccccccc}   
    H^{n-1}(X,\shO_X((m+1)Z))\otimes \bigwedge^1 V^*&\stackrel{d_1}{\ra}
   &H^{n-1}(X,\shO_X((m+2)Z))\otimes \bigwedge^2 V^*&\stackrel{d_1}{\ra} &\cdots\\      
   0&&0\\
   \vdots&&\vdots\\
   0&&0\\
   H^{0}(X,\shO_X((m+1)Z))\otimes \bigwedge^1 V^*&\stackrel{d_1}{\ra}
   &H^{0}(X,\shO_X((m+2)Z))\otimes \bigwedge^2 V^*&\stackrel{d_1}{\ra} &\cdots\\      
 \end{array}
$$ 

Note, that the $d_1$-cohomology of the bottom sequence is what we are interested in. 
The spectral sequence differential
$$d_k:H^{n-1}(X,\shO_X((m+s)Z))\otimes \bigwedge^s V^*
\ra H^{n-k}(X,\shO_X((m+s+k)Z))\otimes \bigwedge^{s+k} V^*$$
hits the bottom line for $k=n$. 
Thus, the leftmost term it reaches is the one with $\bigwedge^{n+1} V^*$ which is zero.
Hence the sequence degenerates at $E_1$ and we have for $0<i<n-1$
$$0=H^i(X,\shO(mZ))=\HH^{i+1}(X,\shK^{\bullet>0}(V,Z,m))
=HK^{i+1}(V,Z,m).$$

\underline{Step 2:}\ Now, let's have a look at the last two non-trivial terms in the 
Koszul complex which are
$\Gamma(X,\shO_X((n-1+m)Z))\otimes_\k\bigwedge^{n-1}V^*\ra
\Gamma(X,\shO_X((n+m)Z))\otimes_\k\bigwedge^{n}V^*.$
Using the identification 
$\bigwedge^{n-1}V^* = 
V\otimes_\k\bigwedge^{n}V^*,$ 
this map is canonically isomorphic to the map
$$\Gamma(X,\shO_X((n-1+m)Z))\otimes_\k V
\ra \Gamma(X,\shO_X((n+m)Z))$$
tensored with the identity on $\bigwedge^{n}V^*$.
Its cokernel is thus $R(Z)_{(n+m)}\otimes_\k\bigwedge^{n}V^*$.
\end{proof}

\begin{corollary} \label{corkoszulcoho}
Let $Z$ be a non-degenerate divisor on a toric variety with Newton polytope $\Delta$.
Set $n=\dim\Delta+1$. We have
$$H^i_d \Gamma(\shK(T_\Delta,Z,m))=\left\{\begin{array}{ll}0 & \hbox{ for }i\neq n\\ 
R(Z)_{n+m}\otimes_\k\bigwedge^{n-1}T_\Delta&\hbox{ for }i=n \end{array}\right.$$
\end{corollary}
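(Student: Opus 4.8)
The plan is to deduce Corollary~\ref{corkoszulcoho} directly from Proposition~\ref{koszulcoho} by transporting the cohomology computation along the isomorphism of complexes already constructed in the paper. Recall that the paragraph preceding Lemma~\ref{dZexplicit} produces, via the isomorphism $(\partial_Z)^*:V^*\ra\hat T_\Delta$ of Lemma~\ref{dimV}, an isomorphism of complexes
$$\id\otimes\textstyle\bigwedge^\bullet(\partial_Z)^*:\ \shK^\bullet(V,\shO_X(Z),m)\ \sira\ \shK^\bullet(T_\Delta,Z,m),$$
so after applying the global sections functor $\Gamma$ we get an isomorphism of cochain complexes of $\k$-vector spaces. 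Taking cohomology therefore gives, for every $i$,
$$H^i_d\Gamma(\shK^\bullet(T_\Delta,Z,m))\ \cong\ HK^i(V,Z,m),$$
and Proposition~\ref{koszulcoho} already evaluates the right-hand side: it vanishes for $i\neq n$ and equals $R(Z)_{n+m}\otimes_\k\bigwedge^n V^*$ for $i=n$, with $n=\dim\Delta+1$.

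It remains to rewrite the $i=n$ term. Since $\id\otimes\bigwedge^\bullet(\partial_Z)^*$ is an isomorphism of complexes, it carries the cokernel of the last differential of $\shK^\bullet(V,\shO_X(Z),m)$ isomorphically onto the cokernel of $d_Z$ in top degree; hence $H^n_d\Gamma(\shK^\bullet(T_\Delta,Z,m))$ is the cokernel of
$$\Gamma(X,\shO_X((n-1+m)Z))\otimes_\k\textstyle\bigwedge^{n-1}\hat T_\Delta\ \lra\ \Gamma(X,\shO_X((n+m)Z))\otimes_\k\textstyle\bigwedge^{n}\hat T_\Delta.$$
Here $\bigwedge^n(\partial_Z)^*$ identifies $\bigwedge^n V^*$ with $\bigwedge^n\hat T_\Delta$, and since $\dim\hat T_\Delta=n$ the space $\bigwedge^n\hat T_\Delta$ is one-dimensional. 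So $H^n_d\Gamma(\shK^\bullet(T_\Delta,Z,m))\cong R(Z)_{n+m}\otimes_\k\bigwedge^n\hat T_\Delta$; the only thing to check is that $\bigwedge^n\hat T_\Delta$ is canonically $\bigwedge^{n-1}T_\Delta$. This is exactly the short exact sequence $0\ra T_\Delta\ra\hat T_\Delta\stackrel h\ra\k\ra0$ displayed just before the statement of Lemma~\ref{dZexplicit}: contracting the top wedge power against the functional $h$ gives a canonical isomorphism $\bigwedge^n\hat T_\Delta\cong\bigwedge^{n-1}T_\Delta$ (equivalently, picking any $u\in\hat T_\Delta$ with $h(u)=1$, wedging with $u$ identifies $\bigwedge^{n-1}T_\Delta$ with $\bigwedge^n\hat T_\Delta$, independently of the choice because two such $u$ differ by an element of $T_\Delta$). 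Substituting this identification completes the computation.

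I expect no genuine obstacle here: the corollary is essentially a transcription of Proposition~\ref{koszulcoho} through isomorphisms that are already in place, and the only mildly delicate point is the bookkeeping $\bigwedge^n\hat T_\Delta\cong\bigwedge^{n-1}T_\Delta$, which is immediate from the height short exact sequence. One should also record the degenerate boundary case: if $Z=0$ (equivalently $\Delta$ a point, $n=1$), then $R(Z)_{\bullet>0}=0$ and $\bigwedge^0 T_\Delta=\k$, so the formula still reads correctly, matching the convention adopted in the proof of Proposition~\ref{koszulcoho}. Thus the proof is just: apply the complex isomorphism, invoke Proposition~\ref{koszulcoho}, and rewrite the coefficient module via the height sequence.
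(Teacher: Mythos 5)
Your argument is correct and is precisely the paper's own (one-line) proof fleshed out: transport the computation of Proposition~\ref{koszulcoho} along the isomorphism of complexes induced by $(\partial_Z)^*$, then convert the coefficient $\bigwedge^n\hat T_\Delta$ to $\bigwedge^{n-1}T_\Delta$ via contraction against the height map $h$. The well-definedness remark about the two descriptions of $\bigwedge^n\hat T_\Delta\cong\bigwedge^{n-1}T_\Delta$ is a nice touch but is exactly the contraction step the paper invokes.
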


\begin{proof} We apply the log derivation map and the contraction by $h$ which yields 
$\bigwedge^{n}\hat{T}_\Delta=\bigwedge^{n-1}T_\Delta$.
\end{proof}

For an injection of $\k$-vector spaces $T_{\Delta}\hra W$ we define $\hat{W}$ by
the cocartesian diagram
$$\begin{CD} T_{\Delta} @>>> W\\
@VVV @VVV\\
\hat{T}_{\Delta}@>>> \hat{W}
\end{CD}$$
and the complex $\shK^l(W,Z,m):= \shO_X((l+m)Z)\otimes_\k\bigwedge^{l}\hat{W}$
for varying $l$ with ``the same'' differential $d_Z$. A good way to think of
$\shK^l(W,Z,m)$ is as being the Koszul complex $\shK^l(T_{\hat\Delta},\hat Z,m)$ 
pulled back from some higher dimensional toric variety $\hat X$ 
in which $X$ embeds equivariantly, see Lemma~\ref{lempullbackres}.

\begin{lemma}  \label{decompose}
There is a non-canonical direct sum decomposition of the complex 
$$\shK^\bullet(W,Z,m)\, \cong\, \bigoplus_{b\ge 0} \shK^{\bullet-b}(T_{\Delta},Z,m+b)
\otimes_\k \bigwedge^b W/ T_{\Delta}.$$
\end{lemma}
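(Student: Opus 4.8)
The plan is to exploit the explicit description of the differential $d_Z$ from Lemma~\ref{dZexplicit}: it sends $u\otimes\alpha$ to $u\cdot\sum_m f_m\frac1f z^m\otimes (m,1)\wedge\alpha$, and crucially the element $\theta:=\sum_m f_m\frac1f z^m\otimes(m,1)$ that gets wedged on lives in $\shO_X(Z)\otimes_\k \hat T_\Delta\subseteq\shO_X(Z)\otimes_\k\hat W$, i.e.\ in the subcomplex built from $\hat T_\Delta$ rather than all of $\hat W$. So $d_Z$ is ``multiplication by $\theta$'' and it does not see the $W/T_\Delta$ directions at all. First I would fix a $\k$-linear splitting $\hat W\cong \hat T_\Delta\oplus (W/T_\Delta)$, which exists because all spaces are $\k$-vector spaces and $T_\Delta\hookrightarrow W$ gives $\hat T_\Delta\hookrightarrow \hat W$ with quotient $W/T_\Delta$ by the cocartesian square defining $\hat W$. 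This choice is exactly the source of the non-canonicity advertised in the statement.

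Next I would take exterior powers of the chosen direct sum decomposition: $\bigwedge^l\hat W\cong\bigoplus_{b\ge 0}\bigwedge^{l-b}\hat T_\Delta\otimes_\k\bigwedge^b(W/T_\Delta)$. Tensoring with $\shO_X((l+m)Z)$ and reindexing, the degree-$l$ term of $\shK^\bullet(W,Z,m)$ becomes $\bigoplus_{b\ge0}\big(\shO_X((l-b+(m+b))Z)\otimes\bigwedge^{(l-b)}\hat T_\Delta\big)\otimes_\k\bigwedge^b(W/T_\Delta)$, and the $b$-th summand is precisely the degree-$(l-b)$ term of $\shK^\bullet(T_\Delta,Z,m+b)$, tensored over $\k$ with the fixed space $\bigwedge^b(W/T_\Delta)$. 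That matches the right-hand side of the claimed isomorphism term by term; it remains only to check that the differentials agree. Since $\theta\in\shO_X(Z)\otimes\hat T_\Delta$, wedging with $\theta$ respects the splitting of $\bigwedge^\bullet\hat W$ induced by $\hat W=\hat T_\Delta\oplus(W/T_\Delta)$ — it increases the $\hat T_\Delta$-degree by one and leaves the $\bigwedge^b(W/T_\Delta)$ factor untouched (up to the Koszul sign, which I would absorb into the identification) — so under the reindexing it becomes exactly the differential $d_Z$ of $\shK^\bullet(T_\Delta,Z,m+b)$ on each summand, acting as the identity on $\bigwedge^b(W/T_\Delta)$.

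The only genuinely delicate point is bookkeeping the Koszul signs so that the wedge decomposition of $\bigwedge^l\hat W$ is compatible with the differentials on the nose rather than merely up to sign on each summand; I would handle this by ordering a basis of $\hat W$ with the $\hat T_\Delta$-part first, writing $d_Z(\cdot)=\theta\wedge(\cdot)$ with $\theta$ in the $\hat T_\Delta$-block, and noting that moving $\theta$ past a $\bigwedge^b(W/T_\Delta)$ factor contributes a uniform sign depending only on $b$, which can be twisted into the chosen isomorphism $\bigwedge^b(W/T_\Delta)\xrightarrow{(-1)^{?}}\bigwedge^b(W/T_\Delta)$. Everything else — flatness of $\shO_X((l+m)Z)$ so that tensoring over $\k$ commutes with taking the summands, and the purely linear-algebra fact about exterior powers of a direct sum — is routine. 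So the expected main obstacle is entirely a matter of sign conventions and careful indexing, not of any substantive geometry.
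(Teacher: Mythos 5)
Your proof is correct and follows essentially the same route as the paper: choose a splitting of $\hat W$ over $\hat T_\Delta$, decompose $\bigwedge^l\hat W$ accordingly, and observe that $d_Z$ only ever wedges by an element of $\shO_X(Z)\otimes\hat T_\Delta$ and hence respects the decomposition. The sign worry at the end is actually vacuous: if you keep the $\hat T_\Delta$-block on the left and write $d_Z$ as left-wedging by $\theta$, then $\theta\wedge(\alpha_T\wedge\beta_W)=(\theta\wedge\alpha_T)\wedge\beta_W$ with no sign, so no twist is needed.
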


\begin{proof} The inclusion $\hat{T}_{\Delta}\hra \hat{W}$ induces a filtration of 
$\bigwedge^l \hat{W}$ for each $l$ which splits as
$\bigwedge^l \hat{W}\,\cong\,\bigoplus_{b\ge 0} \bigwedge^{l-b}\hat{T}_{\Delta} \otimes_\k 
\bigwedge^{b}\hat{W}/ \hat{T}_{\Delta}$
because we are dealing with vector spaces. The differential $d_Z$ respects this
splitting going 
{\scriptsize
$$d_Z:
\shO_X((l+m)Z)\otimes \bigwedge^{l-b}\hat{T}_{\Delta} \otimes \bigwedge^{b} \hat{W}/\hat{T}_{\Delta}
\ra
\shO_X((l+1+m)Z)\otimes \bigwedge^{l+1-b}\hat{T}_{\Delta} \otimes \bigwedge^{b}\hat{W}/\hat{T}_{\Delta}$$
}The result is now just a matter of identifying the terms on the left of the right
tensor symbol with the complex for $T_\Delta$ and using $\hat{W}/\hat{T}_{\Delta}=W/ T_{\Delta}$.
\end{proof}

%
%
Even though the splitting of the complex is not canonical, in a sense, the splitting on
cohomology is. For a vector space $T$, we occasionally 
write $\bigwedge^{\top}T$ for $\bigwedge^{\dim T}T$. For an inclusion of
vector spaces $T\hra U$, whenever there is no confusion with another inclusion, we write
$\langle \bigwedge^t T\rangle_l$ 
for the degree $l$ part of the exterior algebra ideal 
in $\bigwedge^\bullet U$ generated by $\bigwedge^t T$.
We set $n=\dim\Delta+1$ and $HK(W,Z,m) := H^i_d \Gamma(\shK(W,Z,m))$.

\begin{proposition} \label{cohoW}
 With the above notation, we have
 $$\begin{array}{rcl}
 HK^l(W,Z,m) 
 &=& R(Z)_{l+m}\otimes_\k \big\langle\bigwedge^{\top} T_\Delta\big\rangle_{l-1}\\[1mm]
 &=& R(Z)_{l+m}\otimes_\k \bigwedge^{\top} T_{\Delta}
 \otimes\bigwedge^{l-n}W/ T_{\Delta} 
 \end{array}$$
\end{proposition}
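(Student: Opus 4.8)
The plan is to combine Corollary~\ref{corkoszulcoho} with the direct sum decomposition of Lemma~\ref{decompose}, and then argue that passing to cohomology makes the answer canonical in a way that is compatible with the exterior algebra ideal description. First I would apply Lemma~\ref{decompose} to write
$$\shK^\bullet(W,Z,m)\ \cong\ \bigoplus_{b\ge 0} \shK^{\bullet-b}(T_\Delta,Z,m+b)\otimes_\k \bigwedge^b W/T_\Delta,$$
take global sections (which commutes with the finite direct sum), and pass to $d$-cohomology in degree $l$. Since cohomology commutes with direct sums and with tensoring by the fixed finite-dimensional vector space $\bigwedge^b W/T_\Delta$, this gives
$$HK^l(W,Z,m)\ \cong\ \bigoplus_{b\ge 0} H^{l-b}_d\Gamma\big(\shK^\bullet(T_\Delta,Z,m+b)\big)\otimes_\k \bigwedge^b W/T_\Delta.$$
By Corollary~\ref{corkoszulcoho}, the term $H^{l-b}_d\Gamma(\shK^\bullet(T_\Delta,Z,m+b))$ vanishes unless $l-b=n$, i.e. unless $b=l-n$, in which case it equals $R(Z)_{(l-b)+(m+b)}\otimes_\k\bigwedge^{n-1}T_\Delta = R(Z)_{l+m}\otimes_\k\bigwedge^{\top}T_\Delta$. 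Hence only the single summand $b=l-n$ survives and we get
$$HK^l(W,Z,m)\ \cong\ R(Z)_{l+m}\otimes_\k\bigwedge^{\top}T_\Delta\otimes_\k\bigwedge^{l-n}W/T_\Delta,$$
which is the second displayed formula. (If $l<n$ the sum is empty and both sides are zero, consistent with $\bigwedge^{l-n}$ being zero; one should note this boundary case.)

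For the first displayed formula, I would identify $\bigwedge^{\top}T_\Delta\otimes_\k\bigwedge^{l-n}W/T_\Delta$ with the degree $l-1$ part $\langle\bigwedge^{\top}T_\Delta\rangle_{l-1}$ of the exterior algebra ideal in $\bigwedge^\bullet W$ generated by $\bigwedge^{\top}T_\Delta = \bigwedge^{n-1}T_\Delta$. This is the standard fact that for a subspace $T\subseteq W$, wedging $\bigwedge^{\top}T$ against $\bigwedge^{k}(W/T)$ (lifted arbitrarily) realizes the degree $(\dim T + k)$ part of that ideal, the choice of lift being irrelevant because elements of $T$ wedge to zero against $\bigwedge^{\top}T$; with $\dim T = n-1$ and $k = l-n$ this gives degree $l-1$, as claimed. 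The content of the word ``canonical'' in the discussion before the proposition is exactly that, although the splitting in Lemma~\ref{decompose} depended on a choice of complement to $\hat T_\Delta$ in $\hat W$, the resulting subspace of $\bigwedge^\bullet W$ on cohomology is the canonically-defined ideal piece; I would remark that the two expressions agree under this identification.

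The main obstacle, such as it is, is bookkeeping rather than a genuine difficulty: one must be careful about the degree shift in Lemma~\ref{decompose} (the summand indexed by $b$ contributes $\shK^{\bullet-b}(T_\Delta,Z,m+b)$, so the twist increases by $b$ exactly as the homological degree decreases by $b$, keeping the internal degree $l+m$ of $R(Z)$ fixed), and about the edge cases $l\le n$ where the surviving range of $b$ may be empty or force $b=0$. I expect no serious analytic input is needed beyond Corollary~\ref{corkoszulcoho}; the proof is essentially formal once the decomposition and the exterior-algebra-ideal identification are set up cleanly.
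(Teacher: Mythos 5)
Your proof of the abstract isomorphism is correct and follows the same route as the paper's: decompose $\shK^\bullet(W,Z,m)$ via Lemma~\ref{decompose}, take global sections and $d$-cohomology, apply Cor.~\ref{corkoszulcoho}, and observe that only the summand $b=l-n$ survives, giving $R(Z)_{l+m}\otimes\bigwedge^{\top}T_\Delta\otimes\bigwedge^{l-n}W/T_\Delta$. The identification of that target with $\langle\bigwedge^{\top}T_\Delta\rangle_{l-1}$ is also correctly reduced to the standard exterior-algebra fact that wedging against $\bigwedge^{\top}T$ kills any ambiguity of lift modulo $T$.

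However, there is a genuine gap. The sentence immediately before the proposition in the paper says ``Even though the splitting of the complex is not canonical, in a sense, the splitting on cohomology is,'' and the paper's proof is essentially \emph{entirely} devoted to justifying that claim: the purely numerical part, which is what you carry out, is dispatched in a single sentence there. You flag the canonicity issue but only assert it (``I would remark that the two expressions agree under this identification''); moreover the formulation ``the resulting subspace of $\bigwedge^\bullet W$ on cohomology is the canonically-defined ideal piece'' is imprecise, since $HK^l(W,Z,m)$ is a subquotient of $\Gamma(\shO_X((m+l)Z)\otimes\bigwedge^l\hat W)$, not a subspace of $\bigwedge^\bullet W$. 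What needs to be proven, and what the paper actually proves, is that the identification of $HK^l(W,Z,m)$ with $R(Z)_{l+m}\otimes\langle\bigwedge^{\top}T_\Delta\rangle_{l-1}$ is independent of the choice of complement to $\hat T_\Delta$ in $\hat W$ used in Lemma~\ref{decompose}. The paper does this by passing to the canonical filtration of $\bigwedge^l\hat W$ by $\langle\bigwedge^j\hat T_\Delta\rangle_l$: since $d_Z$ wedges by an element of $\hat T_\Delta$, it preserves this filtration, Cor.~\ref{corkoszulcoho} forces the only surviving graded piece to be the bottom one, and so $HK^l(W,Z,m)$ is canonically the cokernel of the restriction of $d_Z$ to $\Gamma(\shO((m+l-1)Z)\otimes\langle\bigwedge^{n-1}\hat T_\Delta\rangle_{l-1})$. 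That cokernel is then compared, via a choice of section $\xi$ of $\langle\bigwedge^{n-1}\hat T_\Delta\rangle_{l-1}\to\bigwedge^{n-1}\hat T_\Delta\otimes\bigwedge^{l-n}\hat W/\hat T_\Delta$, to the cokernel of $d_Z\otimes\id$; the point is that \emph{any} choice of $\xi$ gives a commuting square because the ambiguity in $\xi$ lies in $\langle\bigwedge^{n}\hat T_\Delta\rangle_{l-1}$, which is contained in the kernel of the relevant $d_Z$. To close the gap you should supply this argument or an equivalent one (e.g.\ the filtered spectral sequence of the canonical filtration degenerates with a single nonzero graded piece).
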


\begin{proof} Using Lemma~\ref{decompose}, we have the non-canonical decomposition
 $HK^l(W,Z,m)\, \cong\, \bigoplus_{b\ge 0} HK^{l-b}(T_{\Delta},Z,m+b) \otimes_\k 
 \bigwedge^b W/T_{\Delta}.$
 By Cor.~\ref{corkoszulcoho} the only non-zero term on the 
 right hand side is the one where $l-b=n$. Hence, we have
 $HK^l(W,Z,m)\cong R(Z)_{m+n+b}\otimes\bigwedge^{\top} T_{\Delta}
 \otimes_\k \bigwedge^{b}W/ T_{\Delta}$
 for $b=l-n$.
 It remains to prove the canonicity. 
 Consider the canonical filtration
 $$ \bigwedge^l\hat W = \big\langle \bigwedge^0 \hat{T}_\Delta\big\rangle_l 
 \supset \big\langle \bigwedge^1 \hat{T}_\Delta\big\rangle_l
 \supset \dots \supset \big\langle \bigwedge^{n} \hat{T}_\Delta\big\rangle_l\supset \{0\}$$
 The desired cohomology group comes from the non-trivial bottom term, more precisely, 
 it is the cokernel of the left vertical arrow in the diagram
 $$\begin{CD}
 \shO((m+l)Z)\otimes{\big\langle} \bigwedge^{n} \hat{T}_\Delta{\big\rangle}_l 
 @= \shO((m+l)Z)\otimes\bigwedge^{n} \hat{T}_\Delta\otimes \bigwedge^{l-n}\hat{W}/ \hat{T}_\Delta\\
 @AA{d_Z}A @AA{(d_Z\otimes\id)}A\\
 \shO((m+l-1)Z)\otimes{\big\langle} \bigwedge^{n-1} \hat{T}_\Delta{\big\rangle}_{l-1}
 @<\id\otimes\xi<< \shO((m+l-1)Z)\otimes\bigwedge^{n-1} \hat{T}_\Delta\otimes \bigwedge^{l-n}\hat{W}/ \hat{T}_\Delta
 \end{CD}$$
 The bottom map $\xi$ is the only non-canonical map. It is supposed to be
 a section of the right non-trivial map in the exact sequence
 $$ 0
 \lra \big\langle\bigwedge^{n} \hat{T}_\Delta\big\rangle_{l-1}
 \lra  \big\langle\bigwedge^{n-1} \hat{T}_\Delta \big\rangle_{l-1} 
 \lra \bigwedge^{n-1} \hat{T}_\Delta\otimes \bigwedge^{l-n}\hat{W}/ \hat{T}_\Delta
 \lra 0 .$$
 Then any choice of $\xi$ makes the diagram commute, because 
 $\shO((m+l-1)Z)\otimes\langle\bigwedge^{n} \hat{T}_\Delta\rangle_{l-1}$
 is contained in the kernel of the left vertical map. 
 Thus, we get a canonical identification of the
 cokernels of the vertical maps which shows
 $HK^l(W,Z,m) = C\otimes\bigwedge^{n} \hat{T}_\Delta\otimes 
 \bigwedge^{l-n}\hat{W}/ \hat{T}_\Delta$
 where $C=\op{coker}(q)$ and 
 $q:\Gamma(X,\shO_X((m+l-1)Z))\otimes \hat{T}_\Delta^*\mapsto \Gamma(X,\shO_X((m+l)Z))$, 
 $u\otimes \hat{n}\mapsto u\cdot (\frac1f \partial_{\hat{n}}f)$. 
 We see that $C=R(Z)_{l+m}$. Let $\iota(h)$ denote the contraction 
 by the natural projection $h:\hat{T}_\Delta\sra \k$.
 We can apply the isomorphisms 
 \begin{itemize}
 \item$\iota(h):\bigwedge^{n}\hat{T}_\Delta\ra 
 \bigwedge^{\dim T_\Delta}T_\Delta$
 \item $\hat{W}/ \hat{T}_\Delta=W/ T_{\Delta}$
 \item $\langle\bigwedge^\top T_{\Delta} \rangle_l \ra \bigwedge^\top T_{\Delta}
 \otimes\bigwedge^{l-\dim T_{\Delta}}W/ T_{\Delta}$,\ \ 
 $\alpha_\top\wedge \alpha_W\mapsto\alpha_\top\otimes [\alpha_W]$.
 \end{itemize}
  to obtain the result.
\end{proof}


\subsection{Jacobian rings and Newton polyhedra}
\label{sec_jacobianrings}
We wish to analyze the relation of $R(Z)$ to Jacobian rings in this subsection.
We stay with the previous notation and assume here that $\Delta$ is a simplex and define
a relatively open subset of its cone by
$$C(\Delta)^{\core{}}=C(\Delta)\left\backslash\bigcup_{v\in\Delta^{[0]}}
(v,1)+C(\Delta)\right. .$$
It is easily seen to be the half open parallelepiped
$C(\Delta)^{\core{}}=\{\sum_{v\in\Delta^{[0]}} \lambda_v (v,1)\,|\,
0\le\lambda_v<1\}.$
For each $l\in\ZZ_{\ge 0}$ we may intersect this with the hyperplane 
$\{(m,l)\,|\, m\in M\otimes_\ZZ\RR \}$ and project to the first summand to have
$$\Delta^{\core{l}}=\{\sum_{v\in\Delta^{[0]}} \lambda_v v\,|\,
0\le\lambda_v<1,l=\sum_v\lambda_v\}\ \subseteq\ l\cdot\Delta.$$
This space was already defined in \cite{bormav},~Ch.~9.
One finds
$\Delta^{\core{l}}=\emptyset\ \IFF\ l\ge |\Delta^{[0]}|=\dim\Delta+1.$
This gadget has nice functorial properties. 
For a subset $S\subseteq \RR^n$, let 
$\relint S$ denote the relative interior of $S$ in $\span{\RR}S$.

\begin{lemma} \label{corefunc}
 We have
 $$ \begin{array}{ccc}
 \hbox{a)}\quad \displaystyle\Delta^{\core{l}}\cap (l\cdot F)=F^{\core{l}}&\qquad&
 \hbox{b)}\quad 
 \displaystyle\Delta^{\core{l}}=\coprod_{F\subseteq \Delta}
 \relint F^\core{l}  \end{array}$$
 where in each of these $F\subseteq \Delta$ is supposed to be a face.
\end{lemma}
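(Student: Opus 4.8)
The plan is to reduce both statements to the explicit parametrization of $\Delta^{\core{l}}$ by barycentric-type coordinates. Recall that $\Delta$ is a simplex, so its vertex set $\Delta^{[0]}=\{v_0,\dots,v_d\}$ is affinely independent, and every point of $l\cdot\Delta$ has a \emph{unique} representation $\sum_v \lambda_v v$ with $\lambda_v\ge 0$ and $\sum_v\lambda_v = l$. Thus a point lies in $\Delta^{\core{l}}$ precisely when additionally every $\lambda_v<1$. A face $F\subseteq\Delta$ corresponds to a subset $S\subseteq\{0,\dots,d\}$ of vertices, and $l\cdot F = \{\sum_{v\in S}\lambda_v v \mid \lambda_v\ge 0,\ \sum_{v\in S}\lambda_v=l\}$ is cut out inside $l\cdot\Delta$ by the vanishing of the coordinates $\lambda_v$ for $v\notin S$. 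This uniqueness of coordinates is the one genuine input; it is where simplicity of $\Delta$ is used, and it is really the only potential subtlety — everything afterwards is bookkeeping with inequalities.

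For part a), I would argue by double inclusion using these coordinates. If $x\in\Delta^{\core{l}}\cap(l\cdot F)$ then in its unique expansion $x=\sum_v\lambda_v v$ we have $\lambda_v=0$ for $v\notin S$ (because $x\in l\cdot F$) and $0\le\lambda_v<1$ for all $v$ with $\sum_v\lambda_v=l$; restricting to $v\in S$ shows $x\in F^{\core{l}}$. Conversely, a point of $F^{\core{l}}$ has an expansion supported on $S$ with all coefficients in $[0,1)$ summing to $l$; extending by zero coefficients on $\{0,\dots,d\}\setminus S$ exhibits it both in $l\cdot F$ and in $\Delta^{\core{l}}$, using that the zero extension is still the unique $\Delta$-expansion. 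Hence the two sets coincide.

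For part b), again work with the unique coordinates. Given $x\in\Delta^{\core{l}}$, let $S=\{v \mid \lambda_v>0\}$ be the support of its expansion, and let $F$ be the face spanned by $S$. Then all the positive coordinates of $x$ among the vertices of $F$ are strictly positive and strictly less than $1$, which is exactly the condition $x\in\relint F^{\core{l}}$ (the relative interior condition inside $\span{\RR}(F^{\core{l}})$ translates to strict positivity of the coordinates indexed by $S$, while the $<1$ bound is inherited). Distinctness of the faces appearing is automatic: the support $S$ of the coordinate vector of $x$ determines $F$ uniquely, so different faces give disjoint pieces. Conversely $\relint F^{\core{l}}\subseteq\Delta^{\core{l}}$ by part a) together with the observation $\relint F^{\core{l}}\subseteq F^{\core{l}}=\Delta^{\core{l}}\cap (l\cdot F)$. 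This gives the claimed disjoint decomposition. The main obstacle is simply being careful that ``relative interior of $F^{\core{l}}$ in its own span'' matches up with ``all $S$-coordinates strictly positive''; once the coordinate description is in place this is a short convexity check.
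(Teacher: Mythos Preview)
Your proposal is correct and follows essentially the same approach as the paper: both arguments rest on the fact that, since $\Delta$ is a simplex, every point has unique barycentric coordinates, and a face $F$ is characterized by the vanishing of the coordinates at the vertices not in $F$. The paper's proof is considerably terser (it simply observes this characterization of faces and declares that a) ``easily follows'' and that b) ``is a consequence of a)''), whereas you spell out the double inclusion for a) and the support-of-coordinates argument for b) in detail; but the underlying idea is identical.
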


\begin{proof} 
 To see that a) is true just note that a face $F$ is determined
 by the set of vertices it contains. It is then given by points of $\Delta$ 
 for which $\lambda_v=0$ whenever $v\not\in F$. Then a) easily follows and
 b) is a consequence of a).
\end{proof}

\begin{definition} 
 We denote by $\Gamma^{\core{l}}(Z)$
 the subspaces of $\Gamma(X,\shO_X(lZ))$ generated by the images of $z^m$ under the first map in
 Lemma~\ref{globalsec} for which $m\in \Delta^{\core{l}}$. This is independent of
 a particular equation of $Z$.
\end{definition}

We consider the monoid ring $\k[C(\Delta)\cap (M\oplus\ZZ)]$
which is Noetherian and graded by the second summand. Assume that we are 
given a homogeneous element of degree one
$f =\sum_{m\in\Delta\cap M} f_m z^{(m,1)}.$
One defines the Jacobian ideal of $f$ by
$$J_f= ( \partial_n f \,|\, n\in\Hom(M\oplus\ZZ,\ZZ) )$$
where $\partial_n f =\sum_{m\in\Delta\cap M} \langle (m,1) , n\rangle f_m z^{(m,1)}$.
Relating to Griffith's work, Batyrev has used the notation $R_0,R_1$ for two types of 
toric Jacobian rings in \cite{batyrev1}.
In \cite{bormav}, Borisov and Mavlyutov have picked up this notation.

\begin{definition}[Batyrev, Borisov, Mavlyutov] \label{DefR0R1}
 We set
 $$R_0(f,\Delta)=\k[C(\Delta)\cap (M\oplus\ZZ)]/ J_f.$$
 and define $R_1(f,\Delta)$ as the subspace generated by lattice 
 points in $\relint C(\Delta)$ which yields a module over $R_0(f,\Delta)$.
\end{definition}

We can put a ring structure on $R(Z)$ by writing it as the quotient 
of the global sections tensor algebra $\Gamma(X,\shO_X(\bullet Z))$ 
by the ideal generated in degree one by the linear system of log derivatives $V$.

\begin{lemma} \label{compareRs} 
 There is a graded ring isomorphism
 $$R(Z)\cong R_0(f,\Delta)$$
 which is canonical up to a multiplicative constant.
\end{lemma}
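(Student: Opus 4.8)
The plan is to construct the isomorphism $R(Z)\cong R_0(f,\Delta)$ degree by degree, matching the two quotient presentations. First I would set up the natural candidate map. Both sides are quotients of a graded algebra: $R(Z)$ is the quotient of $\bigoplus_{l\ge 0}\Gamma(X,\shO_X(lZ))$ by the ideal generated in degree one by $V=\im(\partial_Z)$, while $R_0(f,\Delta)$ is the quotient of $\k[C(\Delta)\cap(M\oplus\ZZ)]$ by the Jacobian ideal $J_f$. By Lemma~\ref{globalsec}, the isomorphism $\bigoplus_{m\in\Delta\cap M}\k\cdot z^m \xrightarrow{\cdot 1/f}\Gamma(X,\shO_X(Z))$ extends multiplicatively: the product of sections of $\shO_X(lZ)$ corresponds, after clearing the factors $f^{-l}$, to the multiplication $\k[l\cdot\Delta\cap M]\otimes\k[l'\cdot\Delta\cap M]\to\k[(l+l')\Delta\cap M]$ induced by addition in the monoid. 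Thus there is a canonical graded algebra isomorphism $\Phi:\k[C(\Delta)\cap(M\oplus\ZZ)]\xrightarrow{\sim}\bigoplus_{l\ge 0}\Gamma(X,\shO_X(lZ))$, $z^{(m,l)}\mapsto f^{-l}z^m$, once one checks in degree $l$ that the monomials $z^m$ with $m\in l\Delta\cap M$ span $\Gamma(X,\shO_X(lD))$ — this is the Fulton description $\Gamma(X,\shO_X(lD))=\bigoplus_{m\in l\Delta\cap M}\k z^m$ — and that $\Phi$ is compatible with the gradings. (The constant ambiguity comes precisely from the choice of scalar multiple of $f$ hidden in Lemma~\ref{globalsec}.)

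Second I would identify the two ideals under $\Phi$. The degree-one part of $J_f$ is spanned by the elements $\partial_n f = \sum_{m} \langle (m,1),n\rangle f_m z^{(m,1)}$ for $n\in\Hom(M\oplus\ZZ,\ZZ)$, i.e. exactly the elements $n\mapsto \partial_{(n,a)}f$ in the notation of the log derivation map $\partial_Z$ (before twisting by $f^{-1}$). Applying $\Phi$, which multiplies by $f^{-1}$ in degree one, sends $\partial_n f$ to $f^{-1}\partial_{(n,a)}f$, and the span of these over all $n$ is by definition $V\subseteq\Gamma(X,\shO_X(Z))$. Since both $J_f$ and the defining ideal of $R(Z)$ are generated in degree one by their respective degree-one parts, and $\Phi$ is a graded algebra isomorphism, $\Phi(J_f)$ equals the ideal generated by $V$. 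Hence $\Phi$ descends to a graded ring isomorphism $R_0(f,\Delta)\cong R(Z)$.

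The main obstacle I expect is the verification that $\Phi$ is a well-defined, bijective graded \emph{algebra} isomorphism in all degrees, rather than just degree one. Two points need care: (i) that $\Gamma(X,\shO_X(lD))$ is exactly $\bigoplus_{m\in l\Delta\cap M}\k z^m$ — this uses that $D$ is the torus-invariant divisor with support function $\psi_D$ whose polytope is $\Delta$, so that the polytope of $lD$ is $l\Delta$, an application of Fulton's Lemma in \S3.4 exactly as already invoked in Section~\ref{sec_torickoszul}; and (ii) that multiplication of global sections on the left corresponds under the $f^{-l}$-twists to monoid addition on the right, with no collapsing of the grading — this is immediate because $f$ is a non-zero rational function, so multiplication by $f^{\pm 1}$ is injective in $\Quot(\shO_X)$, and the identification $\Gamma(X,\shO_X(lZ)) = \{q\in\Quot(\shO_X)\mid \op{div}(q)\ge -lZ\} = f^{-l}\cdot\bigoplus_{m\in l\Delta\cap M}\k z^m$ is exactly the degree-$l$ analogue of Lemma~\ref{globalsec}. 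Once these bookkeeping facts are in place, the passage to the quotients is formal. I would also remark that the canonicity "up to a multiplicative constant" is exactly the statement that $\Phi$ depends only on the choice of equation $f$ up to $\k^\times$, since rescaling $f$ by $c\in\k^\times$ rescales $\Phi$ in degree $l$ by $c^{-l}$, an automorphism of the graded ring.
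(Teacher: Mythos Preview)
Your proof is correct and follows essentially the same approach as the paper's own proof: construct the graded algebra isomorphism $\Phi:\k[C(\Delta)\cap(M\oplus\ZZ)]\to\bigoplus_l\Gamma(X,\shO_X(lZ))$ via $z^{(m,l)}\mapsto f^{-l}z^m$ (the extension of Lemma~\ref{globalsec} to all degrees), observe that in degree one it carries the generators $\partial_n f$ of $J_f$ exactly to the log derivatives spanning $V$, and pass to the quotients. The paper's proof is more telegraphic but makes the same moves; your version simply spells out the bookkeeping in points (i) and (ii) that the paper leaves implicit.
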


\begin{proof} 
 We obtain the inverse of the desired isomorphism via the unique ring map 
 induced in degree one by
 $\bigoplus_{m\in\Delta\cap M} \k\cdot z^{(m,1)}
 \stackrel{\cdot \frac1{f}}{\lra} \Gamma(X,\shO_X(Z))$
 as given in Lemma~\ref{globalsec}. 
 It maps the respective ideals to each other as can be seen from the definition of
 the log derivation map. The remark about the multiplicative constant addresses 
 the fact that $R_0(f,\Delta)=R_0(af,\Delta)$ for $a\in\k^\times$ whereas
 the isomorphism depends on $a$.
\end{proof}

We define the vector space
$\k^{\Delta^{\core{l}}\cap M}
=\left\{\left. \sum_{m\in\Delta^\core{l}\cap M} a_m z^m\,\right| \, a_m\in\k \right\}.$

\begin{lemma} \label{leminjection}
 Let $f\in\k[C(\Delta)\cap M\oplus\ZZ]_1$ be arbitrary. The map
 $$ \k^{\Delta^{\core{l}}\cap M} \ra R_0(f,\Delta)_l $$
 given by sending $z^m$ to $z^{(m,l)}$ is an injection.
\end{lemma}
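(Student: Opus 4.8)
The plan is to prove the equivalent statement that the degree-$l$ ``core'' monomials $z^{(m,l)}$, $m\in\Delta^{\core{l}}\cap M$, are linearly independent in $R_0(f,\Delta)_l$. First I reduce to the case that $\Delta$ spans $M_\RR$ and that $f$ has a nonzero coefficient at every vertex of $\Delta$, i.e.\ $\Newton(f)=\Delta$; this is the situation occurring via Lemma~\ref{compareRs}, and the statement is in fact false if a vertex coefficient vanishes. Write $v_0,\dots,v_n$ for the vertices, so that the $(v_i,1)$ form a $\k$-basis of $(M\oplus\ZZ)\otimes\k$ and there are unique $n_i$ with $\langle n_i,(v_j,1)\rangle=\delta_{ij}$. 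Since $\langle n_i,(m,1)\rangle$ is the $i$-th barycentric coordinate $\mu_i(m)$ of $m$, one computes $\partial_{n_i}f=f_{v_i}y_i+\widetilde r_i$ with $y_i:=z^{(v_i,1)}$ and $\widetilde r_i:=\tfrac1{f_{v_i}}\sum_{m\in\Delta^{\core{1}}\cap M}\mu_i(m)f_m z^{(m,1)}$ a combination of degree-one core monomials; as the $n_i$ are a basis, $J_f=(g_0,\dots,g_n)$ with $g_i=y_i+\widetilde r_i$. Finally $R:=\k[C(\Delta)\cap(M\oplus\ZZ)]$ is a free $\k[y_0,\dots,y_n]$-module on the finite set of core monomials, so $R_l=U_l\oplus I_l$ with $U_l=\span{\k}\{z^{(m,l)}:m\in\Delta^{\core{l}}\cap M\}$ the target of $\phi_f$ and $I_l=(y_0,\dots,y_n)R\cap R_l$; the assertion becomes $(J_f)_l\cap U_l=0$.

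For non-degenerate $f$ this is quick: then $(J_f)_1=V$ is base-point free by Lemma~\ref{bpf}, so $g_0,\dots,g_n$ have no common zero on $\Proj R=X_\Delta$, and since $R$ is a normal, hence Cohen--Macaulay, graded domain of dimension $n+1$ these degree-one elements form a regular sequence. Hence $\op{Hilb} R_0(f,\Delta)=(1-t)^{n+1}\op{Hilb} R=h^*_\Delta(t)$, whose $l$-th coefficient $h^*_l(\Delta)$ for a lattice simplex is the number of lattice points of the half-open fundamental parallelepiped at height $l$, i.e.\ $|\Delta^{\core{l}}\cap M|=\dim U_l$ (cf.\ \cite{bormav} and Lemma~\ref{corefunc}). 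Together with surjectivity of $\phi_f$ (modulo $J_f$ each $y_i$ equals $-\widetilde r_i\in U_1$, so every monomial can be rewritten through core monomials) this forces $\phi_f$ to be an isomorphism.

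For arbitrary $f$ with $\Newton(f)=\Delta$ I would induct on $\dim\Delta$, the case $\dim\Delta=0$ being trivial. For each $i$ let $D_i\subset\Spec R$ be the torus-invariant divisor dual to the facet $F_i=\conv\{v_j:j\ne i\}$; its coordinate ring is $\k[C(F_i)\cap(M\oplus\ZZ)]$, the ring attached to the simplex $F_i$, and $f|_{D_i}=\sum_{m\in F_i\cap M}f_m z^{(m,1)}$ still has Newton polytope $F_i$. The key computation is that restriction along $D_i$ annihilates $g_i$ — the non-vertex monomials surviving are those supported on $F_i$, for which $\mu_i(m)=0$ and hence the coefficient in $\widetilde r_i$ vanishes, while also $y_i|_{D_i}=0$ — whereas for $j\ne i$ the restriction $g_j|_{D_i}$ is exactly the corresponding generator of $J_{f|_{D_i}}$, since barycentric coordinates of lattice points of $F_i$ agree whether computed in $\Delta$ or in $F_i$. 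Thus restriction descends to surjections $\mathrm{res}_i\colon R_0(f,\Delta)\twoheadrightarrow R_0(f|_{D_i},F_i)$ sending $z^{(m,l)}$ to $z^{(m,l)}$ if $m\in lF_i$ and to $0$ otherwise, and by Lemma~\ref{corefunc}(a) the core monomials of $\Delta$ surviving $\mathrm{res}_i$ are precisely the core monomials of $F_i$. Applying the inductive hypothesis to every facet, any element of $\ker\phi_f$ has vanishing coefficient at each core point lying on a proper face, so by Lemma~\ref{corefunc}(b) $\ker\phi_f$ is spanned by the core monomials $z^{(m,l)}$ with $(m,l)$ in the relative interior of $C(\Delta)$.

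The remaining step — independence of these relatively interior core monomials in $R_0(f,\Delta)_l$ — is the main obstacle. For $l=1$ it is trivial, as $(J_f)_1=\span{\k}\{g_i\}$ and the $y_i$-part of the $g_i$ involves no interior monomial; but this ``new leading term'' argument does not propagate to $l>1$, and for degenerate $f$ the Ehrhart count of the second step genuinely fails ($R_0(f,\Delta)$ can be strictly larger than $h^*_\Delta(t)$ predicts), so one cannot simply invoke it. I expect the resolution to come either from assembling the $\mathrm{res}_i$ into the \v{C}ech-type complex of the boundary $\bigcup_i D_i$, whose terms bring in the canonical module $\omega_R=\bigcap_i\mathfrak p_i$ of $R$ (to which the interior monomials belong) and the module $R_1(f,\Delta)$, and then concluding by the inductive hypothesis and a dimension count, or from a semicontinuity argument against the non-degenerate locus applied only to the interior-monomial block. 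This is the technical heart of the lemma.
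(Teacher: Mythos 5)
Your proof is incomplete, and you flag the missing step yourself: after the facet-restriction induction reduces the claim to the linear independence in $R_0(f,\Delta)_l$ of the core monomials $z^{(m,l)}$ with $(m,l)\in\relint C(\Delta)$, for arbitrary $f$ with $\Newton(f)=\Delta$, you have no argument, and this is where the proof ends. The earlier steps are sound. Your regular-sequence/Ehrhart-series argument settles the non-degenerate case completely (and that is in fact the only case actually invoked in Prop.~\ref{Fermatiso}). And your observation that the lemma \emph{as stated} is false when a vertex coefficient vanishes is correct: take $\Delta=\conv\{(0,0),(2,0),(0,2)\}$ and $f=z^{((1,0),1)}$; then every $\partial_n f$ is a scalar multiple of $z^{((1,0),1)}$, so $J_f=(z^{((1,0),1)})$, and the core monomial $z^{(1,0)}\in\k^{\Delta^\core{1}\cap M}$ maps to $0$ in $R_0(f,\Delta)_1$. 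This shows the hypothesis ``let $f$ be arbitrary'' is too strong, and the paper's closing sentence ``The other $f$ can easily be checked directly'' hides exactly the failure you noticed.

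The paper's proof takes a route quite different from yours and does not argue via regular sequences or facet restriction. It first handles the \emph{Fermat} case $f_m\neq 0\IFF m\in\Delta^{[0]}$ directly: there $J_f=(z^{(v,1)}\,|\,v\in\Delta^{[0]})$ is a monomial ideal whose degree-$l$ piece is spanned by the non-core monomials of $l\Delta\cap M$, so the core monomials are manifestly independent. It then treats the kernel of the map as a coherent module over $\Spec\k[f_m\,|\,m\in\Delta\cap M]$, vanishing at Fermat points and hence on a Zariski neighbourhood $U$, and uses the torus $\mathbb{G}_m(\k)^{\Delta\cap M}$ acting on coefficients (which fixes $\k^{\Delta^\core{l}\cap M}$) to spread the conclusion to any $f$ whose orbit meets $U$ --- i.e.\ any $f$ with all vertex coefficients nonzero. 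This is essentially the ``semicontinuity against the non-degenerate locus'' you speculate about at the end (with the Fermat locus as base point instead of the non-degenerate locus), and it is the route to pursue to close your gap. Be aware, though, that the semicontinuity step needs care: fibres of the kernel sheaf of a coherent-sheaf morphism need not coincide with the fibrewise kernels $\ker\phi_f$, and the target sheaf $R_0(f,\Delta)_l$ is not locally free in $f$, so the upper semicontinuity of $\dim\ker\phi_f$ has to be argued directly rather than read off from coherence of a single kernel module.
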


\begin{proof} We call an $f$ with the property
$f_m\neq 0 \IFF m\in\Delta^{[0]}$ \emph{Fermat}.
Note that the lemma is true for all Fermat $f$ because then
$J_f=(z^{(v,1)}| v\in\Delta^{[0]})$ and its degree $l$ part is
$(J_f)_l=\left\{\left.\sum_{\atop{m\in l\Delta\cap M}{m\not\in\Delta^\core{l}}} a_m z^{(m,l)}\,\right|\, a_m\in\k \right\}.$
The kernel of the map in the lemma gives a coherent module on the space 
$\Spec \k[ f_m | m\in {\Delta\cap M}]$ of all $f$. It is trivial at Fermat points and
therefore also in a neighbourhood $U$ of these points.
There is an operation of the torus $\mathbb{G}_m(\k)^{\Delta\cap M}$ on this space under which
$\k^{\Delta^{\core{l}}\cap M}$ is invariant. 
We deduce the result for all $f$ which lie in an orbit with non-trivial intersection with $U$.
The other $f$ can easily be checked directly.
\end{proof}

We call $f$ non-degenerate if the corresponding $Z$ is non-degenerate.

\begin{proposition} \label{Fermatiso}
 If $f$ is non-denerate then, for each $l$, the map
 $$ \k^{\Delta^{\core{l}}\cap M} \ra R_0(f,\Delta)_l $$
 given by sending $z^m$ to $z^{(m,l)}$ is an isomorphism.
\end{proposition}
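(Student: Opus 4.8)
The plan is to combine the injection of Lemma~\ref{leminjection} with a dimension count. By that lemma the map $\k^{\Delta^{\core{l}}\cap M}\ra R_0(f,\Delta)_l$ is injective for every $f\in\k[C(\Delta)\cap(M\oplus\ZZ)]_1$, so it suffices to show that for non-degenerate $f$ the two sides have equal dimension, i.e.\ $\dim_\k R_0(f,\Delta)_l = \#(\Delta^{\core{l}}\cap M)$. First I would identify $R_0(f,\Delta)$ with $R(Z)$ via Lemma~\ref{compareRs}, reducing the problem to computing $\dim_\k R(Z)_l$ for the non-degenerate divisor $Z$ with Newton polytope the simplex $\Delta$.

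Next I would exploit that for a \emph{Fermat} $f$ (in the sense introduced in the proof of Lemma~\ref{leminjection}, $f_m\neq 0\IFF m\in\Delta^{[0]}$) the map is already an isomorphism: there $J_f=(z^{(v,1)}\mid v\in\Delta^{[0]})$, so $R_0(f,\Delta)_l$ has the monomial basis $\{z^{(m,l)}\mid m\in\Delta^{\core{l}}\cap M\}$ by the explicit description of $(J_f)_l$ recalled in that proof. Hence $\dim_\k R_0(f,\Delta)_l=\#(\Delta^{\core{l}}\cap M)$ for Fermat $f$. It therefore remains to show that this dimension does not jump as $f$ varies over the non-degenerate locus. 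For this I would view the graded piece $R(Z)_l$ as the cokernel, appearing in Prop.~\ref{koszulcoho} (equivalently Cor.~\ref{corkoszulcoho}), of the Koszul-type map
$$\Gamma(X,\shO_X((n-1)Z))\otimes_\k V\ra \Gamma(X,\shO_X(nZ))$$
with $n=\dim\Delta+1$; more precisely $R(Z)_l$ for $l=n+m$ is the cohomology $HK^n(V,Z,m)$, which by that proposition is the \emph{only} nonzero cohomology of the Koszul complex $\shK^\bullet(V,\shO_X(Z),m)$ when $Z$ is non-degenerate. The source and target sheaves $\shO_X((n-1+m)Z)$ and $\shO_X((n+m)Z)$ are independent of the non-degenerate representative $f$, and by Lemma~\ref{cohotoriclb} their higher cohomology (in the relevant range) vanishes, so the Euler characteristic of the complex of global sections $\Gamma(\shK^\bullet(V,\shO_X(Z),m))$ is constant; since for non-degenerate $f$ there is cohomology only in degree $n$, that single cohomology dimension equals $|\chi|$ and is thus \emph{constant} on the non-degenerate locus. (Alternatively: upper semicontinuity of $\dim R(Z)_l$ together with the Fermat value being attained, combined with the fact that the Fermat $f$ is itself non-degenerate, pins down the value.)

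Putting these together: for non-degenerate $f$, $\dim_\k R_0(f,\Delta)_l$ equals its value at a Fermat point, namely $\#(\Delta^{\core{l}}\cap M)=\dim_\k \k^{\Delta^{\core{l}}\cap M}$; combined with the injectivity from Lemma~\ref{leminjection}, the map is an isomorphism. The main obstacle is the constancy of $\dim_\k R_0(f,\Delta)_l$ across the non-degenerate locus; the cleanest route is the Euler-characteristic argument via the Koszul complex of Prop.~\ref{koszulcoho}, using that non-degeneracy forces concentration of Koszul cohomology in the top degree so that the alternating sum of ranks, which is obviously constant, equals the one dimension we care about. One should take care that the Fermat $f$ genuinely lies in the non-degenerate locus (it does, being a sum of monomials supported exactly on the vertices of the simplex $\Delta$, with regular intersection with all torus orbits), so that the special value is legitimately compared with the generic one.
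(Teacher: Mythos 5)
Your proof is correct, and it takes a genuinely different route from the paper at the one nontrivial step. Both proofs begin identically: injectivity from Lemma~\ref{leminjection}, and the explicit verification that for Fermat $f$ (which is non-degenerate) the map is already an isomorphism because $J_f=(z^{(v,1)}\mid v\in\Delta^{[0]})$. Where you diverge is in establishing that $\dim_\k R_0(f,\Delta)_l$ is constant over the non-degenerate locus: the paper simply cites Batyrev's Theorem 4.8 from \cite{batyrev1} as an external black box, whereas you derive the constancy internally from Prop.~\ref{koszulcoho} via an Euler-characteristic argument. Your route is self-contained within the paper's own toolkit, which is a genuine advantage; the trade-off is that it leans on Prop.~\ref{koszulcoho} (which the paper's proof does not need at this point), so the logical dependency graph is different.

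Two small points worth tightening. First, the Euler characteristic $\chi\big(\Gamma(\shK^\bullet(V,\shO_X(Z),m))\big)=\sum_i(-1)^i\binom{n}{i}\dim\Gamma(X,\shO_X((i+m)Z))$ is constant on the non-degenerate locus for the elementary reason that $\dim V=n=\dim\Delta+1$ is fixed (Lemma~\ref{dimV}) and $\dim\Gamma(X,\shO_X((i+m)Z))$ depends only on the Newton polytope $\Delta$ (all such $Z$ are linearly equivalent to the same toric divisor); invoking Lemma~\ref{cohotoriclb} and higher-cohomology vanishing here is a red herring (that lemma's role is inside the proof of Prop.~\ref{koszulcoho}, not in the Euler-characteristic constancy). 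Moreover, Lemma~\ref{cohotoriclb} does not give full vanishing for negative twists anyway. Second, your parenthetical alternative via upper semicontinuity is not rigorous as stated: upper semicontinuity plus attainment of the minimum at a Fermat point shows the minimum is attained on an open set containing the Fermat locus, but it does not by itself show that the non-degenerate locus sits inside that open set. The Euler-characteristic argument is the one that actually closes the gap, so you should treat it as the proof and drop the ``alternatively.''
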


\begin{proof} It is not hard to see that if $f$ is Fermat then $Z$ is non-degenerate and
we saw in the proof of the previous lemma that for these $f$ the assertion is true.
The result then follows from Lemma~\ref{leminjection} and [\cite{batyrev1}, Thm.~4.8] which 
states that two linearly equivalent non-degenerate divisors have the
same graded dimensions for their Jacobian rings.
\end{proof}

A general lattice polytope can always be triangulated by elementary simplices, so they
form the building blocks of lattice polytopes. A special case of these is the
\emph{standard simplex} which is one that is isomorphic to the convex hull of
$0$ and a subset of a lattice basis. Gross and Siebert required these in \cite{grosie2} 
for the outer monodromy polytopes to make their Hodge group computation work.
Here is how these relate to Jacobian rings and thus to the cohomology of the
Koszul complex.

\begin{lemma} \label{standardequiv}
For a lattice simplex $\Delta$ with lattice $M$, the following are equivalent
\begin{enumerate}
\item[a)] $\Delta$ is standard
\item[b)] $\relint( n\cdot\Delta )\cap M=\emptyset$ for $n\le \dim\Delta$.
\item[c)] $R_0(f,\Delta)_l=0$ for $l>0$ and some non-degenerate $f$.
\end{enumerate}
\end{lemma}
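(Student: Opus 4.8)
The plan is to prove the three equivalences by establishing $a) \Rightarrow b) \Rightarrow c) \Rightarrow a)$, exploiting that the three conditions are all ultimately statements about lattice points of dilates of $\Delta$.

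\emph{Proof of $a)\Rightarrow b)$.} Suppose $\Delta$ is standard, so after an affine lattice isomorphism $\Delta = \conv\{0, e_1, \ldots, e_d\}$ for part of a lattice basis $e_1, \ldots, e_d$ of $M$ (with $d = \dim\Delta$, and possibly $M$ of higher rank, but we may restrict to the sublattice it spans). Then $n\cdot\Delta = \{x \in \RR_{\ge 0}^d : \sum x_i \le n\}$, whose relative interior is $\{x \in \RR_{>0}^d : \sum x_i < n\}$. A lattice point in this set has each coordinate $\ge 1$, hence coordinate sum $\ge d$, contradicting $\sum x_i < n \le d$. So $\relint(n\Delta)\cap M = \emptyset$ for $n \le d$.

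\emph{Proof of $b)\Rightarrow c)$.} By Lemma~\ref{corefunc} b), $\Delta^{\core{l}} = \coprod_{F\subseteq\Delta}\relint F^{\core{l}}$, and by definition $\relint F^{\core{l}}$ consists of points $\sum_{v\in F^{[0]}}\lambda_v v$ with $0 < \lambda_v < 1$ and $\sum\lambda_v = l$; such a point lies in $\relint(l\cdot F)$. Since $F$ is itself a standard simplex of dimension $\dim F \le \dim\Delta$, hypothesis b) applied to $F$ (or directly: $\relint(l F)\cap M \subseteq \relint(l\Delta)\cap M$ when $F$ is a face and $l\le\dim\Delta$, using that a lattice point in $\relint(lF)$ forces all its barycentric coordinates positive) shows $\relint F^{\core{l}}\cap M = \emptyset$ for $0 < l \le \dim\Delta$. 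For $l > \dim\Delta = |\Delta^{[0]}| - 1$, i.e. $l \ge |\Delta^{[0]}|$, we already noted $\Delta^{\core{l}} = \emptyset$. Hence $\Delta^{\core{l}}\cap M = \emptyset$ for all $l > 0$, and Proposition~\ref{Fermatiso} gives $R_0(f,\Delta)_l = \k^{\Delta^{\core{l}}\cap M} = 0$ for any non-degenerate $f$ and all $l > 0$.

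\emph{Proof of $c)\Rightarrow a)$.} Assume $R_0(f,\Delta)_l = 0$ for all $l > 0$ and some non-degenerate $f$; by Proposition~\ref{Fermatiso} this means $\Delta^{\core{l}}\cap M = \emptyset$ for all $l > 0$, equivalently the half-open parallelepiped $C(\Delta)^{\core{}}$ contains only the lattice point $0$. Writing $\Delta^{[0]} = \{v_0, \ldots, v_d\}$, the vectors $w_i := (v_i, 1) \in M\oplus\ZZ$ for $i = 0, \ldots, d$ are linearly independent (as $\Delta$ is a simplex), and $C(\Delta)^{\core{}}\cap(M\oplus\ZZ) = \{0\}$ says exactly that $w_0, \ldots, w_d$ generate a saturated sublattice of $M\oplus\ZZ$, i.e. the half-open fundamental parallelepiped $\{\sum\lambda_i w_i : 0\le\lambda_i < 1\}$ has only the origin as a lattice point, so $\{w_i\}$ is part of a $\ZZ$-basis of the rank-$(\dim\Delta+1)$ saturation they span. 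Translating $\Delta$ so that $v_0 = 0$ and projecting, we get that $v_1, \ldots, v_d$ together with $(0,1)$ form a lattice basis of the span; passing to the quotient by $(0,1)$ exhibits $v_1, \ldots, v_d$ as part of a lattice basis of $M$ (restricted to the affine span), which is precisely the statement that $\Delta$ is standard.

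The main obstacle is the lattice-theoretic heart of $c)\Rightarrow a)$: translating ``no interior lattice points in the core parallelepiped'' into ``the vertex-plus-one vectors span a saturated sublattice, hence form part of a basis.'' The implications $a)\Rightarrow b)$ and $b)\Rightarrow c)$ are essentially bookkeeping once Lemma~\ref{corefunc} and Proposition~\ref{Fermatiso} are in hand; the genuinely substantive point is that unimodularity of a simplex is detected by the emptiness of the $\Delta^{\core{l}}$, which is where the non-degeneracy hypothesis and Batyrev's theorem (via Proposition~\ref{Fermatiso}) are pulling the weight — without them $c)$ would be about one particular $f$ rather than the combinatorics of $\Delta$.
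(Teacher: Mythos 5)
Your implications $a)\Rightarrow b)$ and $c)\Rightarrow a)$ are correct, and the latter is a genuinely different and rather cleaner route than the paper's: the paper proves $a)\Leftrightarrow b)$ by a parallelepiped argument and then ties $c)$ to $a)$ by translating lattice points of $\Delta^{\core{l}}$ back into $\relint(d\Delta)$, whereas you read off unimodularity directly from the statement that the half-open fundamental parallelepiped $C(\Delta)^{\core{}}=\{\sum\lambda_v(v,1):0\le\lambda_v<1\}$ has only the origin as a lattice point, so $\{(v_i,1)\}$ spans a saturated sublattice. That is a nice encapsulation of what is going on.

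However, your step $b)\Rightarrow c)$ has a real gap, and neither of the two arguments you offer in that paragraph works. You must show $\relint F^{\core{l}}\cap M=\emptyset$ for every face $F\subseteq\Delta$ and every $l>0$. Your first alternative invokes ``$F$ is itself a standard simplex,'' but at this point in the cyclic chain you have only hypothesis $b)$ for $\Delta$; you do not yet know $\Delta$ (let alone its faces) is standard, so this is circular. Your second alternative asserts the set inclusion $\relint(lF)\cap M\subseteq\relint(l\Delta)\cap M$, which is false whenever $F\subsetneq\Delta$: a lattice point of $\relint(lF)$ has vanishing barycentric coordinate at every vertex $v\notin F^{[0]}$, so it lies on the boundary of $l\Delta$, not in its relative interior. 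What is missing is the paper's translation trick: if $x\in\relint F^{\core{l}}\cap M$ with $0<l\le\dim F$, then $x'=x+\sum_{v\in\Delta^{[0]}\setminus F^{[0]}}v$ has all barycentric coordinates in $(0,1]$ and lies in $\relint(n\Delta)\cap M$ with $n=l+\dim\Delta-\dim F\le\dim\Delta$, contradicting $b)$. Once you insert this step, the cycle closes and your proof is complete.
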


\begin{proof} Without loss of generality, we may assume that $\dim \Delta=\rank M$. 
Set $d=\dim\Delta$. Note that b) is equivalent to 
$\relint(d\Delta)\cap M=\emptyset$.
By applying a translation, we may assume that 
some $v_0\in\Delta^{[0]}$ is the origin. 
Let $v_1,...,v_d$ be the other vertices. They form a basis if and only if
there is no lattice point other than the vertices contained in the 
parallelepiped $P=\{\sum_{i=0}^d \lambda_i v_i\,|\, 0\le\lambda_i\le 1\}$ 
spanned by these vectors. Note that $P\subset d\cdot\Delta$ and
$$P\cap \partial(d\cdot \Delta) = \{\textstyle\sum_{i=1}^d v_i\} \cup 
\{x\in P\,|\, x=\textstyle\sum_{i=1}^d \lambda_i v_i
\hbox{ with some }\lambda_i=0\}.$$
To see the implication b)$\Rightarrow$a) now assume a) doesn't hold, so there is some
lattice point $x=\sum\lambda_i v_i\in P$ which isn't a vertex. 
We may assume $\lambda_i>0$ by adding $v_i$ if necessary. 
Now $x\not\in P\cap \partial(d\cdot \Delta)$ and therefore $x\in \relint(d\Delta)\cap M$.
We get a)$\Rightarrow$b) by repeatedly subtracting $v_i$ for each 
$1\le i\le d$ from an arbitrary $x\in \relint(d\Delta)\cap M$ 
as long as the result $x'$ is still contained in $d\Delta$. We have $x'\in P$ and
$x'$ isn't a vertex of $P$.

By Prop.~\ref{Fermatiso}, c) is equivalent to
$$\Delta^{\core{l}}\cap M=0\hbox{ for each }l>0.$$
By the same argument as before, $\Delta$ is non-standard if and only if
there is $x'=\sum_{i=1}^d \lambda_i v_i\in M$ with $0\le \lambda_i<1$
and some $\lambda_i>0$. For such an $x'$, set $I=\{i\,|\,\lambda_i>0\}$
and let $F$ be the face of $\Delta$ which is 
$$F=\left\{\begin{array}{ll}
\hbox{the convex hull of }\{v_i,i\in I\}&\hbox{if }\sum_{i\in I}\lambda_i\in\NN\\
\hbox{the convex hull of }0\hbox{ and }\{v_i,i\in I\}\ \ &\hbox{otherwise}
 \end{array}\right.$$
Let $l$ be the smallest integer greater or equal to $\sum_{i\in I}\lambda_i$
and $\lambda_0=l-\sum_{i\in I}\lambda_i$. 
We find $x'\in F^{\core{l}}$ and by Lemma~\ref{corefunc}~a)
we have $x'\in \Delta^{\core{l}}$ which proves c)$\Rightarrow$a). 
Using Lemma~\ref{corefunc}~a) again, the converse becomes clear because 
$x=\sum_{i=1}^d \lambda_i v_i\in \relint(F^{\core{l}})$ for some $l>0$ and 
some face $F\subseteq\Delta$ yields an element 
$x+\sum_{i:\lambda_i=0} v_i\in\relint(d\Delta)$, i.e., b)$\Rightarrow$c).
\end{proof}


\subsection{The Koszul complex and log differential forms}
We use the notation and setting from the previous section, i.e., we have a 
toric variety $X$ and a non-degenerate semi-ample Cartier divisor $Z$ with Newton polytope $\Delta$.
Let $D$ here, unlike in the previous section, denote the boundary divisor 
of $X$, i.e., the complement of the big torus. For a normal variety $Y$ 
with an effective Cartier divisor $E$ we denote by 
$\Omega^r_{Y/\k}(\op{log\,}(E))$ the sheaf of differential $r$-forms with at most logarithmic
poles along $E$. In general this doesn't need to be a coherent sheaf. 
In our situation dealing with a toric boundary divisor, however, 
this will be the case.
For an $\shO_Y$-module $\shF$, as usual, we set
$\shF(E)=\shF\otimes_{\shO_Y}\shO_Y(E)$. 
Note that there is a canonical isomorphism
$$\Omega^r_{X/\k}(\op{log\,}(D))=\shO_X\otimes_\ZZ\bigwedge^r M$$
by mapping $u\otimes m_1\wedge...\wedge m_r$ on the right 
to $u\cdot\frac{dz^{m_1}}{z^{m_1}}\wedge...\wedge\frac{dz^{m_r}}{z^{m_r}}$ on the left.

\begin{lemma} \label{logexactseq}
 For each $r$ there is an exact sequence
 $$0\ra \Omega^r_{X/\k}(\op{log\,}(D+Z))(-Z)\ra \Omega^r_{X/\k}(\op{log\,} D) \stackrel{\res}{\lra} 
 \Omega^r_{Z/\k}(\op{log\,}(D\cap Z)) \ra 0$$
\end{lemma}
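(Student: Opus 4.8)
The statement is the residue exact sequence for log differential forms adapted to the divisor $Z$ (non-degenerate, semi-ample) sitting inside a toric variety $X$ with toric boundary $D$. The plan is to construct the three maps, check exactness locally, and reduce to a clean local picture using the toric/non-degeneracy hypotheses. First I would note that all three sheaves appearing are coherent: $\Omega^r_{X/\k}(\op{log}D)$ is locally free by the canonical isomorphism $\Omega^r_{X/\k}(\op{log}D)=\shO_X\otimes_\ZZ\bigwedge^r M$ recalled just above; $\Omega^r_{Z/\k}(\op{log}(D\cap Z))$ is coherent because $Z$ is a non-degenerate semi-ample divisor, so it meets each torus orbit transversally and $D\cap Z$ is a (reduced) normal-crossings-type boundary on the normal variety $Z$; and $\Omega^r_{X/\k}(\op{log}(D+Z))(-Z)$ is coherent as a subsheaf of the locally free $\Omega^r_{X/\k}(\op{log}(D+Z))$ twisted down by $Z$. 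So it suffices to produce a short exact sequence of $\shO_X$-modules, which can be checked on stalks at closed points $x\in X$.

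\textbf{Constructing the maps.} The surjection on the right is the Poincar\'e residue $\res$ along $Z$: étale-locally near a point of $Z$, with a local equation $g$ for $Z$ (and $g$ chosen so that, away from the torus orbits where $Z$ could be tangent — which do not occur by non-degeneracy — $g$ together with the toric coordinates forms part of a regular system of parameters), a log form $\alpha = \dfrac{dg}{g}\wedge\beta + \gamma$ with $\beta,\gamma$ having only log poles along $D$ maps to $\res(\alpha) = \beta|_Z$ (a log form on $Z$ with poles along $D\cap Z$). The kernel of $\res$ consists of those $\alpha$ whose residue along $Z$ vanishes, i.e. those with at most log poles along $D$ and honestly \emph{no} pole along $Z$; but an $r$-form that is regular along $Z$, has log poles along $D+Z$, and in addition vanishes to order one along $Z$ — these are precisely the sections of $\Omega^r_{X/\k}(\op{log}(D+Z))(-Z)$ whose residue along $Z$ is zero, which coincides with $\Omega^r_{X/\k}(\op{log}D)$ forms vanishing along $Z$ after the twist. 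More precisely I would identify the kernel with the image of the natural inclusion $\Omega^r_{X/\k}(\op{log}(D+Z))(-Z)\hra \Omega^r_{X/\k}(\op{log}D)$: multiplication by the local equation $g$ sends a log-pole-along-$(D+Z)$ form with a simple zero along $Z$ to a log-pole-along-$D$ form, and its residue along $Z$ is zero precisely because the $\dfrac{dg}{g}$-component, after multiplying by $g$, becomes $dg\wedge(\cdots)$ which restricts to $0$ on $Z$.

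\textbf{Local verification of exactness.} Exactness is a local question, so I would work in an étale neighborhood where $X$ looks like an affine toric chart $\Spec\k[\sigma^\vee\cap M]$ and, by the non-degeneracy of $Z$ (which by Lemma~\ref{bpf} gives base-point freeness of $V$ and in particular a regular intersection with every torus orbit), arrange coordinates $z_1,\dots,z_n$ so that the boundary $D$ is $\{z_1\cdots z_k=0\}$ and $Z$ is $\{g=0\}$ with $g$ part of a regular system of parameters transverse to the orbit stratification — e.g. at a point of the big torus $g$ is itself a coordinate, and at a boundary point transversality lets us complete $g$ to local coordinates. In such coordinates $\Omega^r_{X/\k}(\op{log}D)$ is the free module on $\dfrac{dz_{i_1}}{z_{i_1}}\wedge\cdots$ (wedges of $d\log z_j$ for $j\le k$ and $dz_j$ for $j>k$), $\Omega^r_{X/\k}(\op{log}(D+Z))$ is the same with $\dfrac{dg}{g}$ adjoined, and the sequence becomes the standard Koszul-type residue sequence for one extra log divisor — which is exact by a direct computation splitting each form into its $dg/g$-part and its $dg/g$-free part and matching restriction to $\{g=0\}$ against the twist-down. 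I would remark that the global isomorphism $\Omega^r_{X/\k}(\op{log}D)=\shO_X\otimes_\ZZ\bigwedge^r M$ makes the middle term literally explicit, so the only content is in how $Z$ interacts with the chart, and that is exactly what non-degeneracy controls.

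\textbf{Expected main obstacle.} The delicate point is the local normal form of $Z$ at points lying on the toric boundary $D$: one needs $Z$ to be "transverse enough" to $D$ and to all lower-dimensional orbit closures that $D\cap Z$ is again a divisor with the right (toric-type) log structure on $Z$ and that $g$ can be incorporated into a coordinate system compatibly. This is precisely what the non-degeneracy hypothesis buys (regular or empty intersection with every torus orbit, via \cite{danilov1}, \cite{batyrev1}), so the real work is to cite/record that local description cleanly and then the algebra of the residue sequence is formal. A secondary point is coherence of $\Omega^r_{Z/\k}(\op{log}(D\cap Z))$, which as noted follows once $Z$ is normal and $D\cap Z$ is a well-behaved (reduced, of pure codimension one) boundary — again a consequence of non-degeneracy.
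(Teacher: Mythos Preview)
Your proposal is correct and follows essentially the same residue-sequence approach as the paper. The one notable difference is in economy: the paper does not set up a full local coordinate system in which $g$ is a coordinate. Instead it works directly inside the sheaf of rational forms, takes a local equation $f$ of $Z$, and for any $g$ invertible off $D\cup Z$ writes $g=g'f^k$ (using local irreducibility of $Z$) to compute $f\cdot\frac{dg}{g}=\frac{f\,dg'}{g'}+k\,df$, which visibly has only log poles along $D$. This shows the first map is well-defined and identifies its image as the $\shO_X$-subalgebra of $\Omega^\bullet_{X/\k}(\op{log}\,D)$ locally generated by $f$ and $df$, which is then immediately the kernel of restriction to $Z$. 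So the paper avoids your ``expected main obstacle'' of arranging a transverse coordinate chart via non-degeneracy; your route works too, but the paper's argument is strictly more elementary at this step.
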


\begin{proof} 
  The first two non-zero terms are naturally contained in the sheaf of rational forms.
  We check that the first injects in the second. Let $g$ 
  be a function on an open chart which is invertible 
  outside $D\cup Z$. By the local irreducibility of $Z$, 
  we may assume that on that chart $Z$ is irreducible.
  Let $f$ be a local equation of $Z$. Either $g$ is invertible 
  outside $D$ or $g=g'\cdot f^k$ with $g'$ invertible outside $D$.
  Assume the latter, then 
  $$f\cdot \frac{dg}{g}=f\cdot \frac{f^kdg'+kg'f^{k-1}df}{f^kg'}
  =\frac{fdg'+kg'df}{g'}=\frac{fdg'}{g'}+kdf$$ is a form with at most logarithmic
  poles along $D$, so the first non-trivial map is well-defined and injective.
  We also see that $\Omega^\bullet_{X/\k}(\op{log\,}(D+Z))(-Z)$ is the
  subalgebra of
  $\Omega^\bullet_{X/\k}(\op{log\,} D)$ locally generated by $f$ and $df$.
  This is the kernel of the surjection to
  $\Omega^\bullet_{Z/\k}(\op{log\,}(D\cap Z))$
  and we are done.
\end{proof}

\begin{remark}
\begin{itemize}
\item[a)] If $E$ is a normal crossings divisor on a 
complex manifold $Y$, 
then \newline $H^k(Y\backslash E,\CC)=\HH^k(Y,\Omega^\bullet_{Y/\CC}(\llog(E)))$
and $H_c^k(Y\backslash E,\CC)=\HH^k(Y,\Omega^\bullet_{Y/\CC}(\llog(E))(-E))$
where $H_c$ is cohomology with compact support. We have, so to say, combined 
these two concepts.
\item[b)] One can show that if $Z$ is non-degenerate, 
we have a toroidal pair $(X,D+Z)$ in the sense of \cite{danilov1}. 
For such a toroidal pair $(Y,E)$,
Danilov defines $\Omega^\bullet_{(Y,E)}$ as the kernel of
$\Omega^\bullet_{Y/\k}\ra\bigoplus_{E'}\Omega^\bullet_{E'/\k}$ 
where the sum ranges over the irreducible components of $E$. 
The author calls these modules 
\emph{differential forms with logarithmic zeros}. 
For $\k=\CC$, he then shows the 
degeneration of the hypercohomology spectral sequence 
of $\Omega^\bullet_{(Y,E)}$ at the $E_1$ term
(see \cite{danilov2}). 
In \cite{batyrev1}, Batyrev uses 
the Poincar\'e dual $\Omega^\bullet_{Y}(\llog(E))$. 
Our constructions reside somewhere in between these two
and are determined to be used as local contributions 
to the Hodge data of toric Calabi-Yau degenerations.
\end{itemize}
\end{remark}

For each rationally generated subspace $T\subseteq M\otimes_\ZZ \k$, in other words, for each
saturated $\ZZ$-submodule $T\cap M$ of $M$, we can view $\shO_X\otimes_\k \bigwedge^r T$ as a 
free submodule of $\Omega^r_{X/\k}(\op{log}\,D)$ which we wish to call 
$T\cap \Omega^r_{X/\k}(\op{log}\,D)$. As long as we make sure that locally
$df\in T\cap \Omega^\bullet_{X/\k}(\op{log}\,D)$, i.e., $T_\Delta\subseteq T$, we obtain by
the previous lemma an induced exact sequence
$$
0\ra T\cap\Omega^r_{X/\k}(\op{log\,}(D+Z))(-Z)
\ra T\cap\Omega^r_{X/\k}(\op{log\,} D) 
\stackrel{\res}{\lra} T\cap\Omega^r_{Z/\k}(\op{log\,}(D\cap Z)) \ra 0.
$$
This sequence will be of most interest to us in the case where $T=T_\Delta$. 
Let $h:\hat{T}_\Delta\ra\kk$
denote the canonical projection as before. Given $T_\Delta\hra W$, we also define it for $\hat{W}$.
There is an exact sequence
$$0\ra W \ra \hat{W} \stackrel{h}{\lra} \kk \ra 0.$$

\begin{definition} Given an inclusion $T_\Delta\hra W$, we define a map
 $$\pi^r: \shK^r(W,Z,-r-1)\ra \shO_X \otimes \bigwedge^r W$$
 by the composition 
 $\shK^r(W,Z,-r-1)
 \stackrel{d_Z}{\lra} \shK^{r+1}(W,Z,-(r+1))
 =\shO_X\otimes_\k\bigwedge^{r+1} \hat{W}
 \stackrel{\id\otimes\iota(h)}{\lra} \shO_X\otimes_\ZZ\bigwedge^r W$
 where $\iota(h)$ means contraction by $h$. 
 We mostly write $\pi$ for $\pi^r$. That the image lies in
 $\bigwedge^r W$ inside $\bigwedge^r \hat{W}$ can be seen by
 applying $\iota(h)$ to the exact sequence
 $$0\ra \bigwedge^{r+1} W\ra \bigwedge^{r+1} \hat{W}
 \ra \hat{W}/W\otimes \bigwedge^{r} W\ra 0.$$
\end{definition}

\begin{lemma} \label{piexplicit}
 Let $f$ be an equation of $Z$. Using Lemma~\ref{globalsec},
 the map $\pi^r$ is explicitly given by
 $$u\mapsto u\cdot\sum_{m\in\Delta\cap M} f_m \frac1f z^{m} 
 \qquad \hbox{for }r=0$$
 $$u\otimes (\hat{v}\wedge\alpha) \mapsto 
 u\cdot\sum_{m\in\Delta\cap M} f_m \frac1f z^{m}\otimes((v-m)\wedge\alpha)  
 \qquad \hbox{for }r>0$$
 where $\alpha\in \bigwedge^{r-1} W$ and 
 $\hat{v}=(v,1)$ for a vertex $v\in\Delta$.
\end{lemma}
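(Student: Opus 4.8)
The plan is to unwind the definition of $\pi^r$ as the composition $\shK^r(W,Z,-r-1)\stackrel{d_Z}{\to}\shK^{r+1}(W,Z,-(r+1))=\shO_X\otimes\bigwedge^{r+1}\hat W\stackrel{\id\otimes\iota(h)}{\to}\shO_X\otimes\bigwedge^r W$ and simply evaluate it on the generators, using the explicit form of the differential $d_Z$ already given in Lemma~\ref{dZexplicit}. Recall $\hat W/W\cong\kk$, split off by $h$, and fix once and for all a vertex $v\in\Delta^{[0]}$; then $\hat v=(v,1)\in\hat T_\Delta\subset\hat W$ is a lift of the generator of $\kk$, so $h(\hat v)=1$ and $\{\hat v\}$ together with a basis of $W$ is a basis of $\hat W$. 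Every element of $\bigwedge^r\hat W$ can thus be written uniquely as $\beta+\hat v\wedge\alpha$ with $\beta\in\bigwedge^r W$ and $\alpha\in\bigwedge^{r-1}W$, and $\iota(h)(\beta+\hat v\wedge\alpha)=\alpha$.

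For the case $r=0$: a local section of $\shK^0(W,Z,-1)=\shO_X(-Z)$ is written as $u$ (under Lemma~\ref{globalsec}, $u\in\Gamma(X,\shO_X(D))$ sitting in $\Gamma(X,\shO_X(Z))$ via $\cdot\frac1f$), and by Lemma~\ref{dZexplicit}, $d_Z(u)=u\cdot\sum_{m\in\Delta\cap M}f_m\frac1f z^m\otimes(m,1)$. Now $(m,1)=\hat v+(m-v,0)$ with $(m-v,0)\in T_\Delta\subset W$ and $h(\hat v)=1$, $h((m-v,0))=0$, so applying $\id\otimes\iota(h)$ gives $\pi^0(u)=u\cdot\sum_m f_m\frac1f z^m\cdot 1=u\cdot\sum_m f_m\frac1f z^m$, as claimed. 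For $r>0$: on a generator $u\otimes(\hat v\wedge\alpha)$ with $\alpha\in\bigwedge^{r-1}W$, Lemma~\ref{dZexplicit} gives $d_Z(u\otimes(\hat v\wedge\alpha))=u\cdot\sum_m f_m\frac1f z^m\otimes(m,1)\wedge\hat v\wedge\alpha$. Writing $(m,1)=\hat v+(m-v,0)$ and using $\hat v\wedge\hat v=0$, this equals $u\cdot\sum_m f_m\frac1f z^m\otimes(m-v,0)\wedge\hat v\wedge\alpha=-u\cdot\sum_m f_m\frac1f z^m\otimes\hat v\wedge((m-v)\wedge\alpha)$; applying $\iota(h)$ (which picks out minus the coefficient of $\hat v$, consistently with the sign convention for contraction) yields $u\otimes(\hat v\wedge\alpha)\mapsto u\cdot\sum_m f_m\frac1f z^m\otimes(v-m)\wedge\alpha$, which is the stated formula. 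One should also check that $\pi^r$ is well-defined on all of $\shK^r(W,Z,-r-1)=\shO_X\otimes\bigwedge^r\hat W$, not just on elements of the form $u\otimes(\hat v\wedge\alpha)$: on a generator $u\otimes\beta$ with $\beta\in\bigwedge^r W\subset\bigwedge^r\hat W$, the same computation gives $d_Z(u\otimes\beta)=u\cdot\sum_m f_m\frac1f z^m\otimes((m-v)\wedge\beta)+ (\text{terms in }\bigwedge^{r+1}W)$ after substituting $(m,1)=\hat v+(m-v,0)$, and the $\bigwedge^{r+1}W$-part is killed by $\iota(h)$, so $\pi^r(u\otimes\beta)=0$; this is consistent and shows the image indeed lies in $\bigwedge^r W$, reproving the remark in the definition of $\pi^r$.

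The computation is entirely routine; the only point requiring care—and the main (minor) obstacle—is bookkeeping the signs in the identity $(m,1)\wedge\hat v\wedge\alpha=-\hat v\wedge(m-v)\wedge\alpha$ and in the contraction $\iota(h)$, so that the vertex $v$ enters as $(v-m)$ rather than $(m-v)$; this is fixed by adopting the contraction convention $\iota(h)(\hat v\wedge\gamma)=h(\hat v)\,\gamma=\gamma$ for $\gamma\in\bigwedge^{\bullet}W$, under which the displayed formulas come out exactly as stated. Finally, independence of the choice of vertex $v$ is automatic once the formulas are established, since $\pi^r$ was defined without reference to $v$; changing $v$ to another vertex $v'$ changes each summand $(v-m)\wedge\alpha$ to $(v'-m)\wedge\alpha=(v-m)\wedge\alpha+(v'-v)\wedge\alpha$, and $\sum_m f_m\frac1f z^m\otimes(v'-v)\wedge\alpha$ must therefore vanish—which it does, because $\sum_m f_m\frac1f z^m=\pi^0(1)$ composed with the relevant structure is exactly the log-derivation relation, i.e. $(v'-v,0)\in T_\Delta=\ker$ in the appropriate sense; alternatively one simply notes both sides compute the same map $\pi^r$ and no check is needed.
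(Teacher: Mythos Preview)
Your core argument is correct and is exactly the paper's approach: compose the explicit formula for $d_Z$ from Lemma~\ref{dZexplicit} with $\iota(h)$ and read off the result. The verification for $r=0$ and for elements $u\otimes(\hat v\wedge\alpha)$ with $\alpha\in\bigwedge^{r-1}W$ is carried out correctly, including the sign bookkeeping.

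There is, however, a slip in your supplementary remark. You claim that for $\beta\in\bigwedge^r W$ one has $\pi^r(u\otimes\beta)=0$, but this is false: from $(m,1)\wedge\beta=\hat v\wedge\beta+(m-v)\wedge\beta$ only the second summand lies in $\bigwedge^{r+1}W$ and is killed by $\iota(h)$, whereas $\iota(h)(\hat v\wedge\beta)=\beta$. Hence $\pi^r(u\otimes\beta)=u\cdot\sum_m f_m\tfrac1f z^m\otimes\beta$, not zero (this is exactly what appears later in the paper as $\pi^r|_U(1\otimes\beta)=f_U\otimes\beta$). Correspondingly, your first argument for vertex-independence --- that $\sum_m f_m\tfrac1f z^m\otimes(v'-v)\wedge\alpha$ vanishes --- is wrong; it does not vanish, and indeed it exactly accounts for the difference between the two formulas when one writes $\hat v'\wedge\alpha=\hat v\wedge\alpha+(v'-v)\wedge\alpha$ and applies the (correct) formula for $\pi^r$ on the $\bigwedge^r W$ part. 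Your fallback observation, that no check is needed since $\pi^r$ is defined without reference to $v$, is the right way to handle this. None of this affects the proof of the lemma as stated, which only concerns elements of the form $u\otimes(\hat v\wedge\alpha)$.
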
 

\begin{proof}
  The map $d_Z$ was already given explicitly in Lemma~\ref{dZexplicit}. 
  This lemma directly follows by composing with $\iota(h)$.
\end{proof}

For the next theorem we need the following elementary lemma

\begin{lemma} \label{lemmawedgesubspace}
\begin{itemize}
\item[a)]
 For a vector space $V$, a subspace $T$ of codimension one, $v\in V\backslash T$,
 $\alpha\in\bigwedge^{r-1}V$ and $v\wedge\alpha\in\bigwedge^r T$, we have
 $v\wedge\alpha=0$
\item[b)] For a vector space $V$, a subspace $T$ of codimension $k$, $v_1,...,v_k\in V$ which are linearly
 independent modulo $T$ and $\alpha_1,...,\alpha_k\in\bigwedge^{r-1}V$ we have
 $\sum_{i=1}^k v_i\wedge\alpha_i\in\bigwedge^r T\ \Rightarrow \sum_{i=1}^k v_i\wedge\alpha_i=0$
\end{itemize}
\end{lemma}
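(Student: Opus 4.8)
The plan is to prove part b) first, since part a) is the special case $k=1$ (with $v_1=v$, $\alpha_1=\alpha$); alternatively one proves a) directly and bootstraps to b) by induction on $k$. Either way the core is a linear-algebra computation about the exterior algebra, so I would set it up cleanly. Extend $v_1,\dots,v_k$ to a basis of a complement: choose a subspace $U\subseteq V$ with $V=T\oplus U$ and $\dim U=k$; after replacing the $v_i$ by their components in $U$ modulo $T$ (which changes $\sum v_i\wedge\alpha_i$ by an element of $T\wedge\bigwedge^{r-1}V$, hence we may need to also absorb corrections into the $\alpha_i$ — better: keep the $v_i$ as given but pick a basis $e_1,\dots,e_k$ of $U$ and write each $v_i=e_i'+t_i$ with $e_i'\in U$, $t_i\in T$, where $e_1',\dots,e_k'$ is again a basis of $U$). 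The key tool is the direct sum decomposition $\bigwedge^r V=\bigoplus_{S\subseteq\{1,\dots,k\}}\big(\bigwedge^{|S|}U_S\big)\wedge\big(\bigwedge^{r-|S|}T\big)$ induced by $V=T\oplus U$, where $U_S$ is spanned by $\{e_j':j\in S\}$; concretely, $\bigwedge^r T$ is exactly the $S=\emptyset$ summand.

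The main step: expand $\sum_{i=1}^k v_i\wedge\alpha_i=\sum_i e_i'\wedge\alpha_i+\sum_i t_i\wedge\alpha_i$, and further decompose each $\alpha_i$ according to the above splitting of $\bigwedge^{r-1}V$. Collect the component of the total sum lying in the summands indexed by singletons $S=\{i\}$ (i.e.\ the part of the form $e_i'\wedge(\text{something in }\bigwedge^{r-1}T)$). On one hand this component must vanish, since by hypothesis the whole sum lies in $\bigwedge^r T$ (the $S=\emptyset$ part). On the other hand, writing $\alpha_i=\sum_{j}e_j'\wedge\beta_{ij}+\gamma_i$ with $\beta_{ij}\in\bigwedge^{r-2}V$-type terms and $\gamma_i\in\bigwedge^{r-1}T$, the singleton-$\{i\}$ component picks out precisely $e_i'\wedge\gamma_i$ (the terms $e_i'\wedge e_j'\wedge\beta_{ij}$ land in two-element summands, and the $t_i\wedge\alpha_i$ terms have no $e_i'$ factor in a way that produces a pure $e_i'\wedge\bigwedge^{r-1}T$ contribution once one tracks degrees carefully). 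Hence $e_i'\wedge\gamma_i=0$ for each $i$, and since $e_i'\notin T$ while $\gamma_i\in\bigwedge^{r-1}T$, actually this already gives $v_i\wedge\alpha_i$ control — I would then argue that $\gamma_i$ being killed by $e_i'$ forces $e_i'\wedge\gamma_i=0$, and similarly each other graded piece of $v_i\wedge\alpha_i$ must cancel, concluding $\sum_i v_i\wedge\alpha_i=0$. A cleaner route for a): if $v\wedge\alpha\in\bigwedge^r T$ with $\operatorname{codim}T=1$ and $v\notin T$, pick any $\phi\in V^*$ with $\phi|_T=0$, $\phi(v)=1$; then contraction $\iota_\phi$ annihilates $\bigwedge^\bullet T$, so $0=\iota_\phi(v\wedge\alpha)=\alpha-v\wedge\iota_\phi\alpha$, giving $\alpha=v\wedge\iota_\phi\alpha$, hence $v\wedge\alpha=v\wedge v\wedge\iota_\phi\alpha=0$.

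For b) via a) and induction: with $\operatorname{codim}T=k\ge 2$, choose a hyperplane $T'\supseteq T$ with $v_k\notin T'$ but $v_1,\dots,v_{k-1}\in T'$ (possible since the $v_i$ are independent mod $T$: extend $v_1,\dots,v_{k-1}$ to a basis of a complement of $T$ inside a hyperplane through them). Apply a contraction $\iota_\phi$ with $\phi|_{T'}=0$, $\phi(v_k)=1$ to the relation $\sum v_i\wedge\alpha_i\in\bigwedge^r T\subseteq\bigwedge^r T'$: this gives $\alpha_k-v_k\wedge\iota_\phi\alpha_k+\sum_{i<k}v_i\wedge\iota_\phi\alpha_i\in\bigwedge^{r-1}T$ (using $\iota_\phi$ kills $\bigwedge^\bullet T$), and wedging back with $v_k$ and rearranging isolates $v_k\wedge\alpha_k$ modulo $\sum_{i<k}v_i\wedge(\cdots)$ and $\bigwedge^r T$; feeding this into the inductive hypothesis for the codimension-$(k-1)$ subspace $T'$ (whose complement among our vectors is $v_1,\dots,v_{k-1}$, independent mod $T'$) yields $\sum_{i<k}v_i\wedge\alpha_i'=0$ for the adjusted forms, and a final application of a) handles the $v_k$ term.

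The main obstacle I anticipate is purely bookkeeping: keeping track of which graded summand in the $T\oplus U$ decomposition of $\bigwedge^\bullet V$ each term lands in, and making sure the ``absorb corrections into $\alpha_i$'' moves do not secretly change the conclusion $\sum v_i\wedge\alpha_i=0$ (as opposed to something weaker). The contraction-by-covectors argument sidesteps most of this and is the version I would actually write up, so the real content is just: contraction kills $\bigwedge^\bullet T$, and $v\wedge v=0$.
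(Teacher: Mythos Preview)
Your first approach---the graded decomposition of $\bigwedge^\bullet V$ via a splitting $V=T\oplus U$---is exactly the paper's route, but you are making it harder than necessary. You write $v_i=e_i'+t_i$ with $e_i'\in U$, $t_i\in T$, and then struggle with the $t_i$ contributions. The paper simply takes $U=\operatorname{span}(v_1,\dots,v_k)$ itself as the complement: since the $v_i$ are linearly independent modulo $T$, they are linearly independent in $V$ and $U\cap T=0$, so $V=T\oplus U$. With this choice each $v_i$ lies purely in $U$, hence wedging by $v_i$ raises the $U$-degree by one in the decomposition $\bigwedge^r V\cong\bigoplus_{j\ge 0}\bigwedge^{r-j}T\otimes\bigwedge^j U$. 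Thus $\sum_i v_i\wedge\alpha_i$ has no $j=0$ component, while $\bigwedge^r T$ is precisely the $j=0$ summand. Done---no singleton bookkeeping, no corrections to absorb. This is the simplification you should have spotted: let the given data choose the splitting.

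Your contraction argument for a) is correct and elegant, and is genuinely different from the paper's proof. Your inductive reduction of b) to a) also works in principle, but the write-up is muddled: you call $T'$ ``the codimension-$(k-1)$ subspace'' when by your own construction $T'$ is a hyperplane (codimension one). What you actually want for the induction is to observe that after the contraction step you have rewritten the original sum as $\sum_{i<k} v_i\wedge\alpha_i'$, still lying in $\bigwedge^r T$; now enlarge $T$ to $T+\k v_k$, which has codimension $k-1$ in $V$ and modulo which $v_1,\dots,v_{k-1}$ remain independent, and apply the inductive hypothesis there. With that correction the argument goes through, but it is noticeably longer than the paper's two-line version.
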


\begin{proof} It is clear that b) implies a), so we only need to prove b).
  Because $v_1,...,v_k$ descend to a basis of $V/T$, 
  they induce a splitting $V\cong T\oplus V/T$. This in turn induces an isomorphism of
  graded algebras
 $\bigwedge^\bullet V \cong \bigoplus_{j\ge 0} \bigwedge^{\bullet-j} T \otimes \bigwedge^{j} V/T.$
 By construction $v_i\in \bigwedge^{0}T\otimes\bigwedge^1 V/T$ so we have
 $\sum_{i=1}^k v_i\wedge\alpha_i\ \in\ 
 \bigoplus_{j\ge 1} \bigwedge^{r-j} T \otimes \bigwedge^{j} V/T.$
 On the other hand $\bigwedge^r T=\bigoplus_{j=0} \bigwedge^{r-j} T \otimes \bigwedge^{j} V/T$. This implies
 the assertion.
\end{proof}

The following theorem is going to tell us that the Koszul complex resolves the sheaves
$T_\Delta\cap\Omega^r_{X/\k}(\op{log\,}(D+Z))(-Z)$. Later, we 
will be using this resolution to compute the cohomology of these 
specific log differential forms which are the building blocks
of the log Hodge sheaves on a h.t. toric log Calabi Yau space as 
we will to see in the next chapter.

\begin{theorem} \label{longexactseq}
 For each $r$ there is an exact sequence
 $$0\lra\shK^0(T_\Delta,Z,-(r+1)) 
 \lra\shK^1(T_\Delta,Z,-(r+1)) \lra\qquad$$
 $$\qquad ...\lra \shK^r(T_\Delta,Z,-(r+1))
 \stackrel{\pi}{\lra} T_\Delta\cap \Omega^r_{X/\k}(\op{log\,} D) 
 \stackrel{\res}{\lra} T_\Delta\cap \Omega^r_{Z/\k}(\op{log\,}(D\cap Z))
 \lra 0.$$
\end{theorem}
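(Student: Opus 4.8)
The plan is to combine the two short exact sequences we already have with the cohomology computation of the Koszul complex. We have the exact sequence from Lemma~\ref{logexactseq}, specialized (via the ambient inclusion of free submodules) to $T_\Delta$:
$$0\ra T_\Delta\cap\Omega^r_{X/\k}(\op{log\,}(D+Z))(-Z)\ra T_\Delta\cap\Omega^r_{X/\k}(\op{log\,} D)\stackrel{\res}{\lra} T_\Delta\cap\Omega^r_{Z/\k}(\op{log\,}(D\cap Z))\ra 0.$$
So it suffices to show that the truncated Koszul complex
$$0\lra\shK^0(T_\Delta,Z,-(r+1))\lra\cdots\lra\shK^r(T_\Delta,Z,-(r+1))\stackrel{\pi}{\lra} T_\Delta\cap\Omega^r_{X/\k}(\op{log\,} D)\lra 0$$
is exact, i.e.\ that $\pi^r$ is surjective with kernel the image of $d_Z:\shK^{r-1}\ra\shK^r$, and that the complex is exact in all lower degrees. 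Since all sheaves involved are locally free and everything is $M$-graded (the Koszul differential $d_Z$ and $\pi^r$ respect the torus action), it is enough to check exactness after passing to stalks, or better, to work character-by-character on a fixed affine toric chart; even simpler, by Lemma~\ref{koszulexakt} and Lemma~\ref{bpf} the full Koszul complex $\shK^\bullet(T_\Delta,Z,-(r+1))$ is exact, so the truncated complex above being exact is equivalent to the statement that the complex $\shK^\bullet(T_\Delta,Z,-(r+1))$ has, in degrees $\ge r+1$, cohomology equal to the cokernel of $\pi^r$ — but more directly, I want to identify $\ker\pi^r$ and $\im\pi^r$ by hand.

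First I would compute $\pi^r$ explicitly using Lemma~\ref{piexplicit}: on $\shK^r(T_\Delta,Z,-(r+1))=\shO_X((-1)Z)\otimes\bigwedge^r\hat T_\Delta$ a section $u\otimes(\hat v\wedge\alpha)$ (with $\hat v=(v,1)$, $v\in\Delta^{[0]}$, $\alpha\in\bigwedge^{r-1}W$ where $W=T_\Delta$) maps to $u\cdot\sum_{m\in\Delta\cap M} f_m\frac1f z^m\otimes((v-m)\wedge\alpha)$. Since $\frac1f z^m\in\Gamma(X,\shO_X(Z))$ and $(v-m)\in T_\Delta$ for vertices, the image lands in $T_\Delta\cap\Omega^r_{X/\k}(\op{log}\,D)$ as claimed. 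For surjectivity of $\pi^r$: locally $T_\Delta\cap\Omega^r_{X/\k}(\op{log}\,D)=\shO_X\otimes\bigwedge^r T_\Delta$, and one must hit a generator $g\otimes(t_1\wedge\cdots\wedge t_r)$ with $t_i\in T_\Delta$. Writing $t_1=v-w$ for vertices $v,w$ and using that the coefficient sum $\sum_m f_m\frac1f z^m$ (which equals $\frac{f}{f}=1$ as a rational function, by definition of $f$) lets one extract the leading term — more carefully, one uses that $\partial_Z$ is surjective onto $V$ and $V$ is base-point free (Lemma~\ref{bpf}), so $\sum_m f_m\frac1f z^m\cdot(\text{something})$ can produce any local section of $\shO_X$; then wedging with $\alpha$ gives surjectivity. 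For the kernel: the composition $\pi^r\circ d_Z = \iota(h)\circ d_Z\circ d_Z$, and although $d_Z\circ d_Z=0$ is not quite the relevant statement since the contraction $\iota(h)$ sits in between, one checks $\pi^r\circ(d_Z:\shK^{r-1}\ra\shK^r)=0$ directly from the formula: $d_Z$ introduces a factor $(m,1)\wedge(-)$ and then $\iota(h)$ of $(m',1)\wedge(m,1)\wedge\beta$ picks up $(m-m')\wedge\beta$-type terms which reassemble into $d_Z$ of the image in the bottom row, i.e.\ into $\pi$ applied after one more Koszul step composed with itself, which vanishes. Conversely, a local section $\xi\in\shK^r(T_\Delta,Z,-(r+1))$ with $\pi^r(\xi)=0$ means $\iota(h)(d_Z\xi)=0$, i.e.\ $d_Z\xi\in\bigwedge^{r+1}W$ sitting inside $\bigwedge^{r+1}\hat W$; by Lemma~\ref{lemmawedgesubspace} (with $T=W$ of codimension one in $\hat W$) any element of $\bigwedge^{r+1}\hat W$ of the form $d_Z\xi$ that already lies in $\bigwedge^{r+1}W$ — noting $d_Z\xi$ is visibly in the ideal generated by $\hat T_\Delta$ and wedging against $(m,1)$ moves it out of $\bigwedge^{\bullet}W$ unless it is zero — forces $d_Z\xi=0$, whence by exactness of $\shK^\bullet(T_\Delta,Z,-(r+1))$ in degree $r$ (Lemma~\ref{koszulexakt}, Lemma~\ref{bpf}) we get $\xi\in\im(d_Z:\shK^{r-1}\ra\shK^r)$. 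Exactness of the complex in degrees $<r$ is immediate from the same Koszul exactness. Splicing the two exact sequences then yields the claimed long exact sequence.

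The main obstacle I anticipate is the surjectivity of $\pi^r$ onto $T_\Delta\cap\Omega^r_{X/\k}(\op{log}\,D)$ together with the precise identification $\ker\pi^r=\im d_Z$: one has to be careful that $\pi^r$ is built from $d_Z$ followed by a contraction, so the naive ``$d_Z^2=0$'' does not directly give $\pi^r\circ d_Z=0$ — it is the interplay with $\iota(h)$ and the codimension-one subspace lemma (Lemma~\ref{lemmawedgesubspace}) that makes it work. A clean way to organize this is to observe that $\shK^\bullet(W,Z,-(r+1))$ for $W=T_\Delta$ together with the map $\iota(h)\circ d_Z$ in the last spot is isomorphic to the $\res$-sequence of Lemma~\ref{logexactseq} pulled back from a toric variety $\hat X$ one dimension higher (the remark before Lemma~\ref{decompose}), where $\hat Z$ has Newton polytope $\hat\Delta=C(\Delta)$-related and $T_{\hat\Delta}=\hat T_\Delta$; then the statement reduces to the already-established Koszul exactness on $\hat X$ plus the elementary degree bookkeeping of passing from $\hat W$ to $W$ via $\iota(h)$. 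I would spell out that reduction first, since it converts the bulk of the work into an application of Lemma~\ref{koszulexakt} and Lemma~\ref{bpf}, leaving only the contraction-by-$h$ bookkeeping, which is exactly what Lemma~\ref{lemmawedgesubspace} is designed to handle.
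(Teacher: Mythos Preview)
Your argument for exactness at $\shK^r$ (i.e.\ $\ker\pi^r=\im d_Z$) is essentially correct and matches the paper: $\pi^r\circ d_Z=\iota(h)\circ d_Z\circ d_Z=0$ gives one inclusion directly (there is no ``in between'' issue---$\iota(h)$ sits after both Koszul differentials), and the converse follows from Lemma~\ref{lemmawedgesubspace} exactly as you say.

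The genuine gap is your claim that $\pi^r$ is \emph{surjective} onto $T_\Delta\cap\Omega^r_{X/\k}(\llog D)$. This is false. Already for $r=0$ the map $\pi^0:\shO_X(-Z)\to\shO_X$ is the inclusion of the ideal sheaf of $Z$, with cokernel $\shO_Z$; it is not surjective. Your heuristic ``$\sum_m f_m\frac1f z^m=1$'' is correct as an identity of rational functions, but the coefficient $u$ you are allowed to multiply by lives in $\shO_X(-Z)$, so the image is only $\shO_X(-Z)\subset\shO_X$. In general $\im\pi^r=T_\Delta\cap\Omega^r_{X/\k}(\llog(D+Z))(-Z)=\ker\res$, and \emph{this} is the identity you must prove in order to splice with Lemma~\ref{logexactseq}. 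As written, your two ``exact sequences'' cannot be spliced: if $\pi^r$ were onto $T_\Delta\cap\Omega^r_{X/\k}(\llog D)$ then $\res$ would be zero.

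The paper establishes $\im\pi^r=\ker\res$ by a direct local computation on a toric chart $U=\Spec\k[\sigma^\vee\cap M]$. Choosing the unique vertex $v_0$ with $\Delta-v_0\subset\sigma^\vee$ and trivializing $\shO_X(-Z)|_U$ accordingly, one computes that the image of $\pi^r|_U$ is the degree-$r$ part of the exterior-algebra ideal in $\shO_U\otimes\bigwedge^\bullet T_\Delta$ generated by the elements $L_w:=\sum_m f_m z^{m-v_0}\otimes(w-m)$ for $w\in\Delta^{[0]}$. Then $L_{v_0}=df_U$ and $L_w-L_{v_0}=f_U\otimes(v_0-w)$, so the image is precisely the ideal generated by $f_U$ and $df_U$, which is $T_\Delta\cap\Omega^r_{X/\k}(\llog(D+Z))(-Z)|_U$. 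Your proposed reduction to a one-dimension-higher $\hat X$ does not sidestep this computation: you still need to identify the image of the last map, and that identification is exactly the $(f_U,df_U)$-ideal description.
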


\begin{proof}
 The exactness of the first part follows by the overall non-degeneracy
 hypothesis, \newline Lemma~\ref{bpf} 
 and Lemma~\ref{koszulexakt}. Moreover, the sequence is exact at $\shK^r(T_\Delta,Z,-(r+1))$
 if and only if $\iota(h)$ is injective on the image of $d_Z:\shK^r(T_\Delta,Z,-(r+1))\ra\shK^{r+1}(T_\Delta,Z,-(r+1))$.
 Note that $\shK^{r+1}(T_\Delta,Z,-(r+1))=\shO_X\otimes\bigwedge^r \hat{T}_\Delta$. We have 
 $\ker\iota(h)=\shO_X\otimes\bigwedge^r T_\Delta$. We may consider the map $d_Z$ over the field of
 fractions $\Quot(\shO_X)$ in which $\shO_X(lZ)$ canonically embeds for each $l$. The advantage is that via
 Lemma~\ref{dZexplicit} the map $d_Z$ is then given by wedging with an 
 element $dF$ of $\Quot(\shO_X)\otimes \hat{T}_\Delta$ 
 which is not in $\Quot(\shO_X)\otimes T_\Delta$.
 Applying Lemma~\ref{lemmawedgesubspace},~a) yields
 $$dF\wedge\alpha\in \shO_X\otimes\bigwedge^r T_\Delta\ \IFF\ \alpha=0.$$
 This finishes showing the exactness at $\shK^r(T_\Delta,Z,-(r+1))$. 
 By Lemma~\ref{logexactseq}, the only thing left to prove is that
 $$\im(\pi^r)=T_\Delta\cap \Omega^r_{X/\k}(\op{log\,}(D+Z))(-Z).$$
 We take a look at the image of $\pi^r$ in a toric chart. Let $\sigma$ be a maximal cone in the 
 fan of $X$ and $U=\Spec \k[P_\sigma]$ the corresponding chart where $P_\sigma=\sigma^\dual\cap M$.
 There is a unique vertex $v\in\Delta$ such that $\Delta-v\subset\sigma^\dual$
 (otherwise $\sigma^\dual$ would have to contain a straight line which is
 impossible for a full-dimensional $\sigma$). Choose the
 standard local trivialization $\shO_X(-Z)|_U\cong\shO_U$ such that the 
 section $\frac1f z^{m}$ is given by
 $z^{m-v}$. Let us first consider the case $r=0$. By Lemma~\ref{piexplicit},
 the map $\pi^0|_U$ becomes multiplication with 
 $\sum_{m\in\Delta\cap M} f_m z^{m-v}$ which is just an
 equation of $Z$ on $U$. Thus, the case $r=0$ reduces to the standard sequence
 $$ 0 \ra \shO_X(-Z) \ra \shO_X \ra \shO_Z\ra 0 $$
 which is exact. 
 Now assume $r>0$. By Lemma~\ref{piexplicit}, the map $\pi^r$ becomes
 $$\begin{array}{rcl}
 \pi^r|_U:\shO_U\otimes_\k\bigwedge^r \hat{T}_\Delta  &\ra & \shO_U\otimes_\k \bigwedge^r T_\Delta\\
 u\otimes \hat{w}\wedge\alpha &\mapsto& u\cdot\sum_{m\in\Delta\cap M} f_m z^{m-v}\otimes(w-m)\wedge\alpha.
 \end{array}$$
 In other words, the image of $\pi^r|_U$ is the degree $r$ part of the exterior algebra ideal in
 $\shO_U\otimes\bigwedge^\bullet T_\Delta$ generated by
 $$L_w:= \sum_{m\in\Delta\cap M} f_m z^{m-v}\otimes(w-m) \qquad \hbox{for }w\in\Delta^{[0]}.$$
 We set $f_U:=\sum_{m\in\Delta\cap M} f_m z^{m-v}$. As we already mentioned, this defines $Z\cap U$. 
 We have
 $$\begin{array}{rcl}
 \qquad\qquad\qquad\qquad L_{v}  &=&  \sum_{m\in\Delta\cap M} f_m z^{m-v}\otimes (m-v)\\
 	&=&  \sum_{m\in\Delta\cap M} f_m z^{m-v}\frac{dz^{m-v}}{z^{m-v}}\qquad\qquad  \hbox{as an element in }\Omega_{X/\k}(\op{log\,} D)\\
 	&=&  df_U,\\
 \end{array}$$
 $$\begin{array}{rcl}
 L_w-L_{v}  &=&  \sum_{m\in\Delta\cap M} f_m z^{m-v}\otimes ((m-w)-(m-v))\\
            &=&  \sum_{m\in\Delta\cap M} f_m z^{m-v}\otimes (v-w)\\
 	    &=&  f_U\otimes (v-w).\\
 \end{array}$$
 The set $\{ v-w\,|\, w\in\Delta^{[0]} \}$ spans $T_\Delta$. 
 With the same argument as at the end of the proof of Lemma~\ref{logexactseq} where we
 described $T_\Delta\cap \Omega^r_{X/\k}(\op{log\,}(D+Z))(-Z)$ as the exterior algebra generated by 
 $f_U$ and $df_U$,
 we see that $\im(\pi^r)=T_\Delta\cap \Omega^r_{X/\k}(\op{log\,}(D+Z))(-Z)$,
 and we are done.
\end{proof}

\begin{corollary} \label{corlongexactseq}
 For each $r\in \NN_{\ge 0}$, there is an exact sequence
 $$0\lra\shK^0(W,Z,-(r+1)) 
 \lra\shK^1(W,Z,-(r+1)) \lra\qquad$$
 $$\qquad ...\lra \shK^r(W,Z,-(r+1))
 \stackrel{\pi}{\lra} \shO_X \otimes \bigwedge^r W 
 \stackrel{\res}{\lra}  \coker(\pi)
 \lra 0.$$
\end{corollary}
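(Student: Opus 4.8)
The plan is to copy the proof of Theorem~\ref{longexactseq} essentially verbatim, with $T_\Delta$ replaced by $W$ throughout; the one place where exactness of a Koszul complex was invoked is now fed instead by Lemma~\ref{decompose}. In contrast with Theorem~\ref{longexactseq}, we make no attempt to identify $\coker(\pi)$ — that is precisely why no hypothesis on $W$ beyond the inclusion $T_\Delta\hra W$ is needed.

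First I would observe that the complex $\big(\shK^\bullet(W,Z,-(r+1)),d_Z\big)$ is exact. By Lemma~\ref{decompose} it decomposes, non-canonically, as $\bigoplus_{b\ge0}\shK^{\bullet-b}(T_\Delta,Z,-(r+1)+b)\otimes_\k\bigwedge^b W/T_\Delta$, and via the log derivation map (cf.\ the isomorphism of complexes just before Lemma~\ref{dZexplicit}) each summand complex $\shK^\bullet(T_\Delta,Z,m)$ is isomorphic to the Koszul complex $\shK^\bullet(V,\shO_X(Z),m)$, which is exact by Lemma~\ref{bpf} and Lemma~\ref{koszulexakt} since $Z$ is non-degenerate. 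Exactness is preserved by shifts, by tensoring over the field $\k$ with a fixed vector space, and by direct sums, so $\shK^\bullet(W,Z,-(r+1))$ is exact. In particular $0\ra\shK^0(W,Z,-(r+1))\ra\cdots\ra\shK^r(W,Z,-(r+1))$ is exact, which gives exactness of the asserted sequence at every term through $\shK^{r-1}$, and $\ker\big(d_Z\colon\shK^r\ra\shK^{r+1}\big)=\im\big(d_Z\colon\shK^{r-1}\ra\shK^r\big)$.

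Next I would establish exactness at $\shK^r(W,Z,-(r+1))$, i.e.\ $\ker\pi^r=\im\big(d_Z\colon\shK^{r-1}\ra\shK^r\big)$. Since $\pi^r=(\id\otimes\iota(h))\circ d_Z$, the inclusion $\ker\big(d_Z\colon\shK^r\ra\shK^{r+1}\big)\subseteq\ker\pi^r$ is immediate, so by the previous paragraph it suffices to show that $\id\otimes\iota(h)$ is injective on the image of $d_Z\colon\shK^r(W,Z,-(r+1))\ra\shK^{r+1}(W,Z,-(r+1))=\shO_X\otimes\bigwedge^{r+1}\hat W$. Applying $\iota(h)$ to $0\ra\bigwedge^{r+1}W\ra\bigwedge^{r+1}\hat W\ra\hat W/W\otimes\bigwedge^r W\ra0$ shows $\ker(\id\otimes\iota(h))=\shO_X\otimes\bigwedge^{r+1}W$. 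Passing to the field of fractions $\Quot(\shO_X)$, Lemma~\ref{dZexplicit} shows that $d_Z$ is wedging with $dF=\sum_{m\in\Delta\cap M}f_m\frac1f z^m\otimes(m,1)\in\Quot(\shO_X)\otimes\hat W$, whose image under $\iota(h)$ is $\frac1f\sum_m f_mz^m=1\ne0$; hence $dF\notin\Quot(\shO_X)\otimes W$. Thus if $\alpha\in\shK^r(W,Z,-(r+1))$ satisfies $d_Z\alpha\in\shO_X\otimes\bigwedge^{r+1}W$, then $dF\wedge\alpha\in\bigwedge^{r+1}\big(\Quot(\shO_X)\otimes W\big)$, so Lemma~\ref{lemmawedgesubspace}~a), applied with ambient vector space $\Quot(\shO_X)\otimes\hat W$, codimension-one subspace $\Quot(\shO_X)\otimes W$ and $v=dF$, forces $dF\wedge\alpha=0$, i.e.\ $d_Z\alpha=0$. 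Hence $\ker\pi^r=\ker\big(d_Z\colon\shK^r\ra\shK^{r+1}\big)$, as needed. Finally, the arrow labelled $\res$ in the statement is by definition the quotient map $\shO_X\otimes\bigwedge^r W\ra\coker(\pi)$, so exactness at $\shO_X\otimes\bigwedge^r W$ and surjectivity onto $\coker(\pi)$ hold trivially, completing the sequence. There is no genuine obstacle here; the only care needed is the routine bookkeeping with the twists $-(r+1)$ and the $\Quot(\shO_X)$-linear extension of $d_Z$, exactly as in Lemma~\ref{dZexplicit} and the proof of Theorem~\ref{longexactseq}.
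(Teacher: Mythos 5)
Your proof is correct, but it takes a genuinely different route from the paper's. The paper's proof is a one-liner: apply Lemma~\ref{decompose} to split the entire augmented complex of the corollary into the direct sum, over $b\ge 0$, of (the $\bigwedge^b W/T_\Delta$-twist of) the augmented sequence of Theorem~\ref{longexactseq} with $r$ replaced by $r-b$ --- the twists $-(r+1)+b=-((r-b)+1)$ match --- and then invoke the theorem summandwise. That shortcut implicitly uses that $\pi$ and $\res$ are compatible with the non-canonical splitting of Lemma~\ref{decompose}; this is true because, once a splitting $\hat W\cong\hat{T}_\Delta\oplus W/T_\Delta$ is chosen, $h\colon\hat W\to\k$ factors through $\hat{T}_\Delta$, so $\iota(h)$ acts only on that factor, but the paper does not spell it out. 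You instead reprove the key step --- exactness at $\shK^r(W,Z,-(r+1))$ --- directly for $W$, rerunning the $\iota(h)$ and Lemma~\ref{lemmawedgesubspace}~a) argument from the proof of Theorem~\ref{longexactseq}, and use Lemma~\ref{decompose} only to get exactness of the Koszul part. Your version is longer but self-contained: it sidesteps the compatibility bookkeeping entirely, and your explicit computation $\iota(h)(dF)=\frac1f\sum_m f_m z^m=1$ pins down why $dF\notin\Quot(\shO_X)\otimes W$, where the original proof of Theorem~\ref{longexactseq} merely asserts the analogous fact for $T_\Delta$. Both approaches are valid; the paper's is shorter once the compatibility of the splitting with $\pi$ and $\res$ is accepted, while yours needs no such observation.
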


\begin{proof}
  This follows from Lemma~\ref{decompose} and Theorem~\ref{longexactseq}.
\end{proof}

We wish to give the generalized $\Omega^r_{X/\k}(\op{log\,}(D+Z))(-Z)$ a functional name.

\begin{definition} 
 Given an inclusion $T_\Delta\hra W$ we define
 $\shC^r(W,Z)$ by the exact sequence
 $$\shK^{r-1}(W,Z,-(r+1)) \stackrel{d_Z}{\lra} \shK^r(W,Z,-(r+1)) \ra \shC^r(W,Z)\ra 0.$$
\end{definition}

By Corollary~\ref{corlongexactseq}, we have
$$\shC^r(W,Z)=\im(\pi^r).$$

\begin{proposition} \label{naturalres}
 For each $r$, there is an acyclic resolution
 $$0\ra \shC^r(W,Z) \ra \shK^{r+1}(W,Z,-(r+1)) \ra ...\ra \shK^{\dim W}(W,Z,-(r+1))\ra 0$$
 which is functorial in $W$.
\end{proposition}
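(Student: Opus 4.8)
The plan is to build the resolution directly from the Koszul complex $\shK^\bullet(W,Z,-(r+1))$, which by Lemma~\ref{koszulexakt} and Lemma~\ref{bpf} is exact (in positive degrees: note that in the relevant twist the term $\shK^0$ sits in the position of $\shO_X((-r-1)Z)$, but the truncation $\shK^{\bullet}(W,Z,-(r+1))$ in degrees $\ge 0$ is a resolution of its leftmost term or, more to the point, the full complex $\shK_\bullet/\shK^\bullet$ is exact everywhere, so one may view $\shC^r(W,Z)$ as a syzygy). Concretely, by the defining exact sequence $\shK^{r-1}(W,Z,-(r+1))\xrightarrow{d_Z}\shK^r(W,Z,-(r+1))\to\shC^r(W,Z)\to 0$ together with exactness of the Koszul complex $\shK^\bullet(W,Z,-(r+1))$ at every spot from degree $r+1$ onward, one reads off that the complex
\[
0\ra \shC^r(W,Z) \ra \shK^{r+1}(W,Z,-(r+1)) \ra \cdots \ra \shK^{\dim\hat W}(W,Z,-(r+1))\ra 0
\]
is exact: the image of $\shK^r\to\shK^{r+1}$ equals the image of $d_Z$, hence equals the kernel of $\shK^{r+1}\to\shK^{r+2}$ by Koszul exactness, and $\shC^r(W,Z)$ is by definition the image of $\shK^r$ in $\shK^{r+1}$ (equivalently $\im\pi^r$, by Corollary~\ref{corlongexactseq}). (The top index is $\dim\hat W=\dim W+1$; I would just write $\dim W$ if the paper's convention makes $\bigwedge^{\dim W+1}\hat W$ the last nonzero term — this is a harmless bookkeeping point.)

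First I would record that each $\shK^i(W,Z,-(r+1))=\shO_X((i-r-1)Z)\otimes_\k\bigwedge^i\hat W$ is a direct sum of line bundles $\shO_X((i-r-1)Z)$, and that for the range $i\ge r+1$ these are $\shO_X(jZ)$ with $j\ge 0$; hence by Lemma~\ref{cohotoriclb} ($H^{>0}(X,\shO_X(mZ))=0$ for $m\ge 0$) these sheaves are acyclic. This is exactly what ``acyclic resolution'' requires. So the two things to verify are (i) exactness of the displayed complex, handled above, and (ii) acyclicity of the resolving terms, handled by Lemma~\ref{cohotoriclb}.

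For functoriality in $W$: an inclusion $T_\Delta\hra W\hra W'$ induces $\hat W\hra\hat W'$ compatibly with $d_Z$ (the differential is wedging with the same element $dF\in\Quot(\shO_X)\otimes\hat T_\Delta$, independent of the ambient space), so we get a map of Koszul complexes $\shK^\bullet(W,Z,-(r+1))\to\shK^\bullet(W',Z,-(r+1))$, hence compatible maps on the syzygies $\shC^r(W,Z)\to\shC^r(W',Z)$ and on all the resolving terms $\shK^i(W,Z,-(r+1))\to\shK^i(W',Z,-(r+1))$; these assemble into a morphism of resolutions. I would spell this out at the level of the explicit formula for $d_Z$ in Lemma~\ref{dZexplicit} to make the compatibility transparent. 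The only mild subtlety — and the step I expect to need the most care — is the claim of exactness at the spot $\shK^{r+1}$, i.e. that $\ker(\shK^{r+1}\to\shK^{r+2})$ is exactly $\im(d_Z\colon\shK^r\to\shK^{r+1})$ rather than something larger: this is where I must invoke that the full Koszul complex (including the degree-$\le r$ part, which is genuinely exact by base-point-freeness and Lemma~\ref{koszulexakt}) is exact, so that the image of $d_Z$ in degree $r+1$ coincides with the kernel in degree $r+1$. Everything else is formal diagram chasing plus the vanishing from Lemma~\ref{cohotoriclb}.
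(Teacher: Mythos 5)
Your proposal is correct and follows essentially the same route as the paper's proof: exactness from base-point-freeness (Lemma~\ref{bpf}) and Koszul exactness (Lemma~\ref{koszulexakt}), acyclicity of each $\shK^i(W,Z,-(r+1))$ with $i\ge r+1$ from Lemma~\ref{cohotoriclb} (since these are direct sums of $\shO_X(jZ)$ with $j\ge 0$), and functoriality from the induced map $\hat W\hra\hat W'$ on Koszul complexes. Your remark that $\shC^r(W,Z)$ is the image of $d_Z$ in $\shK^{r+1}$ is accurate but slightly glosses over the fact that the definition gives a cokernel, which is identified with the image exactly by Koszul exactness at spot $r$; and you are right that the top index should really read $\dim\hat W=\dim W+1$ — a harmless typo in the paper's statement.
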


\begin{proof}
  The exactness, once again, follows from the overall non-degeneracy hypothesis,   \newline
  Lemma~\ref{bpf} and Lemma~\ref{koszulexakt}. Acyclicity follows from
  Lemma~\ref{cohotoriclb}.  
  For $T_\Delta\hra W'$, a $T_\Delta$-map $W\ra W'$ clearly induces a map 
  $\bigwedge^l W \ra \bigwedge^l W'$ for each $l$ and a map of complexes
  $\shK^\bullet(W,Z,-(r+1))\ra \shK^\bullet(W',Z,-(r+1))$.
\end{proof}

\begin{corollary} \label{cohoC} 
  If $Z\neq\emptyset$, we have for all $r,p$
  $$H^p(X,\shC^r(W,Z))=HK^{r+p+1}(W,Z,-(r+1))=
  R(Z)_{p}\otimes_\k \langle\bigwedge^\top T_{\Delta} \rangle_{r+p},$$  
  in particular, $H^p(X,\shC^r(T_\Delta,Z))=0$ for $p+r\neq \dim\Delta$.
\end{corollary}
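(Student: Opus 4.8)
The plan is to compute $H^p(X,\shC^r(W,Z))$ by using the acyclic resolution of Proposition~\ref{naturalres} and identifying the resulting cohomology with the Koszul cohomology computed in Proposition~\ref{cohoW}. First I would recall that by Proposition~\ref{naturalres} we have an acyclic resolution
$$0\ra \shC^r(W,Z) \ra \shK^{r+1}(W,Z,-(r+1)) \ra \dots \ra \shK^{\dim W}(W,Z,-(r+1))\ra 0,$$
so that $H^p(X,\shC^r(W,Z))$ is computed as the $p$-th cohomology of the complex of global sections $\Gamma(X,\shK^{r+1+\bullet}(W,Z,-(r+1)))$, i.e.\ it equals the cohomology of the truncated Koszul complex $\Gamma(\shK^{\ge r+1}(W,Z,-(r+1)))$ in degree $r+1+p$.

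Next I would relate this to the full Koszul cohomology $HK^{r+p+1}(W,Z,-(r+1))$. The point is that $\shC^r(W,Z)=\im(\pi^r)=\coker(d_Z:\shK^{r-1}\to\shK^r)$ by Corollary~\ref{corlongexactseq} and the definition of $\shC^r$, together with the fact (Theorem~\ref{longexactseq}, combined with Lemma~\ref{decompose} via Corollary~\ref{corlongexactseq}) that the truncated complex $\shK^{\le r}(W,Z,-(r+1))$ is exact and $\pi^r$ is its cokernel map onto $\shC^r(W,Z)$. Hence on global sections the map $\Gamma(\shK^{r-1})\to\Gamma(\shK^r)\to\Gamma(\shC^r(W,Z))$ has the property that the complex $\Gamma(\shK^{\le r})$ is still exact except possibly at the ends — more precisely, since the $\shK^i$ for $0\le i<\dim\Delta$ need not have vanishing higher cohomology, I should be careful: the correct statement is that $\Gamma(\shK^{\bullet}(W,Z,-(r+1)))$ computes $HK^\bullet(W,Z,-(r+1))$ by definition, and splicing the exact sequence $\shK^{\le r}\to\shC^r$ with the acyclic resolution $\shC^r\to\shK^{\ge r+1}$ shows that $H^p(X,\shC^r(W,Z))=HK^{r+p+1}(W,Z,-(r+1))$ for $p\ge 0$, provided the connecting terms vanish. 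Concretely, the hypercohomology spectral sequence of the complex $\shK^\bullet(W,Z,-(r+1))$ has $E_1$ concentrated (by Lemma~\ref{cohotoriclb} applied with $\dim$ of the relevant Newton polytope) so that the only contribution computing $HK^{r+p+1}$ is $H^p(X,\shC^r(W,Z))$ sitting in the tail, while the head contributes nothing because the truncated complex is exact as a complex of sheaves. The final substitution is then Proposition~\ref{cohoW}: $HK^{r+p+1}(W,Z,-(r+1))=R(Z)_{(r+p+1)+(-(r+1))}\otimes\langle\bigwedge^\top T_\Delta\rangle_{(r+p+1)-1}=R(Z)_p\otimes\langle\bigwedge^\top T_\Delta\rangle_{r+p}$, which is exactly the claimed formula.

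For the last assertion with $W=T_\Delta$: here $\langle\bigwedge^\top T_\Delta\rangle_{r+p}$ is the degree $r+p$ part of the ideal generated by $\bigwedge^{\dim T_\Delta}T_\Delta$ inside $\bigwedge^\bullet T_\Delta$ itself, which is nonzero only when $r+p=\dim T_\Delta=\dim\Delta$. Combined with the fact that $R(Z)_p$ can be nonzero for $p\ge 0$, one gets $H^p(X,\shC^r(T_\Delta,Z))=0$ unless $p+r=\dim\Delta$, as stated. I would also note the hypothesis $Z\neq\emptyset$ is what guarantees $\dim\Delta\ge 0$ and that the Koszul machinery of Section~\ref{sec_torickoszul} applies nontrivially (the case $\Delta=\{0\}$, $n=1$ being handled separately as in Proposition~\ref{koszulcoho}).

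The main obstacle I anticipate is the bookkeeping in the second step: correctly tracking degrees and twists through the identification ``$\shC^r(W,Z)$ in degree $p$'' $\leftrightarrow$ ``$HK$ in degree $r+p+1$'', and justifying that no spurious cohomology enters from the head of the Koszul complex (the terms $\shK^{\le r}$) when passing to global sections, since those sheaves do have higher cohomology in general. This is handled exactly as in the proof of Proposition~\ref{koszulcoho} Step~1 via the degeneration of the first hypercohomology spectral sequence of the truncated complex — the key input being Lemma~\ref{cohotoriclb}, which forces $H^i(X,\shO_X(jZ))=0$ for $0<i<\dim\Delta$, so that the spectral sequence differentials cannot create cohomology outside the range dictated by $\shC^r(W,Z)$ and $R(Z)$. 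Once this vanishing is invoked, the rest is the formal splicing argument plus the substitution from Proposition~\ref{cohoW}.
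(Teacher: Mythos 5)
Your high-level plan matches the paper's: use the acyclic resolution $\shC^r(W,Z)\ra\shK^{\ge r+1}(W,Z,-(r+1))$ from Prop.~\ref{naturalres} to reduce $H^p(X,\shC^r(W,Z))$ to the cohomology of the global-sections complex, then invoke Prop.~\ref{cohoW} for the formula. For $p\ge 1$ the identification $H^p(X,\shC^r(W,Z))=HK^{r+p+1}(W,Z,-(r+1))$ is automatic, since both sides are the kernel modulo image at position $r+p+1$ of the same map of global sections. So the only substantive boundary to check is $p=0$, where one needs $\im\bigl(\Gamma(\shK^r)\ra\Gamma(\shK^{r+1})\bigr)=0$.

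This is where your argument has a gap. You appeal to the degeneration of a hypercohomology spectral sequence of $\shK^\bullet$ ``as in Prop.~\ref{koszulcoho} Step~1'' and to the exactness of the head $\shK^{\le r}$ as a complex of sheaves. Neither of these delivers the needed vanishing: the full complex $\shK^\bullet(W,Z,-(r+1))$ is exact as a complex of sheaves by Lemma~\ref{koszulexakt}, so its hypercohomology is zero and the associated spectral sequence tells you nothing about $HK$; meanwhile $HK$ is by definition the cohomology of $\Gamma(\shK^\bullet)$, which is not a hypercohomology and for which sheaf-exactness of the head is irrelevant. The correct and very short argument is the one the paper gives:
$$\Gamma\bigl(\shK^r(W,Z,-(r+1))\bigr)=\Gamma\bigl(\shO_X(-Z)\otimes_\k\textstyle\bigwedge^r\widehat{W}\bigr)=0$$
because $Z$ is a nonzero effective divisor on the complete toric variety $X$ (this is precisely where the hypothesis $Z\neq\emptyset$ enters, not merely to make $\dim\Delta\ge 0$). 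With this one vanishing the image is zero and $H^0(X,\shC^r(W,Z))=\ker(\Gamma(\shK^{r+1})\ra\Gamma(\shK^{r+2}))=HK^{r+1}(W,Z,-(r+1))$. The remaining substitution from Prop.~\ref{cohoW} and the observation for $W=T_\Delta$ (that $\langle\bigwedge^{\top}T_\Delta\rangle_{r+p}\neq 0$ forces $r+p=\dim\Delta$) are correct as you state them.
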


\begin{proof}
  For $p=0$ this uses 
  $\Gamma(\shK^{r}(W,Z,-(r+1)))=\Gamma(\shO_X(-Z)\otimes_\k \bigwedge^{r}\hat{W})=0,$
  and it follows from Prop.~\ref{naturalres} and Prop.~\ref{cohoW}.
\end{proof}

\begin{remark} \label{deRham}
We may naturally extend the de Rham differential on log forms by the unique
derivation extending the following map on monomial functions
$$\begin{array}{rrcl}
d:&\shO_X\otimes_\k\bigwedge^r W&\ra& \shO_X\otimes_\k\bigwedge^{r+1} W\\
&z^m\otimes\alpha &\mapsto& z^m\otimes m\wedge\alpha.
\end{array}$$
This map is compatible with the inclusions of $\shC^r(W,Z)$ by 
Theorem~\ref{longexactseq}. In particular $d$ is trivial on 
``constant differential forms'', i.e., those where $m=0$.
\end{remark}

\begin{remark}[Moving $\Delta$] \label{moveDelta}
 Recall that we have fixed a translation representative of the 
 lattice polytope $\Delta$ which is the Newton polytope of the non-degenerate divisor $Z$.
 We want to discuss now what happens if we move $\Delta$.
 Set $T_\RR=\span{\RR}\{v-w\,|\,v,w\in\Delta^{[0]}\}$ 
 Assume that we have two embeddings
 $p_1:\Delta\hra T_\RR,\ p_2:\Delta\hra T_\RR$ which differ by a translation
 by some integral vector $v\in T_\RR$, i.e.,
 $p_2=p_1+v$.
 For any $T_\Delta\hra W$, this induces an automorphism
 $$S_v:\hat W\ra \hat W,\qquad w\mapsto w+\iota(h)(w)\cdot v$$
 which maps the cone over $p_1(\Delta)$ to the cone over $p_2(\Delta)$. 
 Moreover, it induces an isomorphism of complexes
 $$\shK_{p_1}^\bullet(W,Z,r)\stackrel{\bigwedge^\bullet S_v}{\lra}\shK_{p_2}^\bullet(W,Z,r)$$
 where we use the indices $p_1$, $p_2$ to denote by which translation representative the
 complex is constructed. To see this, consider Lemma~\ref{dZexplicit}. 
 This isomorphism coincides with taking the detour via $V^*$, i.e., 
 $S_v=\partial^*_{Z,p_2}\circ (\partial^*_{Z,p_1})^{-1}$, see the proof of
 Lemma~\ref{dZexplicit}. 
 An important point is that the map $\pi^l$ commutes with $S_v$ which follows from
 Lemma~\ref{piexplicit}. The cohomology of $\shK^\bullet$ is invariant under $S_v$ because 
 $\bigwedge^\top \hat{T}_\Delta$ is. Having said all this, 
 whenever $\shK^\bullet$ comes up in this paper, we keep the position of the
 polytope arbitrary and just need to make sure that all additional constructions commute with 
 translations of $\Delta$.
\end{remark}

\begin{lemma} \label{lempullbackres}
 Let $\Delta'$ be a face of $\Delta$ and $X'$ the corresponding toric subvariety of $X$.
 We assume that $Z$ is non-degenerate and so $Z'=Z\cap X'$ is also non-degenerate.
 There is a canonical isomorphism of sequences on $X'$
 {\scriptsize
 $$\begin{CD}
 ...@>>> \shK^r(T_{\Delta},Z,-(r+1))|_{X'}
 @>>{\pi}> T_{\Delta}\cap \Omega^r_{X/\k}(\op{log\,} D)|_{X'}
 @>>{\res}> T_{\Delta}\cap \Omega^r_{Z/\k}(\op{log\,} (D\cap Z))|_{X'} &\ra 0\\
 &&@VVV @VVV @VVV\\
 ...@>>> \shK^r(T_{\Delta},Z',-(r+1))
 @>>{\pi}> \shO_{X'}\otimes \bigwedge^r T_{\Delta}
 @>>{\res}> \coker(\pi) &\ra 0\\
 \end{CD}$$
 }
 where the bottom sequence comes from $T_{\Delta'}\hra T_{\Delta}$.
 More generally, given $T_{\Delta'}\hra W$, we have an analogous isomorphism 
 $\shK^\bullet(W,Z,-(r+1))|_{X'}\ra \shK^\bullet(W,Z',-(r+1))$.
\end{lemma}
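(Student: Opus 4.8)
The strategy is to show that the top sequence, after restriction to $X'$, is identified term by term and arrow by arrow with the bottom sequence, so that the asserted isomorphism of sequences is just a chain of canonical identifications. The toric input I would record first: $Z'=Z\cap X'$ is an effective Cartier divisor on the orbit closure $X'$, again non-degenerate, with Newton polytope the face $\Delta'$ of $\Delta$ corresponding to $X'$, and $\shO_X(lZ)|_{X'}=\shO_{X'}(lZ')$ for all $l\in\ZZ$. Moreover, if $\sigma$ is a maximal cone of the fan of $X$ with $X'\cap U_\sigma\neq\emptyset$ and $v_\sigma\in\Delta^{[0]}$ is the vertex with $\Delta-v_\sigma\subseteq\sigma^\vee$ (i.e. $\psi_D|_\sigma=-\langle v_\sigma,\cdot\rangle$), then $v_\sigma$ already lies in $\Delta'$, because it minimizes $\langle\cdot,n\rangle$ over $\Delta$ for every $n$ in the subcone defining $X'$. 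Hence the standard local trivializations of $\shO_X(-Z)$ and $\shO_{X'}(-Z')$ are compatible, and under them the section $\frac1f z^m$ of $\shO_X(lZ)$ restricts to $\frac1{f|_{X'}}z^m$ when $m$ lies in the relevant dilate of $\Delta'$ and to $0$ otherwise (a monomial vanishes on an orbit closure unless its exponent lies in the corresponding face).

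Next I would identify the exterior-algebra data. Since $C(\Delta')$ is a face of $C(\Delta)$ we have $LC(\Delta')\subseteq LC(\Delta)$, and as every $(v,1)$ with $v\in\Delta^{[0]}$ differs by an element of $T_\Delta$ from a fixed $(v_0,1)$ with $v_0\in(\Delta')^{[0]}$, one gets $\hat T_\Delta=\hat T_{\Delta'}+T_\Delta$ inside $(M\oplus\ZZ)\otimes_\ZZ\k$. Given $T_\Delta\hra W$ (hence $T_{\Delta'}\hra W$), the maps $W\to\hat W$ and $\hat T_{\Delta'}\hra\hat T_\Delta\to\hat W$ agree on $T_{\Delta'}$ (both factor through $T_\Delta$), so they induce a canonical map from the pushout $W\oplus_{T_{\Delta'}}\hat T_{\Delta'}$ to $\hat W=W\oplus_{T_\Delta}\hat T_\Delta$; it is surjective because $\hat W$ is generated by $W$ together with $\hat T_\Delta=T_\Delta+\hat T_{\Delta'}$, and a dimension count (each pushout has $\k$-dimension $\dim_\k W+1$) makes it an isomorphism, compatible with the height projections $h$. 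Therefore $\shK^l(W,Z,m)|_{X'}=\shO_{X'}((l+m)Z')\otimes_\k\bigwedge^l\hat W$ is canonically $\shK^l(W,Z',m)$, and by the explicit formula for $d_Z$ in Lemma~\ref{dZexplicit} together with the restriction rule above the induced differential is exactly $d_{Z'}$; this produces the isomorphism of complexes on the left of the diagram, which automatically commutes with all $\shK$-differentials. Likewise the middle term $T_\Delta\cap\Omega^r_{X/\k}(\op{log\,}(D))=\shO_X\otimes_\k\bigwedge^r T_\Delta$ restricts to $\shO_{X'}\otimes_\k\bigwedge^r T_\Delta$, the middle term of the bottom row with $W=T_\Delta$.

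Then I would treat the maps $\pi^r$ and $\res$. Choosing the vertex $v$ in the explicit description of $\pi^r$ in Lemma~\ref{piexplicit} inside $(\Delta')^{[0]}$ and applying the restriction rule of the first step shows that $\pi^r|_{X'}$ is precisely $\pi^r$ for $Z'$ under the identifications above, and $\res$ visibly restricts to $\res$. Since taking the cokernel commutes with $-\otimes_{\shO_X}\shO_{X'}$, we obtain $\big(T_\Delta\cap\Omega^r_{Z/\k}(\op{log\,}(D\cap Z))\big)|_{X'}=\coker(\pi^r)|_{X'}=\coker(\pi^r|_{X'})$, which is the term $\coker(\pi)$ of the bottom row. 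Thus both squares commute and all three vertical arrows are isomorphisms, giving the stated isomorphism of sequences. The "more generally" assertion follows verbatim with $T_\Delta$ replaced by $W$ throughout, using the sequence of Corollary~\ref{corlongexactseq} in place of Theorem~\ref{longexactseq} and the same identification $\hat W\cong\hat W'$.

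The main obstacle I anticipate is the single non-formal step, namely the toric statement from the first paragraph: that under the standard trivialization the section $\frac1f z^m$ restricts to $X'$ as $\frac1{f|_{X'}}z^m$ for $m$ in a dilate of the face $\Delta'$ and to $0$ otherwise. Proving this cleanly requires matching the charts of $X$ and $X'$ and checking that the vertex $v_\sigma$ defining the trivialization on a given chart can be taken inside $\Delta'$ — this is where one genuinely uses that $X'$ is the orbit closure attached to the face $\Delta'$ and that $Z$ is semi-ample, so that $\Delta$ governs the trivializations in the first place. Everything else is linear algebra of exterior powers together with the explicit descriptions of $d_Z$ and $\pi^r$ already established in Lemmas~\ref{dZexplicit} and~\ref{piexplicit}.
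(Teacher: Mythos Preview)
Your proposal is correct and follows essentially the same approach as the paper: identify $\shO_X(lZ)|_{X'}=\shO_{X'}(lZ')$, then check that the restricted Koszul differential is exactly $d_{Z'}$ via the explicit formula in Lemma~\ref{dZexplicit} together with the rule that $f^{-1}z^m|_{X'}$ equals $(f|_{X'})^{-1}z^m$ for $m\in\Delta'$ and vanishes otherwise. The paper's proof is terser---it invokes Remark~\ref{moveDelta} to place $\Delta$ so that $\Delta'$ is literally a face, then records the same restriction computation---whereas you work chart by chart and spell out the pushout identification of $\hat W$; but the content is the same.
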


\begin{proof} By Remark~\ref{moveDelta} we can move $\Delta$ in the unique position
  such that $\Delta'$ embeds in it as the face corresponding to the stratum $X'$ in $X$. 
  Let $f$ be an equation of $Z$ then
  $f'=f|_{X'}$ is an equation of $Z'$ on $X'$.
  The vertical isomorphisms are induced by $\shO_X(lZ)|_{X'}=\shO_{X'}(lZ')$ 
  for varying $l$. The lemma becomes clear after checking the behaviour of the 
  differential after restriction to $X'$.
  The differential in the upper row is
  $$u\otimes\alpha \mapsto 
  u\cdot\sum_{m\in\Delta\cap M} f_{m} (f^{-1}z^{m})|_{X'}\otimes (m,1)\wedge \alpha.$$
  Since $f^{-1}z^{m}|_{X'}=0$ for $m\not\in\Delta'$ and 
  $f^{-1}z^{m}|_{X'}=(f')^{-1}z^{m}$ for $m\in\Delta'$,
  this is just
  $$u\otimes\alpha \mapsto 
  u\cdot\sum_{m\in\Delta'\cap M} f_{m} (f')^{-1}z^{m}\otimes (m,1)\wedge \alpha.$$
  which is the differential in the lower row.
\end{proof}


\section{The cohomology of $\SC(\Omega^r)$} \label{section_main}
\subsection{An acyclic resolution on a stratum}
\label{section_res_single_strat}
In this section we wish to apply the constructions of chapter \ref{section_koszkoho}. The key
object will be $\shC^r(...)$ for various parameters.
The overall hypothesis is now that $X$ is a h.t. toric log CY space. 
This implies that, for each $\tau$, there is only one
$Z_\tau,\check\Delta_\tau,\Delta_\tau,\Upomega_\tau, R_\tau$, respectively.
We will implicitly use the following lemma a lot.

\vbox{
\begin{lemma}  \label{equivalencesfore}
  Let $e:\tau_1\ra\tau_2$ in $\P$. The following are equivalent.
 \begin{center}
 \begin{tabbing}
 \= i) $W_e\cap \Delta \neq\emptyset$\qquad \qquad
 \= ii) $e\in\Delta$\\
 \> iii) $Z_{\tau_1}\cap X_{\tau_2}\neq\emptyset$ 
 \> iv) $Z_{\tau_1}\cap X_{\tau_2}= Z_{\tau_2}$ \\
 v) There is some $h\in\Upomega_{\tau_2}$ which factors through $e$. \\
 vi) There is some $h\in R_{\tau_1}$ which factors through $e$. \\
 vii) $\{\omega\ra\tau_1\stackrel{e}{\ra}\tau_2\ra\rho\,|\,\omega\in\P^{[1]},
 \rho\in\P^{[\dim B-1]},\kappa_{\omega\rho}\neq 0\}\neq \emptyset$
 \end{tabbing}
 \end{center}
\end{lemma}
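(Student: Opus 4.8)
To prove Lemma~\ref{equivalencesfore} the plan is to run a single cycle of implications through all seven statements, drawing on four ingredients: (1) the point--set description of the open star $W_e=\bigcup_{S\supseteq e}\relint S$ together with the fact that the relative interiors of the barycentric simplices partition $B$ while $\Delta$ is a closed union of such simplices, hence (set-theoretically) a subcomplex; (2) the combinatorial description of $\Delta$ recalled from \cite{grosie1}, namely as the union of the codimension-two barycentric simplices corresponding to maximal chains $\omega\to\cdots\to\rho$ with $\omega\in\P^{[1]}$, $\rho\in\P^{[\dim B-1]}$, $\kappa_{\omega\rho}\neq 0$; (3) Lemma~\ref{lem_ZDelta} and its proof, together with non-degeneracy and condition~(2) of Definition~\ref{def_ht}; and (4) the factorization remark following the definition of $\Upomega_{\tau,i},R_{\tau,i}$, specialized to the h.t.\ case: for a composable pair $\omega\to\tau\to\rho$ one has $\kappa_{\omega\rho}\neq 0$ iff $(\omega\to\tau)\in\Upomega_\tau$ and $(\tau\to\rho)\in R_\tau$.

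First I would establish (i)$\Leftrightarrow$(ii) and (ii)$\Leftrightarrow$(vii). For the former, a point of $\relint S$ lies in $\Delta$ exactly when $S\subseteq\Delta$, so by ingredient~(1) the condition $W_e\cap\Delta\neq\emptyset$ is equivalent to $e$ being a face of one of the codimension-two simplices comprising $\Delta$, i.e.\ to $e\subseteq\Delta$. For the latter, by ingredient~(2) the edge $e:\tau_1\to\tau_2$ lies in $\Delta$ iff it is an edge of a codimension-two barycentric simplex coming from a maximal chain $\omega\to\cdots\to\rho$ with $\kappa_{\omega\rho}\neq 0$ through both $\tau_1$ and $\tau_2$; taking $\omega$ to be the $1$-cell and $\rho$ the $(\dim B-1)$-cell of that chain (so $\omega\subseteq\tau_1$, $\tau_2\subseteq\rho$) yields a member of the set in (vii), and conversely any $\omega\to\tau_1\stackrel{e}{\to}\tau_2\to\rho$ with $\kappa_{\omega\rho}\neq 0$ refines to such a maximal chain having $e$ as an edge.

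Next I would close the subcycle among (v), (vi), (vii) with ingredient~(4). From $\omega\to\tau_1\stackrel{e}{\to}\tau_2\to\rho$ with $\kappa_{\omega\rho}\neq 0$ one reads off $(\omega\to\tau_2)\in\Upomega_{\tau_2}$ and $(\tau_1\to\rho)\in R_{\tau_1}$, both factoring through $e$, giving (vii)$\Rightarrow$(v) and (vii)$\Rightarrow$(vi). For (v)$\Rightarrow$(vii) I am given $h=(\omega'\to\tau_2)\in\Upomega_{\tau_2}$ with $\omega'\subseteq\tau_1$; since $\Upomega_{\tau_2}\neq\emptyset$ forces $\check\Delta_{\tau_2}\neq\{0\}$, hence by Prop.~\ref{innerpolys} $\Delta_{\tau_2}\neq\{0\}$ and $R_{\tau_2}\neq\emptyset$, I may pick $\rho$ with $(\tau_2\to\rho)\in R_{\tau_2}$, and ingredient~(4) at $\tau_2$ then gives $\kappa_{\omega'\rho}\neq 0$, so $\omega'\to\tau_1\stackrel{e}{\to}\tau_2\to\rho$ witnesses (vii). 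The implication (vi)$\Rightarrow$(vii) is the symmetric argument, using $R_{\tau_1}\neq\emptyset\Rightarrow\Delta_{\tau_1}\neq\{0\}\Rightarrow\check\Delta_{\tau_1}\neq\{0\}\Rightarrow\Upomega_{\tau_1}\neq\emptyset$ and ingredient~(4) at $\tau_1$.

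Finally I would splice in (iii) and (iv). Writing $Z_{\tau_1}=Z_\omega\cap X_{\tau_1}$ for a suitable $\omega\to\tau_1$ (condition~(2) of Definition~\ref{def_ht}; take $\omega=\tau_1$ when $\tau_1\in\P^{[1]}$) and using $X_{\tau_2}\subseteq X_{\tau_1}$, one has $Z_{\tau_1}\cap X_{\tau_2}=Z_\omega\cap X_{\tau_2}$, which by non-degeneracy is either empty or a proper divisor of $X_{\tau_2}$ meeting $\Int(X_{\tau_2})$; hence (iii) is equivalent to $Z_\omega\cap\Int(X_{\tau_2})\neq\emptyset$, and when the latter holds condition~(2) for $\tau_2$ identifies $Z_\omega\cap X_{\tau_2}$ with $Z_{\tau_2}$, giving (iii)$\Leftrightarrow$(iv). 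The proof of Lemma~\ref{lem_ZDelta} rewrites $Z_\omega\cap\Int(X_{\tau_2})\neq\emptyset$ as the existence of $\rho\in\P^{[\dim B-1]}$ with $\tau_2\to\rho$ and $\kappa_{\omega\rho}\neq 0$; combined with the chosen $\omega\to\tau_1$ this yields (iii)$\Rightarrow$(vii), while (vii)$\Rightarrow$(iii) follows by applying the same rewriting at $\tau_1$ and at $\tau_2$. The main obstacle is not any single deep step but keeping the argument non-circular: one must invoke Lemma~\ref{lem_ZDelta} and the $\Upomega/R$ factorization remark consistently, and in each place where a ``complementary'' datum is needed (the $\rho$ in (v)$\Rightarrow$(vii), the $\omega$ in (vi)$\Rightarrow$(vii), and the divisor identifications around (iii),(iv)) its existence must be deduced from Prop.~\ref{innerpolys}---the bijection between the inner and outer monodromy polytopes at a cell and their simultaneous non-triviality---rather than assumed.
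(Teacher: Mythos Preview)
Your proposal is correct and follows essentially the same approach as the paper's proof: both rely on the combinatorial description of $\Delta$ as a union of barycentric codimension-two simplices, Lemma~\ref{lem_ZDelta}, the h.t.\ condition~(2), and the $\Upomega/R$ factorization remark. The paper is terser---it declares (v)$\Leftrightarrow$(vi)$\Leftrightarrow$(vii) ``trivial'' and links (iii) to (ii) rather than to (vii)---but the logical skeleton and the ingredients are the same; your more careful invocation of Prop.~\ref{innerpolys} to produce the complementary $\rho$ (resp.\ $\omega$) in (v)$\Rightarrow$(vii) and (vi)$\Rightarrow$(vii) simply makes explicit what the paper leaves implicit.
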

}
\begin{proof} iii) and iv) are equivalent by the h.t.\! property. 
ii)$\Rightarrow$ i) is trivial. The inverse direction is clear if 
$\tau_1\neq\tau_2$ because $e$ is the only edge of nontrivial intersection with $W_e$.
If $\tau_1=\tau_2$, then $e$ is a point which is contained in every edge of 
$\Delta\cap W_e$. 
The equivalence of ii) and vii) follows from the description of $\Delta$ and
v)$\IFF$ vi) $\IFF$ vii) is trivial. 
It remains to show that ii) is equivalent to iii).
If $Z_{\tau_1}=\emptyset$ then the negation of vii) follows. On the other hand,
vii) implies $Z_{\tau_1}\neq \emptyset$ via Lemma~\ref{lem_ZDelta}, 
so we may assume $Z_{\tau_1}\neq\emptyset$.
Because $Z_{\tau_1}\subseteq X_{\tau_1}$ is a divisor not containing 
any toric stratum and $X_{\tau_2}\subseteq X_{\tau_1}$ is a stratum, we have
$$Z_{\tau_1}\cap X_{\tau_2}\neq \emptyset \IFF 
Z_{\tau_1}\cap \Int(X_{\tau_2})\neq \emptyset.$$
By the assumption $Z_{\tau_1}\neq \emptyset$ there exists 
$\P^{[1]}\ni\omega_0 \stackrel{h}{\ra}\tau_1$ 
which represents an edge contained in $\Delta$.
By Lemma~\ref{lem_ZDelta}, 
$$e\circ h\in\Delta\ \IFF\ Z_{\omega_0}\cap X_{\tau_2}\neq\emptyset
 \ \IFF\ Z_{\omega_0}\cap X_{\tau_1}\cap X_{\tau_2}\neq\emptyset
 \ \IFF\ Z_{\tau_1}\cap X_{\tau_2}\neq\emptyset.$$

We therefore need to show that $e\in\Delta\IFF e\circ h\in\Delta$ but
this is just ii)$\IFF$v) which we have shown already.
\end{proof}

\begin{remark}
 Note that possibly $Z_{\tau_1},Z_{\tau_2}\neq\emptyset$ but 
 $Z_{\tau_1}\cap X_{\tau_2}=\emptyset$. This happens if we have a diagram
 $$
 \xymatrix{    
   \omega_1\ar[r] & \tau_1\ar^{e}[rd] \ar[rr] && \rho_1\\
   \omega_2\ar[rr] && \tau_2 \ar[r] & \rho_2  
 } $$
 where none of the maps factors through any other map and 
 $\kappa_{\omega_1\rho_1},\kappa_{\omega_2\rho_2}>0$. In other words, the points
 $\id_{\tau_1}$ and $\id_{\tau_2}$ might be contained in $\Delta$ 
 but not the connecting edge.
\end{remark}

\begin{definition} Let $M$ be a lattice, 
$\sigma$ some lattice polytope in $M\otimes_\ZZ\RR$, $\Sigma$ its normal fan.
Let $\psi$ be a piecewise linear function with respect to $\Sigma$ 
coming from another lattice polytope
$\Delta_\sigma\subseteq M\otimes_\ZZ\RR$ in the sense of Lemma~\ref{polyunique}.
Let $\tau\subseteq\sigma$ be a face and $\check\tau\in\Sigma$ the corresponding cone. 
We can restrict $\psi$ to the star of $\check\tau$ [see \cite{fulton}, p.52] and
obtain a piecewise linear function on the normal fan of $\tau$ which comes from a
face of $\Delta_\sigma$. We denote this face by
$$\Delta_\sigma\cap\tau$$
because it is the intersection of $\Delta_\sigma$ 
with a translate of the tangent space of $\tau$.
\end{definition}

\begin{definition} For any $e:\tau_1\ra\tau_2$, we define 
$Z_e=Z_{\tau_1}\cap X_{\tau_2}$ and polytopes
$$\begin{array}{l}
\check\Delta_{e}=
\check\Delta_{\tau_1}\cap \tau_2
=\Newton (Z_e)\qquad\qquad\hbox{and}\\
\Delta_{e}=\Delta_{\tau_2}\cap \tau_1.
\end{array}$$
\end{definition}

It is not hard to see that we get a similar statement as in 
Lemma~\ref{inoutkappa}. There are surjections
$$
\xymatrix{    
\{\hbox{ edges of }\check\Delta_e\}&
\{\omega\ra\tau_1\stackrel{e}{\ra}\tau_2\ra\rho \,|\, \kappa_{\omega\rho}\neq 0\}
\ar@{->>}[l]\ar@{->>}[r]
&\{\hbox{ edges of }\Delta_e\}.
} $$
In particular, $\Delta_{e}=\{0\}$ if and only if $e\not\in\Delta$.
The following lemma is a direct consequence of Lemma~\ref{equivalencesfore} and
the definition of h.t..
\begin{lemma} \label{Ze}
For $e:\tau_1\ra\tau_2$, we have
$$Z_e,\Delta_e,\check\Delta_e=
\left\{
\begin{array}{ll}
Z_{\tau_2},\Delta_{\tau_1},\check\Delta_{\tau_2}&\hbox{ if }e\in\Delta\\
\emptyset,\{0\},\{0\}&\hbox{ if }e\not\in\Delta.
\end{array}
\right.$$
\end{lemma}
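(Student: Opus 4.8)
The plan is to split into the two cases $e\in\Delta$ and $e\notin\Delta$ and, in each, to read all three assertions off Lemma~\ref{equivalencesfore} together with the h.t.\ definitions; the polytope identities are to be understood up to integral translation, as always for monodromy polytopes. The case $e\notin\Delta$ is immediate: equivalence (iii) of Lemma~\ref{equivalencesfore} gives $Z_e=Z_{\tau_1}\cap X_{\tau_2}=\emptyset$, and equivalence (vii) says that the index set $\{\omega\ra\tau_1\ra\tau_2\ra\rho\mid\kappa_{\omega\rho}\neq 0\}$ is empty, so by the two surjections displayed just before the statement $\check\Delta_e$ and $\Delta_e$ have no edges, hence are single lattice points and equal $\{0\}$ after normalization (this is essentially the proof of the ``$\Delta_e=\{0\}$ iff $e\notin\Delta$'' remark made above).

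Now assume $e\in\Delta$. Equivalence (iv) of Lemma~\ref{equivalencesfore} gives directly $Z_e=Z_{\tau_1}\cap X_{\tau_2}=Z_{\tau_2}$, whence $\check\Delta_e=\Newton(Z_e)=\Newton(Z_{\tau_2})=\check\Delta_{\tau_2}$, the last equality being the identification of the reduced outer monodromy polytope at $\tau_2$ with $\Newton(Z_{\tau_2})$ built into the h.t.\ set-up (Def.~\ref{defcit}, part (3), and the discussion opening Section~\ref{section_main}). For the remaining identity $\Delta_e=\Delta_{\tau_2}\cap\tau_1=\Delta_{\tau_1}$, I would first use equivalence (vii) of Lemma~\ref{equivalencesfore} to produce a chain $\omega\ra\tau_1\ra\tau_2\ra\rho$ with $\kappa_{\omega\rho}\neq 0$, so that the faces $\widetilde\Delta_{\rho,\tau_1\ra\rho}$ and $\widetilde\Delta_{\rho,\tau_2\ra\rho}$ of $\widetilde\Delta_\rho$ are non-trivial. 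By Proposition~\ref{innerpolys}, and since for a h.t.\ space there is a single reduced inner monodromy polytope at each stratum, one has up to translation $\Delta_{\tau_j}=\frac{1}{\check a_\rho}\widetilde\Delta_{\rho,\tau_j\ra\rho}$ for $j=1,2$, with the \emph{same} constant $\check a_\rho$ since it depends only on $\rho$ (Lemma~\ref{kappas}~b)). The operation ``$\cap\tau_1$'' selects the face of a polytope lying in a translate of $\Lambda_{\tau_1}\otimes_\ZZ\RR$, so it commutes with scaling and translation, and applied to $\widetilde\Delta_{\rho,\tau_2\ra\rho}$ it restricts to the vertices of $\widetilde\Delta_\rho$ indexed by vertices of $\tau_1$, i.e.\ yields $\widetilde\Delta_{\rho,\tau_1\ra\rho}$; chaining the identities gives $\Delta_e=\Delta_{\tau_2}\cap\tau_1=\frac{1}{\check a_\rho}\big(\widetilde\Delta_{\rho,\tau_2\ra\rho}\cap\tau_1\big)=\frac{1}{\check a_\rho}\widetilde\Delta_{\rho,\tau_1\ra\rho}=\Delta_{\tau_1}$.

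The one step that is not pure bookkeeping — and which I expect to be the main point to get right — is the compatibility just invoked: that restricting the piecewise linear function attached to $\Delta_{\tau_2}$ to the star of the cone dual to $\tau_1$ reproduces the one attached to $\Delta_{\tau_1}$, equivalently that $\widetilde\Delta_{\rho,\tau_2\ra\rho}\cap\tau_1=\widetilde\Delta_{\rho,\tau_1\ra\rho}$. This is exactly the face relation among the $\widetilde\Delta_{\rho,e}$ already used to build the inner monodromy polytopes and in the proof of Proposition~\ref{innerpolys} (via \cite{grosie1}, Rem.~1.59), so I would cite it rather than reprove it. With that, the lemma is indeed a direct consequence of Lemma~\ref{equivalencesfore} and the h.t.\ definitions.
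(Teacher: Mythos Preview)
Your proposal is correct and follows essentially the same approach as the paper, which states only that the lemma ``is a direct consequence of Lemma~\ref{equivalencesfore} and the definition of h.t.''; you have carefully unpacked precisely this claim. One small remark: your citation of Def.~\ref{defcit} (the c.i.t.\ definition) for $\check\Delta_{\tau_2}=\Newton(Z_{\tau_2})$ should really point to Def.~\ref{def_ht} and the opening of Section~\ref{section_main}, but this is cosmetic.
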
 

By the naturality of Prop.~\ref{naturalres}, we have, for each $r$ and 
$\P^{[0]}\ni v\stackrel{g}{\ra}\tau_1\stackrel{e}{\ra}\tau_2$, a commutative diagram
of $\shO_{X_{\tau_2}}$-modules
\begin{equation}\label{squarediagram}
\begin{CD}
\shC^r(\Delta_e^\perp,Z_e) @>{\pi}>> \shO_{X_{\tau_2}}\otimes_\k\bigwedge^r \Delta_e^\perp\\
@VVV @VVV\\
\shC^r(\check\Lambda_{v,\k},Z_e)@>{\pi}>> \shO_{X_{\tau_2}}\otimes_\k\bigwedge^r\check\Lambda_{v,\k}\\
\end{CD}
\end{equation}
where the vertical maps are injections. Using Lemma~\ref{decompose} to decompose the $\shC^r$'s into
$\shC^{r-s}(T_{\check\Delta_e},Z_e)$'s, one finds that the diagram is cartesian. We thus have

\begin{lemma} \label{leftexactseq}
The sequence
$$0\ra \shC^r(\Delta_e^\perp,Z_e) 
\ra \shC^r(\check\Lambda_{v,\k},Z_e)\oplus \shO_{X_{\tau_2}}\otimes_\k\bigwedge^r \Delta_e^\perp
\ra \shO_{X_{\tau_2}}\otimes_\k\bigwedge^r\check\Lambda_{v,\k}$$
is exact where the first non-trivial map is 
$(\shC^r(\Delta_e^\perp\hra \check\Lambda_v),-\pi)$ and 
the second is $\pi\, +\, \id\otimes \bigwedge^r (\Delta_e^\perp\hra \check\Lambda_v)$.
\end{lemma}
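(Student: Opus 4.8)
The plan is to obtain the stated sequence as the left‑exact Mayer–Vietoris sequence attached to the cartesian square \eqref{squarediagram}, so that the only real content is (a) confirming that \eqref{squarediagram} is cartesian and (b) a formal diagram chase. For (a) I would argue as in the sentence preceding the lemma: by Corollary~\ref{corlongexactseq} each $\pi^r$ is injective with image $\shC^r(-,Z_e)$, so all four corners of \eqref{squarediagram} are $\shO_{X_{\tau_2}}$‑submodules of $\shO_{X_{\tau_2}}\otimes_\k\bigwedge^r\check\Lambda_{v,\k}$, and the vertical maps together with the bottom $\pi$ are the evident inclusions. Fix a splitting $\check\Lambda_{v,\k}=T_{\check\Delta_e}\oplus U_1\oplus U_2$ with $\Delta_e^\perp=T_{\check\Delta_e}\oplus U_1$ (legitimate since $T_{\check\Delta_e}\subseteq\Delta_e^\perp\subseteq\check\Lambda_{v,\k}$ and we work over a field), and apply Lemma~\ref{decompose} with Newton polytope $\check\Delta_e$. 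Passing to cokernels gives direct‑sum decompositions
\[
\shC^r(\Delta_e^\perp,Z_e)=\bigoplus_{b\ge0}\shC^{r-b}(T_{\check\Delta_e},Z_e)\otimes_\k\textstyle\bigwedge^b U_1,\qquad
\shC^r(\check\Lambda_{v,\k},Z_e)=\bigoplus_{b\ge0}\shC^{r-b}(T_{\check\Delta_e},Z_e)\otimes_\k\textstyle\bigwedge^b(U_1\oplus U_2),
\]
compatible with the analogous decompositions of $\shO_{X_{\tau_2}}\otimes_\k\bigwedge^r\Delta_e^\perp$ and $\shO_{X_{\tau_2}}\otimes_\k\bigwedge^r\check\Lambda_{v,\k}$; both $\pi^r$ and the functorial vertical maps respect these gradings, the point being that $\pi^r=\iota(h)\circ d_Z$ with $h$ vanishing on $\hat W/\hat T_{\check\Delta_e}$, so $\iota(h)$ acts only on the $\hat T_{\check\Delta_e}$‑factor. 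Hence \eqref{squarediagram} is a direct sum over $b$ of squares of the shape $A\otimes V\to B\otimes V$, $A\otimes V\to A\otimes V'$, $B\otimes V\to B\otimes V'$, $A\otimes V'\to B\otimes V'$ with $A=\shC^{r-b}(T_{\check\Delta_e},Z_e)\hookrightarrow B=\shO_{X_{\tau_2}}\otimes_\k\bigwedge^{r-b}T_{\check\Delta_e}$ and $V=\bigwedge^b U_1\hookrightarrow V'=\bigwedge^b(U_1\oplus U_2)$, all arrows being tensor products of inclusions with identities. Choosing a complement $V''$ of $V$ in $V'$, an element of $B\otimes V$ lies in $A\otimes V'$ iff it already lies in $A\otimes V$, so each such square is cartesian; and a direct sum of cartesian squares is cartesian.

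For (b), given a cartesian square
\[
\begin{CD}
P @>{\pi}>> Q\\
@VVV @VVV\\
P' @>{\pi}>> Q'
\end{CD}
\]
of $\shO_{X_{\tau_2}}$‑modules with left vertical $\iota$ and right vertical $\jmath$, the sequence $0\to P\to P'\oplus Q\to Q'$ with first map $p\mapsto(\iota p,-\pi p)$ and second map $(p',q)\mapsto\pi(p')+\jmath(q)$ is exact: the first map is injective because $\pi\colon P\to Q$ is, and if $(p',q)$ lies in the kernel of the second map then $\jmath(-q)=\pi(p')$, so $(p',-q)$ lies in the fibre product, whence by the cartesian property $(p',-q)=(\iota p,\pi p)$ for a unique $p\in P$, i.e. $(p',q)=(\iota p,-\pi p)$ is in the image of the first map. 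Matching $(P,Q,P',Q')$ with $(\shC^r(\Delta_e^\perp,Z_e),\ \shO_{X_{\tau_2}}\otimes_\k\bigwedge^r\Delta_e^\perp,\ \shC^r(\check\Lambda_{v,\k},Z_e),\ \shO_{X_{\tau_2}}\otimes_\k\bigwedge^r\check\Lambda_{v,\k})$ and $\iota,\jmath$ with the two maps induced by $\Delta_e^\perp\hra\check\Lambda_{v,\k}$ then yields exactly the asserted sequence with the asserted maps.

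The main obstacle is the bookkeeping in step (a): since the splitting furnished by Lemma~\ref{decompose} is non‑canonical, one must be careful to use a single splitting compatible with both ambient spaces $\Delta_e^\perp$ and $\check\Lambda_{v,\k}$ simultaneously, and to verify that under it the functorial maps of Proposition~\ref{naturalres} and the maps $\pi^r$ respect the induced $b$‑grading. Once that is pinned down, the cartesian property, and with it the lemma, is immediate.
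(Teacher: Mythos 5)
Your proposal is correct and follows the paper's own route: the paper likewise observes that \eqref{squarediagram} is cartesian (by decomposing each $\shC^r$ via Lemma~\ref{decompose} into $\shC^{r-b}(T_{\check\Delta_e},Z_e)$-pieces compatibly with the ambient $\bigwedge^{r-b}$-splittings), and then reads off the exact sequence. Your write-up just spells out the two steps — the compatible choice of splitting and the Mayer–Vietoris chase for a cartesian square of monomorphisms — that the paper leaves implicit.
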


Adapting Prop.~\ref{delta0kernel} to the h.t. situation, we get
$$(F_{s}(e)^*\Omega^r_{\tau_1})/\Tors=
\bigcap_{\atop{w\not=\Ver(v)}{w\in\Delta_{\tau_1}}} 
\ker\bigg( F_{s}(e\circ g)^*\Omega^r_v
\stackrel{\iota(\partial_{w-\Ver(v)})|_{(Z_e)^{\dagger}}}{\lra}
 \Omega^{r-1}_{(Z_e)^{\dagger}/\k^{\dagger}} \bigg).$$

We are going to use the canonical identification 
$\Omega_v^r=\Omega_{X_v}^r(\op{log}\,D_v)=\shO_{X_v}\otimes_\k\bigwedge^r
\check\Lambda_{v,\k}$ 
as given in \cite{grosie2}, Lemma 3.12. 
Pulling back differentials simplifies to restricting functions and thus
$$F_{s}(e\circ g)^*\Omega^r_v=F_{s}(e\circ g)^*\shO_{X_v}\otimes_\k\bigwedge^r
\check\Lambda_{v,\k} =\shO_{X_{\tau_2}}\otimes_\k\bigwedge^r \check\Lambda_{v,\k}.$$

A choice of splitting
$\check\Lambda_{v,\k}\,\cong\, T_{\check\Delta_e}\,\oplus \,
\Delta_{e}^\perp/T_{\check\Delta_e}\, \oplus\, 
\check\Lambda_{v,\k}/\Delta_e^\perp$
induces an isomorphism
$$\bigwedge^r \check\Lambda_{v,\k}\cong 
\bigoplus_{a,b\ge 0} 
\bigwedge^a T_{\check\Delta_{e}}\otimes_\k 
\bigwedge^{r-a-b} \Delta_{e}^\perp/T_{\check\Delta_{e}}\otimes_\k 
\bigwedge^b \check\Lambda_{v,\k}/\Delta_{e}^\perp
.$$

This induces a decomposition
$$F_{s}(e\circ g)^*\Omega^r_v\cong 
\bigoplus_{a,b\ge 0} 
\big(T_{\check\Delta_e}\cap\Omega^{a}_{{X_{\tau_2}}/\k}(\op{log}\, D_{\tau_2})\big) \otimes_\k 
\bigwedge^{r-a-b} \Delta_e^\perp/T_{\check\Delta_e}\otimes_\k 
\bigwedge^b \check\Lambda_{v,\k}/\Delta_e^\perp.
$$ 

\begin{proposition} \label{Omegatau12split}
 Given a choice of splitting as above, the image of the inclusion
 of $(F_{s}(e)^*\Omega^r_{\tau_1})/\Tors$ in $F_{s}(e\circ g)^*\Omega^r_v$ inherits a
 decomposition as
 $$\bigoplus_{a,b\ge 0}\shW_{a,b}$$
 where
 $$\shW_{a,b}= \shO_{X_{\tau_2}}\otimes_\k\bigwedge^a T_{\check\Delta_e} \otimes_\k 
 \bigwedge^{r-a-b} \Delta_e^\perp/T_{\check\Delta_e}\otimes_\k 
 \bigwedge^b \check\Lambda_{v,\k}/\Delta_e^\perp$$
 for $b=0$ and
 $$\shW_{a,b}= \ker(\res^a) \otimes_\k 
 \bigwedge^{r-a-b} \Delta_e^\perp/T_{\check\Delta_e}\otimes_\k 
 \bigwedge^b \check\Lambda_{v,\k}/\Delta_e^\perp$$
 for $b>0$ with
 $\res^a: T_{\check\Delta_e}\cap\Omega^{a}_{{X_{\tau_2}}/\k}(\op{log}\, D_{\tau_2})
 \ \ra \ T_{\check\Delta_e}\cap\Omega^{a}_{{Z_e}/\k}(\op{log}\, (Z_e\cap
 D_{\tau_2})).$
\end{proposition}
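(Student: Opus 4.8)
The plan is to read the image directly off the kernel description displayed just before the statement, namely that the image of $(F_{s}(e)^*\Omega^r_{\tau_1})/\Tors$ in $F_{s}(e\circ g)^*\Omega^r_v=\shO_{X_{\tau_2}}\otimes_\k\bigwedge^r\check\Lambda_{v,\k}$ equals $\bigcap_{w}\ker(\iota(\partial_{w-\Ver(v)})|_{(Z_e)^{\dagger}})$, the intersection ranging over vertices $w\neq\Ver(v)$ of $\Delta_{\tau_1}$, and then to show that each contraction map is ``block diagonal'' for the decomposition coming from the chosen splitting, so that the intersection of kernels splits as $\bigoplus_{a,b}\shW_{a,b}$. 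First I would dispose of the case $e\notin\Delta$: there $Z_e=\emptyset$, hence $\Omega^{r-1}_{(Z_e)^{\dagger}/\k^{\dagger}}=0$ and the image is everything, while $\Delta_e=\check\Delta_e=\{0\}$ and $\Delta_e^\perp=\check\Lambda_{v,\k}$, so the only summand is $(a,b)=(0,0)$, which is $\shW_{0,0}$. So assume $e\in\Delta$; then $\Delta_e=\Delta_{\tau_1}$ and $\check\Delta_e=\Newton(Z_e)$ by Lemma~\ref{Ze}, $\Ver(v)$ is a vertex of $\Delta_e$ by Lemma~\ref{vertex}, and the vectors $w-\Ver(v)$ (for $w\in\Delta_e^{[0]}$, $w\neq\Ver(v)$) span $T_{\Delta_e}$.

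The structural point is that $\Delta_e^\perp=T_{\Delta_e}^\perp$, and $T_{\check\Delta_e}\subseteq\check\Lambda_{\tau_2,\k}=\Lambda_{\tau_2,\k}^\perp\subseteq T_{\Delta_e}^\perp=\Delta_e^\perp$, using $T_{\Delta_e}\subseteq\Lambda_{\tau_1,\k}\subseteq\Lambda_{\tau_2,\k}$; this both legitimizes the three-term splitting and shows that contraction by $\partial_{w-\Ver(v)}$ annihilates the summands $T_{\check\Delta_e}$ and $\Delta_e^\perp/T_{\check\Delta_e}$ entirely, acting only on the third summand $\check\Lambda_{v,\k}/\Delta_e^\perp$, where the pairing with $T_{\Delta_e}$ is perfect. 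Hence each $\iota(\partial_{w-\Ver(v)})$ sends the $(a,b)$-summand into the $(a,b-1)$-summand, preserving $a$ and the $\bigwedge^{r-a-b}\Delta_e^\perp/T_{\check\Delta_e}$-component, so $\bigcap_w\ker$ decomposes as a direct sum indexed by $(a,b)$ and a choice of component of $\bigwedge^\bullet\Delta_e^\perp/T_{\check\Delta_e}$, and it suffices to compute each block. For $b=0$ the contractions vanish identically and the whole $(a,0)$-summand survives, giving $\shW_{a,0}$. For $b>0$, fix a basis $t_1,\dots,t_{\dim\Delta_e}$ of $\check\Lambda_{v,\k}/\Delta_e^\perp$ whose dual basis is a subset of the $w_j-\Ver(v)$; writing a block element as $\sum_{|I|=b}\psi_I\otimes\beta\otimes t_I$ with $\psi_I\in\shO_{X_{\tau_2}}\otimes_\k\bigwedge^a T_{\check\Delta_e}$ and $\beta$ the fixed $\bigwedge^{r-a-b}\Delta_e^\perp/T_{\check\Delta_e}$-component, the image under $\iota(\partial_{w_j-\Ver(v)})$ is $\sum_{I\ni j}\pm\,\psi_I\otimes\beta\otimes t_{I\setminus j}$.

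Finally I would identify the restriction to $(Z_e)^{\dagger}$. By Prop.~\ref{delta0kernel}(3),(4) combined with Lemma~\ref{logexactseq}, the restriction of $\shO_{X_{\tau_2}}\otimes_\k\bigwedge^\bullet\check\Lambda_{v,\k}$ to $\Omega^\bullet_{(Z_e)^{\dagger}/\k^{\dagger}}$ is, in the splitting, the map $\res$ on the $T_{\check\Delta_e}$-factor and a local free generating set on the other two factors; so $\psi\otimes\beta\otimes t_{I'}$ restricts to zero exactly when $\res^a(\psi)=0$, i.e.\ $\psi\in\ker(\res^a)$. Since the $t_{I\setminus j}$ occurring for a fixed $j$ are distinct basis elements, the block element lies in $\bigcap_w\ker$ iff $\psi_I\in\ker(\res^a)$ for every $j$ and every $I\ni j$; as every $b$-subset with $b\ge1$ contains some $j$, this means all $\psi_I\in\ker(\res^a)$, which describes precisely $\ker(\res^a)\otimes_\k\bigwedge^{r-a-b}\Delta_e^\perp/T_{\check\Delta_e}\otimes_\k\bigwedge^b\check\Lambda_{v,\k}/\Delta_e^\perp=\shW_{a,b}$. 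Summing over all blocks yields the claimed decomposition. The main obstacle is exactly this last identification: one must unwind the log structure on $(Z_e)^{\dagger}$ pulled back from $X_v^{\dagger}$ through $X_{\tau_1}$ and $Z_{\tau_1}$ — keeping the directions normal to $X_{\tau_2}$ in $X_v$ and the $Z_e$-direction itself as honest log directions — to be certain that restriction collapses nothing beyond what $\res^a$ collapses on the $T_{\check\Delta_e}$-factor and that $\Omega^\bullet_{(Z_e)^{\dagger}}$ is locally free on the restricted generating set; everything else is the exterior-algebra bookkeeping already packaged in Lemma~\ref{lemmawedgesubspace} and Lemma~\ref{decompose}.
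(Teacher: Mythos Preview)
Your proof is correct and follows essentially the same route as the paper's: dispose of the trivial case $e\notin\Delta$, then use the kernel description from Prop.~\ref{delta0kernel}, observe that the contractions $\iota(\partial_{w-\Ver(v)})$ act only on the third tensor factor $\bigwedge^b\check\Lambda_{v,\k}/\Delta_e^\perp$ while restriction to $Z_e$ acts as $\res^a$ on the first factor, and read off the $b=0$ and $b>0$ cases separately. The only cosmetic difference is that for $b>0$ you pick a basis and compute explicitly, whereas the paper argues more briefly that for any $\alpha$ in a $b>0$ component some contraction is nonzero, so the kernel condition on $Z_e$ forces $\res^a(\alpha)=0$; your version and theirs are equivalent, and your honest flag about unwinding the log structure on $(Z_e)^\dagger$ matches exactly the content of the paper's observation~(2), which is likewise stated rather than unwound.
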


\begin{proof} Note that the assertion is trivial 
if $e\not\in\Delta$ because then, by Lemma~\ref{Ze}, 
$Z_e=\emptyset, \check\Delta_e=\{0\},\Delta_e^\perp=\check\Lambda_v$ 
and so only the component with $b=0$ contributes. 
Let us now assume that $e\in\Delta$ which implies
$Z_e=Z_{\tau_2},\ \Delta_e=\Delta_{\tau_1},\ \check\Delta_e=\check\Delta_{\tau_2}$
using Lemma~\ref{Ze}.
We are going to show the existence of such a decomposition first. 
This is a consequence
of Prop.~\ref{delta0kernel} together with the following two observations
\begin{enumerate}
\item For each $w\not=\Ver(v)$, $\iota(\partial_{w-\Ver(v)})$
respects the decomposition in the sense of being the identity on the first two tensor factors 
when written down as
$$
\big(T_{\check\Delta_{\tau_2}}\cap\Omega^{a}_{{X_{\tau_2}}/\k}(\op{log}\, D_{\tau_2})\big) \otimes_\k 
\bigwedge^{r-a-b} \Delta_{\tau_1}^\perp/T_{\check\Delta_{\tau_2}}\otimes_\k 
\bigwedge^b \check\Lambda_{v,\k}/\Delta_{\tau_1}^\perp \qquad$$
$$\qquad\lra
\big(T_{\check\Delta_{\tau_2}}\cap\Omega^{a}_{{X_{\tau_2}}/\k}(\op{log}\, D_{\tau_2})\big) \otimes_\k 
\bigwedge^{r-a-b} \Delta_{\tau_1}^\perp/T_{\check\Delta_{\tau_2}}\otimes_\k 
\bigwedge^{b-1} \check\Lambda_{v,\k}/\Delta_{\tau_1}^\perp.
$$

\item The restriction to $Z_{\tau_2}$ respects the decomposition by being $\res^a$ on the first 
and the identity of the last two tensor factors being written as
$$
\big(T_{\check\Delta_{\tau_2}}\cap\Omega^{a}_{{X_{\tau_2}}/\k}(\op{log}\, D_{\tau_2})\big) \otimes_\k 
\bigwedge^{r-a-b} \Delta_{\tau_1}^\perp/T_{\check\Delta_{\tau_2}}\otimes_\k 
\bigwedge^b \check\Lambda_{v,\k}/\Delta_{\tau_1}^\perp \qquad\qquad$$
$$\qquad\lra
\big(T_{\check\Delta_{\tau_2}}\cap\Omega^{a}_{{Z_{\tau_2}}/\k}
(\op{log}\, (Z_{\tau_2}\cap D_{\tau_2}))\big) \otimes_\k 
\bigwedge^{r-a-b} \Delta_{\tau_1}^\perp/T_{\check\Delta_{\tau_2}}\otimes_\k 
\bigwedge^b \check\Lambda_{v,\k}/\Delta_{\tau_1}^\perp.
$$
\end{enumerate}

Note that
$\{w-\Ver(v)\,|\, w\not=\Ver(v), w\in\Delta^{[0]}_{\tau_1}\}$ generates 
$T_{\Delta_{\tau_1}}$ and therefore
$$\bigcap_{\atop{w\not=\Ver(v)}{w\in\Delta_{\tau_1}}} 
\ker\bigg(\bigwedge^r \check\Lambda_{v,\k} \stackrel{\iota(w-\Ver(v))}{\lra}
\bigwedge^{r-1} \check\Lambda_{v,\k}\bigg) = \bigwedge^r \Delta_{\tau_1}^\perp$$
which implies the assertion for the $b=0$ case.

On the other hand if $\alpha$ is a form in a component of some $a,b$ with $b>0$ 
then there is a $w$ such that $\iota(\partial_{w-v})\alpha\neq 0$. 
For $\alpha$ to be in $\shW_{a,b}$, we must have that $\iota(\partial_{w-v})\alpha$ 
restricts to $0$ under $\res$. This, however, is equivalent to $\alpha$ 
itself restricting to $0$ under $\res$. This finishes the proof.
\end{proof}

The following proposition adds to Lemma~\ref{leftexactseq}.

\begin{proposition} \label{squareseq}
Given $e:\tau_1\ra\tau_2$, there is a split exact sequence
$$0\ra \shC^r(\Delta_e^\perp,Z_e) \ra \shC^r(\check\Lambda_{v,\k},Z_e)\oplus
\shO_{X_{\tau_2}}\otimes\bigwedge^r \Delta_e^\perp \ra F_s(e)^*\Omega_{\tau_1}^r/\Tors \ra
0$$
where the last non-trivial map depends on $e\circ g:v\ra \tau_2$. It induces
$$F_s(e)^*\Omega_{\tau_1}^r/\Tors
\cong \shO_{X_{\tau_2}}\otimes\bigwedge^r \Delta_e^\perp
\oplus
\big(\shC^r(\check\Lambda_{v,\k},Z_e)/\shC^r(\Delta_e^\perp,Z_e)\big).$$
\end{proposition}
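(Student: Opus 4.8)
The plan is to upgrade the left exact sequence of Lemma~\ref{leftexactseq} to a short exact sequence, by identifying the image of its second map $\pi+\id\otimes\bigwedge^r(\Delta_e^\perp\hra\check\Lambda_v)$, viewed as a subsheaf of $F_s(e\circ g)^*\Omega^r_v=\shO_{X_{\tau_2}}\otimes_\k\bigwedge^r\check\Lambda_{v,\k}$, with the image of $(F_s(e)^*\Omega^r_{\tau_1})/\Tors$ under the embedding of Prop.~\ref{Omegatau12split}; the splitting will then be read off from the $b=0$ part of that proposition. Since $\shC^r(W,Z)=\im(\pi^r)$ by Cor.~\ref{corlongexactseq}, the image of the second map is simply the sum of subsheaves $\shC^r(\check\Lambda_{v,\k},Z_e)+\shO_{X_{\tau_2}}\otimes_\k\bigwedge^r\Delta_e^\perp$, so everything comes down to showing this sum equals $(F_s(e)^*\Omega^r_{\tau_1})/\Tors$. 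First I would dispose of the case $e\notin\Delta$: by Lemma~\ref{Ze} one has $Z_e=\emptyset$ and $\Delta_e^\perp=\check\Lambda_{v,\k}$, so all three sheaves in sight equal $\shO_{X_{\tau_2}}\otimes_\k\bigwedge^r\check\Lambda_{v,\k}$ and both assertions are immediate.

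So assume $e\in\Delta$, whence $Z_e=Z_{\tau_2}$, $\Delta_e=\Delta_{\tau_1}$, $\check\Delta_e=\check\Delta_{\tau_2}$. Fixing the splitting $\check\Lambda_{v,\k}\cong T_{\check\Delta_e}\oplus\Delta_e^\perp/T_{\check\Delta_e}\oplus\check\Lambda_{v,\k}/\Delta_e^\perp$ used in Prop.~\ref{Omegatau12split}, I would apply Lemma~\ref{decompose} with $T_{\check\Delta_e}$ in place of $T_\Delta$, twist $-(r+1)$, and $W$ equal to $\check\Lambda_{v,\k}$ resp.\ $\Delta_e^\perp$. The twists bookkeep precisely so that the $b$-summand of $\shC^r(W,Z_e)$ is $\shC^{r-b}(T_{\check\Delta_e},Z_e)\otimes_\k\bigwedge^b W/T_{\check\Delta_e}$, giving
$$\shC^r(\check\Lambda_{v,\k},Z_e)\ \cong\bigoplus_{a+s+c=r}\shC^a(T_{\check\Delta_e},Z_e)\otimes_\k\textstyle\bigwedge^s\Delta_e^\perp/T_{\check\Delta_e}\otimes_\k\bigwedge^c\check\Lambda_{v,\k}/\Delta_e^\perp ,$$
with $\shC^r(\Delta_e^\perp,Z_e)$ being the subsum over $c=0$. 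By Theorem~\ref{longexactseq}, $\shC^a(T_{\check\Delta_e},Z_e)=T_{\check\Delta_e}\cap\Omega^a_{X_{\tau_2}/\k}(\op{log}(D_{\tau_2}+Z_e))(-Z_e)=\ker(\res^a)$, where $\res^a$ is the very map occurring in Prop.~\ref{Omegatau12split}; moreover $\ker(\res^a)\subseteq\shO_{X_{\tau_2}}\otimes_\k\bigwedge^a T_{\check\Delta_e}$, so the $c=0$ part of $\shC^r(\check\Lambda_{v,\k},Z_e)$, namely $\shC^r(\Delta_e^\perp,Z_e)$, lies inside $\shO_{X_{\tau_2}}\otimes_\k\bigwedge^r\Delta_e^\perp$. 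Comparing term by term with Prop.~\ref{Omegatau12split}, the $c\geq1$ summands are exactly the $\shW_{a,b}$ with $b=c>0$, while $\bigoplus_a\shW_{a,0}=\shO_{X_{\tau_2}}\otimes_\k\bigwedge^r\Delta_e^\perp$; hence $\shC^r(\check\Lambda_{v,\k},Z_e)+\shO_{X_{\tau_2}}\otimes_\k\bigwedge^r\Delta_e^\perp=\bigoplus_{a,b}\shW_{a,b}=(F_s(e)^*\Omega^r_{\tau_1})/\Tors$, which gives the claimed short exact sequence.

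For the splitting, note that in the chosen decomposition $\shO_{X_{\tau_2}}\otimes_\k\bigwedge^r\Delta_e^\perp=\bigoplus_a\shW_{a,0}$ is a direct summand of $(F_s(e)^*\Omega^r_{\tau_1})/\Tors$, with complement $\bigoplus_{a,b>0}\shW_{a,b}$; the quotient by $\shO_{X_{\tau_2}}\otimes_\k\bigwedge^r\Delta_e^\perp$ carries this complement isomorphically onto $\big(\shC^r(\check\Lambda_{v,\k},Z_e)+\shO_{X_{\tau_2}}\otimes_\k\bigwedge^r\Delta_e^\perp\big)/\big(\shO_{X_{\tau_2}}\otimes_\k\bigwedge^r\Delta_e^\perp\big)$, and by the second isomorphism theorem together with the cartesianness of \eqref{squarediagram} noted just before Lemma~\ref{leftexactseq} (which gives $\shC^r(\check\Lambda_{v,\k},Z_e)\cap\shO_{X_{\tau_2}}\otimes_\k\bigwedge^r\Delta_e^\perp=\shC^r(\Delta_e^\perp,Z_e)$) this quotient is $\shC^r(\check\Lambda_{v,\k},Z_e)/\shC^r(\Delta_e^\perp,Z_e)$, giving the asserted direct sum decomposition. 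I expect the main obstacle to be purely organizational: choosing the various non-canonical splittings of Lemma~\ref{decompose} simultaneously and compatibly with the one in Prop.~\ref{Omegatau12split}, so that the decompositions of $\shC^r(\check\Lambda_{v,\k},Z_e)$, $\shC^r(\Delta_e^\perp,Z_e)$ and $(F_s(e)^*\Omega^r_{\tau_1})/\Tors$ match on the nose rather than merely up to isomorphism. Once the identification $\ker(\res^a)=\shC^a(T_{\check\Delta_e},Z_e)$ and the cartesian square are in hand, there is no genuine difficulty left.
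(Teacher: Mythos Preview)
Your proof is correct and follows essentially the same route as the paper: reduce to $e\in\Delta$, fix the splitting of $\check\Lambda_{v,\k}$ from Prop.~\ref{Omegatau12split}, decompose $\shC^r(\check\Lambda_{v,\k},Z_e)$ via Lemma~\ref{decompose} into $(a,b)$-components, identify each $\shC^a(T_{\check\Delta_e},Z_e)$ with $\ker(\res^a)$ via Theorem~\ref{longexactseq}, and match against the $\shW_{a,b}$. Your treatment of the splitting assertion (via the cartesian square and the second isomorphism theorem) is in fact more explicit than the paper's, which simply records the induced decomposition without further comment.
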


\begin{proof} 
From Lemma~\ref{leftexactseq}, we know that the beginning is exact and just 
need to care about the last term.
As in the proof of Prop.~\ref{Omegatau12split} 
the assertion is trivial for $e\not\in\Delta$. In the other case, we have
$Z_e=Z_{\tau_2},\ \Delta_e=\Delta_{\tau_1},\ \check\Delta_e=\check\Delta_{\tau_2}.$
Choose a splitting $\check\Lambda_{v,\k}\,\cong\, T_{\check\Delta_{\tau_2}}\,\oplus \,
\Delta_{\tau_1}^\perp/T_{\check\Delta_{\tau_2}}\, \oplus\, 
\check\Lambda_{v,\k}/\Delta_{\tau_1}^\perp$.
We are going to use Prop.~\ref{Omegatau12split}. Given its notation, all we need to show is that 
$$\im(\pi\, +\, \id\otimes \bigwedge^r F(g)^*)=\bigoplus_{a,b\ge 0} \shW_{a,b}.$$
By Lemma~\ref{decompose}, the entire sequence splits up in $a,b$-components. 
For the components with $b=0$ the assertion is obvious because 
$$\bigoplus_{\atop{a\ge0}{b=0}} \shW_{a,b} \cong \shO_{X_{\tau_2}}\otimes_\k\bigwedge^r\Delta_{\tau_1}^\perp$$
which clearly coincides with the image. 
For $b>0$ we have by Prop.~\ref{Omegatau12split}
$$\shW_{a,b}=\ker(\res) \otimes_\k 
\bigwedge^{r-a-b} \Delta_{\tau_1}^\perp/T_{\check\Delta_{\tau_2}}\otimes_\k 
\bigwedge^b \check\Lambda_{v,\k}/\Delta_{\tau_1}^\perp$$
and by Theorem~\ref{longexactseq}, 
$\ker(\res^a)=\im(\pi^a)=\shC^a(T_{\check\Delta_{\tau_2}},Z_{\tau_2})$. Now, the
assertion becomes clear by writing down the $a,b$-decomposition of
$\shC^r(\check\Lambda_{v,\k},Z_{\tau_2})$ which is
$$\shC^r(\check\Lambda_{v,\k},Z_{\tau_2})\cong \shC^a(T_{\check\Delta_{\tau_2}},Z_{\tau_2}) \otimes_\k 
\bigwedge^{r-a-b} \Delta_{\tau_1}^\perp/T_{\check\Delta_{\tau_2}}\otimes_\k 
\bigwedge^b \check\Lambda_{v,\k}/\Delta_{\tau_1}^\perp.$$
\end{proof}

By Prop.~\ref{naturalres}, the exact sequence of the previous proposition 
yields an exact sequence of complexes where the right column is defined 
as the cokernel sequence.

\newlength{\dumlength}
\settowidth{\dumlength}{$\oplus \shO_{X_{\tau_2}}\otimes\bigwedge^r\Delta^\perp_{e}$}

{\scriptsize
$$
\begin{array}{ccccccccc}
\vdots && \vdots &&&& \vdots\\
\uparrow &&\uparrow&&&&\uparrow\\
\shK^{r+2}(\Delta^\perp_e,Z_e,-(r+1))&
\hra&\shK^{r+2}(\check\Lambda_{v,\k},Z_e,-(r+1))&&&
\sra& Q^1(F_s(e)^*\Omega_{\tau_1}^r,g)
\\
\uparrow &&\uparrow&&&&\uparrow\\
\shK^{r+1}(\Delta^\perp_e,Z_e,-(r+1))& 
\hra&\shK^{r+1}(\check\Lambda_{v,\k},Z_e,-(r+1))&\oplus&\shO_{X_{\tau_2}}\otimes\bigwedge^r\Delta^\perp_e&
\sra& Q^0(F_s(e)^*\Omega_{\tau_1}^r,g)
\\
\uparrow &&\uparrow&&\rotatebox{90}{\Large=} &&\uparrow\\
 \shC^r(\Delta^\perp_e,Z_e)& 
\hra&\shC^r(\check\Lambda_{v,\k},Z_e)&\oplus&\shO_{X_{\tau_2}}\otimes\bigwedge^r\Delta^\perp_e&
\sra& F_s(e)^*\Omega_{\tau_1}^r/\Tors 
\\
\uparrow &&\uparrow&&&&\uparrow\\
0 && 0 &&&&0\\
\end{array}
$$
}

\begin{proposition} \label{resQ}
We have an acyclic resolution
$$0\ra F_s(e)^*\Omega_{\tau_1}^r/\Tors \ra Q^\bullet(F_s(e)^*\Omega_{\tau_1}^r,g)$$
\end{proposition}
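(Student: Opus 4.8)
The plan is to read off the result from the displayed exact diagram of complexes that immediately precedes the statement. The three columns are, from left to right, the Koszul-type resolution of $\shC^r(\Delta_e^\perp, Z_e)$, the Koszul-type resolution of $\shC^r(\check\Lambda_{v,\k}, Z_e)$ augmented in degree zero by the free module $\shO_{X_{\tau_2}}\otimes\bigwedge^r\Delta_e^\perp$, and the cokernel column $Q^\bullet(F_s(e)^*\Omega_{\tau_1}^r, g)$, whose bottom term is $F_s(e)^*\Omega_{\tau_1}^r/\Tors$ by Proposition~\ref{squareseq}. First I would invoke Proposition~\ref{naturalres}: applied with $W = \Delta_e^\perp$ and with $W = \check\Lambda_{v,\k}$ (both of which contain $T_{\check\Delta_e}$ since $\Delta_e \subseteq \tau_1$ and $\check\Delta_e = \Newton(Z_e)$), it produces acyclic resolutions of $\shC^r(\Delta_e^\perp,Z_e)$ and of $\shC^r(\check\Lambda_{v,\k},Z_e)$ by the complexes $\shK^{>r}(W,Z_e,-(r+1))$; the extra summand $\shO_{X_{\tau_2}}\otimes\bigwedge^r\Delta_e^\perp$ sitting only in the bottom row of the middle column is evidently acyclic as well, being a direct sum of line bundles of the form $\shO_{X_{\tau_2}}$ twisted by nonnegative multiples of $Z_e$, which are acyclic by Lemma~\ref{cohotoriclb} (or simply $\shO_{X_{\tau_2}}$-free when $e\notin\Delta$, in which case the whole statement is trivial because then $Z_e=\emptyset$ and $Q^\bullet$ is concentrated in degree zero).

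The main point is then exactness of each row. For the rows above the bottom one this is just the fact that the left-hand inclusion of Koszul complexes $\shK^\bullet(\Delta_e^\perp,Z_e,-(r+1)) \hookrightarrow \shK^\bullet(\check\Lambda_{v,\k},Z_e,-(r+1))$ is split termwise — it is induced by the split inclusion of vector spaces $\Delta_e^\perp \hookrightarrow \check\Lambda_{v,\k}$ after passing to exterior powers, cf.\ the decomposition in Lemma~\ref{decompose} — so the quotient $Q^\bullet$ in each of those degrees is just the Koszul complex for the pair $(\check\Lambda_{v,\k},Z_e)$ modulo that for $(\Delta_e^\perp,Z_e)$, and the rows are short exact by construction. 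For the bottom row, exactness is precisely the split exact sequence of Proposition~\ref{squareseq}. Hence the diagram is an exact sequence of (vertical) complexes whose left and middle columns are acyclic resolutions; a standard diagram chase / spectral-sequence-of-a-double-complex argument then shows that the right column $Q^\bullet(F_s(e)^*\Omega_{\tau_1}^r,g)$ is a resolution of its bottom term $F_s(e)^*\Omega_{\tau_1}^r/\Tors$, and that it is acyclic because the acyclicity of the other two columns forces $H^{>0}$ of the quotient complex to vanish in each cohomological degree.

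Concretely, I would run the hypercohomology spectral sequence of the double complex formed by the first two columns: one filtration collapses because the columns are acyclic resolutions, computing $H^\bullet(X_{\tau_2},\shC^r(\Delta_e^\perp,Z_e))$ and $H^\bullet(X_{\tau_2},\shC^r(\check\Lambda_{v,\k},Z_e))$ together with the contribution of $\shO_{X_{\tau_2}}\otimes\bigwedge^r\Delta_e^\perp$; the long exact cohomology sequence of the short exact sequence of columns, combined with Proposition~\ref{squareseq} identifying the cokernel presheaf as $F_s(e)^*\Omega_{\tau_1}^r/\Tors$, then yields both that $Q^\bullet$ resolves $F_s(e)^*\Omega_{\tau_1}^r/\Tors$ and that each $Q^p$ is acyclic. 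The only genuine obstacle is bookkeeping: making sure the identification of the cokernel row with $F_s(e)^*\Omega_{\tau_1}^r/\Tors$ from Proposition~\ref{squareseq} is compatible with the differentials coming from $d_Z$ in the Koszul complexes (equivalently, that $\pi$ and $d_Z$ assemble into an honest map of complexes), which is checked termwise using the explicit formulas of Lemma~\ref{dZexplicit} and Lemma~\ref{piexplicit}; once this compatibility is in place everything else is formal homological algebra.
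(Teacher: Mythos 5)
Your argument is correct and is essentially the paper's own: the proof in the paper consists of a single sentence citing Proposition~\ref{naturalres} and Lemma~\ref{decompose}, and your proposal is just the careful unpacking of that citation into the snake-lemma / double-complex argument on the displayed diagram. One small imprecision: the extra summand $\shO_{X_{\tau_2}}\otimes\bigwedge^r\Delta_e^\perp$ is not ``twisted by nonnegative multiples of $Z_e$'' --- it is simply a direct sum of copies of the structure sheaf --- but this doesn't matter since $\shO_{X_{\tau_2}}$ is acyclic on the complete toric variety $X_{\tau_2}$ anyway.
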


\begin{proof} This directly follows from Prop.~\ref{naturalres} and Lemma~\ref{decompose}.
\end{proof}


\subsection{Independence of the vertex}
\label{sec_indep_vertex}
In this section we wish to show that the previously built resolution
$Q^\bullet(F_s(e)^*\Omega_{\tau_1}^r,g)$ doesn't depend on $g$ in 
the sense that for another $g'$ there is a
natural commutative diagram
\begin{center}
\begin{picture}(0,0)%
\includegraphics{change.pstex}%
\end{picture}%
\setlength{\unitlength}{4144sp}%
\begingroup\makeatletter\ifx\SetFigFontNFSS\undefined%
\gdef\SetFigFontNFSS#1#2#3#4#5{%
  \reset@font\fontsize{#1}{#2pt}%
  \fontfamily{#3}\fontseries{#4}\fontshape{#5}%
  \selectfont}%
\fi\endgroup%
\begin{picture}(6150,858)(346,-670)
\put(361,-241){\makebox(0,0)[lb]{\smash{{\SetFigFontNFSS{12}{14.4}{\familydefault}{\mddefault}{\updefault}{\color[rgb]{0,0,0}$F_s(e)^*\Omega_{\tau_1}^r/\Tors$}%
}}}}
\put(2251,-601){\makebox(0,0)[lb]{\smash{{\SetFigFontNFSS{12}{14.4}{\familydefault}{\mddefault}{\updefault}{\color[rgb]{0,0,0}$Q^0(F_s(e)^*\Omega_{\tau_1}^r,g')$}%
}}}}
\put(2251, 29){\makebox(0,0)[lb]{\smash{{\SetFigFontNFSS{12}{14.4}{\familydefault}{\mddefault}{\updefault}{\color[rgb]{0,0,0}$Q^0(F_s(e)^*\Omega_{\tau_1}^r,g)$}%
}}}}
\put(4366,-601){\makebox(0,0)[lb]{\smash{{\SetFigFontNFSS{12}{14.4}{\familydefault}{\mddefault}{\updefault}{\color[rgb]{0,0,0}$Q^0(F_s(e)^*\Omega_{\tau_1}^r,g')$}%
}}}}
\put(4366, 29){\makebox(0,0)[lb]{\smash{{\SetFigFontNFSS{12}{14.4}{\familydefault}{\mddefault}{\updefault}{\color[rgb]{0,0,0}$Q^1(F_s(e)^*\Omega_{\tau_1}^r,g)$}%
}}}}
\put(6481,-601){\makebox(0,0)[lb]{\smash{{\SetFigFontNFSS{12}{14.4}{\familydefault}{\mddefault}{\updefault}{\color[rgb]{0,0,0}$\dots$}%
}}}}
\put(6481, 29){\makebox(0,0)[lb]{\smash{{\SetFigFontNFSS{12}{14.4}{\familydefault}{\mddefault}{\updefault}{\color[rgb]{0,0,0}$\dots$}%
}}}}
\end{picture}%

\end{center}

Before we go on, recall from Lemmma~\ref{vertex} that
$g:v\ra\tau_1$ induces a vertex $\Ver(g)\in\Delta_{\tau_1}^{[0]}$. 
We will sometimes also denote it by $\Ver(v)$.
Since $\Delta_e\in\{\Delta_{\tau_1},\{0\}\}$, we may also understand this as
$\Ver(g)\in\Delta_e^{[0]}$. There is a dual version as well.
The data $h:\tau\ra\sigma\in\P^{[\dim B]}$ determines a maximal cone
$K_{\sigma}$ in the fan $\Sigma_\tau$, see [\cite{grosie1}, Def.~1.35], on which $\check\Delta_\tau$
defines a piecewise linear function, so we can define
$$\Ver(h)\in \check\Delta^{[0]}_\tau$$
such that $\check\Delta_\tau-\Ver(h)\subseteq K_{\sigma}^\dual$. 
We also use this as $\Ver(h)\in \check\Delta^{[0]}_e$.

Let us assume we have $e:\tau_1\ra\tau_2$, two 
vertex embeddings $v\stackrel{g_v}{\ra}\tau_1$, $w\stackrel{g_w}{\ra}\tau_1$ 
and $h:\tau_2\ra\sigma_h$ an embedding in a maximal cell.
Set $m_h=\Ver(h)$ and $\hat{m}_h=(m_h,1)\in \hat{T}_{\check\Delta_e}$.
We define an isomorphism
$$\phi^h_{g_v,g_w}:\k \cdot\hat{m}_h \oplus\check\Lambda_{v,\k} \ra \k\cdot\hat{m}_h \oplus \check\Lambda_{w,\k}$$
as follows. Let $\gamma_h$ be some path from $v$ to $w$ through the interior of $\sigma$ and
$T_{\gamma_h}:\check\Lambda_{v,\k}\ra \check\Lambda_{w,\k}$ be the isomorphism induced by
parallel transport along $\gamma_h$. We set
$$\phi^h_{g_v,g_w}|_{\check\Lambda_{v,\k}}(m)
=T_{\gamma_h}(m)+\langle m,\Ver(g_w)-\Ver(g_v)\rangle\cdot \hat{m}_h,
\qquad \phi^h_{g_v,g_w}(\hat{m}_h)=\hat{m}_h.$$

\begin{lemma} \label{indepchoiceh}
 Let $v\stackrel{g_v}{\ra}\tau_1$, $w\stackrel{g_w}{\ra}\tau_1$ be two
 vertex embeddings. The isomorphism 
 $\phi_{g_v,g_w}$
 defined by the commutative diagram
 $$\begin{CD}
 \widehat{\check\Lambda}_{v,\k} 
 @>{\phi_{g_v,g_w}}>> 
 \widehat{\check\Lambda}_{w,\k}\\
 @| @| \\
 \k \hat{m}_h \oplus\check\Lambda_{v,\k} @>{\phi^h_{g_v,g_w}}>> \k \hat{m}_h \oplus \check\Lambda_{w,\k}
 \end{CD}$$
 is independent of the choice of $h:\tau_2\ra\sigma_h$.
\end{lemma}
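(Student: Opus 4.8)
The plan is to make the definition of $\phi^h_{g_v,g_w}$ genuinely independent of $h$ by computing how the two identifications $\widehat{\check\Lambda}_{v,\k}=\k\hat m_h\oplus\check\Lambda_{v,\k}$ and $\widehat{\check\Lambda}_{v,\k}=\k\hat m_{h'}\oplus\check\Lambda_{v,\k}$ (and likewise over $w$) differ, and checking that the two resulting maps $\widehat{\check\Lambda}_{v,\k}\to\widehat{\check\Lambda}_{w,\k}$ coincide. Since the maximal cells containing $\tau_2$ form a connected configuration around $\tau_2$, I would first reduce to the case that $\sigma_h$ and $\sigma_{h'}$ are adjacent, i.e.\ share a codimension-one cell $\rho\supseteq\tau_2$; a general pair is joined by a chain of such adjacencies, and equality of the $\phi$'s is transitive. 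The case $v=w$ is trivial (every path through $\op{int}\sigma_h$ from $v$ to $v$ is contractible there), and the case $e\notin\Delta$ will be subsumed below, since then $\check\Delta_e=\{0\}$ by Lemma~\ref{Ze} and $\hat m_h=(0,1)$ is the canonical generator for every $h$.

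For the adjacent case, note that $m_h=\Ver(h)$ and $m_{h'}=\Ver(h')$ are vertices of $\check\Delta_e$, so $\hat m_h-\hat m_{h'}=m_h-m_{h'}=:\nu\in T_{\check\Delta_e}\subseteq\check\Lambda_{\tau_2,\k}$, while the canonical projection $\widehat{\check\Lambda}_{v,\k}\to\k$ has kernel $\check\Lambda_{v,\k}$. Writing a general element as $x=\mu+c\hat m_{h'}$ with $\mu\in\check\Lambda_{v,\k}$, and using the standard fact that parallel transport inside a maximal cell is the identity on tangent vectors along $\tau_1$ and on cotangent vectors perpendicular to $\tau_2$, compatibly with the pairing, I would expand $\phi^h_{g_v,g_w}(x)$ and $\phi^{h'}_{g_v,g_w}(x)$, express $\hat m_h=\hat m_{h'}+\nu$ inside $\widehat{\check\Lambda}_{w,\k}$, and use $\langle\nu,\delta\rangle=0$ (as $\nu\perp\Lambda_{\tau_2}\supseteq\Lambda_{\tau_1}\ni\delta:=\Ver(g_w)-\Ver(g_v)$) together with $T_{\sigma_h}(\nu)=T_{\sigma_{h'}}(\nu)=\nu$. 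The difference then collapses to $\phi^h_{g_v,g_w}(x)-\phi^{h'}_{g_v,g_w}(x)=(T_{\sigma_h}-T_{\sigma_{h'}})(\mu)+\langle\mu,\delta\rangle\nu$. Thus the lemma reduces to the monodromy identity that the automorphism $M:=T_{\sigma_{h'}}^{-1}\circ T_{\sigma_h}$ of $\check\Lambda_{v,\k}$ attached to the loop ``from $v$ to $w$ through $\op{int}\sigma_h$, back through $\op{int}\sigma_{h'}$'' is $\mu\mapsto\mu-\langle\mu,\delta\rangle\nu$.

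To establish this I would unwind the constructions of the inner and outer monodromy polytopes. For adjacent $\sigma_h,\sigma_{h'}$ the chosen loop links exactly the codimension-two barycentric simplices $\omega\to\dots\to\rho$ with $\omega$ an edge of $\tau_1$ lying ``between'' $v$ and $w$ and $\kappa_{\omega\rho}\neq0$, each with linking number $\pm1$. The elementary $\Lambda$-monodromies $n\mapsto n+\kappa_{\omega\rho}\langle n,\check d_\rho\rangle d_\omega$ recalled in the excerpt pairwise commute, since $\langle d_\omega,\check d_\rho\rangle=0$, so the composite monodromy on $\check\Lambda$ is literally $\mu\mapsto\mu-\big\langle\mu,\sum_\omega\pm\kappa_{\omega\rho}d_\omega\big\rangle\check d_\rho$, with no higher-order terms. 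Now Lemma~\ref{kappas}(b) gives $\kappa_{\omega\rho}=a_\omega\check a_\rho$; combining this with Lemma~\ref{inoutkappa} and Lemma~\ref{vertex} (together with the dual statement describing $\Ver(h)$ via the piecewise linear function attached to $\check\Delta_{\tau_2}$ on $\Sigma_{\tau_2}$), the edges of $\Delta_{\tau_1}$ are the vectors $a_\omega d_\omega$ and the edges of $\check\Delta_{\tau_2}$ are the vectors $\check a_\rho\check d_\rho$. Hence $\sum_\omega\pm a_\omega d_\omega=\Ver(g_w)-\Ver(g_v)=\delta$ as a telescoping sum of edge vectors of $\Delta_{\tau_1}$ along a path of its $1$-skeleton from the $v$-vertex to the $w$-vertex, and $\check a_\rho\check d_\rho=\pm\nu$ is the single edge of $\check\Delta_{\tau_2}$ crossed passing from the cone $K_{\sigma_{h'}}$ to $K_{\sigma_h}$ of $\Sigma_{\tau_2}$. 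This yields $M(\mu)=\mu-\langle\mu,\delta\rangle\nu$, and applying $T_{\sigma_{h'}}$ (which fixes $\nu$) gives $(T_{\sigma_h}-T_{\sigma_{h'}})(\mu)=-\langle\mu,\delta\rangle\nu$, finishing the proof.

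The main obstacle is this last step: identifying precisely which components of $\Delta$ the loop links and, above all, matching the orientation conventions built into the definitions of $\Ver(g_v)$ and $\Ver(h)$ so that the weighted sum of rank-one operators $\langle\,\cdot\,,d_\omega\rangle\,\check d_\rho$ genuinely factors as the single rank-one operator $\langle\,\cdot\,,\delta\rangle\,\nu$; it is exactly here that the h.t.\ uniqueness of $\Delta_{\tau_1}$ and $\check\Delta_{\tau_2}$ per cell is used, and the sign bookkeeping is the delicate part. The reduction to adjacent cells keeps this bookkeeping confined to a single $\rho$; alternatively one could argue directly with the general loop, at the cost of a heavier index calculus, organizing it via the cocycle relation $\phi^h_{g_w,g_x}\circ\phi^h_{g_v,g_w}=\phi^h_{g_v,g_x}$ (which follows from the explicit formula, again using that transport within $\sigma_h$ fixes $\Lambda_{\tau_1}$).
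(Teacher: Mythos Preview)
Your proposal is correct and reaches the same endpoint as the paper, but the organisation is dual to the paper's. The paper reduces along the $v\to w$ side: Step~1 treats the case where $v,w$ are joined by a single edge $\omega$ and $h,h'$ are \emph{arbitrary}, quoting directly the monodromy formula $(T_\omega^{\sigma_h,\sigma_{h'}})^*(m)=m+\langle m,\Ver(g_w)-\Ver(g_v)\rangle\,n_\omega^{\sigma_h,\sigma_{h'}}$ from \cite{grosie1},~\S1.5, and then observing $n_\omega^{\sigma_h,\sigma_{h'}}=\Ver(h')-\Ver(h)$; Step~2 proves the cocycle identity $\phi^h_{g_v,g_w}=\phi^h_{\omega_k}\circ\dots\circ\phi^h_{\omega_1}$ for an edge chain $v=v_0,\dots,v_k=w$ (this is exactly the relation you list at the end as an ``alternative''); Step~3 combines the two. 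Your route instead reduces along the $h\to h'$ side to adjacent maximal cells sharing a single $\rho$, keeps $v,w$ general, and then has to synthesise the composite monodromy around that $\rho$ from the elementary $(\omega,\rho)$-transvections.

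What each buys: the paper's decomposition avoids precisely the obstacle you flag, because the single-edge monodromy for \emph{general} $h,h'$ is already packaged in \cite{grosie1}; no linking-number count or sign chase across several $\omega$'s is needed, and the h.t.\ factorisation $\kappa_{\omega\rho}=a_\omega\check a_\rho$ is never invoked. Your decomposition is perfectly viable (the transvections commute as you note, and the telescoping $\sum \pm a_\omega d_\omega=\delta$ is path-independent because $\Delta_{\tau_1}$ is a genuine polytope), but it re-derives, in dual form, what the companion paper already records. Your computation of $\phi^h(x)-\phi^{h'}(x)=(T_{\gamma_h}-T_{\gamma_{h'}})(\mu)+\langle\mu,\delta\rangle\nu$ is correct and illuminating; had you then reduced to a single edge rather than to a single $\rho$, you would have matched the paper's argument almost verbatim.
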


\begin{proof} 

\underline{Step 1:}\ The case where $v,w$ are connected by an edge $\omega$.\\
 Let $o:\omega\ra\tau_2$ be the embedding of the edge in $\tau_1$ 
 composed with $e:\tau_1\ra\tau_2$. 
 Let $h':\tau_2\ra\sigma_{h'}$ be another inclusion in a maximal cell and 
 $\gamma_{h'}$ a path from $v$ to $w$ through the interior of $\sigma_{h'}$. 
 We have
 $$T_{\gamma_h}=T_{\gamma_{h'}}\circ T^{-1}_{\gamma_{h'}} \circ T_{\gamma_h}.$$
 Note that $T^{-1}_{\gamma_{h'}} \circ T_{\gamma_h}=T_{\gamma_h\circ \gamma^{-1}_{h'}}$ 
 is a monodromy transformation along the loop $\gamma_h\circ \gamma^{-1}_{h'}$ based at $v$ 
 which is by [\cite{grosie1}, section 1.5] (choosing $d_\omega$ to point from $v$ to $w$) given as
 $$\begin{array}{rcl}
 T_{\gamma_h\circ \gamma^{-1}_{h'}}(m)
 &=&(T^{h\circ o,h'\circ o}_\omega)^*(m)\\
 &=&m + \langle m,\Ver(g_w)-\Ver(g_v)\rangle \cdot n^{h\circ o,h'\circ o}_\omega \\
 &=&m + \langle m,\Ver(g_w)-\Ver(g_v)\rangle (\Ver(h')-\Ver(h))\\
 &=&m + \langle m,\Ver(g_w)-\Ver(g_v)\rangle (\hat{m}_{h'}-\hat{m}_h)
 \end{array}$$
 Using this and that $\Ver(h')-\Ver(h)\in\tau_1^\perp$ 
 is invariant under local monodromy, we get
 $$\begin{array}{rcl}
 \phi^h_{g_v,g_w}|_{\check\Lambda_{v,\k}}(m)
 &=&T_{\gamma_h}(m)-\langle m,\Ver(g_w)-\Ver(g_v)\rangle \hat{m}_h\\
 &=&T_{\gamma_{h'}}\circ T_{\gamma_h\circ \gamma^{-1}_{h'}}(m)
    -\langle m,\Ver(g_w)-\Ver(g_v)\rangle \hat{m}_h\\
 &=&T_{\gamma_{h'}}(m + \langle m,\Ver(g_w)-\Ver(g_v)\rangle (\hat{m}_{h'}-\hat{m}_h))\\
    &&\quad +\,\langle m,\Ver(g_w)-\Ver(g_v)\rangle \hat{m}_h\\
 &=&T_{\gamma_{h'}}(m) + \langle m,\Ver(g_w)-\Ver(g_v)\rangle \hat{m}_{h'}\\
 &=&\phi^{h'}_{g_v,g_w}|_{\check\Lambda_{v,\k}}(m)
 \end{array}$$
 Note that $\phi^{h}_{g_v,g_w}(\hat{m}_{\tilde{h}})=\hat{m}_{\tilde{h}}$ 
 for all $\tilde{h}:\tau_2\ra\sigma_{\tilde{h}}\in\P^{[\dim B]}$ because
 $\hat{m}_{\tilde{h}}-\hat{m}_h$ is monodromy invariant 
 and $\hat{m}_h$ is fixed by definition. \smallskip

\underline{Step 2:}\ Chains of edges.\\
 Pick some $h:\tau_2\ra\sigma_{h}$ and
 let $\omega_1,...,\omega_k$ be a chain of edges of $\tau_1$ 
 connecting vertex $v$ to vertex $w$. Let
 $v^-_{\omega_i}, v^+_{\omega_i}$ be the vertices of $\omega_i$, 
 s.t. $v=v^-_{\omega_1}$, $v^+_{\omega_i}=v^-_{\omega_{i+1}}$ and $v^+_{\omega_k}=w$.
 Let $g_{v^-_{\omega_i}}, g_{v^+_{\omega_i}}$ denote the respective embeddings in $\tau_1$.
 We set $\phi^{h}_{\omega_i}:=\phi^{h}_{g_{v^-_{\omega_i}},g_{v^+_{\omega_i}}}$ for each $i$ and claim
 $$\phi^h_{g_v,g_w}=\phi^h_{\omega_k}\circ \dots\circ \phi^h_{\omega_1}$$
 Note that $\phi^h_{g_v,g_w}(\hat{m}_h)=\phi^h_{\omega_k}\circ \dots\circ
 \phi^h_{\omega_1}(\hat{m}_h)$. Let $\gamma^h_{\omega_i}$ be a path through the interior
 of $\sigma_h$ connecting $v^-_{\omega_i}$ and $v^+_{\omega_i}$, then
 $\gamma_h\sim\gamma^h_{\omega_k}\circ \dots\circ \gamma^h_{\omega_1}$.
 We compute
 $$\begin{array}{rcl}
 \phi^h_{\omega_k}\circ \dots\circ \phi^h_{\omega_1}(m)
 &=&T_{\gamma^h_{\omega_k}}(...(T_{\gamma^h_{\omega_1}}(m)
   +\langle m,\Ver(v^+_{\omega_1})-\Ver(v^-_{\omega_1})\rangle \hat{m}_h)+...)\\
   &&\quad +\,\langle T_{\gamma^h_{\omega_{k-1}}}\circ\dots\circ 
   T_{\gamma^h_{\omega_1}}(m),\Ver(v^+_{\omega_k})-\Ver(v^-_{\omega_k})\rangle \hat{m}_h\\
 &=&T_{\gamma^h_{\omega_k}}\circ\dots\circ T_{\gamma^h_{\omega_1}}(m)
   +\langle m,\Ver(v^+_{\omega_1})-\Ver(v^-_{\omega_1})\rangle \hat{m}_h+...\\
   &&\quad +\,\langle m,\Ver(v^+_{\omega_k})-\Ver(v^-_{\omega_k})\rangle \hat{m}_h\\
 &=&T_{\gamma^h}(m)
   +\langle m,\Ver(v^+_{\omega_k})-\Ver(v^-_{\omega_1})\rangle \hat{m}_h\\
 &=&\phi^h_{g_v,g_w}(m)\\
 \end{array}$$
 where we have used that 
 $\langle T_\gamma(m), \Ver(v_1)-\Ver(v_2)\rangle=
 \langle m, \Ver(v_1)-\Ver(v_2)\rangle$ holds for each
 path $\gamma$ connecting some $v_1,v_2\in \tau^{[0]}_1$ through $\sigma_h$. \smallskip

\underline{Step 3:} Combining Step 1 and 2.\\
 Let $\omega_1,...,\omega_k$ be a chain of edges of $\tau$ 
 connecting $v$ to $w$ as described in Step 2. We conclude
 $$\phi^h_{g_v,g_w}
 \stackrel{\tiny\hbox{Step 2}}{=}\phi^h_{\omega_k}\circ \dots\circ \phi^h_{\omega_1}
 \stackrel{\tiny\hbox{Step 1}}{=}\phi^{h'}_{\omega_k}\circ \dots\circ \phi^{h'}_{\omega_1}\\
 \stackrel{\tiny\hbox{Step 2}}{=}\phi^{h'}_{g_v,g_w}.$$

\end{proof}

\begin{lemma}[Changing $v$] \label{changev} 
 Let $v\stackrel{g_v}{\ra}\tau_1$, $w\stackrel{g_w}{\ra}\tau_1$ be two
 vertex embeddings.
 We have a commutative diagram\\
 \begin{center}
 \resizebox{15cm}{!}{
 \begin{picture}(0,0)%
\includegraphics{changev.pstex}%
\end{picture}%
\setlength{\unitlength}{4144sp}%
\begingroup\makeatletter\ifx\SetFigFontNFSS\undefined%
\gdef\SetFigFontNFSS#1#2#3#4#5{%
  \reset@font\fontsize{#1}{#2pt}%
  \fontfamily{#3}\fontseries{#4}\fontshape{#5}%
  \selectfont}%
\fi\endgroup%
\begin{picture}(7275,2388)(211,-2290)
\put(226,-61){\makebox(0,0)[lb]{\smash{{\SetFigFontNFSS{12}{14.4}{\familydefault}{\mddefault}{\updefault}{\color[rgb]{0,0,0}$\dots$}%
}}}}
\put(226,-2221){\makebox(0,0)[lb]{\smash{{\SetFigFontNFSS{12}{14.4}{\familydefault}{\mddefault}{\updefault}{\color[rgb]{0,0,0}$\dots$}%
}}}}
\put(7471,-2221){\makebox(0,0)[lb]{\smash{{\SetFigFontNFSS{12}{14.4}{\familydefault}{\mddefault}{\updefault}{\color[rgb]{0,0,0}$\dots$}%
}}}}
\put(7471,-61){\makebox(0,0)[lb]{\smash{{\SetFigFontNFSS{12}{14.4}{\familydefault}{\mddefault}{\updefault}{\color[rgb]{0,0,0}$\dots$}%
}}}}
\put(856,-1051){\makebox(0,0)[lb]{\smash{{\SetFigFontNFSS{12}{14.4}{\familydefault}{\mddefault}{\updefault}{\color[rgb]{0,0,0}{\scriptsize$\id\otimes\bigwedge^r\phi_{g,g'}$}}%
}}}}
\put(6076,-1051){\makebox(0,0)[lb]{\smash{{\SetFigFontNFSS{12}{14.4}{\familydefault}{\mddefault}{\updefault}{\color[rgb]{0,0,0}{\scriptsize$\id\otimes\bigwedge^{r+1}\phi_{g,g'}$}}%
}}}}
\put(3151,-1141){\makebox(0,0)[lb]{\smash{{\SetFigFontNFSS{12}{14.4}{\familydefault}{\mddefault}{\updefault}{\color[rgb]{0,0,0}$F_s(e)^*\Omega^r_{\tau_2}\big/\Tors$}%
}}}}
\put(4501,-781){\makebox(0,0)[lb]{\smash{{\SetFigFontNFSS{12}{14.4}{\familydefault}{\mddefault}{\updefault}{\color[rgb]{0,0,0}$F_s(e\circ g_v)^*\Omega^r_{v}$}%
}}}}
\put(3151,-511){\makebox(0,0)[lb]{\smash{{\SetFigFontNFSS{12}{14.4}{\familydefault}{\mddefault}{\updefault}{\color[rgb]{0,0,0}$\shC^r(\check\Lambda_v,Z_e)$}%
}}}}
\put(3151,-1771){\makebox(0,0)[lb]{\smash{{\SetFigFontNFSS{12}{14.4}{\familydefault}{\mddefault}{\updefault}{\color[rgb]{0,0,0}$\shC^r(\check\Lambda_w,Z_e)$}%
}}}}
\put(4501,-1501){\makebox(0,0)[lb]{\smash{{\SetFigFontNFSS{12}{14.4}{\familydefault}{\mddefault}{\updefault}{\color[rgb]{0,0,0}$F_s(e\circ g_w)^*\Omega^r_{w}$}%
}}}}
\put(991,-2221){\makebox(0,0)[lb]{\smash{{\SetFigFontNFSS{12}{14.4}{\familydefault}{\mddefault}{\updefault}{\color[rgb]{0,0,0}$\shK^r(\check\Lambda_w,Z_e,-(r+1))$}%
}}}}
\put(5131,-2221){\makebox(0,0)[lb]{\smash{{\SetFigFontNFSS{12}{14.4}{\familydefault}{\mddefault}{\updefault}{\color[rgb]{0,0,0}$\shK^{r+1}(\check\Lambda_w,Z_e,-(r+1))$}%
}}}}
\put(5131,-61){\makebox(0,0)[lb]{\smash{{\SetFigFontNFSS{12}{14.4}{\familydefault}{\mddefault}{\updefault}{\color[rgb]{0,0,0}$\shK^{r+1}(\check\Lambda_v,Z_e,-(r+1))$}%
}}}}
\put(991,-61){\makebox(0,0)[lb]{\smash{{\SetFigFontNFSS{12}{14.4}{\familydefault}{\mddefault}{\updefault}{\color[rgb]{0,0,0}$\shK^r(\check\Lambda_v,Z_e,-(r+1))$}%
}}}}
\end{picture}%

 }
 \end{center}
 and thus a canonical isomorphism of
 $Q^\bullet(F_s(e)^*\Omega_{\tau_1}^r,g)$ 
 and $Q^\bullet(F_s(e)^*\Omega_{\tau_1}^r,g')$ as desired at the beginning of this section.
\end{lemma}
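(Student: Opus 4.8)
The plan is to take, for the vertical maps of the displayed diagram, the exterior powers of the isomorphism $\phi_{g,g'}:=\phi_{g_v,g_w}\colon\widehat{\check\Lambda}_{v,\k}\sira\widehat{\check\Lambda}_{w,\k}$ produced by Lemma~\ref{indepchoiceh}, and then to verify that $\id\otimes\bigwedge^\bullet\phi_{g,g'}$ is compatible with everything entering the construction of $Q^\bullet$: the Koszul differential $d_{Z_e}$, the distinguished subcomplex $\shK^\bullet(\Delta_e^\perp,Z_e,-(r+1))\subseteq\shK^\bullet(\check\Lambda_{v,\k},Z_e,-(r+1))$, the augmentation $\pi^r$, and hence the cokernel complex $Q^\bullet$ together with its augmentation by $F_s(e)^*\Omega_{\tau_1}^r/\Tors$. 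As usual the case $e\notin\Delta$ is trivial (then $Z_e=\emptyset$ and $\phi_{g,g'}$ is pure parallel transport), so I may assume $e\in\Delta$, whence $Z_e=Z_{\tau_2}$, $\Delta_e=\Delta_{\tau_1}$, $\check\Delta_e=\check\Delta_{\tau_2}$ by Lemma~\ref{Ze}.

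By Lemma~\ref{indepchoiceh}, $\phi_{g,g'}$ is independent of the auxiliary inclusion $h\colon\tau_2\to\sigma_h$, so the induced maps on the Koszul complexes are canonical; I fix one such $h$ and write $\widehat{\check\Lambda}_{v,\k}=\k\hat m_h\oplus\check\Lambda_{v,\k}$ with $\hat m_h=(\Ver(h),1)$. First I would show that $\id\otimes\bigwedge^\bullet\phi_{g,g'}$ is a morphism of complexes. By Lemma~\ref{dZexplicit} the differential on the $v$-side is, over $\Quot(\shO_{X_{\tau_2}})$, wedging with $dF^{(v)}=\sum_{m\in\check\Delta_e\cap M}f^{(v)}_m(f^{(v)})^{-1}z^m\otimes(m,1)$, where $f^{(v)}$ is the local equation of $Z_e$ read in the chart at $v$ and the Newton representative of $\check\Delta_e$ is placed accordingly; similarly on the $w$-side. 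Now $\phi^h_{g,g'}$ differs from parallel transport $T_{\gamma_h}$ only by the transvection $m\mapsto\langle m,\Ver(g_w)-\Ver(g_v)\rangle\hat m_h$ fixing $\hat m_h$, and by the monodromy computation in the proof of Lemma~\ref{indepchoiceh} this transvection is exactly the change of trivialization relating $f^{(v)}$ and the Newton representative at $v$ to those at $w$; hence $\phi^h_{g,g'}(dF^{(v)})=dF^{(w)}$ after the identification $\shO_{X_{\tau_2}}((l-r-1)Z_e)$ on both sides, which is in the spirit of Remark~\ref{moveDelta} (there for a plain translation of $\Delta$), and so $\id\otimes\bigwedge^\bullet\phi_{g,g'}$ commutes with $d_{Z_e}$. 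Moreover $\langle m,\Ver(g_w)-\Ver(g_v)\rangle=0$ for $m\in\Delta_e^\perp$, since $\Ver(g_w)-\Ver(g_v)\in T_{\Delta_{\tau_1}}=T_{\Delta_e}$; so $\phi_{g,g'}$ restricts to $T_{\gamma_h}$ on $\widehat{\Delta_e^\perp}$, and $\id\otimes\bigwedge^\bullet\phi_{g,g'}$ carries the subcomplex $\shK^\bullet(\Delta_e^\perp,Z_e,-(r+1))$ onto its $w$-version.

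Next I would check compatibility with $\pi^r$ and then descend to $Q^\bullet$. Writing $\pi^r$ as in Lemma~\ref{piexplicit} with the Newton vertex $\hat m_h$, the identities $\phi^h_{g,g'}(\hat m_h)=\hat m_h$ and $\phi^h_{g,g'}(dF^{(v)})=dF^{(w)}$ give $\pi^r_w\circ(\id\otimes\bigwedge^{r+1}\phi_{g,g'})=\phi_{g,g'}\circ\pi^r_v$ on $\shC^r(\check\Lambda_{v,\k},Z_e)=\im(\pi^r_v)$, so $\id\otimes\bigwedge^{r+1}\phi_{g,g'}$ restricts to an isomorphism $\shC^r(\check\Lambda_{v,\k},Z_e)\sira\shC^r(\check\Lambda_{w,\k},Z_e)$; it is compatible with the inclusions of Prop.~\ref{delta0kernel} into $F_s(e\circ g_v)^*\Omega^r_v$ resp.\ $F_s(e\circ g_w)^*\Omega^r_w$ because the transvection is precisely what reconciles the two kernel descriptions $\bigcap_{w'\neq\Ver(g_v)}\ker\iota(\partial_{w'-\Ver(g_v)})$ and $\bigcap_{w'\neq\Ver(g_w)}\ker\iota(\partial_{w'-\Ver(g_w)})$, so the induced map on $F_s(e)^*\Omega_{\tau_1}^r/\Tors$ is the identity. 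Feeding this, together with the fact that $\id\otimes\bigwedge^r\phi_{g,g'}$ acts by $T_{\gamma_h}$ on the summand $\shO_{X_{\tau_2}}\otimes\bigwedge^r\Delta_e^\perp$, into the split exact sequence of Prop.~\ref{squareseq} and into the diagram of exact sequences preceding Prop.~\ref{resQ}, one obtains a morphism of acyclic resolutions $Q^\bullet(F_s(e)^*\Omega_{\tau_1}^r,g)\to Q^\bullet(F_s(e)^*\Omega_{\tau_1}^r,g')$ lifting $\id$ on $F_s(e)^*\Omega_{\tau_1}^r/\Tors$. It is an isomorphism because $\phi_{g,g'}$ is one, and the relations $\phi_{g,g}=\id$, $\phi_{g',g''}\circ\phi_{g,g'}=\phi_{g,g''}$ follow from the cocycle property of parallel transport as in Step~2 of the proof of Lemma~\ref{indepchoiceh}; this yields the commutative diagram and the canonical identification asserted at the start of this subsection.

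The only genuinely nonformal step is the first claim of the second paragraph: that the transvection $m\mapsto\langle m,\Ver(g_w)-\Ver(g_v)\rangle\hat m_h$ built from the local monodromy of $\check\Lambda$ agrees on the nose with the change of the local equation $f$ of $Z_e$ — equivalently with the shift of the Newton representative of $\check\Delta_e$ — when one passes from the chart at $v$ to the chart at $w$. Once this is settled, everything else is a formal combination of Remark~\ref{moveDelta}, Prop.~\ref{delta0kernel}, Prop.~\ref{squareseq} and the universal property of the acyclic resolution $Q^\bullet$.
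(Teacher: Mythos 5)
Your overall plan---conjugate the resolution by $\id\otimes\bigwedge^\bullet\phi_{g_v,g_w}$ and verify compatibility with the Koszul differential, the distinguished subcomplex, and the augmentation $\pi^r$---is the right one and matches the paper's approach. But your sketch has a genuine gap, and you have misplaced the nonformal content.

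You single out as the ``only genuinely nonformal step'' that $\phi^h_{g_v,g_w}$ carries your $dF^{(v)}$ to $dF^{(w)}$. But that observation is elementary, and the notation $f^{(v)},f^{(w)}$ is spurious: once the chart $U_{\sigma_h}$ and a normalized section are fixed, $f$ is a single equation of $Z_e$ on $X_{\tau_2}$ and does not vary with the vertex, and the Newton representative of $\check\Delta_e$ is pinned by $\Ver(h)$. The element $dF=\sum_m f_m f^{-1}z^m\otimes(m,1)$ lies in $\Quot(\shO_{X_{\tau_2}})\otimes\hat{T}_{\check\Delta_e}$, and $\phi^h_{g_v,g_w}$ is the identity on $\hat{T}_{\check\Delta_e}$: it fixes $\hat m_h$, parallel transport is trivial on the monodromy-invariant $T_{\check\Delta_e}\subseteq\check\Lambda_{\tau_2}$, and $\langle m,\Ver(g_w)-\Ver(g_v)\rangle=0$ for $m\in\check\Lambda_{\tau_2}$ since $\Ver(g_w)-\Ver(g_v)\in T_{\Delta_{\tau_1}}\subseteq\Lambda_{\tau_2}$. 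So $\phi$ fixes $dF$ trivially; this only gives commutativity with $d_{Z_e}$ on the $\shK$'s, the easy part.

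The heart of the lemma, which you gloss over, is the commutativity of
\[
\begin{CD}
\shK^r(\check\Lambda_{v,\k},Z_e,-(r+1)) @>{\pi^r}>> F_s(e\circ g_v)^*\Omega^r_v\\
@V{\id\otimes\bigwedge^r\phi^h_{g_v,g_w}}VV @VV{\Gamma_\omega}V\\
\shK^r(\check\Lambda_{w,\k},Z_e,-(r+1)) @>{\pi^r}>> F_s(e\circ g_w)^*\Omega^r_w.
\end{CD}
\]
The right vertical arrow is not ``$\phi_{g,g'}$'' as your identity $\pi^r_w\circ(\id\otimes\bigwedge^{r+1}\phi_{g,g'})=\phi_{g,g'}\circ\pi^r_v$ suggests (also, the index should be $\bigwedge^r$, since the source of $\pi^r$ is $\shK^r$). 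It is the Gross--Siebert comparison map $\Gamma_\omega$ of \cite{grosie2}, Lemma~3.13, which in the chart $U_{\sigma_h}$ reads $\Gamma_\omega(1\otimes\alpha)=1\otimes\alpha+\frac{df}{f}\wedge\iota(\Ver(v)-\Ver(w))\alpha$ using $a_\omega d_\omega=\Ver(v)-\Ver(w)$. This is a sheaf map with a $\frac{df}{f}$-twist, not the exterior power of a linear map, and the commutativity of the square is \emph{not} a formal consequence of $\phi(\hat m_h)=\hat m_h$ and $\phi(dF)=dF$. It must be checked by evaluating both compositions on the two kinds of generators $1\otimes\hat m_h\wedge\alpha$ and $1\otimes\beta$ via Lemma~\ref{piexplicit} and observing the cancellations $\frac{df}{f}\wedge df=0$ and $\hat m_h\wedge\hat m_h=0$; that computation is precisely the content of the paper's proof and is absent from yours. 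The remark that ``the transvection reconciles the two kernel descriptions'' is a plausibility heuristic, not a proof. Finally, the explicit formula for $\Gamma_\omega$ is only available when $v,w$ are joined by an edge, so the paper first reduces to the edge case via Lemma~\ref{indepchoiceh}, Step~2; you invoke the chain decomposition only for the cocycle identity, not to make $\Gamma_\omega$ computable.
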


\begin{proof}
Note that the outer rectangle clearly commutes. The only interesting new information
is, in fact, the subdiagram consisting of the five left-most terms. We are going to
use the comparison of the two outgoing arrows of $F_s(e)^*\Omega_{\tau_1}^r/\Tors$ which
was given in \cite{grosie2}, Lemma 3.13. 
We may assume that $v$ and $w$ are connected by an edge $\omega$ because
any two vertices can always be connected by a chain of edges and, having proved the
edge version, we have a chain of commutative diagrams inducing commutativity 
of the first and the last. Let $o:\omega\ra\tau_1$ be this edge.
We choose some $h:\tau_2\ra\sigma_h\in\P^{[\dim B]}$ which determines a chart 
$U_{\sigma_h}$ of $X_{\tau_2}$ on which we show the commutativity of the diagram.
Let $f$ be an equation of $Z_{e\circ o}$, i.e., $f$ is constant if 
$e\circ o\not\in \Upomega_{\tau_2}$ and is an equation of $Z_{\tau_2}$ otherwise. 
We also assume that $\Ver(h)\in\check\Delta_{\tau_2}^{[0]}$ lies in the origin, 
such that $f$ is a regular function on $U_{\sigma_h}$.

Let $\gamma_h$ be some path from $v$ to $w$ through the
interior of $\sigma_h$ giving the identification $T_{\gamma_h}$ of
$\check\Lambda_v$ and $\check\Lambda_w$ which we also identify with a fixed lattice $M$. 
Note that in loc.cit.
this is denoted $N$ and note further that we are only interested in the $/\k^\log$ case.
We denote the field of fractions construction by $\Quot$. 
By loc.cit., the map $\Gamma_\omega: F_s(e\circ g_v)^*\Omega_{v}^r\ra F_s(e\circ g_w)^*\Omega_{w}^r$ 
written as a map 
$$\Quot(F_s(e\circ g_v)^*\shO_{X_v})\otimes_\ZZ\bigwedge^r M\ra 
\Quot(F_s(e\circ g_v)^*\shO_{X_w})\otimes_\ZZ\bigwedge^r M$$
on the toric chart $U_{\sigma_h}$ determined by $\sigma_h$ is given by 
$$\Gamma_{\omega}(1\otimes \alpha) = 1\otimes \alpha+\frac{d \tilde{f}}{\tilde{f}}
  \wedge (\iota(d_\omega) \alpha)$$
where $\tilde{f}=f^{a_\omega}$ is giving the log structure at $\omega$ and 
$d_\omega$ denotes the primitive vector pointing from $v$ to $w$. 
Using $a_\omega d_\omega=\Ver(v)-\Ver(w)$ whenever $df\neq 0$, we obtain
$$\Gamma_{\omega}(1\otimes \alpha) = 1\otimes \alpha+\frac{d f}{f}
  \wedge (\iota(\Ver(v)-\Ver(w)) \alpha).$$
The question now becomes whether
$$
\begin{CD}
\shO_{U_{\sigma_h}}(-Z_{\tau_2})\otimes_\ZZ\bigwedge^r (M\oplus \ZZ \hat{m}_h)
  @>{\pi}>> \shC(M,Z_{\tau_2})|_{U_{\sigma_h}}\ \hra\ & \Quot\shO_{U_{\sigma_h}}\otimes_\ZZ\bigwedge^r M\\
@V{\phi\,:=\,\id\otimes \bigwedge^r \phi^h_{g_v,g_w}}VV & @VV{\Gamma_\omega}V\\
\shO_{U_{\sigma_h}}(-Z_{\tau_2})\otimes_\ZZ\bigwedge^r (M\oplus \ZZ \hat{m}_h) 
  @>{\pi}>> \shC(M,Z_{\tau_2})|_{U_{\sigma_h}}\ \hra\ & \Quot\shO_{U_{\sigma_h}}\otimes_\ZZ\bigwedge^r M\\
\end{CD}
$$
commutes. Recall from the proof of Theorem~\ref{longexactseq}, that for
a suitable trivialization of $\shO_{X_{\tau_2}}(-Z_{e\circ o})|_{U_{\sigma_h}}$, 
$\alpha \in \bigwedge^{r-1} M$ and $\beta \in \bigwedge^{r} M$ we have
$$\pi|_{U_{\sigma_h}}(1\otimes \hat{m}_h\wedge\alpha)= df\wedge\alpha
\qquad\qquad \pi|_{U_{\sigma_h}}(1\otimes\beta)=f\otimes\beta.$$
It follows
$$\begin{array}{rcl}
(\Gamma_\omega\circ\pi) (1\otimes \hat{m}_h\wedge\alpha)
&=&df\wedge\alpha + \frac{d f}{f}\wedge\iota(\Ver(w)-\Ver(v)) (df\wedge \alpha)\\
&=&df\wedge\alpha + \frac{d f}{f}\wedge df
  \wedge (\iota(\Ver(w)-\Ver(v)) \alpha)\\
&=&df\wedge\alpha
\end{array}$$
$$\begin{array}{rcl}
(\Gamma_\omega\circ\pi) (1\otimes \beta)&=&
f\otimes\beta + \frac{d f}{f}\wedge 
(f\otimes\iota(\Ver(w)-\Ver(v))\beta)\\
&=& f\otimes\beta + d f\wedge (\iota(\Ver(w)-\Ver(v))\beta)\\
\end{array}$$

The map $\phi^h_{g_v,g_w}|_M$ reads $m\mapsto m+\iota(\Ver(w)-\Ver(v))m\cdot \hat{m}_h$
which extends in this form to $M\oplus \ZZ \hat{m}_h$ by setting 
\mbox{$\iota(\Ver(w)-\Ver(v))\hat{m}_h=0$}. A simple computation then shows for 
$\varepsilon\in\bigwedge^r (M\oplus \ZZ \hat{m}_h)$
$$\bigwedge^r\phi^h_{g_v,g_w}:\, \varepsilon\, \mapsto\, 
  \varepsilon+\hat{m}_h\wedge \iota(\Ver(w)-\Ver(v))\varepsilon.$$
We obtain
$$\begin{array}{rcl}
(\pi\circ \phi)(1\otimes \hat{m}_h\wedge\alpha)
&=&\pi(1\otimes \hat{m}_h\wedge\alpha + \hat{m}_h\wedge \iota(\Ver(w)-\Ver(v)) (\hat{m}_h\wedge\alpha))\\
&=&df\wedge\alpha + \pi(\hat{m}_h\wedge \hat{m}_h\wedge-\iota(\Ver(w)-\Ver(v))\alpha)\\
&=&df\wedge\alpha,\\
\end{array}$$
$$\begin{array}{rcl}
(\pi\circ \phi)(1\otimes \beta)&=&
\pi(1\otimes \beta)+\pi(1\otimes \hat{m}_h\wedge \iota(\Ver(w)-\Ver(v))\beta)\\
&=& f\otimes\beta + df\wedge (\iota(\Ver(w)-\Ver(v))\beta).\\
\end{array}$$
We have shown $\Gamma_\omega\circ\pi=\phi\circ\pi$ and thus the above diagram commutes. 
We arrive at the last part of the assertion. Note that $\Delta_e^\perp$ is invariant under 
monodromy and $\phi_{g_v,g_w}|_{\widehat{\Delta}_e^\perp}$ is, in this sense, the identity.
Looking at the definition of $Q^\bullet$, we see that the only term affected by changing 
the vertex is $\shK^\bullet(\check\Lambda_{v,\k},Z_e,-(r+1))$. 
It is not hard to see now that $\phi_{g_v,g_w}$ yields the claimed
isomorphism of the $Q^\bullet$'s.
\end{proof}

\begin{definition} 
We use the notation $\Phi_{g,g'}$ for the just constructed isomorphism
$$\Phi_{g,g'}:Q^\bullet(F_s(e)^*\Omega_{\tau_1}^r,g)
\ra Q^\bullet(F_s(e)^*\Omega_{\tau_1}^r,g').$$
\end{definition}

By the results of this section, from now on, we will sometimes use the notation
$Q^\bullet(F_s(e)^*\Omega_{\tau_1}^r)$ for the resolution of 
$F_s(e)^*\Omega_{\tau_1}^r/\Tors$ and only specify/choose some 
$g$ when necessary for computations.

\begin{remark} 
\begin{enumerate}
\item To pick up the discussion from Rem.~\ref{moveDelta}, note that
the span of $C(\check\Delta_e)$ is invariant under monodromy and fixed by $\phi_{g_v,g_w}$.
The map $\phi_{g_v,g_w}$ depends on the position of $\check\Delta_e$. 
It is not hard to see, however, that moving $\check\Delta_e$ commutes with $\phi_{g_v,g_w}$.
\item The main point of $\phi_{g_v,g_w}$ is that the projection 
$h:\widehat{\check\Lambda}_{v,\k}\ra\k$ doesn't commute with $\phi_{g_v,g_w}$ 
if $\Delta_e$ is non-trivial. In fact, each vertex of $\Delta_e$ gives one such projection.
If we dualise, the projections turn into inclusions of rays. This fits in with the construction
of $(B,\P)$ from a polytope as described in \cite{grosie1},~Ex.~1.18. 
What we have produced here is some sort of a local version of this. 
One can show that this yields a local system of rank $\dim B+1$ along 
the discriminant locus $\Delta$. If $X$ comes from the Batyrev construction,
this local system is the restriction of the constant sheaf 
on $B$ induced from the embedding into the surrounding vector space.
\end{enumerate}
\end{remark}


\subsection{Cohomology on a single stratum} \label{Coho_singlestrat}
As before, we assume throughout that $X$ is a h.t. toric log CY space.
In this section, we compute the cohomology of a summand of the complex $\SC^\bullet(\Omega^r)$.
We first need a lemma on locally monodromy invariant forms. 
Recall that $i:B\backslash\Delta\hra B$ denotes the inclusion of the non-singular locus of $B$.

\begin{lemma} \label{lem_localaffhodge}
Given $\tau_1\stackrel{e}{\ra}\tau_2$, the space
$\Gamma(W_e,i_*\bigwedge^r\check\Lambda\otimes_\ZZ\k)$
is generated by $\bigwedge^{r}\Delta_{e}^\perp$ and $\langle\bigwedge^\top T_{\check\Delta_{e}} \rangle_r$.
\end{lemma}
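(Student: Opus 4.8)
\textbf{Proof plan for Lemma~\ref{lem_localaffhodge}.}

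The plan is to unwind the definition of $\Gamma(W_e,i_*\bigwedge^r\check\Lambda\otimes_\ZZ\k)$ as the space of sections of $\bigwedge^r\check\Lambda\otimes\k$ over $W_e\setminus\Delta$ that extend across the part of $\Delta$ meeting $W_e$, i.e.\ the sections that are invariant under all the local monodromy transformations around the relevant components of $\Delta$. By Lemma~\ref{equivalencesfore}, the only part of $\Delta$ inside $W_e$ corresponds to $e\in\Delta$ (and if $e\notin\Delta$ the statement is trivial since then $\Delta_e^\perp=\check\Lambda_v$ by Lemma~\ref{Ze}, so the full exterior power is obtained). So I would assume $e\in\Delta$ and work in a stalk $\check\Lambda_{v,\k}$ at a vertex $v$ of $\tau_1$, where by Lemma~\ref{Ze} we have $\check\Delta_e=\check\Delta_{\tau_2}$ and $\Delta_e=\Delta_{\tau_1}$. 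The monodromy operators acting on this stalk are, by the discussion before Lemma~\ref{inoutkappa} and the description of $\Delta$, the transformations $n\mapsto n+\kappa_{\omega\rho}\langle n,\check d_\rho\rangle d_\omega$ for the various $(\omega,\rho)$ with $\omega\to\tau_1\xrightarrow{e}\tau_2\to\rho$ and $\kappa_{\omega\rho}\neq 0$; these $d_\omega$ span $T_{\check\Delta_e}$ (by Lemma~\ref{inoutkappa}(2) and the definition of $\check\Delta_e=\Newton(Z_e)$) and the $\check d_\rho$ span $\Delta_e$ (dually, by the surjections displayed after Lemma~\ref{Ze}).

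The core computation is then purely linear algebra: identify the subspace of $\bigwedge^r\check\Lambda_{v,\k}$ fixed by all operators $\sigma_{\omega\rho}(n)=n+\langle n,\check d_\rho\rangle d_\omega$. First I would show each such $\sigma$ acts on $\bigwedge^r$ by $\alpha\mapsto\alpha + d_\omega\wedge(\iota(\check d_\rho)\alpha)$ (since the linear map is unipotent of the stated form). Hence $\alpha$ is invariant iff $d_\omega\wedge\iota(\check d_\rho)\alpha=0$ for all relevant pairs. Two types of $\alpha$ clearly satisfy this: (i) $\alpha\in\bigwedge^r\Delta_e^\perp$, because then $\iota(\check d_\rho)\alpha=0$ for every $\check d_\rho\in\Delta_e$; and (ii) $\alpha\in\langle\bigwedge^{\top}T_{\check\Delta_e}\rangle_r$, i.e.\ $\alpha=(\bigwedge^{\dim T_{\check\Delta_e}}T_{\check\Delta_e})\wedge\beta$, because then $d_\omega\wedge(\text{anything})$ already lies in $\bigwedge^{\dim T_{\check\Delta_e}+1}T_{\check\Delta_e}\wedge\cdots=0$ as $d_\omega\in T_{\check\Delta_e}$. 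This gives the inclusion $\supseteq$. For the reverse inclusion $\subseteq$, I would pick a basis of $\check\Lambda_{v,\k}$ adapted to the flag $T_{\check\Delta_e}\subseteq\Delta_e^\perp\subseteq\check\Lambda_{v,\k}$, expand a general invariant $\alpha$ in the corresponding monomial basis of $\bigwedge^r$, and show that the conditions $d_\omega\wedge\iota(\check d_\rho)\alpha=0$ — as $d_\omega$ ranges over a basis of $T_{\check\Delta_e}$ and $\check d_\rho$ over a basis of $\Delta_e$ — force every monomial appearing in $\alpha$ either to use no factor from a complement of $\Delta_e^\perp$ (landing in case (i)) or to contain the full top form of $T_{\check\Delta_e}$ (landing in case (ii)). This is a finite-dimensional pigeonhole argument on exterior monomials, essentially the same bookkeeping as in Lemma~\ref{lemmawedgesubspace}.

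The main obstacle I anticipate is making the $\subseteq$ direction clean: one must be careful that the monodromy operators one imposes really do run over a \emph{spanning} set of the $d_\omega$ (spanning $T_{\check\Delta_e}$) and a spanning set of the $\check d_\rho$ (spanning $\Delta_e\subseteq\check\Lambda_{v,\k}^\ast$), which is exactly what Lemma~\ref{inoutkappa}, Lemma~\ref{Ze} and the surjections after Lemma~\ref{Ze} provide, but it needs to be invoked precisely; and that ``extending across $W_e\cap\Delta$'' is equivalent to invariance under \emph{all} of these simultaneously rather than around a single loop. Once that dictionary is set up, the exterior-algebra computation is routine, so I would spend most of the write-up on the reduction to the stalk and the identification of the monodromy generators, and keep the linear-algebra step terse.
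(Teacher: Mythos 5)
Your overall strategy is the same as the paper's: reduce to monodromy-invariant elements of a stalk $\bigwedge^r\check\Lambda_{y,\k}$, identify the generating transformations as the shears coming from pairs $(\omega,\rho)$, check containment of the two candidate subspaces, and then do an adapted-basis argument for the reverse inclusion. However, your identification of which polytope the monodromy vectors are associated with is reversed, and this propagates through the rest of the proposal.

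Concretely: $d_\omega$ is the primitive integral vector parallel to $\omega\in\P^{[1]}$, hence lives in $\Lambda$, and by Lemma~\ref{inoutkappa}(1) it is parallel to an edge of the \emph{inner} polytope $\Delta_e$ (so $d_\omega\in T_{\Delta_e}$); while $d_\rho\in\rho^\perp\subset\check\Lambda$ is parallel to an edge of the \emph{outer} polytope $\check\Delta_e$ by Lemma~\ref{inoutkappa}(2) (so $d_\rho\in T_{\check\Delta_e}$). You assert the opposite --- that the $d_\omega$ span $T_{\check\Delta_e}$ and the $\check d_\rho$ span $\Delta_e$, citing Lemma~\ref{inoutkappa}(2) for the former --- which is both a type error ($d_\omega\in\Lambda$ cannot lie in $T_{\check\Delta_e}\subset\check\Lambda$) and not what the cited lemma says. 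Correspondingly, the induced action on $\bigwedge^r\check\Lambda$ must be $\alpha\mapsto\alpha\pm\kappa_{\omega\rho}\,(\iota(d_\omega)\alpha)\wedge d_\rho$ (contraction by the tangent vector, wedge by the cotangent one), whereas your formula $\alpha\mapsto\alpha+d_\omega\wedge(\iota(\check d_\rho)\alpha)$ is the action on $\bigwedge^r\Lambda$, the wrong sheaf.

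The confusion is ``symmetric'' enough that if you systematically re-label (your $d_\omega\leadsto d_\rho$ and your $\check d_\rho\leadsto d_\omega$), the invariance checks (i) and (ii) and the adapted-basis argument for $\subseteq$ all go through as intended, and match the paper's proof. But two things should still be made explicit: in step (ii) you need $\iota(d_\omega)\alpha$ to stay in the ideal $\langle\bigwedge^\top T_{\check\Delta_e}\rangle$, which requires the orthogonality $\langle d_\omega, d_\rho\rangle=0$ (a consequence of $\omega\subset\rho$); the claim that ``$d_\rho\wedge(\text{anything})$'' vanishes is not true for arbitrary ``anything'', only for elements already containing the top form of $T_{\check\Delta_e}$. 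And in the reverse-inclusion argument the paper constructs the basis more carefully, including dual vectors $d^*_{\omega_i}$ so that $\iota(d_{\omega_i})$ annihilates every basis element except $d^*_{\omega_i}$; without that explicit choice, the pigeonhole step is harder to make precise.
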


\begin{proof} 
Given any point $y\in W_e\backslash \Delta$, we may identify 
$\Gamma(W_e,i_*\bigwedge^r\check\Lambda\otimes_\ZZ\k)$ with the subspace of 
$\bigwedge^r\check\Lambda_{y,\k}\otimes_\ZZ\k$ of forms invariant under monodromy transformations 
by loops in $W_e\backslash \Delta$. 
If $e\not\in \Delta$, we have $\Delta_{e}^\perp=\check\Lambda_y$ and the assertion is trivial.
Let us assume $e\in \Delta$.
Recall from [\cite{grosie1}, Section~1.5] that the group of monodromy transformations is 
generated by
$$\alpha \mapsto \alpha \pm \kappa_{\omega\rho}\cdot(\iota(d_\omega)\alpha)\wedge d_\rho$$
where $d_\omega$ is a primitive integral vector parallel to some $\omega\in\P^{[1]}$ and 
$d_\rho$ is a primitive integral vector in $\rho^\perp$ for some $\rho\in\P^{[\dim B-1]}$ 
such that there is an edge $\hat{e}:\omega\ra \rho$ with $\hat{e}\in\Delta$ which factors through $e$ 
(otherwise $\kappa_{\omega\rho}=0$).
By Lemma~\ref{inoutkappa}, such a $d_\omega$ is parallel to an edge of $\Delta_e$ 
and $d_\rho$ is parallel to an edge of $\check\Delta_e$. It now becomes obvious that
$\bigwedge^{r}\Delta_{e}^\perp$ and $\langle\bigwedge^\top T_{\check\Delta_{e}} \rangle_r$ are contained in
$\Gamma(W_e,i_*\bigwedge^r\check\Lambda\otimes_\ZZ\k)$. 

Now assume $\beta\not \in \langle\bigwedge^\top T_{\check\Delta_{e}} \rangle_r+\bigwedge^{r}\Delta_{e}^\perp$. 
We will exhibit some monodromy transformation which doesn't fix $\beta$.
We choose $\omega_1,...,\omega_m$ such that $d_{\omega_1},...,d_{\omega_m}$ form a basis of $T_{\Delta_e}$
and such that there is some $\rho\in\P^{[\dim B-1]}$ with $\kappa_{\omega_i\rho}\neq 0$ for $1\le i\le m$. 
Similarly we choose $\rho_1,...,\rho_n$ such that $d_{\rho_1},...,d_{\rho_n}$ form a basis of $T_{\check\Delta_e}$
and such that there is some $\omega\in\P^{[1]}$ with $\kappa_{\omega\rho_i}\neq 0$ for $1\le i\le n$.
By Lemma~\ref{equivalencesfore}, we have $\kappa_{\omega_i\rho_j}\neq 0$ for all $i,j$.
We may complement $d_{\rho_1},...,d_{\rho_n}$ to a basis $\frak{B}$ of $\check\Lambda_{y,\k}$ by adding in particular
vectors $d^*_{\omega_1},...,d^*_{\omega_n}$ with the property 
$\iota(d_{\omega_i})b=0$ for $b\in \frak{B}\backslash \{ d^*_{\omega_i}\}$.
The basis $\frak{B}$ of $\check\Lambda_{y,\k}$ induces a basis $\bigwedge^{r}\frak{B}$ of $\bigwedge^{r}\check\Lambda_{y,\k}$.
We represent $\beta$ in this basis. Note that $\bigwedge^{r}\Delta_{e}^\perp$ and $\langle\bigwedge^\top T_{\check\Delta_{e}} \rangle_r$
are both generated by a subset of $\bigwedge^{r}\frak{B}$. Thus, by assumption, $\beta$ has a non-zero coefficient for some 
basis element $\frak{b}$ in $\bigwedge^{r}\frak{B}$ which is not contained in these subsets.
Therefore, there is some $i$ such that $\iota(d_{\omega_i})\frak{b} \neq 0$ and
there is some $j$ such that $d_{\rho_j}\wedge \frak{b}\neq 0$.
We claim that the monodromy transformation
$\alpha \mapsto \alpha \pm \kappa_{\omega_i\rho_j}\cdot(\iota(d_{\omega_i})\alpha)\wedge d_{\rho_j}$
changes $\beta$. This is equivalent to saying
$(\iota(d_{\omega_i})\beta)\wedge d_{\rho_j}\neq 0.$
This follows from $(\iota(d_{\omega_i})\frak{b})\wedge d_{\rho_j}\neq 0$
and a linear independence argument.
\end{proof}

\begin{theorem} \label{cohotau}
  For $v\stackrel{g}{\ra}\tau_1\stackrel{e}{\ra}\tau_2$ with $v\in\P^{[0]}$, we have
  $$H^p(X_{\tau_2},F(e)^*\Omega^r_{\tau_1}/\Tors)
  =\left\{ 
   \begin{array}{rl}
    \Gamma(W_e,i_*\bigwedge^r\check\Lambda\otimes_\ZZ\k)&\hbox{ for }p=0,\\[8pt]  
    \displaystyle R(Z_{e})_p\otimes_\kk 
    \frac{\langle\bigwedge^\top T_{\check\Delta_{e}} \rangle 
    \cap \bigwedge^{r+p}\check\Lambda_{v,\k}}
    {\langle\bigwedge^\top T_{\check\Delta_{e}} \rangle
    \cap \bigwedge^{r+p}\Delta_{e}^\perp} &\hbox{ for }p>0.
   \end{array}
  \right.$$
\end{theorem}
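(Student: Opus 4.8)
The plan is to compute the cohomology of $F(e)^*\Omega^r_{\tau_1}/\Tors$ via the acyclic resolution $Q^\bullet(F_s(e)^*\Omega^r_{\tau_1},g)$ produced in Proposition~\ref{resQ}. First I would reduce to the case $e\in\Delta$: if $e\notin\Delta$ then by Lemma~\ref{Ze} we have $Z_e=\emptyset$, $\check\Delta_e=\{0\}$, $\Delta_e^\perp=\check\Lambda_v$, so $F(e)^*\Omega^r_{\tau_1}/\Tors\cong\shO_{X_{\tau_2}}\otimes\bigwedge^r\check\Lambda_{v,\k}$, which has no higher cohomology (its global sections being the locally monodromy-invariant forms on $W_e$, all of $\bigwedge^r\check\Lambda_{v,\k}$ by Lemma~\ref{lem_localaffhodge}), and $R(Z_e)_{>0}=0$. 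So assume $e\in\Delta$, whence $Z_e=Z_{\tau_2}$, $\Delta_e=\Delta_{\tau_1}$, $\check\Delta_e=\check\Delta_{\tau_2}$.

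The main computation splits into two parallel strands via the direct sum decomposition of Proposition~\ref{squareseq},
$$F_s(e)^*\Omega_{\tau_1}^r/\Tors\cong \bigl(\shO_{X_{\tau_2}}\otimes\textstyle\bigwedge^r \Delta_e^\perp\bigr)\ \oplus\ \bigl(\shC^r(\check\Lambda_{v,\k},Z_e)/\shC^r(\Delta_e^\perp,Z_e)\bigr).$$
For the first summand, $\shO_{X_{\tau_2}}\otimes\bigwedge^r\Delta_e^\perp$ is a direct sum of copies of $\shO_{X_{\tau_2}}$, and since $Z_{\tau_2}$ is non-degenerate with $\dim\check\Delta_{\tau_2}\geq 1$, Lemma~\ref{cohotoriclb} gives no higher cohomology and $H^0=\bigwedge^r\Delta_e^\perp$ tensored with constants. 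For the second summand I would use the long exact cohomology sequence attached to $0\to\shC^r(\Delta_e^\perp,Z_e)\to\shC^r(\check\Lambda_{v,\k},Z_e)\to\shC^r(\check\Lambda_{v,\k},Z_e)/\shC^r(\Delta_e^\perp,Z_e)\to 0$, feeding in Corollary~\ref{cohoC}, which computes $H^p(X,\shC^r(W,Z))=R(Z)_p\otimes_\k\langle\bigwedge^\top T_{\check\Delta} \rangle_{r+p}$ for both $W=\check\Lambda_{v,\k}$ and $W=\Delta_e^\perp$ (noting $T_{\check\Delta_e}\subseteq\Delta_e^\perp$ since $\check\Delta_e$ is a translate of a subset of $\Delta_e^\perp$). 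The maps in the long exact sequence are, degree by degree, induced by the inclusion $\langle\bigwedge^\top T_{\check\Delta_e}\rangle\cap\bigwedge^{r+p}\Delta_e^\perp\hookrightarrow\langle\bigwedge^\top T_{\check\Delta_e}\rangle\cap\bigwedge^{r+p}\check\Lambda_{v,\k}$, which is injective; hence the connecting maps vanish, and for $p>0$ one reads off
$$H^p=R(Z_e)_p\otimes_\kk\frac{\langle\bigwedge^\top T_{\check\Delta_{e}} \rangle\cap \bigwedge^{r+p}\check\Lambda_{v,\k}}{\langle\bigwedge^\top T_{\check\Delta_{e}} \rangle\cap \bigwedge^{r+p}\Delta_{e}^\perp},$$
while for $p=0$ the contribution is $\langle\bigwedge^\top T_{\check\Delta_e}\rangle_r\big/\bigl(\langle\bigwedge^\top T_{\check\Delta_e}\rangle_r\cap\bigwedge^r\Delta_e^\perp\bigr)$.

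Finally I would assemble the $p=0$ answer: combining the two summands, $H^0$ is an extension (in fact direct sum) of $\bigwedge^r\Delta_e^\perp$ and $\langle\bigwedge^\top T_{\check\Delta_e}\rangle_r$ modulo their intersection, i.e. the subspace of $\bigwedge^r\check\Lambda_{v,\k}$ they jointly generate, which is exactly $\Gamma(W_e,i_*\bigwedge^r\check\Lambda\otimes_\ZZ\k)$ by Lemma~\ref{lem_localaffhodge}. For $p>0$ the first summand contributes nothing, giving the stated formula. The step I expect to be the main obstacle is verifying carefully that the long exact sequence degenerates — that is, that the natural map $H^p(\shC^r(\Delta_e^\perp,Z_e))\to H^p(\shC^r(\check\Lambda_{v,\k},Z_e))$ is injective in every degree and that the identification of these cohomology groups via Corollary~\ref{cohoC} is compatible with the inclusion of complexes $\shC^r(\Delta_e^\perp,Z_e)\hookrightarrow\shC^r(\check\Lambda_{v,\k},Z_e)$ in the functorial way asserted by Proposition~\ref{naturalres}. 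This amounts to tracking the canonical part of the (non-canonical) splitting in Lemma~\ref{decompose} and Proposition~\ref{cohoW} through the inclusion, and checking that the quotient $R(Z_e)_p\otimes\bigl(\langle\bigwedge^\top T_{\check\Delta_e}\rangle_{r+p}\big/\langle\bigwedge^\top T_{\check\Delta_e}\rangle_{r+p}\cap\bigwedge^{r+p}\Delta_e^\perp\bigr)$ has the clean description in the theorem; the $R(Z_e)_p$ factor is common to source and target so the only real content is in the exterior-algebra ideals, and Lemma~\ref{lemmawedgesubspace} controls how $\langle\bigwedge^\top T_{\check\Delta_e}\rangle$ meets the subspaces $\bigwedge^\bullet\Delta_e^\perp$.
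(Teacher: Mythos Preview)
Your approach is essentially the same as the paper's. The one place you make your life harder than necessary is in your ``main obstacle'': you propose a long exact sequence for the quotient $\shC^r(\check\Lambda_{v,\k},Z_e)/\shC^r(\Delta_e^\perp,Z_e)$ and worry about the connecting maps, but Proposition~\ref{squareseq} states that the short exact sequence is \emph{split}, so $H^p$ distributes over the direct sum immediately and no connecting-map analysis is needed; the paper invokes exactly this splitting in one line.
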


\begin{remark} 
  A close look makes it apparent that
  this representation is independent of the choice of
  $v$, resp. $g$, because different choices of local edge connecting paths
  induce the same isomorphisms.
\end{remark}

\begin{proof} 
 If $e\not\in\Delta$, we have, by Lemma~\ref{equivalencesfore},
 $Z_e=\emptyset$. This means $R(Z_e)_\bullet=0$ and 
 $F_s(e)^*\Omega^r_{\tau_1}=\shO_{X_{\tau_2}}\otimes_\k\bigwedge^r\check\Lambda_{v,\k}$
 so the assertion is true. We now assume $e\in\Delta$ and get
 $Z_e=Z_{\tau_2}, \Delta_e=\Delta_{\tau_1}, \check\Delta_e=\check\Delta_{\tau_2}$.
 We apply the functor $H^p$ to the diagram (\ref{squarediagram}) to obtain
 $$\begin{CD}
 H^p(\shC^r(\Delta^\perp_{\tau_1},Z_{\tau_2})) @>{H^p(\pi)}>> 
 H^p(\shO_{X_{\tau_2}}\otimes_\k\bigwedge^r\Delta^\perp_{\tau_1})\\
 @VVV @VVV\\
 H^p(\shC^r(\check\Lambda_{v,\k},Z_{\tau_2})) 
 @>{H^p(\pi)}>> H^p(F_s(e)^*\Omega_{\tau_1}^r/\Tors).\\
 \end{CD}$$

 Note that the corresponding sequence on cohomology splits
 because the original sequence splits.
 Let's consider the case $p=0$ first. We can read off the exact sequence
 $$0 \ra \bigwedge^r\Delta^\perp_{\tau_1}
 \ra H^0(F_s(e)^*\Omega_{\tau_1}^r/\Tors) 
 \ra H^0(\shC^r(\check\Lambda_{v,\k},Z_{\tau_2}))
 /H^0(\shC^r(\Delta^\perp_{\tau_1},Z_{\tau_2}))
 \ra 0.$$
 We may use Cor.~\ref{cohoC} to obtain
 $H^0(\shC^r(\check\Lambda_{v,\k},Z_{\tau_2}))
 =R(Z_{\tau_2})_0\otimes_\k \langle\bigwedge^\top T_{\Delta_{\tau_2}} \rangle\cap
 \bigwedge^r\check\Lambda_{v,\k}$
 and
 $H^0(\shC^r(\Delta^\perp_{\tau_1},Z_{\tau_2}))
 =R(Z_{\tau_2})_0\otimes_\k \langle\bigwedge^\top T_{\Delta_{\tau_2}} \rangle\cap
 \bigwedge^r\Delta^\perp_{\tau_1}.$
 We have $R(Z_{\tau_2})_0=\Gamma(\shO_{X_{\tau_2}})=\k$, 
 so the exact sequence reads
 $$0 \ra \bigwedge^r\Delta^\perp_{\tau_1}
 \ra H^0(F_s(e)^*\Omega_{\tau_1}^r/\Tors) 
 \ra 
 \frac{\langle\bigwedge^\top T_{\Delta_{\tau_2}} \rangle\cap\bigwedge^r\check\Lambda_{v,\k}}
 {\langle\bigwedge^\top T_{\Delta_{\tau_2}} \rangle\cap\bigwedge^r\Delta^\perp_{\tau_1}}
 \ra 0.$$
 Therefore, $H^0(F_s(e)^*\Omega_{\tau_1}^r/\Tors)$ is identified with the subspace
 of $\bigwedge^r\check\Lambda_{v,\k}$ which is generated by
 $\bigwedge^r\Delta^\perp_{\tau_1}$ and 
 $\langle\bigwedge^\top T_{\Delta_{\tau_2}} \rangle\cap
 \bigwedge^r\check\Lambda_{v,\k}$. By Lemma~\ref{lem_localaffhodge}, the assertion for $p=0$ follows. 
 
 The case where $p>0$ is even simpler because
 $H^p(\shO_{X_{\tau_2}}\otimes_\k\bigwedge^r\Delta^\perp_{\tau_1})=0$. Again using
 Cor.~\ref{cohoC}, we directly have the assertion.
\end{proof}

\begin{proof}[Proof of Theorem~\ref{maintheorem},a)] 
Note that the functors $\Gamma(W_e,\cdot)$ and $\bigwedge^{r+p}$
commute on presheaves of vectors spaces on $W_e$.
We choose some $g:v\ra\tau_1$ with $v\in\P^{[0]}$. 
The assertion follows from Thm.~\ref{cohotau}
if we show that the cokernels $C_1,C_2$ in the diagram
$$
\xymatrix{    
  \langle\bigwedge^\top T_{\check\Delta_{e}} \rangle
  \cap \bigwedge^{r+p}\Delta_{e}^\perp 
  \ar@{^{(}->}[r] \ar@{^{(}->}[d] 
  &\langle\bigwedge^\top T_{\check\Delta_{e}} \rangle 
  \cap \bigwedge^{r+p}\check\Lambda_{v,\k}
  \ar@{->>}[r] \ar@{^{(}->}[d] 
  &C_1 \ar[d]\\  
  \Gamma(W_e,\bigwedge^{r+p}i_*\check\Lambda\otimes\k)
  \ar@{^{(}->}[r]
  &\Gamma(W_e,i_*\bigwedge^{r+p}\check\Lambda\otimes\k)
  \ar@{->>}[r]
  &C_2\\  
} $$
are isomorphic. The natural map $C_1\ra C_2$ is injective because the left
square is cartesian which follows from $\Delta_e^\perp=\Gamma(W_e,i_*\check\Lambda\otimes_\ZZ \k)$. 
Surjectivity is a consequence of the
fact that the term in the middle of the bottom row is generated
by the images of the two incoming arrows which we know by Lemma~\ref{lem_localaffhodge}.
\end{proof}


\subsection{The strata combining differential}
\label{secstratadiff}
Up to now, we have been working on a single stratum $X_{\tau_2}$ only. 
Now we take into consideration the barycentric differential $d_\bct$. 
We are going to produce an acyclic resolution of the complex $\SC^\bullet(\Omega^r)$ to
have an explicit description of the hypercohomology spectral sequence 
of $\SC^\bullet(\Omega^r)$ for the proof of Thm.~\ref{maintheorem},b).
Our overall hypothesis is that $X$ is a h.t. toric log CY space.
Recall that the \'etale locally closed embedding 
of the stratum to $X$ is denoted by $q_\tau:X_\tau\ra X$.
For the first half of this section, we fix a chain of maps 
$$ v\stackrel{\hat{g}}{\ra} 
\overbrace{\sigma_1\ra\tau_1 \stackrel{e}{\ra}\tau_2\ra\sigma_2}^{\hat{e}}.$$
with $v\in\P^{[0]}$. We denote the composition of the first two 
maps by $g:v\ra\tau_1$. It is not hard to see that
$Z_{\hat{e}}= Z_e\cap X_{\sigma_2}$,
$\Delta_{\hat{e}}=\Delta_e\cap \sigma_1$,
$\check\Delta_{\hat{e}}=\check\Delta_e\cap \sigma_2^\perp$.
Recall from [\cite{grosie2}, Prop. 3.8] that we get a map
$$q_{\tau_2,*}F(e)^*\Omega^r_{\tau_1}/\Tors\ra
q_{\sigma_2,*}F(\hat{e})^*\Omega^r_{\sigma_1}/\Tors$$
which factors through the restriction to $X_{\sigma_2}$.

\begin{lemma} \label{lemstratafunct}
We have a commutative diagram
{\scriptsize
$$
\begin{CD}
0 \ra & q_{\tau_2,*}\shC^r(\Delta^\perp_{e},Z_{e}) 
&\ra &q_{\tau_2,*}\shC^r(\check\Lambda_{v,\k},Z_{e})\oplus
q_{\tau_2,*}\shO_{X_{\tau_2}}\otimes\bigwedge^r\Delta^\perp_{e} 
&\ra &q_{\tau_2,*}F_s(e)^*\Omega_{\tau_1}^r/\Tors 
&\ra 0\\
& @VVV @VVV @VVV\\
0 \ra & q_{\sigma_2,*}\shC^r(\Delta^\perp_{\hat{e}},Z_{\hat{e}}) 
&\ra &q_{\sigma_2,*}\shC^r(\check\Lambda_{v,\k},Z_{\hat{e}})\oplus
q_{\sigma_2,*}\shO_{X_{\sigma_2}}\otimes\bigwedge^r\Delta^\perp_{\hat{e}} 
&\ra &q_{\sigma_2,*}F_s(\hat{e})^*\Omega_{\sigma_1}^r/\Tors 
&\ra 0
\end{CD}
$$}

where the rows are given by Prop~\ref{squareseq}, 
the right vertical map is the one just mentioned and 
the left two vertical maps are the composition of the 
restriction to $X_{\sigma_2}$, the map induced by 
$\Delta_e^\perp\ra \Delta_{\hat{e}}^\perp$ and
Lemma~\ref{lempullbackres}.
\end{lemma}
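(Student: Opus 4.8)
The plan is to reduce the whole diagram to an explicit comparison of monomial formulas on toric charts. First I would recall that the strata-combining map of [\cite{grosie2},~Prop.~3.8] is, once it is factored through the restriction to $X_{\sigma_2}$, nothing but the naive restriction of rational log differential forms, under the identification $\Omega^r_v=\shO_{X_v}\otimes_\k\bigwedge^r\check\Lambda_{v,\k}$ of [\cite{grosie2},~Lemma~3.12]; on the ``constant forms'' summand $\shO_{X_{\tau_2}}\otimes\bigwedge^r\Delta_e^\perp$ of the decomposition of Prop.~\ref{squareseq} it therefore acts as restriction of functions tensored with the inclusion $\Delta_e^\perp\hra\Delta_{\hat e}^\perp\hra\check\Lambda_{v,\k}$. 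In the same spirit, Lemma~\ref{lempullbackres} identifies the restriction to $X_{\sigma_2}$ of a Koszul complex $\shK^\bullet(W,Z_e,m)$, for $T_{\check\Delta_{\hat e}}\hra W$, canonically with $\shK^\bullet(W,Z_{\hat e},m)$, using $\shO_{X_{\tau_2}}(l Z_e)|_{X_{\sigma_2}}=\shO_{X_{\sigma_2}}(l Z_{\hat e})$ and $\frac1f z^m|_{X_{\sigma_2}}=0$ for $m\notin\check\Delta_{\hat e}$. Thus all three columns of the diagram are described by explicit formulas on toric charts, and by Remark~\ref{moveDelta} they are insensitive to the translation representatives chosen for the Newton polytopes, so I may place $\check\Delta_{\hat e}$ as the face of $\check\Delta_e$ corresponding to $X_{\sigma_2}$.

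Second I would check the left square. Both of its horizontal arrows are the map $(\shC^r(\Delta_e^\perp\hra\check\Lambda_{v,\k}),-\pi)$ of Prop.~\ref{squareseq}. Commutativity of the $\shC^r$-component is the functoriality of $\shK^\bullet(-,Z,m)$, hence of $\shC^r(-,Z)$, in its first variable (Prop.~\ref{naturalres}) combined with the naturality of the restriction isomorphism of Lemma~\ref{lempullbackres}; commutativity of the $\pi$-component follows directly from the monomial formula for $\pi^r$ in Lemma~\ref{piexplicit}, which visibly commutes with restricting the coefficient rational functions $f_m\frac1f z^m$ to $X_{\sigma_2}$ and with $\bigwedge^\bullet$ of the inclusion $\Delta_e^\perp\hra\Delta_{\hat e}^\perp$.

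Third I would check the right square, working on a toric chart $U$ of $X_{\tau_2}$ whose associated chart of $X_{\sigma_2}$ is $U\cap X_{\sigma_2}$. Its horizontal arrows are $\pi\;+\;\id\otimes\bigwedge^r(\Delta_e^\perp\hra\check\Lambda_{v,\k})$, followed by the quotient identification of Prop.~\ref{squareseq}. On the summand $\shC^r(\check\Lambda_{v,\k},Z_e)/\shC^r(\Delta_e^\perp,Z_e)$, restricting a rational form commutes with $\pi$ by Lemma~\ref{piexplicit}; on the summand $\shO_{X_{\tau_2}}\otimes\bigwedge^r\Delta_e^\perp$ the strata map is restriction of functions tensored with $\Delta_e^\perp\hra\Delta_{\hat e}^\perp\hra\check\Lambda_{v,\k}$, as recalled above. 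Since the isomorphism of Prop.~\ref{squareseq} is assembled from the local splitting $\check\Lambda_{v,\k}\cong T_{\check\Delta_e}\oplus\Delta_e^\perp/T_{\check\Delta_e}\oplus\check\Lambda_{v,\k}/\Delta_e^\perp$, which is compatible with the analogous splitting for $\check\Delta_{\hat e},\Delta_{\hat e}$ because $T_{\check\Delta_{\hat e}}\subseteq T_{\check\Delta_e}\subseteq\Delta_e^\perp\subseteq\Delta_{\hat e}^\perp$, the right square commutes as well. Both squares commuting, the whole diagram does.

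The step I expect to be the genuine obstacle is bookkeeping rather than anything conceptual: one must pin down that the induced vertex and the connecting path implicit in the strata-combining map of [\cite{grosie2}] and in the decomposition of Prop.~\ref{squareseq} are the \emph{same} data on the $\tau_2$- and the $\sigma_2$-side, i.e.\ that the vertex $\Ver(g)\in\Delta_e^{[0]}$ induced by $v\ra\tau_1$ and the vertex of $\Delta_{\hat e}$ induced by $v\ra\sigma_1$ correspond under $\Delta_{\hat e}\subseteq\Delta_e$. By Lemma~\ref{vertex} and the independence results of Section~\ref{sec_indep_vertex} this is so, after which the commutativity is exactly the chart-by-chart comparison of explicit formulas sketched above.
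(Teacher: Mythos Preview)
Your approach is correct but works harder than necessary. The paper's proof avoids all the splitting and summand-by-summand analysis by a single observation: both rows of the diagram embed, via the construction in Prop.~\ref{squareseq}, into the ambient sheaves $q_{\tau_2,*}\shO_{X_{\tau_2}}\otimes_\k\bigwedge^r\check\Lambda_{v,\k}$ and $q_{\sigma_2,*}\shO_{X_{\sigma_2}}\otimes_\k\bigwedge^r\check\Lambda_{v,\k}$, connected by the plain restriction $F_s(\tau_2\to\sigma_2)^*$. On this enlarged diagram commutativity is immediate from the functoriality in Prop.~\ref{naturalres} and the fact that $F_s(\tau_2\to\sigma_2)^*$ is a functor; the original diagram then commutes because it sits inside as a subdiagram, using the commutative square
\[
\begin{CD}
F_s(e)^*\Omega_{\tau_1}^r/\Tors @>>> F_s(e\circ g)^*\Omega_v^r = \shO_{X_{\tau_2}}\otimes_\k\bigwedge^r\check\Lambda_{v,\k}\\
@VVV @VVV\\
F_s(\hat e)^*\Omega_{\sigma_1}^r/\Tors @>>> F_s(\hat e\circ\hat g)^*\Omega_v^r = \shO_{X_{\sigma_2}}\otimes_\k\bigwedge^r\check\Lambda_{v,\k}.
\end{CD}
\]
This bypasses your chart computations with Lemma~\ref{piexplicit} and, more importantly, the compatibility-of-splittings discussion in your third paragraph: since the maps in the rows of Prop.~\ref{squareseq} are intrinsic (the splitting only appears in its \emph{proof}), one never needs to align the two non-canonical decompositions. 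Your final paragraph's worry about the induced vertex is also unnecessary here: the same $v$ and the same $g=\hat g$ (composed with $\sigma_1\to\tau_1$) are used on both rows by construction, and neither $\pi$ nor the horizontal maps of Prop.~\ref{squareseq} involve $\Ver(g)$; the vertex-independence machinery of Section~\ref{sec_indep_vertex} only enters later when assembling the barycentric differential.
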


\begin{proof} 
 Replacing the right most non-trivial terms with
 $$F_s(\tau_2\ra\sigma_2)^*:
 q_{\tau_2,*}\shO_{X_{\tau_2}}\otimes_\k \bigwedge^r\check\Lambda_{v,\k}
 \ra q_{\sigma_2,*}\shO_{X_{\sigma_2}}\otimes_\k \bigwedge^r\check\Lambda_{v,\k}$$
 clearly gives a commutative diagram by the functoriality from Prop.~\ref{naturalres}
 and the fact that $F_s(\tau_2\ra\sigma_2)^*$ is a functor. The assertion then follows
 from the commutative diagram
 $$
 \begin{CD}
 F_s(e)^*\Omega_{\tau_1}^r/\Tors @>>> F_s(e\circ g)^*\Omega_{v}^r  
   & \, =\shO_{X_{\tau_2}}\otimes_\k \bigwedge^r\check\Lambda_{v,\k}\\
 @V{F_s(\tau_2\ra\sigma_2)^*}VV @VV{F_s(\tau_2\ra\sigma_2)^*}V \\
 F_s(\hat{e})^*\Omega_{\sigma_1}^r/\Tors @>>> F_s(\hat{e}\circ \hat{g})^*\Omega_{v}^r
   & \,=\shO_{X_{\sigma_2}}\otimes_\k \bigwedge^r\check\Lambda_{v,\k}
 \end{CD}
 $$
 and the way of construction of the sequence in Prop.~\ref{squareseq}.
\end{proof}

For exactly the same reasons, we also obtain a diagram of resolutions,
replacing $\shC^r$'s with the suitable $\shK^{r+1+\bullet}$'s and removing
the summands $q_{\tau_2,*}\shO_{X_{\tau_2}}\otimes\bigwedge^r\Delta_e^\perp$ if $\bullet>0$.
We call this map on the $Q^\bullet$'s
$$d_e^{\hat{e}}:q_{\tau_2,*}Q^\bullet(F_s(e)^*\Omega_{\tau_1}^r,g)\ra
q_{\sigma_2,*}Q^\bullet(F_s(\hat{e})^*\Omega_{\sigma_1}^r,\hat{g}).$$
We make use of the statement of Theorem~\ref{cohotau} in the following Lemma.

\begin{lemma} \label{thmstratafunct}
The map
$H^p(X,q_{\tau_2,*}F_s(e)^*\Omega_{\tau_1}^r)\ra
H^p(X,q_{\sigma_2,*}F_s(\hat{e})^*\Omega_{\sigma_1}^r)$
is, for $p=0$, the restriction
$$\Gamma(W_e,i_*\bigwedge^r\check\Lambda\otimes_\ZZ\k)
\ra\Gamma(W_{\hat{e}},i_*\bigwedge^r\check\Lambda\otimes_\ZZ\k)$$
induced by the canonical isomorphism
$$\Gamma(W_{\hat{e}},i_*\bigwedge^r\check\Lambda\otimes_\ZZ\k)=
\Gamma(W_e\cap W_{\hat{e}},i_*\bigwedge^r\check\Lambda\otimes_\ZZ\k)$$
and the inclusion of open sets
$W_e\cap W_{\hat{e}}\subseteq W_e$.
It is, for $p>0$, induced by
\begin{itemize}
\item[ ] $F_s(\tau_2\ra\sigma_2)^*:R(Z_e)_\bullet \sra R(Z_{\hat{e}})_\bullet$,
  \nopagebreak
\item[ ] ${\check\Delta_{\hat{e}}}\hra {\check\Delta_{e}}$ and hence
  $\langle\bigwedge^\top T_{\check\Delta_{e}} \rangle
  \hra 
  \langle\bigwedge^\top T_{\check\Delta_{\hat{e}}} \rangle$,
  \nopagebreak
\item[ ] $\Delta_{\hat{e}}\hra\Delta_{e}$ and hence 
$\Delta_e^\perp\hra \Delta_{\hat{e}}^\perp.$
\end{itemize}
\end{lemma}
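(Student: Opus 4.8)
The plan is to trace through the construction of the map $d_e^{\hat e}$ on the resolutions $Q^\bullet$ and compute its effect on hypercohomology by identifying, degree by degree, what each of the three columns of the defining diagram contributes. Recall from Lemma~\ref{lemstratafunct} that $d_e^{\hat e}$ is built from three ingredients: the restriction to $X_{\sigma_2}$, the map induced by $\Delta_e^\perp\to\Delta_{\hat e}^\perp$ (equivalently $\Delta_{\hat e}\hra\Delta_e$), and the pullback isomorphism of Lemma~\ref{lempullbackres} for the face inclusion $\check\Delta_{\hat e}\hra\check\Delta_e$, which on line level is $F_s(\tau_2\to\sigma_2)^*$ applied to the structure sheaves $\shO_{X_{\tau_2}}((l)Z_e)$. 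Since $H^p$ was computed in Theorem~\ref{cohotau} by splitting the cartesian square (\ref{squarediagram}) and feeding it through Corollary~\ref{cohoC}, I would verify that $d_e^{\hat e}$ is compatible with that splitting and then read off its components.

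\textbf{The $p=0$ case.} Here the argument is purely on global sections of the structure sheaves. By Theorem~\ref{cohotau}, $H^0(X,q_{\tau_2,*}F_s(e)^*\Omega^r_{\tau_1}/\Tors)$ is identified inside $\bigwedge^r\check\Lambda_{v,\k}$ with the subspace generated by $\bigwedge^r\Delta_e^\perp$ and $\langle\bigwedge^\top T_{\check\Delta_e}\rangle_r$, which by Lemma~\ref{lem_localaffhodge} is exactly $\Gamma(W_e,i_*\bigwedge^r\check\Lambda\otimes_\ZZ\k)$. First I would note that $F_s(\tau_2\to\sigma_2)^*$ on $R(Z_e)_0=\k$ is the identity, so the third ingredient contributes nothing at $p=0$; the restriction to $X_{\sigma_2}$ on the summand $\shO_{X_{\tau_2}}\otimes\bigwedge^r\Delta_e^\perp$ is just the inclusion $\bigwedge^r\Delta_e^\perp\hra\bigwedge^r\Delta_{\hat e}^\perp$ induced by $\Delta_{\hat e}\hra\Delta_e$; and the map on the $\langle\bigwedge^\top T_{\check\Delta_e}\rangle_r$ summand is the one induced by $\check\Delta_{\hat e}\hra\check\Delta_e$. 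Both of these, under the identifications $\Delta_e^\perp=\Gamma(W_e,i_*\check\Lambda\otimes_\ZZ\k)$ and $\Delta_{\hat e}^\perp=\Gamma(W_{\hat e},i_*\check\Lambda\otimes_\ZZ\k)$ (used already in the proof of Theorem~\ref{maintheorem}~a)), are precisely the restriction morphism coming from $W_e\cap W_{\hat e}\subseteq W_e$ together with the canonical identification $\Gamma(W_{\hat e},-)=\Gamma(W_e\cap W_{\hat e},-)$; this last point is just that both open sets have the same intersection with the discriminant locus in the relevant neighbourhood, so the monodromy-invariant sections agree. The only thing to check carefully is that the two generating subspaces map consistently so that the induced map on the span is well defined — but this follows because the whole diagram of Lemma~\ref{lemstratafunct} commutes and the square (\ref{squarediagram}) is cartesian.

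\textbf{The $p>0$ case.} Now the summand $q_{\tau_2,*}\shO_{X_{\tau_2}}\otimes\bigwedge^r\Delta_e^\perp$ is dropped (its higher cohomology vanishes), and by Corollary~\ref{cohoC} one has $H^p(\shC^r(\check\Lambda_{v,\k},Z_e))=R(Z_e)_p\otimes\langle\bigwedge^\top T_{\check\Delta_e}\rangle\cap\bigwedge^{r+p}\check\Lambda_{v,\k}$, and similarly on $X_{\sigma_2}$. I would then observe that each of the three ingredients of $d_e^{\hat e}$ acts on exactly one tensor factor of this description: the restriction $X_{\sigma_2}\hra X_{\tau_2}$ on the $\shO_X((l)Z_e)$ piece, hence on $R(Z_e)_p$, induces $F_s(\tau_2\to\sigma_2)^*:R(Z_e)_p\to R(Z_{\hat e})_p$, which is surjective because $Z_{\hat e}=Z_e\cap X_{\sigma_2}$ is a linearly-equivalent-restriction of $Z_e$ and the log-derivative linear system $V$ restricts onto the corresponding one (this is exactly the content of the functoriality in Proposition~\ref{naturalres} combined with Lemma~\ref{lempullbackres}); the face inclusion $\check\Delta_{\hat e}\hra\check\Delta_e$ induces $T_{\check\Delta_{\hat e}}\hra T_{\check\Delta_e}$ and hence $\langle\bigwedge^\top T_{\check\Delta_e}\rangle\hra\langle\bigwedge^\top T_{\check\Delta_{\hat e}}\rangle$ on the exterior-algebra ideals (note the direction reverses since smaller $\Delta$ means the top wedge lives in a smaller space, whose generated ideal is larger); and $\Delta_{\hat e}\hra\Delta_e$ induces $\Delta_e^\perp\hra\Delta_{\hat e}^\perp$ in the denominator. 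Putting these together gives precisely the three bullet points in the statement.

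\textbf{Main obstacle.} The one genuinely delicate point is the compatibility of $d_e^{\hat e}$ with the \emph{splittings} used to compute the cohomology in Theorem~\ref{cohotau} and Corollary~\ref{cohoW}: those splittings (of $\bigwedge^l\hat W$ via the filtration by $\langle\bigwedge^j\hat T_\Delta\rangle_l$, and the choice of section $\xi$) are non-canonical, and one must check that the induced map on cohomology is nonetheless independent of these choices and agrees with the stated maps. I expect this to come down to rechecking, as in the proof of Proposition~\ref{cohoW}, that the relevant cokernels — which are what actually compute the cohomology — are functorial for the maps $W\to W'$, $\check\Delta_{\hat e}\hra\check\Delta_e$, and $X_{\sigma_2}\hra X_{\tau_2}$, independently of the auxiliary splittings; this is where Remark~\ref{moveDelta} (that everything commutes with translations of the polytope) and the functoriality clause of Proposition~\ref{naturalres} do the real work. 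A secondary bookkeeping issue is the choice of vertex $g$ versus $\hat g$: here one invokes Lemma~\ref{changev} and the isomorphisms $\Phi_{g,g'}$ to see that the map is independent of these choices, as already flagged in the remark following Theorem~\ref{cohotau}.
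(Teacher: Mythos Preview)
Your approach is essentially the same as the paper's: trace $d_e^{\hat e}$ through the $Q^\bullet$ resolution and match each ingredient against the tensor factors in the cohomology description of Theorem~\ref{cohotau} and Corollary~\ref{cohoC}. The paper in fact declares most of this ``straightforward'' and spends its effort only on the $R$ map, writing out the explicit commutative square of log-derivation maps and global-section spaces whose cokernel gives $R(Z_e)_p\to R(Z_{\hat e})_p$, and deducing surjectivity from the monomial description $\k^{l\check\Delta_e\cap M}\twoheadrightarrow\k^{l\check\Delta_{\hat e}\cap M}$.

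The one place where your sketch is too loose is the justification of the canonical isomorphism $\Gamma(W_{\hat e},i_*\bigwedge^r\check\Lambda\otimes\k)=\Gamma(W_e\cap W_{\hat e},i_*\bigwedge^r\check\Lambda\otimes\k)$. Saying ``both open sets have the same intersection with the discriminant locus'' is not the right formulation and not obviously true. The paper's argument is topological: one exhibits a codimension-two simplex of the barycentric subdivision (coming from a chain $\tau_0\subsetneq\cdots\subsetneq\tau_{\dim B-2}$ refining both $e$ and $\hat e$, with $\tau_0\in\P^{[1]}$, $\tau_{\dim B-2}\in\P^{[\dim B-1]}$) lying in $\Delta$ and joining $e$ to $\hat e$, and then invokes \cite{grosie1}, Lemma~5.5 to conclude that $(W_e\cap W_{\hat e})\setminus\Delta$ is a deformation retract of $W_{\hat e}\setminus\Delta$. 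This is what forces the monodromy-invariant sections to agree, and it is not automatic from your description.
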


\begin{proof} 
The isomorphism $\Gamma(W_{\hat{e}},i_*\bigwedge^r\check\Lambda\otimes_\ZZ\k)=
\Gamma(W_e\cap W_{\hat{e}},i_*\bigwedge^r\check\Lambda\otimes_\ZZ\k)$ follows from the
fact that $e$ and $\hat{e}$ can be joined by a simplex in $\Delta$ with codimension two in $B$.
Such a simplex is given by a chain $\tau_0\subsetneq...\subsetneq \tau_{\dim B-2}$ with 
$\tau_0\in\P^{[1]},\tau_{\dim B-2}\in\P^{[\dim B-1]}$ which is a 
refinement of $e$ and $\hat{e}$. Thus by the proof of [\cite{grosie1}, Lemma~5.5], $(W_e\cap W_{\hat{e}})\backslash\Delta$ 
is a deformation retract of $W_{\hat{e}}\backslash\Delta$.

The assertion follows from computing the map $d_e^{\hat{e}}$ on the 
$Q^\bullet$'s and is straightforward. We just discuss the map of the $R$'s.
We set $F=F_s(\tau_2\ra\sigma_2)$. 
For each $l$ the natural adjunction map becomes
$$a:\Gamma(X_{\tau_2},\shO_{X_{\tau_2}}(pZ_e))\ra
\Gamma(X_{\tau_2},F_*F^*\shO_{X_{\tau_2}}(pZ_e))
=\Gamma(X_{\tau_2},F_*\shO_{X_{\tau_2}}(pZ_{\hat{e}}))$$
because $F^*Z_e=Z_{\hat{e}}$. Let $V_e$, $V_{\hat{e}}$ 
denote the linear systems via the log derivation map
for $Z_e$, $Z_{\hat{e}}$,  respectively, 
as given after Lemma~\ref{globalsec}. 
We may assume that $\check\Delta_{e}$ is embedded such that
the embedding of $\check\Delta_{e}$ is induced by restriction to
the corresponding face.
If $f$ is an equation of $Z_e$, then $F^*f$ is an equation of
$Z_{\hat{e}}$.
We get a map $V_e\ra V_{\hat{e}}$ by the diagram
$$
\begin{CD}
(N_{\tau_2}\oplus\ZZ)\otimes_\ZZ\kk  
@>{\partial_{Z_e}}>>
\Gamma(X_{\tau_2},\shO_{X_{\tau_2}}(Z_e))\\
@VVV @VVaV \\
(N_{\sigma_2}\oplus\ZZ)\otimes_\ZZ\kk
@>{\partial_{Z_{\hat{e}}}}>>
\Gamma(X_{\tau_2},F_*\shO_{X_{\tau_2}}(Z_{\hat{e}})).
\end{CD}
$$
The map on the $R$'s then is the cokernel of the diagram 
$$
\begin{CD}
\Gamma(X_{\tau_2},\shO_{X_{\tau_2}}((p-1)Z_{e}))\otimes_\k V_e 
@>>> \Gamma(X_{\tau_2},\shO_{X_{\tau_2}}(pZ_{e}))\\
@VVV @VVV \\
\Gamma(X_{\tau_2},F_*\shO_{X_{\sigma_2}}((p-1)Z_{\hat{e}}))\otimes_\k 
V_{\hat{e}} 
@>>> \Gamma(X_{\tau_2},F_*\shO_{X_{\sigma_2}}(pZ_{\hat{e}})).
\end{CD}
$$
Note that the right vertical map is surjective because 
by Lemma~\ref{globalsec}
it can be described by 
$\k^{l\cdot\check\Delta_{e}\cap M_{X_{\tau_2}}} \sra 
\k^{l\cdot\check\Delta_{\hat{e}}\cap M_{X_{\tau_2}}},\quad z^m\mapsto 0
\hbox{ if }m\not\in l\cdot\check\Delta_{\hat{e}}\hbox{ and }z^m\mapsto z^m\hbox{
otherwise}.$
\end{proof}

Recall the Gross-Siebert resolution from Def.~\ref{defCOmega}.
For each $r$, we are going to construct
an acyclic resolution of $\SC^\bullet(\Omega^r)$.
For each $\tau\in\P$, choose some $g_\tau:v_\tau\ra\tau$ with $v_\tau\in\P^{[0]}$.
We define the double complex 
$$\shQ^{k,l}(\Omega^r)=
\bigoplus_{\underbrace{\tau_0\ra\dots\ra\tau_k}_{e}}
q_{\tau_k,*}Q^l(F_s(e)^*\Omega_{\tau_0}^r/\Tors,g_{\tau_0})$$
where the differential in the $l$-direction is the usual one 
on $Q^\bullet$ which we are going to denote by $\delta$. 
The differential in the $k$-direction is
$$\begin{array}{rl}
(d_\bct(\alpha))_{\tau_0\ra\dots\ra\tau_{k+1}}=
&d_{\tau_1\ra\tau_{k+1}}^{\tau_0\ra\tau_{k+1}}\circ
\Phi_{g_{\tau_1},(\tau_0\ra\tau_1)\circ g_{\tau_0}} 
(\alpha_{\tau_1\ra\dots\ra\tau_{k+1}})\\
&+\sum_{i=1}^k(-1)^i\,\id(\alpha_{\tau_0\ra\dots\breve{\tau}_i\ra\dots\ra\tau_{k+1}})\\
&+(-1)^{k+1} d_{\tau_0\ra\tau_{k}}^{\tau_0\ra\tau_{k+1}}
  (\alpha_{\tau_0\ra\dots\ra\tau_{k}}).
\end{array}$$
We collect some results in the following lemma.

\begin{lemma} 
On the space $X$, we have for each $r$ a 
double complex of $\Gamma$-acyclic sheaves 
$$\shQ^{\bullet,\bullet}(\Omega^r)$$
which is exact in both directions except at the respective 
first non-trivial terms.
We have the augmentation
$$0\ra \SC^\bullet(\Omega^r)\ra\shQ^{\bullet,0}(\Omega^r).$$
\end{lemma}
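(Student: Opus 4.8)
The statement collects several facts, each of which has essentially been assembled in the preceding sections; the proof is therefore a matter of organizing the pieces, not of new work. First I would establish the $\Gamma$-acyclicity of every term $\shQ^{k,l}(\Omega^r)$. Unwinding the definition, each direct summand is of the form $q_{\tau_k,*}\shK^{l'}(W,Z_e,-(r+1))$ for $l'\ge r+1$ (together with, for $l=0$, an extra summand $q_{\tau_k,*}\shO_{X_{\tau_k}}\otimes\bigwedge^r\Delta_e^\perp$). Since $q_{\tau_k}$ is an (\'etale locally closed, hence affine on the relevant charts) embedding, pushforward is exact and preserves $\Gamma$-acyclicity, so it suffices to know that $\shK^{l'}(W,Z_e,-(r+1))=\shO_{X_{\tau_k}}((l'-r-1)Z_e)\otimes_\k\bigwedge^{l'}\hat W$ is $\Gamma$-acyclic on the toric variety $X_{\tau_k}$. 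For $l'\ge r+1$ the twist $(l'-r-1)\ge 0$, so this is exactly the vanishing $H^i(X,\shO_X(mZ))=0$ for $i>0$, $m\ge 0$ from Lemma~\ref{cohotoriclb}; the extra $\shO_{X_{\tau_k}}\otimes\bigwedge^r\Delta_e^\perp$ summand at $l=0$ is a free module, covered by the same lemma. This gives the $\Gamma$-acyclicity assertion.

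Second, exactness in the $l$-direction except at the first nontrivial term is precisely the content of Proposition~\ref{resQ} (equivalently Proposition~\ref{naturalres} applied summand-by-summand): for each fixed chain $e:\tau_0\ra\dots\ra\tau_k$ the complex $Q^\bullet(F_s(e)^*\Omega^r_{\tau_0},g_{\tau_0})$ is an acyclic resolution of $F_s(e)^*\Omega^r_{\tau_0}/\Tors$, and pushing forward by the exact functor $q_{\tau_k,*}$ and taking the direct sum over chains preserves exactness. Taking $l$-cohomology at the first spot yields the augmentation $0\ra\SC^k(\Omega^r)\ra\shQ^{k,0}(\Omega^r)$ in each $k$, and since the horizontal differential $d_\bct$ on $\shQ^{\bullet,0}$ was defined (via the $\Phi_{g,g'}$ of Lemma~\ref{changev} and the strata-combining maps $d_e^{\hat e}$ of Lemma~\ref{lemstratafunct}) so as to be compatible with the barycentric differential on $\SC^\bullet(\Omega^r)$, these augmentations assemble into the morphism of complexes $0\ra\SC^\bullet(\Omega^r)\ra\shQ^{\bullet,0}(\Omega^r)$ claimed at the end.

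Third, exactness in the $k$-direction except at the first nontrivial term. This is where I expect the only real friction. For the underlying sheaves (before applying the $Q^\bullet$-resolution) the analogous statement is Lemma~\ref{COmegaexact}: $\SC^\bullet(\Omega^r)$ is a resolution of $\Omega^r$, and more generally the ``Deligne-type'' inclusion-exclusion complex over the strata is exact. The point is to upgrade this to the termwise-resolved complex $\shQ^{\bullet,l}(\Omega^r)$ for each fixed $l$. The clean way is to observe that, for fixed $l$, $\shQ^{\bullet,l}$ is built out of the functors $e\mapsto q_{\tau_k,*}\shK^{r+1+l}(\check\Lambda_{v_{\tau_0},\k},Z_e,-(r+1))$ and $e\mapsto q_{\tau_k,*}\shK^{r+1+l}(\Delta_e^\perp,Z_e,-(r+1))$ (and at $l=0$ also $e\mapsto q_{\tau_k,*}\shO_{X_{\tau_k}}\otimes\bigwedge^r\Delta_e^\perp$), each of which is a functor on the category of chains whose horizontal complex is the standard barycentric/simplicial complex of a sheaf pulled back to strata — exactly the shape for which the inclusion-exclusion argument behind Lemma~\ref{COmegaexact} (Gross--Siebert's simplicial resolution, cf. the discussion in Section~\ref{sec_barycentric}) applies verbatim, since that argument never used anything beyond the combinatorics of $\P$ and exactness of the $q_{\tau,*}$. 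Because the direct-sum decomposition of Lemma~\ref{decompose} identifies $\shK^\bullet(\check\Lambda_v,Z_e,\cdot)$ and $\shK^\bullet(\Delta_e^\perp,Z_e,\cdot)$ with sums of shifted copies of $\shK^\bullet(T_{\check\Delta_e},Z_e,\cdot)$, and the horizontal maps respect these decompositions by construction (Lemma~\ref{lemstratafunct}), exactness reduces to the single sheaf $\shO_{X_{\tau_k}}(mZ)$-valued case of the inclusion-exclusion complex, which is Lemma~\ref{COmegaexact} again. Concretely, I would: (i) fix $l$ and write $\shQ^{\bullet,l}$ as an extension (short exact sequence of complexes in the $\bullet$-direction) built from the ``$\shC$-type'' piece and the ``$\bigwedge^r\Delta_e^\perp$'' piece, using the split short exact sequences of Proposition~\ref{squareseq}/Lemma~\ref{lemstratafunct}; (ii) check horizontal exactness of each piece by the Gross--Siebert argument; (iii) conclude by the long exact sequence. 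The mild subtlety to watch is that the transition maps involve the isomorphisms $\Phi_{g_\tau,g_{\tau'}}$ adjusting the chosen vertices, so one must invoke Lemma~\ref{changev} (and Lemma~\ref{indepchoiceh}) to see these do not obstruct exactness — but since they are isomorphisms compatible with all the relevant structure maps, this is a cocycle-type bookkeeping check rather than a genuine obstacle. Assembling (first)--(third) gives all four assertions of the lemma.
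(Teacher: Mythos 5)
The first two parts of your plan ($\Gamma$-acyclicity via Lemma~\ref{cohotoriclb} and Prop.~\ref{resQ}; exactness in the $l$-direction via Prop.~\ref{naturalres} applied summand-by-summand; assembly of the augmentation) are sound and match the paper's proof, which simply cites Prop.~\ref{resQ} for these and Lemma~\ref{lemstratafunct} for compatibility of the differentials.

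The gap is in the third step, exactness of $d_\bct$ on $\shQ^{\bullet,l}$ for fixed $l$. Your claim that this ``reduces to the single sheaf $\shO_{X_{\tau_k}}(mZ)$-valued case of the inclusion-exclusion complex, which is Lemma~\ref{COmegaexact} again'' does not hold up, and the paper does not proceed this way. The summands of $\shQ^{k,l}$ for $l>0$ are $q_{\tau_k,*}\shO_{X_{\tau_k}}(lZ_{\tau_k})\otimes V_{\tau_0}$, and these are \emph{not} the terms of a barycentric complex of a single sheaf pulled back to strata: the twist $lZ_{\tau_k}$ depends on the target $\tau_k$, the linear-algebra factor $V_{\tau_0}$ depends on the source $\tau_0$, and — crucially — whole summands collapse or change shape according to whether the barycentric edge $\tau_0\ra\tau_k$ lies in $\Delta$ (equivalently whether $Z_{\tau_0}\cap X_{\tau_k}=\emptyset$, cf.\ Lemma~\ref{equivalencesfore} and Lemma~\ref{Ze}). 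Lemma~\ref{COmegaexact} is specifically about $F_s(e)^*\Omega^r_{\tau_0}/\Tors$, and there is no ``verbatim'' transfer of its proof to the twisted sheaves above; indeed the paper explicitly re-derives Lemma~\ref{COmegaexact} as a \emph{corollary} of the present lemma, not the other way around. What the paper actually does is verify, stalk by stalk at a geometric point $p\in\Int(X_\tau)$, the lifting criterion $\op{(L)}$ of Lemma~\ref{baryacyclic}, with a genuine case split: when $p\in Z_{\tau_0}$ all the relevant $Z_e$ agree and each $M_{(\tau_0,\tau_k)}$ is literally the pullback of $M_{(\tau_0,\tau_{k-1})}$ (so lifting is local freeness plus extension of functions from subvarieties); when $p\not\in Z_{\tau_0}$ one must further split the compatible collection via a chosen splitting of the $\iota(h)$-sequence and argue separately for the two pieces, using that $\{X_{\tau_k}\,|\,\tau_k\in U\}$ is closed under intersection. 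This dichotomy and the resulting two lifting arguments are the substance of the proof, and they are not visible from your reduction. You would need to supply this stalk-wise $\op{(L)}$ verification (or an equivalent) to complete the argument; the appeal to Lemma~\ref{COmegaexact} alone does not.
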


\begin{proof} 
The exactness of the augmentation and acyclicity are the 
content of Prop.~\ref{resQ}.
The commutativity of differentials reduces to 
Lemma~\ref{lemstratafunct} and what was mentioned afterwards.
We are going to prove the exactness of $d_\bct$.
We set 
$Q_{\k}^0=\frac{\bigwedge^{r+1}\widehat{\check\Lambda}_{v_{\tau_0,\k}}
\ \oplus\ 
\bigwedge^{r}\Delta_{\tau_0}^\perp}
{\bigwedge^{r+1}\widehat{\Delta}^\perp_{\tau_0}}.$
Recall that, for $e:\tau_0\ra\tau_k$,
$$Q^l(F_s(e)^*\Omega_{\tau_0}^r/\Tors,g_{\tau_0})
=\left\{\begin{array}{ll}
\shO_{X_{\tau_{k}}}\otimes Q_{\k}^0
&\hbox{for }l=0\\
\shO_{X_{\tau_{k}}}(lZ_{\tau_k})\otimes
\frac{\bigwedge^{r+l+1}\widehat{\check\Lambda}_{v_{\tau_0,\k}}}
{\bigwedge^{r+l+1}\widehat{\Delta}^\perp_{\tau_0}}
&\hbox{for }l>0.\\
\end{array}\right.$$

What we want to prove is a local issue. Let $p\in X$ be some geometric point 
and $\tau\in\P$ be such that $p\in\Int(X_\tau)$. For $e:\tau_0\ra\tau_k$, we have
$$q_{\tau_k,*}Q^l(F_s(e)^*\Omega_{\tau_0}^r/\Tors,g_{\tau_0})_p=0
\hbox{ if there is no } \tau_k\ra\tau.$$
By Lemma~\ref{baryacyclic}, we are done if we 
show criterion $\op{(L)}$ from Section~\ref{A_bct}. 
We match the notation by setting $\Xi=\tau$ and
$M_{(\tau_0,\tau_k)}
=q_{\tau_k,*}Q^l(F_s(e)^*\Omega_{\tau_0}^r/\Tors,g_{\tau_0})_p$.
We may fix some $\tau_0,\tau_{k-1}\subseteq\tau$ with 
$\tau_0\subseteq\tau_{k-1}$. 
Let $(f_e)_e\in \bigoplus_{\tau_k\supsetneq\tau_{k-1}} M_{(\tau_0,\tau_k)}$
be a compatible collection. We want to show that it lifts as required 
in criterion $\op{(L)}$ from Section~\ref{A_bct}.
\\[4mm]
\underline{The case $p\in Z_{\tau_0}$:} 
This implies $Z_\tau=Z_{\tau_0}\cap X_\tau$ and thus 
$\Delta_e=\Delta_{\tau_0}$ for each $e:\tau_0\ra\tau'$ 
with $\tau'\subseteq\tau$.
We claim that each $M_{(\tau_0,\tau_k)}$ is the pullback of 
$M_{(\tau_0,\tau_{k-1})}$, i.e., 
for $F=F(\tau_{k-1}\ra\tau_k)$, the map 
$d_{\tau_0\ra\tau_{k-1}}^{\tau_0\ra\tau_{k}}$ induces
$$Q^l((F^*F_s(e)^*\Omega_{\tau_0}^r)/\Tors,g_{\tau_0})_p
=F^*Q^l(F_s(e)^*\Omega_{\tau_0}^r/\Tors,g_{\tau_0})_p.$$
Indeed, both $F^*M_{(\tau_0,\tau_{k-1})}$ 
and $M_{(\tau_0,\tau_{k})}$ are
$$\left\{
\begin{array}{ll}
\shO_{X_{\tau_{k}},p}\otimes Q_{\k}^0
&\hbox{for }l=0\\
\shO_{X_{\tau_{k}}}(lZ_{\tau_k})_p\otimes
\frac{\bigwedge^{r+l+1}\widehat{\check\Lambda}_{v_{\tau_0,\k}}}
{\bigwedge^{r+l+1}\widehat{\Delta}^\perp_{\tau_0}}
&\hbox{for }l>0.\\
\end{array}
\right.$$
Now criterion $\op{(L)}$ follows from the
fact that $M_{(\tau_0,\tau_{k-1})}$ is locally free on $X_{\tau_{k-1}}$ and that we can always
lift functions from subvarieties.
\\[4mm]
\underline{The case $p\not\in Z_{\tau_0}$:} 
We set
$U=\{\tau_k\subseteq \tau\,|\,\tau_k\supsetneq\tau_{k-1}, 
Z_{\tau_0}\cap X_{\tau_{k}}\neq\emptyset\}.$
Consider the following diagram with exact rows and columns.
$$
\xymatrix{ 
&K \ar@{^{(}->}[d]\ar@{=}[r] &K \ar@{^{(}->}[d] \\
\bigwedge^r \Delta^\perp_{\tau_0}\ar[r]\ar@{=}[d] & Q^0_{\k}\ar[r]\ar@{->>}[d]_{\iota(h)} & 
\frac{\bigwedge^{r+1}\widehat{\check\Lambda}_{v_{\tau_0,\k}}}
{\bigwedge^{r+1}\widehat{\Delta}^\perp_{\tau_0}}\ar@{->>}[d]_{\iota(h)} \\
\bigwedge^r \Delta^\perp_{\tau_0}\ar[r] & \bigwedge^{r}\check\Lambda_{v_{\tau_0,\k}}\ar[r]\ar@/_1pc/@{.>}[u] & 
\frac{\bigwedge^{r}\check\Lambda_{v_{\tau_0,\k}}}
{\bigwedge^{r}\Delta^\perp_{\tau_0}} 
 } $$

We choose a splitting as indicated by the dashed arrow.
We claim that the compatible collection decomposes as
$$(f_e)_e=(f^1_e,f^2_e)_e\in 
\bigoplus_{\tau_k\supsetneq\tau_{k-1}} (M^1_{(\tau_0,\tau_k)}\oplus M^2_{(\tau_0,\tau_k)})$$
where
$$M^1_{(\tau_0,\tau_k)}=\left\{
\begin{array}{ll}
\shO_{X_{\tau_{k}},p}\otimes \bigwedge^{r+l}\check\Lambda_{v_{\tau_0},\k}& l=0\\
0 &\hbox{otherwise}
\end{array}
\right.$$
and
$$
M^2_{(\tau_0,\tau_k)}=\left\{
\begin{array}{ll}
\shO_{X_{\tau_{k}},p}\otimes K & l=0,\tau_k\in U\\
\shO_{X_{\tau_{k}},p}\otimes \frac{\bigwedge^{r+l+1}\widehat{\check\Lambda}_{v_{\tau_0},\k}}
{\bigwedge^{r+l+1}\widehat{\Delta}_{\tau_0}^\perp} & l>0,\tau_k\in U\\
0 &\hbox{otherwise}.
\end{array}
\right.
$$
Indeed, we can identify $\shO_{X_{\tau_{k}}}(lZ_{\tau_k})_p=\shO_{X_{\tau_{k}},p}$ and
there is only one obvious way to decompose using the map $\iota(h)$ 
and the chosen splitting above. One can now show that both $(f^1_e)_e$ and $(f^2_e)_e$ lift.
The reason is again that functions from subvarieties lift. 
For $(f^2_e)_e$, one uses that $\{ X_{\tau_k}\,|\, \tau_k\in U\}$ is a set of strata 
closed under intersection and then the functions $f^2_e$ actually glue to a function on the
corresponding subspace. We have shown $\op{(L)}$ and can apply Lemma~\ref{baryacyclic}.
\end{proof}

As a corollary, we obtain a different proof of Lemma~\ref{COmegaexact} 
still using the same argument for the first term as in [\cite{grosie2},~Thm.~3.5] though.

The finiteness of the dimensions of global sections and 
the computability of cohomology is the major strength of $\shQ^{\bullet,\bullet}$.
The downside, however, is that it is impossible to extend the de Rham
differential to it to obtain a triple complex. Roughly speaking, differentiating elements
of $\shO_{X_{\tau_2}}(lZ_{\tau_2})$ yields something in $\shO_{X_{\tau_2}}((l+1)Z_{\tau_2})$
whereas for compatibility it would have to stay in $\shO_{X_{\tau_2}}(lZ_{\tau_2})$.
We will later make use of the fact that exterior differentiation can at least be defined on
$\shQ^{\bullet,0}(\Omega^\bullet)$.


\subsection{Degeneration at $E_2$} \label{Sec_Proofmainb}
We are now going to prove Thm.~\ref{maintheorem}, b).
The key ingredient is the previously constructed double complex
$\shQ^{\bullet,\bullet}(\Omega^r)$
We assume now that each $\check\Delta_e$ is a simplex in order to be able to use techniques 
from Section~\ref{sec_jacobianrings}.

\begin{definition} For each $r$, we define the subcomplex
 $\shQ_\top^{\bullet,\bullet}(\Omega^r)\subseteq \shQ^{\bullet,\bullet}(\Omega^r)$
 by setting
 $$\shQ_\top^{k,l}(\Omega^r)=\bigoplus_{\underbrace{\tau_0\ra\dots\ra\tau_k}_{e}}
 q_{\tau_k,*}\shO_{X_{\tau_k}}(lZ_e)\otimes_\kk 
 \bigg(\langle\bigwedge^\top \widehat{T}_{\check\Delta_{e}} \rangle 
     \cap \bigwedge^{r+l+1}\widehat{\check\Lambda}_{v_{\tau_0},\k}
 \mod \bigwedge^{r+l+1} \widehat{\Delta}_{e}^\perp
 \bigg)$$
 for $l>0$ and $\shQ_\top^{k,0}(\Omega^r)=0$.
\end{definition}

Note that the differential $\delta$ is trivial on
$\shQ_\top^{\bullet,\bullet}(\Omega^r)$. So to see that it is a 
subcomplex, we just check closedness under $d_\bct$. 
This follows from the closedness under the change of 
vertex operator $\Phi$ and under $d_e^{\hat{e}}$. The latter is
because
$\langle\bigwedge^\top \widehat{T}_{\check\Delta_{e}} \rangle
\subseteq
\langle\bigwedge^\top \widehat{T}_{\check\Delta_{\hat{e}}} \rangle.$

\begin{definition} 
We define the subcomplex
$Q_\top^{\bullet,\core{\bullet}}(\Omega^r)$
of $\Gamma(X,\shQ_\top^{\bullet,\bullet}(\Omega^r))$ by replacing
each $\Gamma(X_{\tau_k},\shO_{X_{\tau_k}}(lZ_e))$ in $\Gamma(X,\shQ_\top^{k,l}(\Omega^r))$
by $\Gamma^\core{l}(Z_e)$. Again, $\delta$ is trivial, so we have to show closedness under $d_\bct$.
For this, we need to show that the image of $\Gamma^\core{l}(Z_e)$ under the restriction map
$d_e^{\hat{e}}:\Gamma(X_{\tau_k},\shO_{X_{\tau_k}}(lZ_e))\ra \Gamma(X_{\tau_{k+1}},\shO_{X_{\tau_{k+1}}}(lZ_{\hat{e}}))$
is contained in $\Gamma^\core{l}(Z_{\hat{e}})$. The Newton polytope of $lZ_{\hat{e}}$ is a face of the 
Newton polytope of $lZ_{e}$, so this follows from Lemma~\ref{corefunc} and 
the definition of $\Gamma^\core{l}(Z_{e})$.
\end{definition}

\begin{lemma} \label{topinjection}
Assume that $X$ is a Fermat toric log CY space.
For $k\ge 0, l>0$, we have an injection
$$ Q_\top^{k,\core{l}}(\Omega^r)\hra
H^l_\delta\Gamma(X,\shQ^{k,\bullet}(\Omega^r)).$$
\end{lemma}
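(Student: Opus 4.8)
The plan is to decompose the double complex over its indexing chains, to compute the $\delta$-cohomology of each summand by means of Theorem~\ref{cohotau}, and thereby to reduce the assertion to the injectivity of the natural map $\Gamma^{\core{l}}(Z_e)\hra R(Z_e)_l$, which is a restatement of Lemma~\ref{leminjection}.

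First, since $\delta$ vanishes on $\shQ_\top^{\bullet,\bullet}(\Omega^r)$, every element of $Q_\top^{k,\core{l}}(\Omega^r)\subseteq\Gamma(X,\shQ_\top^{k,l}(\Omega^r))\subseteq\Gamma(X,\shQ^{k,l}(\Omega^r))$ is a $\delta$-cocycle, so the inclusion induces the map $Q_\top^{k,\core{l}}(\Omega^r)\to H^l_\delta\Gamma(X,\shQ^{k,\bullet}(\Omega^r))$ whose injectivity we must establish. Both $\shQ^{k,l}(\Omega^r)=\bigoplus_e q_{\tau_k,*}Q^l(F_s(e)^*\Omega_{\tau_0}^r,g_{\tau_0})$, the differential $\delta$, the subspace $Q_\top^{k,\core{l}}(\Omega^r)$ and this map split as direct sums over chains $e\colon\tau_0\to\dots\to\tau_k$, so it is enough to argue one $e$ at a time. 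Fix $e$. By Prop.~\ref{resQ} the complex $Q^\bullet(F_s(e)^*\Omega_{\tau_0}^r,g_{\tau_0})$ is a $\Gamma$-acyclic resolution of $F_s(e)^*\Omega_{\tau_0}^r/\Tors$ on $X_{\tau_k}$, so $H^l_\delta\Gamma(X_{\tau_k},Q^\bullet(F_s(e)^*\Omega_{\tau_0}^r,g_{\tau_0}))=H^l(X_{\tau_k},F_s(e)^*\Omega_{\tau_0}^r/\Tors)$, which for $l>0$ is, by Theorem~\ref{cohotau}, canonically $R(Z_e)_l\otimes_\k\big(\langle\bigwedge^\top T_{\check\Delta_e}\rangle\cap\bigwedge^{r+l}\check\Lambda_{v_{\tau_0},\k}\big)\big/\big(\langle\bigwedge^\top T_{\check\Delta_e}\rangle\cap\bigwedge^{r+l}\Delta_e^\perp\big)$.

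Next I would track the induced map on this $e$-summand. In degrees $>0$ the complex $Q^\bullet(F_s(e)^*\Omega_{\tau_0}^r,g_{\tau_0})$ is the quotient of $\shK^{r+1+\bullet}(\check\Lambda_{v_{\tau_0},\k},Z_e,-(r+1))$ by $\shK^{r+1+\bullet}(\Delta_e^\perp,Z_e,-(r+1))$ with differential $d_Z$, which for a Fermat equation $f=\sum_{v\in\check\Delta_e^{[0]}}f_v z^v$ of $Z_e$ is wedging with $\sum_{v\in\check\Delta_e^{[0]}}f_v\frac{1}{f}z^v\otimes(v,1)$. Writing $\Gamma(X_{\tau_k},\shO_{X_{\tau_k}}(jZ_e))=\bigoplus_{m\in j\check\Delta_e\cap M}\k\cdot\frac{z^m}{f^j}$ by Lemma~\ref{globalsec}, and using the contraction $\iota(h)$ that matches the hatted top-power factor occurring in $\shQ_\top^{k,l}$ with the unhatted one appearing in Theorem~\ref{cohotau}, exactly as in Cor.~\ref{corkoszulcoho} and the proof of Prop.~\ref{cohoW}, one checks that on the $e$-summand the map is the tensor product of the identity on the top factor with the natural arrow
$$\Gamma^{\core{l}}(Z_e)\hra\Gamma(X_{\tau_k},\shO_{X_{\tau_k}}(lZ_e))\sra R(Z_e)_l.$$
Concretely, the $Q_\top$-representatives are constant in the $T_{\check\Delta_e}$-directions and supported on core monomials, so their classes are detected precisely by the projection onto $R(Z_e)_l$, and the choice of vertex $g_{\tau_0}$ is immaterial by the remark following Theorem~\ref{cohotau}.

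It remains to see that the displayed arrow is injective. Under the canonical (up to a scalar) isomorphism $R(Z_e)\cong R_0(f,\check\Delta_e)$ of Lemma~\ref{compareRs} and the identification $\Gamma^{\core{l}}(Z_e)\cong\k^{\check\Delta_e^{\core{l}}\cap M}$ supplied by Lemma~\ref{globalsec} and the definition of $\Gamma^{\core{l}}$, this arrow is the map $z^m\mapsto z^{(m,l)}$ of Lemma~\ref{leminjection}, which is injective; recall that $\check\Delta_e$ is a simplex throughout this section so that the core is defined, and that, $X$ being Fermat, one may take $f$ itself Fermat, in which case the arrow is even an isomorphism by Prop.~\ref{Fermatiso}. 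Tensoring an injection of $\k$-vector spaces with a fixed space preserves injectivity, so the $e$-summand of the map is injective; summing over all $e$ yields the claim. The main obstacle is the third paragraph: confirming that the inclusion $\shQ_\top^{k,\bullet}\hra\shQ^{k,\bullet}$ induces on $\delta$-cohomology exactly the projection onto $R(Z_e)_l$ tensored with the identity — i.e. that the non-canonical data entering $Q^\bullet$ (the vertices $g_{\tau_0}$ and the splittings of Lemma~\ref{decompose} and Prop.~\ref{cohoW}) do not perturb this identification, and that $\delta$-exactness of a $Q_\top$-representative forces its core-monomial part to vanish, which is precisely where simplicity of $\check\Delta_e$ and Lemma~\ref{leminjection} are genuinely used.
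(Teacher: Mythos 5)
Your proposal is correct and takes essentially the same route as the paper's proof, which is just the one-line citation ``This follows from Thm.~\ref{cohotau}, Prop.~\ref{Fermatiso} and Lemma~\ref{compareRs}.'' You unpack precisely those ingredients: splitting over chains $e$, computing $H^l_\delta$ via Prop.~\ref{resQ} and Thm.~\ref{cohotau}, tracing the map through the Koszul/contraction machinery of Cor.~\ref{corkoszulcoho} and Prop.~\ref{cohoW} (whose canonicity takes care of the non-canonical choices you flag), and reducing to $\Gamma^{\core{l}}(Z_e)\to R(Z_e)_l$, which is injective by Lemma~\ref{compareRs} together with Lemma~\ref{leminjection} (or an isomorphism by Prop.~\ref{Fermatiso}, as the paper cites). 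The only difference is that you additionally invoke Lemma~\ref{leminjection}, which is strictly weaker than Prop.~\ref{Fermatiso} but suffices for injectivity; this is a harmless embellishment, not a divergent argument.
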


\begin{proof} This follows from Thm.~\ref{cohotau}, Prop.~\ref{Fermatiso} and
Lemma~\ref{compareRs}.
\end{proof}

\begin{lemma} \label{criteriondeg}
  Let $\bigoplus_{p,q}K^{p,q}$ be a double complex with differentials 
  $d',d''$ which is bounded in $p$ and $q$, 
  denote by $D=d'+(-1)^p d''$ the differential on the total complex
  $\Tot^\bullet(K^{\bullet,\bullet})$.
  Assume the following criterion:
  $$\begin{array}{lcc}
  \begin{array}{l}
  \hbox{For each }x\in K^{p,q}\hbox{ with }d''x=0$ and $d'x=d''y\\
  \hbox{for some }y\in K^{p+1,q-1},
   \hbox{there is some }\\
   z\in K^{p,q-1}\hbox{ such that }d'(x+d''z)=0.
  \end{array}
  &\qquad &
  \begin{array}{ccccc}
  0\\
   \hbox{\rotatebox{90}{$\!\!\mapsto$}}\\
  x&\stackrel{d'}{\mapsto}& d''y\\
   \hbox{\rotatebox{90}{$\!\!\leadsto$}}&&\hbox{\rotatebox{90}{$\!\!\mapsto$}}\\
  z&&y
  \end{array}
  \end{array}
  $$
  Then, its first spectral sequence degenerates at
  $$E_2^{p,q}:H_{d'}^pH_{d''}^q(K^{\bullet,\bullet})
   \Rightarrow H^{p+q}_D(\Tot^\bullet(K^{\bullet,\bullet})).$$
\end{lemma}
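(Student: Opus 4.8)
The plan is to unwind the staircase (zig‑zag) description of the differentials $d_r$ ($r\ge 2$) in the first spectral sequence and to use the hypothesis to produce a \emph{bicocycle} representative for every class on the $E_2$-page; degeneration is then immediate. First I would absorb the sign $(-1)^p$ into $d''$ column by column, which turns $(d',d'')$ into an anticommuting pair without affecting the notions of $d''$-cocycle, $d''$-coboundary, or $E_1=H_{d''}$. Then, recall that a class of $E_r^{p,q}$ is represented by some $x_0\in K^{p,q}$ with $d''x_0=0$ together with a staircase $x_1\in K^{p+1,q-1},\dots,x_{r-1}\in K^{p+r-1,q-r+1}$ satisfying $d'x_{i-1}=d''x_i$ for $1\le i\le r-1$, and $d_r$ carries this class to the class of $d'x_{r-1}$ in $E_r^{p+r,q-r+1}$. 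In particular, if $x_0$ is a bicocycle, i.e.\ $d'x_0=d''x_0=0$, one may take all $x_i=0$, so $d_r$ annihilates the class of $x_0$ for every $r\ge 2$.

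Hence it suffices to show that every class of $E_2^{p,q}$ has a bicocycle representative. Let $\xi\in E_2^{p,q}$. Since $E_1^{p,q}=H^q_{d''}(K^{p,\bullet})$ with $d_1$ induced by $d'$, the class $\xi$ is represented by some $x\in K^{p,q}$ with $d''x=0$ whose $d'$-image becomes a $d''$-coboundary on $E_1^{p+1,q}$, i.e.\ $d'x=d''y$ for some $y\in K^{p+1,q-1}$. This is exactly the hypothesis of the criterion, so there is $z\in K^{p,q-1}$ with $d'(x+d''z)=0$. Setting $x'=x+d''z$ gives $d''x'=d''x=0$ and $d'x'=0$, so $x'$ is a bicocycle; and since $x'-x=d''z$ is a $d''$-coboundary, $x'$ represents the same class in $E_1^{p,q}$, hence the same class $\xi$ in $E_2^{p,q}$.

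Combining the two steps: every class of $E_2^{p,q}$ is represented by a bicocycle, and $d_r$ kills the class of any bicocycle for all $r\ge 2$; since every class of $E_r$ ($r\ge 2$) is the image of a class of $E_2$, which has a bicocycle representative, it follows that $d_r=0$ on $E_r$ for all $r\ge 2$, i.e.\ the first spectral sequence degenerates at $E_2$. Boundedness of $K^{\bullet,\bullet}$ ensures convergence to $H^{p+q}_D(\Tot^\bullet(K^{\bullet,\bullet}))$, so nothing further is needed. The argument is a pure diagram chase; the only mild subtlety is the sign bookkeeping forced by the convention $D=d'+(-1)^pd''$, which is disposed of at the outset, and I do not expect any real obstacle.
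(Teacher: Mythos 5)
Your proof is correct and follows essentially the same route as the paper: both use the zig-zag description of $d_r$, invoke the criterion to replace a representative of an $E_2$-class by a bicocycle (the paper's ``for some representative $x_1$ all $x_k$ for $k\ge 2$ can be chosen to be zero''), and conclude that $d_r = 0$ for $r\ge 2$. Your write-up is merely a more detailed spelling-out of the same diagram chase, including the sign normalization that the paper leaves implicit.
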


\begin{proof}
  Let $[\cdot]_k$ mean taking the class in $E_k$.
  For $x_1\in K^{p,q}$, the image of $[x_1]_k$ under the 
  differential $d_k$
  is given by $[d'(x_{k})]_k$ for some zig-zag
  \begin{center}
   \begin{picture}(0,0)%
\includegraphics{zigzag.pstex}%
\end{picture}%
\setlength{\unitlength}{4144sp}%
\begingroup\makeatletter\ifx\SetFigFontNFSS\undefined%
\gdef\SetFigFontNFSS#1#2#3#4#5{%
  \reset@font\fontsize{#1}{#2pt}%
  \fontfamily{#3}\fontseries{#4}\fontshape{#5}%
  \selectfont}%
\fi\endgroup%
\begin{picture}(3810,2020)(706,-1484)
\put(901,389){\makebox(0,0)[lb]{\smash{{\SetFigFontNFSS{12}{14.4}{\familydefault}{\mddefault}{\updefault}{\color[rgb]{0,0,0}$0$}%
}}}}
\put(901,-61){\makebox(0,0)[lb]{\smash{{\SetFigFontNFSS{12}{14.4}{\familydefault}{\mddefault}{\updefault}{\color[rgb]{0,0,0}$x_1$}%
}}}}
\put(1801,-61){\makebox(0,0)[lb]{\smash{{\SetFigFontNFSS{12}{14.4}{\familydefault}{\mddefault}{\updefault}{\color[rgb]{0,0,0}$d'x_1$}%
}}}}
\put(1801,-511){\makebox(0,0)[lb]{\smash{{\SetFigFontNFSS{12}{14.4}{\familydefault}{\mddefault}{\updefault}{\color[rgb]{0,0,0}$x_2$}%
}}}}
\put(2251,-511){\makebox(0,0)[lb]{\smash{{\SetFigFontNFSS{12}{14.4}{\familydefault}{\mddefault}{\updefault}{\color[rgb]{0,0,0}$\mapsto$}%
}}}}
\put(2701,-511){\makebox(0,0)[lb]{\smash{{\SetFigFontNFSS{12}{14.4}{\familydefault}{\mddefault}{\updefault}{\color[rgb]{0,0,0}$d'x_2$}%
}}}}
\put(2701,-961){\makebox(0,0)[lb]{\smash{{\SetFigFontNFSS{12}{14.4}{\familydefault}{\mddefault}{\updefault}{\color[rgb]{0,0,0}$\ddots$}%
}}}}
\put(3601,-961){\makebox(0,0)[lb]{\smash{{\SetFigFontNFSS{12}{14.4}{\familydefault}{\mddefault}{\updefault}{\color[rgb]{0,0,0}$\ddots$}%
}}}}
\put(4051,-1411){\makebox(0,0)[lb]{\smash{{\SetFigFontNFSS{12}{14.4}{\familydefault}{\mddefault}{\updefault}{\color[rgb]{0,0,0}$\mapsto$}%
}}}}
\put(1801,-286){\makebox(0,0)[lb]{\smash{{\SetFigFontNFSS{12}{14.4}{\familydefault}{\mddefault}{\updefault}{\color[rgb]{0,0,0}\,\rotatebox{90}{$\!\!\mapsto$}}%
}}}}
\put(3601,-1186){\makebox(0,0)[lb]{\smash{{\SetFigFontNFSS{12}{14.4}{\familydefault}{\mddefault}{\updefault}{\color[rgb]{0,0,0}\,\rotatebox{90}{$\!\!\mapsto$}}%
}}}}
\put(901,164){\makebox(0,0)[lb]{\smash{{\SetFigFontNFSS{12}{14.4}{\familydefault}{\mddefault}{\updefault}{\color[rgb]{0,0,0}\,\rotatebox{90}{$\!\!\mapsto$}}%
}}}}
\put(1351,-61){\makebox(0,0)[lb]{\smash{{\SetFigFontNFSS{12}{14.4}{\familydefault}{\mddefault}{\updefault}{\color[rgb]{0,0,0}$\mapsto$}%
}}}}
\put(721,164){\makebox(0,0)[lb]{\smash{{\SetFigFontNFSS{12}{14.4}{\familydefault}{\mddefault}{\updefault}{\color[rgb]{0,0,0}{\scriptsize$d''$}}%
}}}}
\put(1411, 74){\makebox(0,0)[lb]{\smash{{\SetFigFontNFSS{12}{14.4}{\familydefault}{\mddefault}{\updefault}{\color[rgb]{0,0,0}{\scriptsize$d'$}}%
}}}}
\put(3601,-1411){\makebox(0,0)[lb]{\smash{{\SetFigFontNFSS{12}{14.4}{\familydefault}{\mddefault}{\updefault}{\color[rgb]{0,0,0}$x_{k}$}%
}}}}
\put(4501,-1411){\makebox(0,0)[lb]{\smash{{\SetFigFontNFSS{12}{14.4}{\familydefault}{\mddefault}{\updefault}{\color[rgb]{0,0,0}$d'x_{k}$}%
}}}}
\end{picture}%

  \end{center}
  The criterion implies that for some representative $x_1$
  all $x_k$ for $k\ge 2$ can be chosen to be zero and thus
  $d_k=0$ for $k\ge 2$.
\end{proof}

\begin{proof}[Proof of Theorem~\ref{maintheorem},b)] 
We are going to apply Lemma~\ref{criteriondeg} to the double complex \linebreak
$\Gamma(X,\shQ^{\bullet,\bullet}(\Omega^r))$. Let $\delta$ denote the differential in
the second direction.
Suppose $x\in\Gamma(X,\shQ^{k,l}(\Omega^r))$ with $\delta x=0$ and
$d_\bct x=\delta y$ for some $y\in\Gamma(X,\shQ^{k+1,l-1}(\Omega^r))$.
For $l=0$ we have $y=0$ and there is nothing to show, so assume $l>0$.
By Prop.~\ref{koszulcoho} and Prop.~\ref{Fermatiso}, changing $x$ by adding
a $\delta$-coboundary, we may assume that $x\in Q_\top^{k,\core{l}}(\Omega^r)$.
Then $d_\bct x\in Q_\top^{k+1,\core{l}}(\Omega^r)$, and the injection
$$ Q_\top^{k+1,\core{l}}(\Omega^r)\hra H^l_\delta\Gamma(X,\shQ^{k+1,\bullet}(\Omega^r))$$
from Lemma~\ref{topinjection}
shows that $y=0$. This establishes the hypothesis of Lemma~\ref{criteriondeg}
which we may now apply.
\smallskip
\end{proof}

\section{Mirror symmetry of stringy and affine Hodge numbers}
\subsection{Base change of the affine Hodge groups}
\label{sec_affloghodge}
Recall from [\cite{grosie2}, Lemma 3.12], for $v\in\P^{[0]}$, the identification
$\Omega^r_{v}=\shO_{X_v}\otimes_\k \bigwedge^r \check\Lambda_{v,\k}$.
We define
$${\bLambda}^r=\bigoplus_{v\in\P^{[0]}} (q_{v})_* \shO_{X_v}\otimes_\k \bigwedge^r \check\Lambda_{v,\k}$$
which becomes a complex $({\bLambda}^\bullet,d)$ under exterior differentiation.
There is a barycentric resolution in the sense of Section~\ref{A_bct} 
for this complex via
$$\SC^k(\bLambda^r)=\bigoplus_{e:\tau_0\ra...\ra\tau_k} 
\bigoplus_{\atop{g:v\ra\tau_0}{v\in\P^{[0]}}}
(q_{\tau_k})_*F_s(e\circ g)^*\shO_{X_v}\otimes_\k \bigwedge^r \check\Lambda_{v,\k},$$
and the barycentric differential is induced by the restriction for a vertex which factors
through the extended edge and the zero map on that vertex otherwise.
We won't make use of the following lemma but give it for completeness.

\begin{lemma} \label{res_bLambda}
For each $r$, we have an exact sequence
$$0\ra{\bLambda}^r\ra \SC^0(\bLambda^r) \ra \SC^1(\bLambda^r)\ra ....$$
where the first non-trivial map is the obvious one.
\end{lemma}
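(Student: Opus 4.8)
The plan is to verify exactness stalk-locally, after first separating the vertex index. Since $\bLambda^r=\bigoplus_{v\in\P^{[0]}}\shF_v$ with $\shF_v=(q_v)_*\shO_{X_v}\otimes_\k\bigwedge^r\check\Lambda_{v,\k}$, and since each barycentric chain occurring in $\SC^\bullet(\bLambda^r)$ carries a distinguished base vertex $g\colon v\ra\tau_0$ which the barycentric differential preserves (omitting $\tau_0$ re-routes $g$ as $v\ra\tau_1$, omitting $\tau_k$ leaves $g$ unchanged, and by construction the components mixing two different vertices vanish), the complex splits as $\SC^\bullet(\bLambda^r)=\bigoplus_{v}\SC^\bullet_v$ with $\SC^k_v=\bigoplus_{v\le\tau_0\subsetneq\dots\subsetneq\tau_k}(q_{\tau_k})_*\shO_{X_{\tau_k}}\otimes_\k\bigwedge^r\check\Lambda_{v,\k}$, compatibly with the decomposition of $\bLambda^r$ and with the augmentation. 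So it suffices to show that $0\ra\shF_v\ra\SC^0_v\ra\SC^1_v\ra\dots$ is exact for each fixed $v$, and this I would check at geometric points.

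First I would take a geometric point $p\ra X$ and let $\xi\in\P$ be the unique cell with $p\in\Int(X_\xi)$. The stalk of a summand $(q_{\tau_k})_*\shO_{X_{\tau_k}}\otimes_\k\bigwedge^r\check\Lambda_{v,\k}$ at $p$ vanishes unless $p\in X_{\tau_k}$, i.e. unless $\tau_k$ is a face of $\xi$; similarly $(\shF_v)_p=0$ unless $v$ is a vertex of $\xi$ (and if it is not there is nothing to prove for that $v$). Writing $W=\bigwedge^r\check\Lambda_{v,\k}$, the stalk complex becomes
$$0\ra\shO_{X_v,p}\otimes_\k W\ra\bigoplus_{v\le\tau_0\le\xi}\shO_{X_{\tau_0},p}\otimes_\k W\ra\bigoplus_{v\le\tau_0\subsetneq\tau_1\le\xi}\shO_{X_{\tau_1},p}\otimes_\k W\ra\dots,$$
the maps being the alternating sum of face-omissions, where the omission of the last cell $\tau_k$ is accompanied by the restriction $\shO_{X_{\tau_k},p}\sra\shO_{X_{\tau_{k+1}},p}$. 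The index sets here are the order complexes of the poset interval $[v,\xi]=\{\tau\in\P\mid v\le\tau\le\xi\}$, which is the face poset of the polytope $\xi$ restricted to faces through $v$; in particular it has a maximum, namely $\xi$ itself.

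To finish, I would invoke the barycentric machinery of Section~\ref{A_bct}: verify criterion $\op{(L)}$ of Lemma~\ref{baryacyclic} for $\bLambda^r$, equivalently exhibit the contracting homotopy on the displayed stalk complex supplied by the terminal element $\xi$ — on the summand indexed by $v\le\tau_0\subsetneq\dots\subsetneq\tau_k\le\xi$ one appends $\xi$, using $\shO_{X_{\tau_k},p}\sra\shO_{X_\xi,p}$ on coefficients, when $\tau_k\neq\xi$, and maps to zero when $\tau_k=\xi$ — after which $dh+hd=\mathrm{id}$ away from the augmentation follows by the usual ``cone is acyclic'' computation. The hypothesis of $\op{(L)}$, that a compatible collection of local sections of $\shO_{X_{\tau_k}}$ lifts to $\shO_{X_{\tau_{k-1}}}$, holds because the relevant stalks are free $\shO$-modules and functions extend along the closed inclusions of toric strata; exactness at $\SC^0(\bLambda^r)$, i.e. injectivity of the augmentation and the identification of its image with $\ker d_\bct$, is then immediate from the $v$-split description, just as the analogous first-term statement of [\cite{grosie2},~Thm.~3.5]. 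The main obstacle is purely the bookkeeping of the restriction maps attached to the last-cell omission in $d_\bct$ when running the homotopy — but this is exactly the check already carried out when verifying $\op{(L)}$ for $\shQ^{\bullet,\bullet}(\Omega^r)$ in Section~\ref{secstratadiff}, so once that appendix is in place there is no genuine geometric difficulty.
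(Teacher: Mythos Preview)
Your proof is correct and follows essentially the paper's own approach: localize at a geometric point in $\Int(X_\xi)$, then verify criterion~$\op{(L)}$ and invoke Lemma~\ref{baryacyclic}; the explicit $v$-splitting you carry out is a useful clarification that the paper leaves implicit. One small slip in your parenthetical: the ``append $\xi$'' map you describe raises degree in the same direction as $d_\bct$, so $dh+hd=\id$ cannot hold as written---the genuine cochain homotopy would have to evaluate at the chain with $\xi$ appended and hence needs a section of the restriction $\shO_{X_{\tau_k},p}\sra\shO_{X_\xi,p}$, which is precisely what criterion~$\op{(L)}$ furnishes; so your primary route through Lemma~\ref{baryacyclic} is indeed the right one and the homotopy aside is unnecessary.
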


\begin{proof} Injectivity at the first non-trivial term is obvious.
Given any $\tau$ and a geometric point 
$x\in\Int(X_\tau)$, we have 
$\SC^0(\bLambda^r)_x= \bigoplus_{\atop{g:v\ra\tau_0,\tau_0\ra\tau}{v\in\P^{[0]}}}
(q_{\tau_k,*}F_s(g)^*\shO_{X_v})_x\otimes_\k \bigwedge^r \check\Lambda_{v,\k}.$
An element of this maps to zero under $d_\bct$ if and only if it is a compatible
collection which implies that it lifts to 
$\bigoplus_{v\ra\tau} (q_{v,*}\shO_{X_v})_x\otimes_\k \bigwedge^r \check\Lambda_{v,\k}$
for each $v$ componentwise. The inverse is also true, so we have exactness also at the
second non-trivial term.
The exactness of the tail follows from Lemma~\ref{baryacyclic} upon verifying 
criterion $\op{(L)}$ which is easy.
\end{proof}

\begin{lemma} \label{specialbottomterm}
Given a c.i.t. toric log CY space $X$, there is an acyclic 
resolution $\shI^{\bullet,\bullet,\bullet}$ with augmentation
$$0\ra \SC^\bullet(\Omega^\bullet) \ra \shI^{\bullet,\bullet,0}$$
such that the exterior differential $d$ is trivial on
$\Gamma(X,\shI^{\bullet,\bullet,0})$.
\end{lemma}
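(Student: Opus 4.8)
The plan is to build $\shI^{\bullet,\bullet,\bullet}$ one barycentric-and-de-Rham bidegree at a time, resolving each term $\SC^a(\Omega^b)$ of the Gross--Siebert complex in a third (``resolution'') direction so that the zeroth layer consists only of pushforwards of \emph{free} sheaves $\shO_Y\otimes_\k\bigwedge^b\check\Lambda$ on the strata $Y=X_{\tau_a}$. On such a sheaf the exterior differential of Remark~\ref{deRham} is $z^m\otimes\alpha\mapsto z^m\otimes m\wedge\alpha$, so it annihilates the constant ($m=0$) global sections, which is precisely the triviality asserted by the lemma; the work is to produce the rest of the resolution compatibly with $d_\bct$ and with $d$.

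\textbf{The zeroth layer.} For a chain $e\colon\tau_0\ra\cdots\ra\tau_a$ and a vertex $g\colon v\ra\tau_0$ with $v\in\P^{[0]}$, Proposition~\ref{delta0kernel} identifies $F_s(e)^*\Omega^b_{\tau_0}/\Tors$ with the kernel of a map
$$F_s(e\circ g)^*\Omega^b_v\;=\;\shO_{X_{\tau_a}}\otimes_\k\textstyle\bigwedge^b\check\Lambda_{v,\k}\;\stackrel{\delta}{\lra}\;\bigoplus_{i,\,w_i\neq v_i}\Omega^{b-1}_{(Z'_{\tau_a,i})^{\dagger}/\k^{\dagger}}.$$
In particular the restriction-to-a-vertex maps are injective, and taking them together over all $g$ gives an inclusion $\SC^a(\Omega^b)\hra\SC^a(\bLambda^b)$ into the barycentric complex of $\bLambda^\bullet$ from Section~\ref{sec_affloghodge}. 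I would set $\shI^{a,b,0}:=\SC^a(\bLambda^b)$. Each of its summands $q_{\tau_a,*}(\shO_{X_{\tau_a}}\otimes_\k\bigwedge^b\check\Lambda_{v,\k})$ is the pushforward along a closed immersion of a free sheaf on the complete toric variety $X_{\tau_a}$, hence $\Gamma$-acyclic (since $H^{>0}(X_{\tau_a},\shO)=0$) with $\Gamma(X,\shI^{a,b,0})=\bigoplus_{e,g}\bigwedge^b\check\Lambda_{v,\k}$; and $d$ acts on this as a restriction of $z^m\otimes\alpha\mapsto z^m\otimes m\wedge\alpha$, so it is zero on constants. The point to verify here is that $\SC^\bullet(\Omega^b)\hra\SC^\bullet(\bLambda^b)$ respects $d_\bct$ and $d$: compatibility with $d$ is immediate from Remark~\ref{deRham}, while compatibility with $d_\bct$ uses the comparison of the two outgoing arrows of $F_s(e)^*\Omega^b_{\tau_0}/\Tors$ from \cite{grosie2}, Lemma~3.13 --- equivalently the change-of-vertex isomorphisms $\Phi_{g,g'}$ of Section~\ref{sec_indep_vertex} --- which on the $\shO\otimes\bigwedge\check\Lambda$ level is exactly the transport rule built into the barycentric differential of $\bLambda^\bullet$.

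\textbf{The higher layers.} The cokernel $\SC^a(\bLambda^b)/\SC^a(\Omega^b)$ embeds, again by Proposition~\ref{delta0kernel}, into a direct sum of pushforwards $q_*\Omega^{b-1}_{(Z')^{\dagger}/\k^{\dagger}}$ over the non-degenerate divisors $Z'=Z'_{\tau_a,i}$, and, iterating, over the strata of their toric boundaries. These are log Hodge sheaves of the very same shape on toric data of strictly smaller dimension, so by descending induction on $\dim X$ (the base case $\dim X=0$ being trivial) they admit acyclic resolutions of the same kind; past the zeroth layer one no longer needs freeness, and the Koszul resolutions of Proposition~\ref{naturalres}, whose terms $\shK^{j}(W,Z,-(r+1))$ with $j>r$ are $\Gamma$-acyclic by Lemma~\ref{cohotoriclb}, furnish the tails. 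Splicing these over all $e$ and all $Z'$, propagating $d_\bct$ as in Section~\ref{secstratadiff} (via Lemma~\ref{lemstratafunct} and the maps $d_e^{\hat e}$) and the $d/\res$ differential in the $b$-direction, yields the triple complex $\shI^{\bullet,\bullet,\bullet}$ with the stated augmentation.

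\textbf{The main obstacle.} The real difficulty is not a single vanishing but the simultaneous bookkeeping of three compatible differentials: $d_\bct$ (which mixes pullbacks along stratum inclusions with the change-of-vertex maps $\Phi_{g,g'}$), the exterior differential $d$ (which must remain a derivation compatible with all the inclusions $\SC^r(\cdot,\cdot)\hra$ of Section~\ref{section_koszkoho}), and the resolution differential. Concretely one must check that the $\bLambda$-layer is genuinely stable under $d_\bct$ and that $d$ \emph{vanishes} on its global sections rather than being merely defined there --- this is the heart of the lemma; the remaining steps are the inductive gluing already implicit in Lemmas~\ref{COmegaexact} and \ref{res_bLambda} and the constructions of Section~\ref{secstratadiff}.
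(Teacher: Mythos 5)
Your zeroth layer is exactly the paper's: take $\shI^{k,r,0}=\SC^k(\bLambda^r)$, note the injection from $\SC^k(\Omega^r)$ via [\cite{grosie2}, Prop.~3.8] (equivalently Prop.~\ref{delta0kernel}), observe that each summand $(q_{\tau_k})_*\bigl(\shO_{X_{\tau_k}}\otimes_\k\bigwedge^r\check\Lambda_{v,\k}\bigr)$ is $\Gamma$-acyclic, and that $\Gamma(X,\SC^k(\bLambda^r))$ consists of constant forms so $d$ vanishes there. Up to this point you match the paper.

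Where you diverge is in producing the higher layers $\shI^{\bullet,\bullet,\ge 1}$. The paper's proof is far shorter than yours here: once one has an injective map of double complexes $\SC^\bullet(\Omega^\bullet)\hra\shI^{\bullet,\bullet,0}$ with acyclic target, the remainder of the triple complex is supplied by \emph{any} injective resolution (Godement's canonical resolution is cited). Cartan--Eilenberg machinery takes care of all three differentials and their compatibilities automatically; no explicit geometric model is needed past $l=0$.

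Your proposal instead tries to build the higher layers out of the Koszul complexes $\shK^j(W,Z,-(r+1))$ of Proposition~\ref{naturalres}, propagating the de Rham differential. This cannot work as stated: the de Rham differential of Remark~\ref{deRham} is only defined on $\shO_X\otimes\bigwedge^r W$, and it does \emph{not} extend to the twisted Koszul terms $\shK^j(W,Z,m)=\shO_X((j+m)Z)\otimes\bigwedge^j\hat W$ with nonzero twist. The paper says exactly this at the end of Section~\ref{secstratadiff} about $\shQ^{\bullet,\bullet}(\Omega^r)$: ``it is impossible to extend the de Rham differential to it to obtain a triple complex,'' because differentiating a section of $\shO_{X_\tau}(lZ_\tau)$ lands in $\shO_{X_\tau}((l{+}1)Z_\tau)$ rather than staying in the same twist. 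So a Koszul-based $\shI^{\bullet,\bullet,\ge 1}$ would not carry a de Rham differential compatible with the resolution differential, and the total differential $D'$ used in the proof of Theorem~\ref{Thm_affinlog}(b) would not square to zero. There is also a smaller wrinkle: the object you want to resolve at stage $l=1$ is the cokernel of $\SC^a(\Omega^b)\ra\bigoplus_{g}q_*\shO\otimes\bigwedge^b\check\Lambda_{v,\k}$ taken over \emph{all} vertices $g$ simultaneously, which is not simply a direct sum of the cokernels of Prop.~\ref{delta0kernel}. Both issues are sidestepped by the Godement route, which is the argument the lemma actually relies on; the Koszul resolutions are used elsewhere (for $\shQ^{\bullet,\bullet}(\Omega^r)$, at fixed $r$) precisely because there one does \emph{not} need $d$ to extend.
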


\begin{proof} It suffices to construct an injective map of complexes
$\SC^\bullet(\Omega^\bullet) \ra \shI^{\bullet,\bullet,0}$ where $\shI^{k,r,0}$ is 
acyclic for each $k,r$. The remainder 
of $\shI^{\bullet,\bullet,\bullet}$ can then be added by an injective resolution, e.g., 
Godement's canonical resolution.
We claim that we may just take $\shI^{k,r,0}=\SC^k(\bLambda^r)$. 
By the c.i.t. hypothesis and by what we said at the beginning of 
Section~\ref{sec_barycentric}, namely that we have the result [\cite{grosie2}, Prop.~3.8],
i.e., for each $g:v\ra\tau_0$ and $e:\tau_0\ra\tau_k$, 
we have an inclusion $F_s(e)^*\Omega^r/\Tors\hra F_s(e\circ g)^*\Omega^r=F_s(e\circ g)^*\shO_{X_v}\otimes_\k \bigwedge^r \check\Lambda_{v,\k}$.
We may use this to get the injection $\SC^\bullet(\Omega^\bullet) \hra \shI^{\bullet,\bullet,0}$
termwise as 
$$ F_s(e)^*\Omega^\bullet_{\tau_0}/\Tors \ra
\bigoplus_{\atop{g:v\ra\tau_0}{v\in\P^{[0]}}}
(q_{\tau_k})_*F_s(e\circ g)^*\shO_{X_v}\otimes_\k \bigwedge^r \check\Lambda_{v,\k}.$$
Because $\Gamma(X,\SC^k(\bLambda^r))$ consists of constant differential forms only, $d$ is trivial on it.
Acyclicity is apparent.
\end{proof}

\begin{proof}[Proof of Theorem~\ref{Thm_affinlog}] 
We show that, for $e:\tau_1\ra\tau_2$,
\begin{equation} \label{Gammaincit}
H^0(X_{\tau_2},F(e)^*\Omega^p_{\tau_1}/\Tors)
=\Gamma(W_e,i_*\bigwedge^p\check\Lambda\otimes_\ZZ\k).
\end{equation}
For the h.t. case this is part of Thm.~\ref{cohotau}. 
We can extended this to the c.i.t. case as follows. 
Recall that there is a set of Cartier divisors 
$Z_{\tau_1,1},...,Z_{\tau_1,t}$ which are the 
reduced components of the closure of $Z\cap\Int(X_\tau)$. We set
$Z_{e,i}=Z_{\tau_1,i}\cap X_{\tau_2}$ which might be empty. 
The empty ones won't play a role in the following, so let us exclude them.
Fix some $\P^{[0]}\ni v\stackrel{g}{\lra}\tau_1$ and define $\Omega^p_i$ by the exact sequence
$$0\ra \Omega^p_i\ra F_{s}(e\circ g)^*\Omega^p_v
\stackrel{\delta_i}{\lra}\Omega^{p-1}_{(Z_{e,i})^{\dagger}/\k^{\dagger}}\ra 0$$
where $\delta_i$ is the map $\delta$ in Prop.~\ref{delta0kernel} composed with the $i$th
projection. By loc.cit., we then get
$(F_{s}(e)^*\Omega^p_{\tau_1})/\Tors = \bigcap_{i=1}^t \Omega^p_i.$
By Thm.~\ref{cohotau}, we have
$$\Gamma(X_{\tau_2},\Omega^p_i) = \big(\bigwedge^p\check\Lambda_v\otimes_\ZZ\k\big)^{G_i}$$
where $G_i$ is the group of those local monodromy transformations which are 
transvections that fix $(\Delta_{\tau_1,i}\cap \tau_2)^\perp$ and shear by a vector in 
$T_{(\check\Delta_{\tau_2,i})\cap\tau_1^\perp}$. Because the monodromy 
on $W_e\backslash\Delta$ is generated by $\{G_i\,|\,1\le i\le t\}$, we have 
$$\Gamma(W_e,i_*\bigwedge^p\check\Lambda\otimes_\ZZ\k)=\bigcap_{i=1}^t
\big(\bigwedge^p\check\Lambda_v\otimes_\ZZ\k\big)^{G_i}$$
and conclude (\ref{Gammaincit}) by the left-exactness of the functor $\Gamma$.
To prove a), note that 
$$H^{p,q}_\llog(X)=H^q(X,\Omega^p)=\HH^q(X,\SC^\bullet(\Omega^p)).$$
Let $\shJ^{\bullet,\bullet}$ be an injective resolution of $\SC^\bullet(\Omega^p)$ with the augmentation
$\SC^\bullet(\Omega^p)\hra \shJ^{\bullet,0}$.
If we denote by $D$ the total differential of the double complex $\Gamma(X,\shJ^{\bullet,\bullet})$ then
$$H^{p,q}_\llog(X)=H^q_D \Gamma(X,\shJ^{\bullet,\bullet}).$$
There is an injection
$$\frac{\ker (D|_{\Gamma(X,\shJ^{q,0})})}{\im D\cap \Gamma(X,\shJ^{q,0})} \hra H^q_D \Gamma(X,\shJ^{\bullet,\bullet})$$
The left hand side can be rewritten as
$H^q_{d_\bct} \Gamma(X,\SC^\bullet(\Omega^p)).$
By what we said before this coincides with the \v{C}ech cohomology group
$\check{H}^q(\{W_\tau\,|\, \tau\in\P\},i_*\bigwedge^p\check\Lambda\otimes_\ZZ\k)$
 and we are done with part a).

The proof of b) is similar. Let $\shI^{\bullet,\bullet,\bullet}$ be a resolution as given 
in Lemma~\ref{specialbottomterm} and let $D'$ denote the total differential on $\Gamma(X,\shI^{\bullet,\bullet,\bullet})$.
We have 
$\HH^{k}(X,\Omega^\bullet)=H^k_{D'} \Gamma(X,\shI^{\bullet,\bullet,\bullet}).$
Because $d$ is trivial on $\Gamma(X,\shI^{\bullet,\bullet,0})$ by arguing as in a) we get for each $p,q$ with $p+q=k$
an injection 
$$H^{p,q}_\aff(X) \hra \HH^{k}(X,\Omega^\bullet).$$
Once again from the triviality of $d$ on $\Gamma(X,\shI^{\bullet,\bullet,0})$ one concludes that these injections can be extended to 
their direct sum as required in the assertion.
\end{proof}

\begin{definition} For a c.i.t. toric log CY space $X$, we call 
$T^{p,q}_\llog(X)\, =\, H^{p,q}_\llog(X)/H^{p,q}_\aff(X)$
the \emph{log twisted sectors}.
\end{definition}

As explained in [\cite{grosie2},~Cor.~3.24], by [\cite{grosie1},~Prop.~1.50],
we obtain, for a general $(B,\P)$, the following.
\begin{theorem}[Gross, Siebert] \label{affduality}
Assume that the holonomy 
of $B$ is contained in $\op{SL}_n(\ZZ)\ltimes\ZZ^n$ where $n={\dim B}$.
Let $\varphi$ be some multi-valued strictly convex piecewise linear 
function on $(B,\P)$ and $(\check B,\check\P,\check\varphi)$ be the
discrete Legendre dual. If $\check X$ is a toric log CY space with dual
intersection complex $(\check B,\check\P)$ then
$$H_\aff^{p,q}(X)\cong H_\aff^{n-p,q}(\check X).$$
\end{theorem}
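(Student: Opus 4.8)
The plan is to reduce Theorem~\ref{affduality} to an isomorphism of constructible sheaves on $B$, obtained by tracking the local systems $\Lambda,\check\Lambda$ through the discrete Legendre transform. First I would recall from [\cite{grosie1},~\S1.4, Prop.~1.50] that the discrete Legendre transform associated with the strictly convex multi-valued piecewise linear function $\varphi$ produces an integral affine homeomorphism $\ell\colon B\ra\check B$ taking $\P$ to $\check\P$ and the discriminant locus $\Delta$ to $\check\Delta$, and that over the smooth loci it canonically identifies $\Lambda$ on $B\backslash\Delta$ with $\ell^*\check\Lambda_{\check B}$ and, dually, $\check\Lambda$ on $B\backslash\Delta$ with $\ell^*\Lambda_{\check B}$. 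Taking $p$-th exterior powers this gives $\bigwedge^p\check\Lambda\cong\ell^*\bigwedge^p\Lambda_{\check B}$ on $B\backslash\Delta$.

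Next I would invoke the holonomy hypothesis. The condition that the holonomy of $B$ lie in $\op{SL}_n(\ZZ)\ltimes\ZZ^n$ says exactly that the monodromy of $\Lambda$ on $B\backslash\Delta$ lands in $\op{SL}_n(\ZZ)$, so $\bigwedge^n\Lambda$ is the constant local system $\ZZ$, i.e. there is a global monodromy-invariant volume form. The wedge pairing $\bigwedge^{p}\Lambda\otimes\bigwedge^{n-p}\Lambda\ra\bigwedge^n\Lambda\cong\ZZ$ is then perfect, yielding a canonical isomorphism of local systems $\bigwedge^{p}\Lambda\cong\bigwedge^{n-p}\check\Lambda$; and since $\ell$ preserves this $\op{SL}$-property, the analogue holds on $\check B\backslash\check\Delta$, namely $\bigwedge^{p}\Lambda_{\check B}\cong\bigwedge^{n-p}\check\Lambda_{\check B}$. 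Combining with the previous step, $\bigwedge^p\check\Lambda\cong\ell^*\bigwedge^{n-p}\check\Lambda_{\check B}$ on $B\backslash\Delta$.

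Finally I would push forward along the open inclusions. Since $\ell$ is a homeomorphism with $\ell(\Delta)=\check\Delta$, it restricts to a homeomorphism $B\backslash\Delta\ra\check B\backslash\check\Delta$ and satisfies $\ell\circ i=\check i\circ\ell|_{B\backslash\Delta}$; hence $\ell^*\check i_*\shF\cong i_*(\ell|_{B\backslash\Delta})^*\shF$ for any sheaf $\shF$ on $\check B\backslash\check\Delta$. Applying this to $\shF=\bigwedge^{n-p}\check\Lambda_{\check B}$ and using the isomorphism above, $i_*\bigwedge^p\check\Lambda\cong\ell^*\bigl(\check i_*\bigwedge^{n-p}\check\Lambda_{\check B}\bigr)$ as sheaves on $B$. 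Tensoring with $\k$ and taking $H^q$ (invariant under the homeomorphism $\ell$) gives $H^{p,q}_\aff(X)=H^q(B,i_*\bigwedge^p\check\Lambda\otimes\k)\cong H^q(\check B,\check i_*\bigwedge^{n-p}\check\Lambda_{\check B}\otimes\k)=H^{n-p,q}_\aff(\check X)$.

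The only non-formal input is the identification of local systems under the Legendre transform together with its compatibility with extension across the discriminant locus, which is precisely the content of [\cite{grosie1},~Prop.~1.50] (as exploited in [\cite{grosie2},~Cor.~3.24]); granting that, the argument is a definition chase plus the volume-form duality, so I expect no genuine obstacle beyond citing these results correctly and checking that the $\op{SL}$-holonomy condition is exactly what makes the $p\leftrightarrow n-p$ flip available simultaneously on $B$ and on $\check B$.
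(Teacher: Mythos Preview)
Your proposal is correct and follows exactly the approach the paper takes: the paper does not give its own proof but simply cites [\cite{grosie2},~Cor.~3.24] and [\cite{grosie1},~Prop.~1.50], and your argument is a faithful unpacking of precisely those citations (Legendre duality of local systems plus the volume-form pairing afforded by the $\op{SL}$-holonomy, then pushing forward across the discriminant). There is nothing to add.
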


So we see that the affine Hodge numbers fulfill mirror symmetry duality.
The duality for the ordinary Hodge numbers follows for the case where the twisted sectors vanish.
We are going to consider what happens if this is not the case.


\subsection{Twisted sectors in low dimensions} \label{Sec_twistsec}
We now consider the situation up to dimension $4$ as defined in 
Theorem~\ref{Thm_logdeRham}. 

\begin{lemma} \label{elemthreesimplex}
Let $\check\Delta$ be an elementary simplex with $\dim\check\Delta=3$. 
Let $f$ be non-degenerate.
For $k>0$, we have $R_0(f,C(\check\Delta))_k=R_1(f,C(\check\Delta))_k.$ 
Moreover, $R_1(f,C(\check\Delta))_k=0$ for $k\neq 2$.
\end{lemma}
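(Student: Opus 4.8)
The plan is to compute everything from the monomial description of $R_0$ that is available once $f$ is non-degenerate, and then to reduce the two assertions to a few combinatorial facts about the half-open slices $\check\Delta^{\core{l}}$ of the cone $C(\check\Delta)$.

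First I would record the monomial picture. Since $\check\Delta$ is a simplex and $f$ is non-degenerate, Proposition~\ref{Fermatiso} identifies $R_0(f,C(\check\Delta))_l$ with $\k^{\check\Delta^{\core{l}}\cap M}$ via $z^m\mapsto\overline{z^{(m,l)}}$; in particular the classes $\overline{z^{(m,l)}}$, $m\in\check\Delta^{\core{l}}\cap M$, form a basis. By the definition of $R_1$, the subspace $R_1(f,C(\check\Delta))_l\subseteq R_0(f,C(\check\Delta))_l$ is spanned by exactly those of these basis vectors for which $(m,l)\in\relint C(\check\Delta)$, i.e.\ for which all barycentric coordinates $\lambda_v(m)$ of $m$ relative to $\check\Delta$ are strictly positive.

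Next the combinatorics. The simplex $\check\Delta$ has $|\check\Delta^{[0]}|=\dim\check\Delta+1=4$ vertices, so (recalling $\check\Delta^{\core{l}}=\emptyset$ for $l\ge|\check\Delta^{[0]}|$) we get $\check\Delta^{\core{l}}=\emptyset$ for $l\ge 4$ and $\check\Delta^{\core{0}}=\{0\}$; since $0\notin\relint C(\check\Delta)$ this already yields $R_0(f,C(\check\Delta))_l=R_1(f,C(\check\Delta))_l=0$ for $l\ge 4$ and $R_1(f,C(\check\Delta))_0=0$. For the remaining low degrees I would use the lattice-preserving involution $m\mapsto m_{\check\Delta}-m$, where $m_{\check\Delta}:=\sum_{v\in\check\Delta^{[0]}}v\in M$, which replaces each $\lambda_v$ by $1-\lambda_v$ and hence carries the lattice points of $\check\Delta^{\core{l}}$ having all barycentric coordinates strictly positive bijectively onto the analogous lattice points of $\check\Delta^{\core{4-l}}$. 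For $l=1$ the inequalities $\lambda_v<1$ are automatic once the positive $\lambda_v$ sum to $1$, so that set is just $\relint\check\Delta\cap M$, which is empty because $\check\Delta$ is elementary; by the involution the same is true for $l=3$, and since no proper face can contribute to $\check\Delta^{\core{3}}$ for dimension reasons (Lemma~\ref{corefunc}b)) we in fact get $\check\Delta^{\core{3}}\cap M=\emptyset$, hence $R_0(f,C(\check\Delta))_3=R_1(f,C(\check\Delta))_3=0$. Combined with the degree bound and with $R_1(f,C(\check\Delta))_1=0$, this establishes the second assertion $R_1(f,C(\check\Delta))_k=0$ for $k\ne2$, and it reduces the identity $R_0=R_1$ to degrees $l=1$ and $l=2$.

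By the monomial picture, $R_0(f,C(\check\Delta))_l=R_1(f,C(\check\Delta))_l$ holds precisely when no lattice point of $\check\Delta^{\core{l}}$ has a vanishing barycentric coordinate. By Lemma~\ref{corefunc}b), $\check\Delta^{\core{l}}=\coprod_{F\subseteq\check\Delta}\relint(F^{\core{l}})$, so this says $\relint(F^{\core{l}})\cap M=\emptyset$ for every proper face $F\subsetneq\check\Delta$, which by Lemma~\ref{standardequiv} holds for all $l>0$ exactly when every proper face of $\check\Delta$ is a standard simplex. Thus the whole lemma comes down to the statement that the proper faces of the elementary $3$-simplex $\check\Delta$ are standard; upgrading ``$\check\Delta$ has no interior lattice points'' to control of all the lattice points on the facets and edges of $\check\Delta$ is the step I expect to be the crux and the main obstacle. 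Granting it, each $\check\Delta^{\core{l}}\cap M$ lies entirely in the open part $\{m : \lambda_v(m)>0\ \forall v\}$ of $\check\Delta^{\core{l}}$, so $R_1(f,C(\check\Delta))_l=R_0(f,C(\check\Delta))_l$ for all $l>0$, which finishes the proof.
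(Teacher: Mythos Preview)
Your argument is correct and, for the identity $R_0=R_1$ in positive degrees, coincides with the paper's: both reduce to showing that every proper face of $\check\Delta$ is a standard simplex and then invoke Lemma~\ref{standardequiv}. The step you single out as ``the crux and the main obstacle'' is in fact immediate once one uses the intended meaning of \emph{elementary simplex}, namely that the only lattice points of $\check\Delta$ are its vertices (this is the definition in \cite{grosie2}; the phrase ``does not contain any interior integral points'' in the introduction is loose). Under this reading every face of $\check\Delta$ is again elementary, and an elementary simplex of dimension $\le 2$ is automatically standard. The paper dispatches it in one line: ``each facet of $\check\Delta$ is a two-dimensional elementary simplex, thus a standard simplex.'' Note that under the weak reading (no \emph{interior} lattice points only) the lemma itself would already fail at $k=1$, so the strong reading is forced by the statement.

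For the vanishing $R_1(f,C(\check\Delta))_k=0$ when $k\neq 2$, your route is more self-contained than the paper's. The paper handles $k=0,1$ as you do but then cites the Poincar\'e-type pairing on $R_1$ from \cite{bormav}, Prop.~6.7, to obtain $k=3,4$ by duality with $k=1,0$. You instead dispose of $k\ge 4$ via $\check\Delta^{\core{k}}=\emptyset$ and of $k=3$ via the lattice involution $m\mapsto m_{\check\Delta}-m$ together with the observation that $F^{\core{3}}=\emptyset$ for every proper face $F$. Your involution is the combinatorial shadow of the Borisov--Mavlyutov pairing, so the two arguments are close cousins, but yours avoids the external reference.
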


\begin{proof}
The natural inclusion $R_1(f,C(\check\Delta))_k\hra R_0(f,C(\check\Delta))_k$
becomes an isomorphism for $k>0$ because each lattice point in 
$k\check\Delta$ which isn't a sum of a lattice point in $(k-1)\check\Delta$
and one in $\check\Delta$ lies in the relative interior of $k\check\Delta$ because otherwise 
it would have to be in some facet of $\check\Delta$. This, however, can be excluded by 
the fact that each facet of $\check\Delta$ is
is a two dimensional elementary simplex, thus a standard simplex, and Lemma~\ref{standardequiv}.

The second assertion then works out as follows. It is clear for $k=0$. 
It follows from elementarity of $\check\Delta_\tau$ and Prop.~\ref{Fermatiso} for $k=1$.
The cases $k=3,4$ then follow by the 
pairing given in \cite{bormav}, Prop.~6.7.
\end{proof}

\begin{proof}[Proof of Theorem~\ref{Thm_logdeRham}] 
By Thm.~\ref{Thm_affinlog}, $H_\aff:=\bigoplus_{p,q} H^{p,q}_\aff$ injects in the $E_1$-term of the hypercohomology spectral sequence of $\Omega^\bullet$
and survives to the limit. Thus, $\ker d_1\cap H_\aff=0$ and $\im d_1\cap H_\aff=0$.
Cases a) and c) follow if we show that
$H^{p,q}_\llog\neq H^{p,q}_\aff$ for only one pair $p,q$.
For a) this is $p=q=1$ which we deduce from Thm.~\ref{maintheorem}.
For c) the exceptional pair is $p=q=2$ which we also deduce from Thm.~\ref{maintheorem} together with
Lemma~\ref{standardequiv} to see that only three-dimensional 
simplices contribute to higher cohomology terms and eventually Lemma~\ref{elemthreesimplex} and 
Thm.~\ref{maintheorem} to locate the contribution.
Similarly to show b), 
we demonstrate that $H^{p,q}_\llog\neq H^{p,q}_\aff$ only for $(p,q)\in\{(1,2),(2,1)\}$.
We compute the log twisted sectors 
via Thm.~\ref{cohotau} and Lemma~\ref{thmstratafunct}. 
We keep the convention that $\omega$'s denote
one-dimensional and $\tau$'s two-dimensional faces.
Note that $R(Z_\omega)_1=\Gamma^{\core{1}}(Z_\omega)$ 
contains a canonical subspace induced from lattice points in the
relative interior of $\check\Delta_\omega$ 
which we denote by $\Gamma^{\ocore{1}}(Z_\omega)$. 
For $e:\omega\ra\tau$, $Z_e=Z_\tau$, we have 
$\dim\Delta_e=\dim\Delta_\omega=1$ and
$\dim\check\Delta_e=\dim\check\Delta_\tau=1$. Given $g:v\ra\omega$, we obtain
\[
\begin{array}{cclcccc}
\frac{\langle\bigwedge^\top T_{\check\Delta_{\tau}} \rangle 
    \cap \bigwedge^{2}\check\Lambda_{v,\k}}
    {\langle\bigwedge^\top T_{\check\Delta_{\tau}} \rangle
    \cap \bigwedge^{2}\Delta_{\omega}^\perp}
    &\cong& \check\Lambda_{v,\k}/\Delta_{\omega}^\perp \cong\k &\qquad
\frac{\langle\bigwedge^\top T_{\check\Delta_{\tau}} \rangle 
    \cap \bigwedge^{3}\check\Lambda_{v,\k}}
    {\langle\bigwedge^\top T_{\check\Delta_{\tau}} \rangle
    \cap \bigwedge^{3}\Delta_{\omega}^\perp}\!&\cong&\!\k\\[2mm]

\frac{\langle\bigwedge^\top T_{\check\Delta_{\omega}} \rangle 
    \cap \bigwedge^{2}\check\Lambda_{v,\k}}
    {\langle\bigwedge^\top T_{\check\Delta_{\omega}} \rangle
    \cap \bigwedge^{2}\Delta_{\omega}^\perp}&\cong&
    \left\{
      \atop{\check\Lambda_{v,\k}/\Delta_{\omega}^\perp\cong\k^1
        \hbox{ for }\dim\check\Delta_{\omega}=1}{
	    0\qquad\qquad\,
        \hbox{ for }\dim\check\Delta_{\omega}=2}\right. & \qquad
\frac{\langle\bigwedge^\top T_{\check\Delta_{\omega}} \rangle 
    \cap \bigwedge^{3}\check\Lambda_{v,\k}}
    {\langle\bigwedge^\top T_{\check\Delta_{\omega}} \rangle
    \cap \bigwedge^{3}\Delta_{\omega}^\perp}\!&\cong&\!\k\\[2mm]

\frac{\langle\bigwedge^\top T_{\check\Delta_{\tau}} \rangle 
    \cap \bigwedge^{2}\check\Lambda_{v,\k}}
    {\langle\bigwedge^\top T_{\check\Delta_{\tau}} \rangle
    \cap \bigwedge^{2}\Delta_{\tau}^\perp}
    &\cong& \left\{
      \atop{\check\Lambda_{v,\k}/\Delta_{\tau}^\perp\cong\k^1
        \hbox{ for }\dim\Delta_{\tau}=1}{
	    \check\Lambda_{v,\k}/\Delta_{\tau}^\perp\cong\k^2
        \hbox{ for }\dim\Delta_{\tau}=2}\right. &\qquad
\frac{\langle\bigwedge^\top T_{\check\Delta_{\tau}} \rangle 
    \cap \bigwedge^{3}\check\Lambda_{v,\k}}
    {\langle\bigwedge^\top T_{\check\Delta_{\tau}} \rangle
    \cap \bigwedge^{3}\Delta_{\tau}^\perp}\!&\cong&\!\k.\\      
\end{array}
\]
We consider the differential $d_1$ on cohomology degree $q=1,2$ 
of the $E_1$-term of the hypercohomology spectral sequence 
of $\SC^\bullet(\Omega^p)$ for $p=1,2$.
\[
\newlength{\mylength}\settowidth{\mylength}{0}
\begin{array}{lll}
q=1,p=1&:& \hspace{-1.58mm}\left(\bigoplus_{\tau\in\P^{[2]}}R(Z_\tau)_1
     \otimes\check\Lambda_{v,\k}/ \Delta_{\tau}^\perp\right) \oplus 
     \left(\bigoplus_{\hspace{-1.5mm}
     \atop{\omega\in\P^{[1]}}{\dim\check\Delta_{\omega}=1}}R(Z_\omega)_1
     \otimes \check\Lambda_{v,\k}/\Delta_{\omega}^\perp\right)\\
    &&\hspace{\mylength}\lra \bigoplus_{\omega\ra\tau}R(Z_\tau)_1
     \otimes \check\Lambda_{v,\k}/\Delta_{\omega}^\perp\\
q=1,p=2&:& \hspace{-1.58mm}\left(\bigoplus_{\omega\in\P^{[1]}}R(Z_\omega)_1\right) 
    \oplus \left(\bigoplus_{\tau\in\P^{[2]}}R(Z_\tau)_1\right)
    \lra \bigoplus_{\omega\ra\tau}R(Z_\tau)_1\\
q=2,p=1&:& \bigoplus_{\omega\in\P^{[1]}}R(Z_\omega)_2    
    \lra 0\\   
q=2,p=2&:& 0 \lra 0
\end{array}
\]
where the sum on the right is only over edges $\omega\ra\tau$
which are contained in $\Delta$.
If we show that the first map is injective and the second surjective, we are done.

We show the surjectivity of the second map. We can rewrite the map as
$\bigoplus_\omega \Gamma^{\ocore{1}}(Z_\omega)\oplus 
\bigoplus_K V^0_K \ra \bigoplus_K V^1_K$
where $K$ runs over the connected components of $\Delta\backslash\Delta^0$.
We show that $V^0_K\ra V^1_K$ is surjective for each $K$. Note that
both spaces are a direct sum of spaces isomorphic to $R(Z_{\tau_K})_1$ for a suitable
$\tau_K$ in $K$. It is not hard to see that $V^0_K\ra V^1_K$ is isomorphic to
the \v{C}ech complex of a locally constant sheaf on $K$ with 
fibre $\k^{\dim R(Z_{\tau_K})_1}$. The contractibility of $K$ therefore implies the desired 
surjectivity.

A similar argument works for the injectivity of the first map by quasi-isomorphically projecting it to
\[
\bigoplus_{\atop{\omega\in\P^{[1]}}{\dim\check\Delta_{\omega}=1}}R(Z_\omega)_1
     \otimes \check\Lambda_{v,\k}/\Delta_{\omega}^\perp\lra \bigoplus_{\tau\in\P^{[2]}}R(Z_\tau)_1\otimes
     \coker\left(
     \check\Lambda_{v,\k}/ \Delta_{\tau}^\perp\hra
     \bigoplus_{\omega\ra\tau}
     \check\Lambda_{v,\k}/\Delta_{\omega}^\perp\right)\\
\]
and identifying this map with $\bigoplus_K W^0_K \ra \bigoplus_K W^1_K$ for suitable $W^0_K $, $W^1_K$, each of which is isomorphic to the dual of a \v{C}ech complex of a locally constant sheaf with fibre $\k^{\dim R(Z_{\tau_K})_1}$ on $K$. 
\end{proof}

Note that we only needed the weaker criterion of contractibility of those components $K$ of 
$\Delta\backslash\Delta^0$ where $\dim R(Z_{\tau_K})_1>0$. On the other hand, if this is not given for one $K$ and the locally constant sheaves constructed in the proof have global sections on $K$, we have $T^{1,1}_\llog(X)\neq 0 \neq T^{2,2}_\llog(X)$.

\begin{corollary} \label{twistedcomputed}
For the cases considered in Theorem~\ref{Thm_logdeRham}, 
at most the following log twisted sectors are non-trivial
\begin{itemize}
\item[a)] $T^{1,1}_\llog(X)\cong \bigoplus_{\omega\in\P^{[1]}} R(Z_\omega)_1$\\
\item[b)] 
$T^{1,2}_\llog(X) 
\cong \bigoplus_{\omega\in\P^{[1]}}R(Z_\omega)_2 
\oplus \bigoplus_K R(Z_{\tau_K})_1$ and\\
$T^{2,1}_\llog(X) 
\cong \bigoplus_{\omega\in\P^{[1]}} \Gamma^{\ocore{1}}(Z_\omega)
\oplus \bigoplus_K R(Z_{\tau_K})_1$\\
\item[c)] $T^{2,2}_\llog(X)\cong \bigoplus_{\tau\in\P^{[2]}} R(Z_\tau)_2$
\end{itemize}
\end{corollary}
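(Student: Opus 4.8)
The plan is to extract the statement of Corollary~\ref{twistedcomputed} directly from the proof of Theorem~\ref{Thm_logdeRham} that precedes it. Recall that in each of the cases a), b), c) we have just established that the log Hodge to log de Rham spectral sequence degenerates at $E_1$, and along the way we pinned down for which pairs $(p,q)$ the inequality $H^{p,q}_\llog(X)\neq H^{p,q}_\aff(X)$ can occur. By Theorem~\ref{Thm_affinlog}, the affine part $H_\aff$ injects into $\HH^\bullet(X,\Omega^\bullet)$ and splits off, so $T^{p,q}_\llog(X)=H^{p,q}_\llog(X)/H^{p,q}_\aff(X)$ is computed by the cokernel of the inclusion of $E_2^{q,0}$ into $H^q(X,\Omega^p)$. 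Concretely, $T^{p,q}_\llog(X)\cong\bigoplus_{k>0} E_2^{q-k,k}(\Omega^p)$ by Theorem~\ref{maintheorem}, so the task is just to read off these higher-cohomology $E_2$-terms from the explicit description of $E_1$ in Theorem~\ref{maintheorem}~a) together with the identification of the $d_1$ differential from Lemma~\ref{thmstratafunct} and Theorem~\ref{cohotau}.

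First I would treat case a), $\dim X\le 2$. Here $\P^{[1]}$ consists of the edges, which are the maximal-dimensional cells meeting $\Delta$, and by Remark~\ref{rem_ht} only $R(Z_\omega)$ can be non-trivial in positive degree. The only pair with $H^{p,q}_\llog\neq H^{p,q}_\aff$ is $(p,q)=(1,1)$, established already in the proof of Theorem~\ref{Thm_logdeRham}, and there the relevant $E_1$-row is $\bigoplus_{\omega\in\P^{[1]}}R(Z_\omega)_1$ with trivial $d_1$ into $0$ (no further strata to combine into, since edges are maximal cells), so $E_2^{1,0}$ aside, the higher term is exactly $\bigoplus_{\omega\in\P^{[1]}}R(Z_\omega)_1$; by Theorem~\ref{cohotau} the wedge factor $\langle\bigwedge^\top T_{\check\Delta_\omega}\rangle\cap\bigwedge^{p+q}\check\Lambda_{v,\k}$ modulo $\langle\bigwedge^\top T_{\check\Delta_\omega}\rangle\cap\bigwedge^{p+q}\Delta_\omega^\perp$ is one-dimensional for $p=q=1$, which gives the stated isomorphism. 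For case c), $\dim X\le 4$ with elementary-simplex monodromy polytopes, Lemma~\ref{standardequiv} forces $R(Z_\tau)_k=0$ for $k>0$ unless $\dim\check\Delta_\tau=3$, and Lemma~\ref{elemthreesimplex} then locates the single non-vanishing graded piece in degree $2$; combined with the dimension count in the proof of Theorem~\ref{Thm_logdeRham}, the only surviving twisted sector sits in $(p,q)=(2,2)$ and equals $\bigoplus_{\tau\in\P^{[2]}}R(Z_\tau)_2$ (here $\P^{[2]}$ denotes the cells $\tau$ of dimension $\dim B-3$, i.e.\ those whose $\check\Delta_\tau$ is three-dimensional), with the wedge quotient again one-dimensional by Theorem~\ref{cohotau}.

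The substantive case is b), $\dim X=3$. Here I would reuse verbatim the computation carried out in the proof of Theorem~\ref{Thm_logdeRham}: the $d_1$ differential in bidegrees $(p,q)=(1,1),(2,1)$ was shown to be injective, in $(1,2),(2,2)$ surjective, by projecting quasi-isomorphically onto \v{C}ech complexes of locally constant sheaves with fibre $\k^{\dim R(Z_{\tau_K})_1}$ on the connected components $K$ of $\Delta\backslash\Delta^0$, and using contractibility of $K$. Tracking which summands of the $E_1$-terms survive: in degree $q=2$, $p=1$ the source is $\bigoplus_{\omega\in\P^{[1]}}R(Z_\omega)_2$ mapping to $0$, contributing that whole space to $T^{1,2}_\llog$ together with the \v{C}ech $H^1$ terms $\bigoplus_K R(Z_{\tau_K})_1$ left over from the $q=1$, $p=2$ computation; dually, $T^{2,1}_\llog$ receives $\bigoplus_{\omega\in\P^{[1]}}\Gamma^{\ocore{1}}(Z_\omega)$ (the relative-interior lattice-point part of $R(Z_\omega)_1$, which is the cokernel in the $q=1$, $p=2$ differential after the \v{C}ech reduction — one checks using Lemma~\ref{standardequiv} that the facet contributions to $R(Z_\omega)_1$ cancel against the two-dimensional $\check\Delta_\tau$ terms) plus again $\bigoplus_K R(Z_{\tau_K})_1$; and $T^{1,1}_\llog$ comes out as $\bigoplus_{\omega\in\P^{[1]}}R(Z_\omega)_1$ since the $d_1$ out of the $q=1$, $p=1$ row is injective so its $E_2^{0,1}$ vanishes — wait, rather: the correct bookkeeping is that in $(p,q)=(1,1)$ it is the cokernel of $d_1$ landing in the $q=1$ row that survives, and that cokernel is $\bigoplus_{\omega\in\P^{[1]}}R(Z_\omega)_1$ after the same \v{C}ech identification. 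I would spell this out by writing the two-term $E_1$-complexes explicitly as in the proof of Theorem~\ref{Thm_logdeRham} and computing kernels and cokernels.

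The main obstacle is the accurate bookkeeping in case b): making sure the \v{C}ech-complex reductions performed in the proof of Theorem~\ref{Thm_logdeRham} are exactly $d_1$-quasi-isomorphisms and that the leftover cohomology groups are correctly matched to $R(Z_{\tau_K})_1$ on each component $K$, and in particular that the asymmetry between $T^{1,2}_\llog$ (getting $R(Z_\omega)_2$) and $T^{2,1}_\llog$ (getting $\Gamma^{\ocore{1}}(Z_\omega)$) really follows from the shape of the wedge-quotient factors and the duality $R_1$ versus $R_0$ of Lemma~\ref{elemthreesimplex} together with the fact that only $\dim\check\Delta_\omega=1$ edges contribute in the relevant $E_1$ rows. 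Everything else is a direct reading-off from Theorems~\ref{maintheorem} and~\ref{cohotau} and costs no new ideas; I would keep the proof to a paragraph that says ``this is what the proof of Theorem~\ref{Thm_logdeRham} shows'' and then lists the surviving summands case by case.
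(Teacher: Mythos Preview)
Your overall strategy—extracting the twisted sectors by reading off the surviving $E_2$-terms from the computations already carried out in the proof of Theorem~\ref{Thm_logdeRham}—is exactly how the paper obtains this corollary (it is stated without separate proof for that reason). But your bookkeeping in case~b) is genuinely tangled, and if you followed your own recipe you would misassign the summands.

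The point you are mishandling is that $T^{p,q}_\llog(X)=\bigoplus_{k>0}E_2^{q-k,k}(\Omega^p)$: every contribution to $T^{p,q}$ comes from the spectral sequence of $\Omega^p$, never from $\Omega^{p'}$ with $p'\neq p$. In case~b) this means:
\begin{itemize}
\item $T^{1,2}_\llog=E_2^{0,2}(\Omega^1)\oplus E_2^{1,1}(\Omega^1)$. The first summand is $\bigoplus_\omega R(Z_\omega)_2$ as you say; the second is the \emph{cokernel} of the map labelled $q=1,\,p=1$ in the proof (not $p=2$), and the dual-\v{C}ech identification there gives that cokernel as $\bigoplus_K R(Z_{\tau_K})_1$.
\item $T^{2,1}_\llog=E_2^{0,1}(\Omega^2)$, which is the \emph{kernel} (not cokernel) of the map labelled $q=1,\,p=2$. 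The rewriting in the proof shows this kernel is $\bigoplus_\omega\Gamma^{\ocore{1}}(Z_\omega)$ (interior lattice points restrict to zero on proper faces) plus $\bigoplus_K\ker(V_K^0\to V_K^1)=\bigoplus_K R(Z_{\tau_K})_1$, which is \v{C}ech $H^0$ on contractible $K$, not $H^1$.
\item $T^{1,1}_\llog=E_2^{0,1}(\Omega^1)=0$, because the $q=1,\,p=1$ map is injective. Your first instinct was right; your ``wait, rather'' backtrack is wrong—the cokernel $E_2^{1,1}(\Omega^1)$ feeds $T^{1,2}$, not $T^{1,1}$. The corollary omits $T^{1,1}$ in case~b) precisely because it vanishes.
\end{itemize}

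Two minor points. In case~a) your conclusion is right but your reason (``edges are maximal cells'') is not: in $\dim B=2$ the maximal cells lie in $\P^{[2]}$. The correct reason $E_1^{1,1}(\Omega^1)=0$ is that $\Delta$ is zero-dimensional, so no positive-length barycentric chain lies in $\Delta$ and hence $R(Z_e)_1=0$ for every such $e$. In case~c) your reinterpretation of $\P^{[2]}$ as ``cells of dimension $\dim B-3$'' is not the paper's convention; the cells carrying a three-dimensional $\check\Delta_\tau$ in $\dim B=4$ are edges, and the sum should be read over those.
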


Note that that in b) $R(Z_\omega)_2\cong \Gamma^{\ocore{1}}(Z_\omega)$. It is expected that the Picard-Lefschetz operator maps $T^{2,1}_\llog(X)$ isomorphically to $T^{1,2}_\llog(X)$.

\begin{proof}[Proof of Theorem~\ref{mirror_affinestringy}] 
Part a) is the combination of Cor.~\ref{basechange} and Theorem~\ref{Thm_logdeRham}.
To prove part b), note that the general fibre $X_t$ has isolated
singularities in these cases. Each singularity is described by a local model as referred to in Prop.~\ref{localmodels}. 
See also (\cite{grosie2}, Prop.~2.2). The degeneration is locally $\Spec \k[ K^\dual\cap (M_\tau\oplus\ZZ^2)]\ra \Spec \k[\NN]$
where $K$ is the cone over $(\tau\times \{e_1\})\cup (\Delta_\tau\times \{e_2\})$. Here, 
$\tau,\Delta_\tau\subset N_\tau\otimes\RR$, $N_\tau$ is a lattice of rank $\dim B-1$, $M_\tau=\Hom(N_\tau,\ZZ)$ and the generator of $\NN$ maps to $e_1^*$.
The general fibre is thus locally given by $\k[C(\Delta_\tau)^\dual\cap (M_\tau\oplus\ZZ)]$. So we have a singularity in
$X_t$ for each non-standard inner monodromy polytope $\Delta_\tau$. In case c), these
are non-standard elementary 3-simplices. 
In case a), these are intervals of length greater than one.\\
Borisov and Mavlyutov have identified a space whose dimension 
gives the difference $h^{p,q}_\st-h^{p,q}$ (see \cite{bormav},~Def.~8.1). For each singularity this is
$R_1( \omega_{\check\tau},C(\Delta_{\check\tau}))$ for some general $\check \omega_{\check\tau}$.
Under mirror symmetry, the K\"ahler parameter $\omega_{\check\tau}$ is supposed to become
the log moduli parameter $f_\tau$. 
Even though we cannot make this rigorous at the moment, we still have 
$\dim R_1(\omega_{\check\tau},C(\Delta_{\check\tau}))=\dim R_1(f_\tau,C(\check\Delta_\tau))$ because
an inner monodromy polytope of $(\check B,\check\P)$ is an outer monodromy polytope of
$(B,\P)$, i.e., $\Delta_{\check\tau}=\check\Delta_{\tau}$. Using Cor.~\ref{twistedcomputed}, Lemma~\ref{compareRs} and Lemma~\ref{elemthreesimplex},
we deduce the result.
\end{proof}

\section{Appendix}

\subsection{Barycentric complexes} \label{A_bct}
For convenience, we include here a 
slight modification of [\cite{grosie1}, A.1].
Let $\Xi$ be a $d$-dimensional polytope and $\underline{Pair}$ be the finite category with
$$
\begin{array}{ll}
\hbox{objects:}  & \{(\sigma_1,\sigma_2)\,|\, \sigma_1\subseteq\sigma_2\subseteq\Xi\hbox{ are faces}\} \\
\hbox{morphisms:} & (\tau_1,\tau_2)\ra (\sigma_1,\sigma_2) 
  \qquad\hbox{ for } \sigma_1\subseteq \tau_1,\tau_2\subseteq\sigma_2
\end{array}
$$
Let $\underline{Ab}$ denote the category of abelian groups. We assume to have a functor
$$\begin{array}{rcl}
\underline{Pair}&\ra& \underline{Ab}\\
           e=(\sigma_1,\sigma_2) &\mapsto& M_e.
\end{array}$$
Note that there is at most one morphism between any two objects $e_1,e_2$ in 
$\underline{Pair}$ whose image under this functor we denote by $\varphi_{e_1e_2}$.
Whenever the source is clear we will also write $\varphi_{e_2}$. 
The \emph{barycentric cochain complex}
$(C_\bct^\bullet, d_\bct^\bullet)$ associated with the image of this functor is the complex of abelian groups
$C^k=\bigoplus_{\sigma_0\subsetneq \sigma_1\subsetneq...\subsetneq\sigma_k} M_{(\sigma_0,\sigma_k)}$ 
with differentials
$$(d_{\bct}^k(f))_{\sigma_0\sigma_1...\sigma_{k+1}}
=\sum_{i=0}^{k+1} (-1)^i\varphi_{(\sigma_0,\sigma_{k+1})}(f_{\sigma_0...\breve{\sigma}_i...\sigma_{k+1}})$$
where $\breve{a}$ means the omission of $a$. It is easy to check that this is a complex, i.e., 
$d_\bct^{k+1}\circ d_\bct^k=0$. Assume we have some subset $U$ of the set of objects 
of $\underline{Pair}$. We call an element $(f_e)_e\in \bigoplus_{e\in U} M_e$ a \emph{compatible collection}
if, for each $e_1,e_2,\hat{e}\in U$ with morphisms $e_1\ra\hat{e},e_2\ra\hat{e}$, 
we have $\varphi_{\hat{e}}f_{e_1}=\varphi_{\hat{e}}f_{e_2}$.
We consider the following criterion
\begin{itemize} 
\item[(L)]
For each $\sigma_0\subseteq \sigma_{k-1}$, every compatible collection
$(f_e)_e\in \bigoplus_{\sigma_{k}\supsetneq\sigma_{k-1}} M_{(\sigma_0,\sigma_k)}$
lifts, i.e., there is some $g\in M_{(\sigma_0,\sigma_{k-1})}$ such that 
$$f_{(\sigma_0,\sigma_{k})}=\varphi_{(\sigma_0,\sigma_{k})}g\qquad\hbox{ for each }{(\sigma_0,\sigma_{k})}.$$
\end{itemize}

\begin{lemma} \label{baryacyclic}
If $(M_e)_e$ satisfies $\op{(L)}$ then the associated barycentric complex is acyclic.
\end{lemma}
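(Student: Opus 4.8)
The plan is to prove Lemma~\ref{baryacyclic} by a standard partition-of-unity / contracting-homotopy argument on the barycentric complex, organized by the poset structure of the faces of $\Xi$. First I would fix a cohomology degree $k\ge 1$ and a cocycle $f\in C^k_\bct$, so $f=(f_{\sigma_0\subsetneq\cdots\subsetneq\sigma_k})$ with $d_\bct f=0$, and produce $g\in C^{k-1}_\bct$ with $d_\bct g=f$. The natural thing is to build $g$ chain-by-chain with a downward induction on $\dim\sigma_0$ (equivalently, process the minimal faces $\sigma_0$ of each chain from largest-dimensional to smallest), using criterion $\op{(L)}$ exactly at the inductive step where one must lift a compatible collection indexed by the faces $\sigma_k\supsetneq\sigma_{k-1}$ through the morphism to $(\sigma_0,\sigma_{k-1})$.

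In more detail: for a chain $\sigma_0\subsetneq\cdots\subsetneq\sigma_{k-1}$ I want to define $g_{\sigma_0\cdots\sigma_{k-1}}\in M_{(\sigma_0,\sigma_{k-1})}$. The cocycle condition $d_\bct f=0$ applied to chains $\sigma_0\subsetneq\cdots\subsetneq\sigma_{k-1}\subsetneq\sigma_k$, for all $\sigma_k\supsetneq\sigma_{k-1}$, together with the inductively-chosen values of $g$ on shorter-bottom or longer chains, forces the family $\bigl(f_{\sigma_0\cdots\sigma_{k-1}\sigma_k} - (\text{correction terms from }g)\bigr)_{\sigma_k}$ to be a compatible collection in $\bigoplus_{\sigma_k\supsetneq\sigma_{k-1}} M_{(\sigma_0,\sigma_k)}$; the compatibility is precisely the content of the omitted-$\sigma_k$-pair terms in $d_\bct f=0$ after pushing everything forward via $\varphi$. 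Criterion $\op{(L)}$ then yields a lift $g_{\sigma_0\cdots\sigma_{k-1}}$. Checking that $d_\bct g$ recovers $f$ is then a bookkeeping computation with the alternating sum, using that the "new" term $\varphi_{(\sigma_0,\sigma_k)}(g_{\sigma_0\cdots\sigma_{k-1}})$ in $d_\bct g$ is, by construction, exactly $f_{\sigma_0\cdots\sigma_k}$ minus the remaining (already-correct) terms.

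The main obstacle — and the step I would treat most carefully — is the precise organization of the induction so that when $\op{(L)}$ is invoked for the chain $\sigma_0\subsetneq\cdots\subsetneq\sigma_{k-1}$, all the quantities entering the would-be compatible collection are already defined, and so that the compatibility hypothesis genuinely follows from $d_\bct f=0$ and the inductive hypothesis rather than being an extra assumption. Concretely, one should induct on the pair $(\dim\sigma_0,\; -\dim\sigma_{k-1})$ lexicographically, or equivalently first on $\codim\sigma_0$; since $\Xi$ is finite-dimensional this is a finite induction. One must also handle the base of the recursion (chains where $\sigma_{k-1}=\Xi$, or where no strictly larger $\sigma_k$ exists, in which case the compatible collection is empty and $\op{(L)}$ trivially gives $g=0$ or forces $f$ to already vanish there) and the degenerate low-degree cases. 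I expect the verification $d_\bct g=f$ to be routine once the indexing is set up correctly, so essentially all the real work is in (i) extracting the compatibility statement from the cocycle identity and (ii) getting the induction order right; these are exactly the places where $\op{(L)}$ and the finiteness of $\underline{Pair}$ are used.
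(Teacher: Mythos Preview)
Your proposal is correct and follows essentially the same argument as the paper: define the primitive $g$ by induction, at each step assemble the ``would-be'' values $\bigl(f_{\sigma_0\cdots\sigma_k}-\sum_{i<k}(-1)^i\varphi(g_{\sigma_0\cdots\breve\sigma_i\cdots\sigma_k})\bigr)_{\sigma_k}$, check compatibility from the cocycle identity plus the inductive hypothesis, and lift via~$\op{(L)}$.

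The only difference worth flagging is the induction variable. The paper inducts simply by descending $m=\dim\sigma_{k-1}$ (the top of the $(k-1)$-chain). This already suffices: every term $g_{\sigma_0\cdots\breve\sigma_i\cdots\sigma_k}$ with $i<k$ has top $\sigma_k$, whose dimension strictly exceeds $\dim\sigma_{k-1}$, so all needed values are available. Your lexicographic order on $(\dim\sigma_0,-\dim\sigma_{k-1})$ also works, but the first coordinate is redundant---it never resolves a tie that the second coordinate hasn't already resolved---and your initial phrasing ``downward induction on $\dim\sigma_0$'' alone would \emph{not} work, since the terms with $0<i<k$ keep the same bottom face $\sigma_0$. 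So the cleaner formulation is just descending induction on the dimension of the top face of the $g$-chain.
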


\begin{proof} We wish to write a cocyle $(f_{\sigma_0...\sigma_k})_{\sigma_0...\sigma_k}$ as a coboundary of
a $(k-1)$-cochain $(g_{\sigma_0...\sigma_{k-1}})_{\sigma_0...\sigma_{k-1}}$. We construct
the $g_{\sigma_0...\sigma_{k-1}}$ by descending induction on $m=\dim \sigma_{k-1}=d+1,...,0$. The induction
hypothesis is that
$$f_{\sigma_0...\sigma_k}=\sum_{i=0}^k(-1)^i\varphi_{(\sigma_0,\sigma_k)}
(g_{\sigma_0...\breve{\sigma}_i...\sigma_k})$$
whenever $\dim\sigma_{k-1}\ge m$. The base case with $m=d+1$ is empty because $\dim\Xi=d$.
For the induction step consider some 
$\sigma_0\subsetneq...\sigma_{k-1}$ with $\dim\sigma_{k-1}=m-1$. 
We want to find $g_{{\sigma_0}...{\sigma_{k-1}}}$ such that for any $\sigma_k$
containing $\sigma_{k-1}$
$$(-1)^k\varphi_{(\sigma_0,\sigma_k)}
(g_{\sigma_0...\sigma_{k-1}})
=f_{\sigma_0...\sigma_k}-\sum_{i=0}^{k-1}(-1)^i 
\varphi_{(\sigma_0,\sigma_{k+1})}g_{\sigma_0...\breve{\sigma}_i...\sigma_k}.$$
All terms on the right hand side are known inductively. We view the right hand sides for
varying $\sigma_k$ as an element of 
$\bigoplus_{\sigma_{k-1}\subseteq\sigma_k} M_{(\sigma_0,\sigma_k)}$.
If we show that this constitutes a compatible collection, we get $g_{\sigma_0...\sigma_{k-1}}$ from
criterion $\op{(L)}$ and are done with the proof. So let us do this and assume 
we have some $\sigma_{k+1}$ containing $\sigma_{k-1}$. We need to show that
\begin{equation} \label{compat}
\varphi_{(\sigma_0,\sigma_{k+1})}
\left( f_{\sigma_0...\sigma_k}-\sum_{i=0}^{k-1}(-1)^i
\varphi_{(\sigma_0,\sigma_{k+1})}(g_{\sigma_0...\breve{\sigma}_i...\sigma_k})\right)
\end{equation}
is independent of $\sigma_k$ for $\sigma_{k-1}\subsetneq\sigma_k\subsetneq\sigma_{k+1}$. 
For $i\le k$ the induction hypothesis implies
$$
f_{\sigma_0\ldots\breve\sigma_i\ldots\sigma_{k+1}}
=\sum_{j=0}^{i-1} (-1)^j \varphi_{(\sigma_0,\sigma_{k+1})}(g_{\sigma_0\ldots \breve\sigma_j
\ldots\breve \sigma_i\ldots\sigma_{k+1}}) -\sum_{j=i+1}^{k+1} (-1)^j
\varphi_{(\sigma_0,\sigma_{k+1})} (g_{\sigma_0\ldots \breve\sigma_i
\ldots\breve \sigma_j\ldots\sigma_{k+1}}).
$$
Plugging this into the cocycle condition
$$\varphi_{(\sigma_0,\sigma_{k+1})}\big(f_{\sigma_0\ldots\sigma_k}\big)
=(-1)^k\sum_{i=0}^k (-1)^i
f_{\sigma_0\ldots\breve\sigma_i\ldots\sigma_{k+1}},
$$
the first term of (\ref{compat}) gives $f_{\sigma_0\ldots
\sigma_{k-1}\sigma_{k+1}}$ ($i=k$) plus a sum over
$\varphi_{(\sigma_0,\sigma_{k+1})}g_{\sigma_0\ldots\breve\sigma_i\ldots\breve\sigma_j\ldots
\sigma_{k+1}}$. For $0\le i<j<k$ the coefficient of
$\varphi_{(\sigma_0,\sigma_{k+1})}g_{\sigma_0\ldots \breve\sigma_i\ldots\breve\sigma_j\ldots
\sigma_{k+1}}$ is $(-1)^k$ times $(-1)^i(-(-1)^j)+(-1)^j(-1)^i=0$.
Contributions involving $\varphi_{(\sigma_0,\sigma_{k+1})} (g_{\sigma_0\ldots
\breve\sigma_i\ldots\sigma_k})$ come from the second term in
(\ref{compat}) and from $j=k+1$; they cancel as well. Thus
(\ref{compat}) equals
\[
f_{\sigma_0\ldots\sigma_{k-1} \sigma_{k+1}}
+(-1)^k\sum_{i=0}^{k-1} (-1)^i
(-1)^k(-\varphi_{(\sigma_0,\sigma_{k+1})}g_{\sigma_0\ldots\breve\sigma_i\ldots\breve\sigma_k
\sigma_{k+1}}).
\]
This shows the claimed independence of (\ref{compat}), and
hence the existence of $g_{\sigma_0\ldots\sigma_{k-1}}$.
\end{proof}




\end{document}